\newtheorem{theorem}{Theorem}[section]
\newtheorem{lemma}[theorem]{Lemma}
\newtheorem{proposition}[theorem]{Proposition}
\newtheorem{corollary}[theorem]{Corollary}
\newtheorem{question}[theorem]{Question}
\theoremstyle{definition}
\newtheorem{definition}[theorem]{Definition}
\newtheorem{example}[theorem]{Example}
\newtheorem{remark}[theorem]{Remark}
\DeclareMathOperator{\Conv}{Conv}
\DeclareMathOperator{\Cone}{Cone}
\def\R{\mathbb{R}}
\def\S{\mathbb{S}}
\def\C{\mathbb{C}}
\def\Z{\mathbb{Z}}
\def\Q{\mathbb{Q}}
\def\N{\mathbb{N}}
\newcommand{\MM}{\mathcal{M}}
\newcommand{\CC}{\mathcal{C}}
\newcommand{\DD}{\mathcal{D}}
\newcommand{\PP}{\mathcal{P}}
\newcommand{\NN}{\mathcal{N}}
\newcommand{\VV}{\mathcal{V}}
\renewcommand{\leq}{\leqslant}
\renewcommand{\geq}{\geqslant}
\DeclareMathOperator{\distance}{dist}
\DeclareMathOperator{\Vol}{Vol}
\DeclareMathOperator{\lin}{lin}
\DeclareMathOperator{\GL}{GL}
\DeclareMathOperator{\AGL}{AGL}
\newcommand{\dist}{\delta}
\newcommand{\T}{{\mathbb T}}
\newcommand{\om}{\omega} 
\newcommand{\Mo}{\mathcal{M}(2n)}
\newcommand{\Mt}{\widetilde{\mathcal{M}(2n)}}
\newcommand{\Dtt}{\widetilde{\mathcal{D}(n)}}
\newcommand{\Dt}{\mathcal{D}(n)}
\DeclareMathOperator{\len}{length}
\DeclareMathOperator{\symp}{Sympl}
\begin{document}

\title[Moduli spaces of Delzant polytopes and symplectic manifolds]{Moduli spaces of Delzant polytopes and symplectic toric manifolds}

\author{\'Alvaro Pelayo \,\,\,\,\,\,\,\, Francisco Santos}

\address{\'Alvaro Pelayo,
Facultad de Ciencias Matem\'aticas,
Universidad Complutense de Madrid, 28040 Madrid, Spain and Real Academia de Ciencias Exactas, F\'isicas y Naturales, Spain.}
\email{alvpel01@ucm.es}

\address{Francisco Santos,
Departamento de Matem\'{a}ticas, Estad\'{i}stica y Computaci\'{o}n, Universidad de Cantabria, Av.~de Los Castros 48, 39005 Santander, Spain}
\email{francisco.santos@unican.es}

 \subjclass[2000]{Primary  52B20, 52C07, 53D20, ; Secondary 53D05,  52B11, 52A20}

 \keywords{Delzant polytope, convex polytope, polytope, Delzant correspondence, symplectic toric manifold, moment map, momentum map,  moduli space, toric integrable system.}

\begin{abstract}
This paper introduces modern geometric combinatorial technology from the theory of triangulations in order to derive results in toric symplectic geometry. 
In the main part of the paper we prove a number of properties of the space $\mathcal{D}(n)$ of $n$\--dimensional Delzant polytopes. 
Two  highlights are the construction of examples showing that, 
 in contrast with the classical work of Oda in dimension $2$, no classification of combinatorially minimal Delzant polytopes can be expected in dimension $3$ or higher, and a proof that the space of $n$\--dimensional 
 Delzant polytopes is path-connected. 
Our proof of the latter is based on the fact that every rational fan can be refined to a unimodular fan, which is a standard technique used for resolution of singularities of toric varieties.
In the last part of the paper, using the Delzant correspondence, these results allow us to answer several open questions concerning the moduli space $\mathcal{M}(n)$
of symplectic toric manifolds of dimension $2n$, since this space is isometric to the space of Delzant polytopes.
Our results imply that no classification of minimal models of symplectic toric manifolds is plausible in dimension $6$ or higher, which answers in the negative a long-standing folklore question originating in Oda's work (1978).
\end{abstract}

\maketitle
\setcounter{tocdepth}{2}
\tableofcontents
\section{Introduction}

\subsection{Novelty and achievements}

 The novelty in this paper  is the introduction of modern geometric combinatorial and convex geometric ideas into the study of Delzant polytopes and the moduli spaces they form, which we then incorporate into the framework of symplectic geometry. 
 These techniques  include using Hausdorff distance and its close relation to Minkowski sums,
focusing 
on the normal fans of Delzant polytopes rather than on the polytopes themselves, and applying to the normal fans ideas coming from the theory of triangulations of polytopes and vector configurations developed in the past thirty years (see for example the book~\cite{triangbook}) but which had not made their way into mainstream symplectic geometry yet. 

A \emph{Delzant polytope} in $\R^n$  is a simple full-dimensional polytope with rational edge directions and such that the primitive edge-direction vectors at each vertex form a basis of the lattice $\Z^n$. Equivalently, it is a polytope with a simplicial and unimodular normal fan (Definition~\ref{delpol}). We refer to Figure \ref{fig:delzants} for some examples.
 \begin{figure}[htb]
  \includegraphics[height=4.5cm]{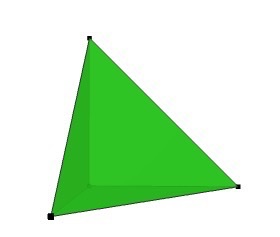} \qquad
 \includegraphics[height=4.5cm]{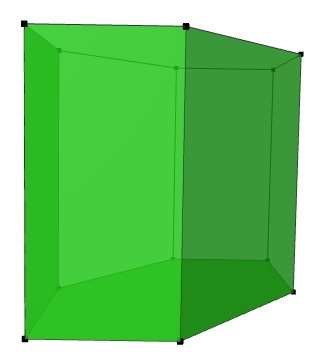}
\caption{Two Delzant polytopes in dimension three. Under the Delzant correspondence, the first one (tetrahedron) corresponds to the toric varitety $\C P^3$.}
\label{fig:delzants}
\end{figure}

Our two main achievements are:
\begin{itemize}
\item  We exploit the interplay between fans and polytopes (Figure~\ref{fig:Hirzebruch}), and use triangulation theory and resolution of singularities of toric varieties in order to construct examples which show that the famous classification of Delzant polygons by Oda~\cite[Theorem 8.2]{OdaMiyake} cannot have an analogue in dimension three or higher.
\item We bring techniques from convex geometry to gain a rather complete understanding of the geometry and topology of  the moduli space of $n$\--dimensional Delzant polytopes, extending to arbitrary dimension the results obtained for $n=2$ by Pelayo, Pires, Ratiu and Sabatini~\cite[Theorem 2]{PPRS14}.  
\end{itemize}

While the core ideas of the paper come from geometric combinatorics and convex geometry, they are strongly related to and motivated by developments in symplectic geometry, which build on seminal work of Atiyah, Delzant, Guillemin, Duistermaat, Heckman, Kostant, Sternberg, Weinstein, and others; see \cite{PES} for a survey of results in this direction.
In the last section of this paper we discuss this connection and the strong implications of the achievements mentioned above in toric symplectic geometry.

 \begin{figure}[htb]
 \includegraphics[height=3cm]{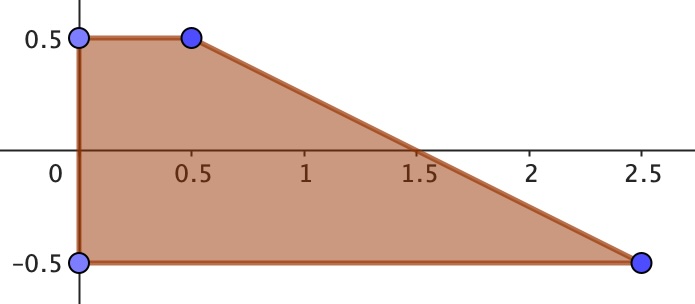} \qquad
 \includegraphics[height=4.5cm]{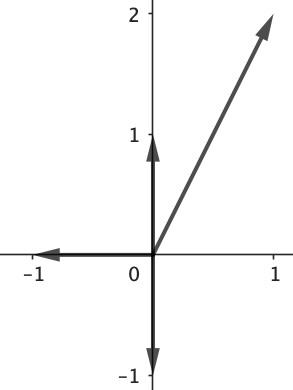}
\caption{The Hirzebruch trapezoid ${\rm H}_{3/2,1,2}$ with vertices $(0, -\tfrac12)$, $(0, \tfrac12)$, $(\tfrac52,-\tfrac12)$, $(\tfrac12,\tfrac12)$  and its corresponding normal fan, with generators $(-1,0)$, $(0,1)$, $(0,-1)$, and $(1,2)$. See Corollary~\ref{coro:2dim} for the meaning of its parameters.}
\label{fig:Hirzebruch}
\end{figure}

 \subsection{From discrete to symplectic geometry and back}

Our combinatorial results have a direct translation into the world of symplectic toric manifolds. 
As detailed in Definition~\ref{def:symplectic} these are quadruples of the form $(M, \om, \T^n, \mu)$ where $\T^n$ is an $n$\--dimensional torus acting effectively and Hamiltonianly on 
 a $2n$\--dimensional compact connected symplectic manifold $(M,\omega)$
 with momentum map 
 \[
 \mu=(\mu_1,\dots,\mu_n): M \to \mathfrak t^* \cong \R^n,
 \]
 where $\mathfrak t^*$ is the dual Lie algebra of  $\T^n$ and there is a choice of isomorphism $\mathfrak t^* \cong \R^n$ (see Section~\ref{stm} for details and Figure~\ref{fig:S2} for the simplest example).
 \begin{figure}[htb]
 \includegraphics[height=4cm]{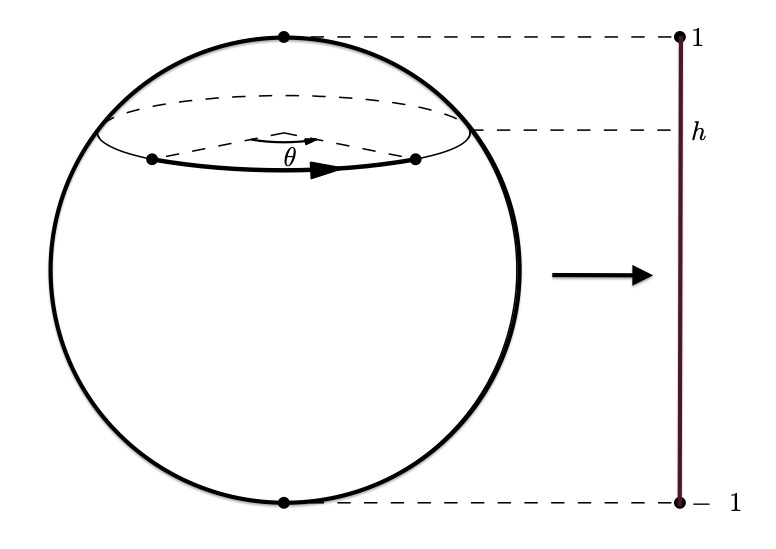}
 \caption{The only $2$-dimensional symplectic toric manifold, up to scaling, $(\C P^1 \cong \S^2, \omega = {\rm d}\theta \wedge {\rm d}h, \T^1=\S^1, \mu(\theta, h) =h)$. The momentum polytope $P=[-1,1] \subset \mathfrak{t}^* \cong \R$  classifies  its $\S^1$-equivariant symplectomorphism class. The Delzant correspondence sends the class of $(\S^2, \omega, \S^1, \mu)$ to $P$.}
 \label{fig:S2}
 \end{figure}

Often, symplectic toric manifolds are referred to as
\emph{toric integrable systems}, since $\mu$ is in fact a very special type of finite dimensional integrable Hamiltonian
system: one for which all components $\mu_i \colon M \to \R$ generate periodic flows of the same period.

The translation of results from the theory of Delzant polytopes to toric symplectic geometry
 is made via the \emph{Delzant correspondence} (Theorem~\ref{thm:delzant}), which is
a seminal result of Delzant~\cite{De1988} saying that the map
\begin{align*}
\Mo & \longrightarrow \Dt\\
[(M, \om, \T^n, \mu)] &\longmapsto \mu(M)
\end{align*}
is a bijection from the set of isomorphism classes of  compact connected symplectic toric manifolds of dimension $2n$, which we denote by $\Mo$,
to the set of Delzant polytopes of dimension $n$, which we denote by $\Dt$. Here, two such manifolds are \emph{isomorphic} if there is a toric equivariant symplectomorphism between them preserving  the momentum map (Section~\ref{isomorphism}).
The Delzant correspondence establishes a powerful bridge between the ``world of polytopes'' and the ``world of manifolds'', which is a very rare occurrence in differential geometry. It builds on seminal work of Atiyah~\cite{At82}, Guillemin-Sternberg~\cite{GS82}, Horn~\cite{HORN},  Kostant~\cite{Kostant}, and Schur~\cite{SCHUR}.

The use of the Delzant correspondence to work with polytopes instead of manifolds appeared 
in symplectic geometry around 20 years ago 
in the papers \cite{Pe06,PEE,McDuffTolman} in a more elementary, from the combinatorial point of view, form.  
(See also the recent paper \cite{AHN} for a different approach using convex geometry techniques in symplectic geometry).
The definition and moduli space structure on $\Mo$  was introduced in~\cite{PPRS14},  and has since then been considered by other authors~\cite{FuOh,FuKiMi}. It is constructed by pulling back to $\Mo$ the symmetric difference distance on $\Dt$.  The metric on $\Mo$ is closely related to the famous Duistermaat-Heckman measure~\cite{DH}. 

The metrics on $\Mo$ and $\Dt$ obtained in this way have the following nice properties:

\begin{itemize}
\item The symmetric difference distance and the Hausdorff distance induce the same topology on $\Dt$. This allows us to use the latter, which is better-understood in convex geometry, to deduce results about the former, hence about the Duistermaat-Heckman measure.
\item The diffeomorphism strata in $\Mo$ correspond to the normal equivalence strata in $\Dt$. Within such strata, continuity 
with respect to the distance of polytopes is the same as continuity regarding parallel translation of their facets.
\item The closures of the strata of $\Mo$ correspond to the symplectic toric varieties obtained by exploding lower dimensional invariant submanifolds, which includes blow-ups, but also more complicated explosions.
\end{itemize}

Having these moduli
space structures is useful to study the variation of properties such as continuity of functions defined on these moduli spaces, 
see Figalli\--Pelayo~\cite{FIPAPE} and Figalli\--Palmer\--Pelayo~\cite{FIPE}.
We note that other moduli spaces of polygons have been studied by a number of authors, for example 
see Hausmann\--Knutson~\cite{HaKn1, HaKn2} and Kapovich\--Millson~\cite{KaMi}.

\subsection{Main results}

Next we describe the main contributions of the paper to geometric combinatorics, convex geometry and symplectic geometry. Sections~\ref{sec:dp} and~\ref{sec:dp2} do not require any knowledge of symplectic geometry and the results therein are of interest independently of their connection to symplectic geometry, which is worked out in Section~\ref{sec:symplectic}.

\subsubsection{Minimal models of Delzant polytopes and symplectic toric manifolds}
\label{subsec:minimal}
In dimension $2$  there is a well-known classification of the minimal unimodular complete fans ---equivalently, of Delzant polygons--- going back to Oda (1978)~\cite[Theorem 8.2]{OdaMiyake}, which we state as Lemma~\ref{lemma:2dim}.
This  is  the main tool used in~\cite{PPRS14} to study the space $\mathcal{D}(2)$. We show that the failure of such a classification in dimension three and higher is drastic: there are Delzant polytopes with arbitrarily many vertices and facets and which do not admit any blow-down (Theorem~\ref{thm:dim3})  or, more strongly, which are not refinements of other unimodular fans (Theorem~\ref{thm:isolated}).

Oda's Classification implies a classification of $4$-dimensioanl symplectic toric manifolds in terms of finite sequences of blow-ups of three basic minimal models: $\C P^2$, $\C P^1\times \C P^1$, and the Hirzebruch surfaces. 
We state the symplectic versions of Theorems~\ref{thm:dim3} and Theorem~\ref{thm:isolated} as 
Theorems~\ref{thm:dim3-symplectic} and Theorem~\ref{thm:isolated-symplectic}. This answers in the negative the long-standing folklore question in toric geometry  of whether an analogue of Oda's classification of symplectic toric manifolds  exists in dimension higher than $4$.

\subsubsection{Local topology, compactness, and path-connectedness of $\Dt$ and $\Mo$}
Indeed, 
Theorem~\ref{thm:connected} shows that the moduli space $\Dt$ of $n$\--dimensional 
Delzant polytopes is path-connected but not locally compact, and describes its completion under the two natural metrics on it.  
Theorem~\ref{thm:quotientD} states the same for the quotient of $\Dt$ under unimodular equivalence, which we denote by $\Dtt$,
solving in particular a problem posed by  Fujita and Ohashi~\cite[Problem~5.2]{FuOh} in 2018.

Our proof of path-connectedness is algorithmic. It is based on the fact that every pair of unimodular fans admit a common unimodular refinement  (Corollary~\ref{coro:desingular}), which can be computed via  the standard resolution of singularities in toric varieties. Yet, contrary to the $2$-dimensional case, in dimension three and higher there is no  a priori bound on the complexity of the resulting 
path (Section~\ref{sec:pm}).

As said above, the Delzant correspondence sends $\Dt$ bijectively to the space $\Mo$ of $2n$-dimensional symplectic toric manifolds modulo isomorphisms. The quotient $\Dtt$ by unimodular equivalence then corresponds to quotienting $\Mo$ by \emph{weak isomorphsims}; we denote this quotient space by $\Mt$. With this in mind, 
Theorems~\ref{thm:connected} and \ref{thm:quotientD} translate to the symplectic statement Theorem~\ref{key}, which 
essentially solves  \cite[Problem 2.42]{PeVN2012}, also posed 
as \cite[Problem 3]{PPRS14}. In fact, the path-connectedness part extends the main
theorem of~\cite{PPRS14}  from $n=2$ to arbitrary $n\geq 2$.  

\subsubsection{Simple connectedness of $\mathcal{D}(2)$ and $\mathcal{M}(4)$}
We give partial results on the structure of the fundamental group of $\Dt$, showing that loops with a certain finiteness condition are null-homotopic (Corollary~\ref{coro:finite} and Theorem~\ref{thm:refining}). For $n=2$ all loops have the required property, so  $\DD(2)$ is simply connected (Corollary~\ref{coro:simply2}). For $n\geq 3$ loops without the required property exist (Example~\ref{exm:nonlocal}),  and whether the fundamental group of $\mathcal{D}(n)$ is trivial or not
remains an open question (Question~\ref{q:simply-connected}).

The symplectic version of Corollary~\ref{coro:simply2} ($\MM(4)$ is simply-connected) is Proposition~\ref{pol}.

\subsubsection{Stratification of $\Dt$ and $\Mo$}
 We study the stratification of $\Dt$ by normal fans.
 Let $\NN$ be a unimodular fan of dimension $n$ with $m$ rays.
 We show that the set of Delzant polytopes with normal fan equal to $\NN$ form a subspace of 
$\Dt$ isometric to a  polyhedral relatively open cone of dimension $m$ with a lineality space of dimension $n$.
The faces of the cone correspond to the fans that are refined by $\NN$ (Corollary~\ref{coro:cells}). 

This in particular allows us to give the exact value of the Hausdorff dimension of the space of Delzant $n$-polytopes with a fixed number of vertices or of facets~(Theorem~\ref{thm:dimension}), thus solving 
two problems  of  Fujita and Ohashi~\cite[Problems~5.3 and 5.5]{FuOh}.

\subsubsection{{\rm CW} topology on $\Dt$ and $\Mo$}
The stratification of the previous paragraph suggests that one can consider $\Dt$, hence also $\Mo$, as embedded into a {\rm CW} complex. The {\rm CW} topology obtained in this way is strictly finer than the metric topology (Propositions~\ref{prop:CW} and \ref{prop:finer}) and turns out to be more manageable from a homotopical point of view: with this topology $\Dt$ and $\Mo$ are \emph{weakly contractible} (Corollary \ref{coro:homotopy}).

\subsection{Structure of the paper}

In order to make the paper accesible for both the combinatorics and symplectic communities we have reviewed the basic concepts of both areas that are needed for the paper. An introduction to polytope theory in general and Delzant polytopes in particular is contained in Sections~\ref{sec:polytopes} and~\ref{sec:refinement} and an introduction to toric symplectic geometry and momentum maps is given  in Section~\ref{sec:symplectic-intro}. The rest of Section~\ref{sec:dp} contains the constructions of $3$-dimensional Delzant polytopes that lead to the results related to the impossibility of an Oda type classification, as mentioned in Section~\ref{subsec:minimal} above.

Section~\ref{sec:dp2} deals with the geometry and topology  of the space of Delzant polytopes. 
After introducing some technical tools such as the Hausdorff metric and Minkowski paths in Section~\ref{sec:metric}, in Section~\ref{sec:connected} we show that the space of Delzant $n$-polytopes is path-connected. 
Section~\ref{sec:stratification} studies the stratification of $\Dt$ by normal fans, and Section~\ref{sec:simply} shows that $\DD(2)$ is simply connected. In Section~\ref{sec:CW-contractible} we use the aforementioned stratification to endow $\Dt$ with a CW topology, and show that this new topology is weakly contractible.

After reviewing the basics of toric symplectic geometry in Section~\ref{sec:symplectic-intro}, in  Section~\ref{sec:space-symplectic} we interpret our combinatorial results in this world.

\subsection*{Acknowledgements}

The first author is funded by a BBVA (Bank Bilbao Vizcaya Argentaria) Foundation Grant for Scientific Research Projects with project title \emph{From Integrability to Randomness in Symplectic and Quantum Geometry}. 
The  second author is funded by grant PID2019-106188GB-I00 and PID2022-137283NB-C21 of MCIN/AEI/ 10.13039/501100011033 and by project CLaPPo (21.SI03.64658) of Universidad de Canta\-bria and Banco Santander.

We are grateful to the Department
of Mathematics, Statistics and Computation at the University of Cantabria for inviting the first author in October 2022 for a visit during which the first ideas of this paper were born. The first author is also thankful to Carlos Beltr\'an and Fernando Etayo for the hospitality during his visit.

We thank Liat Kessler and Margaret Symington for pointing out to us an error in a previous version of the proof of Theorem~\ref{thm:isolated}, and Ignasi Mundet for a suggestion leading to the statement of Corollary~\ref{coro:homotopy}, which is more general than the statement we previously had.
We also thank an anonymous referee for a very attentive and thorough reading and many useful comments which have helped us improved the paper.

\section{Delzant polytopes: basic theory and impossibility of an Oda type classification} 
\label{sec:dp}

In this section we review some modern tools of geometric combinatorics and convex geometry which we need to prove our main results. We also include several new examples
and counterexamples with interesting properties (notably, Theorem~\ref{thm:dim3} and Theorem~\ref{thm:isolated}) which illustrate striking differences between the geometry/topology of the space of  Delzant polygons and its higher dimensional analogue. 
In particular, these examples tell us that no Oda type classification of Delzant $n$-polytopes is possible in $n\ge 3$.

Throughout the section we emphasize the interplay between fans (more
commonly used in algebraic geometry) and polytopes (more commonly used in symplectic geometry), using one or the other depending on how convenient
it is for the proofs.

\subsection{Polytopes and their normal fans}
\label{sec:polytopes}

Because of the close relation which exists between Delzant polytopes and smooth toric varieties, we mainly use the books \cite{CLS, Ewald, Fulton} for background in polytope theory. Occasionally we use \cite{triangbook,Ziegler} for combinatorial aspects, and \cite{Gruber} for convex geometric ones.

\subsubsection{Polytopes}

Let $V$ be a real vector space of any finite dimension. Let $V^\ast$ be its dual vector space  and 
let 
\[
\left\langle\,\ ,\  \right\rangle:V ^\ast\times V \rightarrow\mathbb{R}
\]
 be the standard pairing. 

\begin{definition}
 A  \emph{polytope} $P$ in $V$ is the closed
convex hull of a finite subset $\{p_1, \ldots, p_m\} \subset V$, i.e., 
the  smallest convex set containing it or, equivalently,
\[
\Conv\{p_1, \ldots, p_m\} : = 
\Big\{\sum_{i=1}^m \lambda_i p_i \,\Big|\, \lambda_i \in [0,1],
\;\sum_{i=1}^m \lambda_i = 1 \Big\}.
\]
\end{definition}

Many authors call ``convex polytope'' what we call a polytope. Since we are not concerned with non-convex polytopes we omit the word ``convex'', as done for instance in \cite{triangbook, Ziegler}.

The \textit{dimension} of a convex set $S\subset V$ is the dimension of its affine span.
The set is \textit{full dimensional} if it affinely spans $V$.
(For example, Figure~\ref{fig:delzants} shows two full-dimensional polytopes in $V =\R^3$.)

By definition, a  polytope is a compact subset of $V$.  
An \emph{extreme point} of a convex subset $C\subset V$ is a point of $C$ which is not 
in any open line segment  contained in $C$. A
 polytope is equal to the closed convex hull of its extreme 
points (Krein\--Milman Theorem \cite{KeMi1940}), and 
this is the unique minimal description of a polytope as a convex hull of points.%
\footnote{For a polytope, extreme points are the same as \emph{vertices} (that is, $0$-dimensional faces as defined below).
For general convex sets, every vertex is an extreme point but the converse is not always true.} 
Polytopes have also a dual description via inequalities, that is, as finite intersections of half-spaces.

\begin{definition}
A \emph{half-space} in $V$ is any subset which has the form
\[
\Big\{p\in V \,\Big|\,\left\langle \alpha,p\right\rangle \leq b\Big\},
\]
where $\alpha \in V^*\setminus \{0\}$ and $b\in \R$.
 We call a  \emph{polyhedron}  any finite intersection of half-spaces.
 \end{definition}

By the Minkowski-Weyl Theorem \protect{\cite[Theorem 14.2]{Gruber}
every polytope is a  polyhedron and every bounded  polyhedron is a polytope.

The language of inequalities allows us to define faces of a polytope, understanding the vector $\alpha$ in each inequality as a normal vector to the corresponding face:

\begin{definition}
Let $P$ be a polytope, or a  polyhedron. For each linear functional $\alpha\in V^*$ the \emph{face of $P$ in the direction of $\alpha$} is
\[
{\rm F}_\alpha(P):=\Big\{p \in P \,\Big|\,\left\langle
\alpha,p-x\right\rangle \geq 0 \text{ for } x\in P\Big\}.
\]
\end{definition}

Every face of a polyhedron (respectively, of a polytope) is itself polyhedron (respectively, a polytope), of a certain dimension. 
The set of all faces is a partially ordered set (\emph{poset}) ranked by dimension. It has a unique maximal element, namely $P$ itself which equals ${\rm F}_0(P)$. 
Faces of dimension 0 are called \emph{vertices} of $P$. Faces of dimension one are called \emph{edges} 
if they are bounded, which is always the case if $P$ itself is bounded.
Faces of dimension $\dim(P)-1$ are called  \emph{facets}.%
\footnote{Some authors consider the empty set as a face of $P$ too (of dimension $-1$), but we do not need that.}

A  \emph{cone} in $V$ is any subset that is closed under addition and multiplication by nonnegative scalars. 
A \emph{polyhedral cone} is a  cone that is also a polyhedron. Equivalently, it is a finitely generated cone, according to the following formula for the \emph{cone generated by a finite set} $v_1,\dots,v_m\in V$ of vectors:
\[
\Cone\big\{v_1,\dots,v_m\big\} : = 
\Big\{\sum_{i=1}^m a_i v_i\;\Big|\; a_i \in [0,\infty) \Big\}.
\]
A polyhedral cone is \emph{simplicial} if it is generated by linearly independent vectors.

The \emph{dual} or \emph{normal cone} of a cone $C\subset V$ is defined as
\[
C^\vee :=\Big\{ \alpha \in V^* \,\Big|\, \langle \alpha, x\rangle \leq 0 \text{ for } x \in C\Big\} \subset V^*.
\]
It is easy to show that $(C^\vee)^\vee=C$ for every closed  cone, and that the dual of a  polyhedral cone is a polyhedral cone.

To every face $F$ of  a polytope (or polyhedron) $P$ one naturally associates two closed  polyhedral cones, living respectively in $V$ and $V^*$, and
dual to one another:
the \emph{tangent cone} and the \emph{normal cone} of a face $F$ of $P$ are the  polyhedral cones 
\begin{align*}
{\rm T}(P,F):=& \R_{\geq 0} (P-F) =  \Big\{\lambda (q-p) \,\Big|\, q\in P, \, p\in F,\, \lambda \geq 0  \Big\} \subset V,\\
{\rm N}(P,F):=&  \Big\{\alpha\in V^* \,\Big|\, F \subseteq {\rm F}_\alpha(P)\Big\} = \Big\{\alpha\in V^* \,\Big|\,  \left\langle\alpha,p-q\right\rangle \geq 0 \ \text{ for } p\in F, q\in P\Big\} \subset V^*.
\end{align*}
See Figure~\ref{fig:cones} for an example.

\begin{figure}[htb]
 \includegraphics[height=3.7cm]{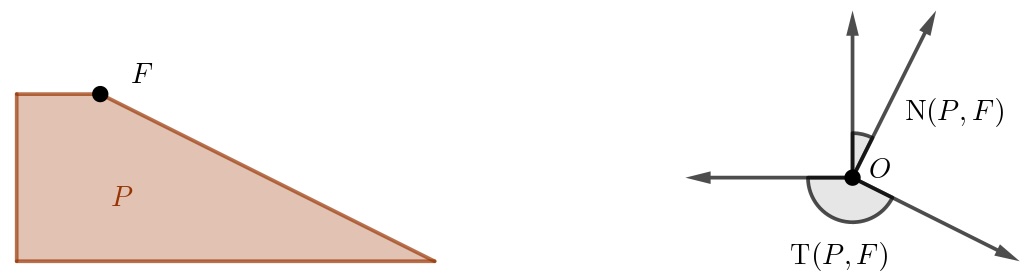} 
\caption{Left: a polytope $P$ (a quadrilateral) and a face $F$ of it (a vertex). Right: the tangent and normal cones of $F$ in $P$.}
\label{fig:cones}
\end{figure}

\begin{definition}
A \emph{fan} in $V$ is a complex of polyhedral cones. That is, a collection $\NN$ of polyhedral cones in $V$ that is closed under taking faces and such that if $C_1$ and $C_2$ are in $\NN$ then $C_1\cap C_2$ is a common face of both. 
A fan is called \emph{simplicial} if all its cones are simplicial.

The  \emph{normal fan} $\NN(P)$ of a polytope $P$, is the fan consisting  of the normal cones of all faces of $P$. See Figure~\ref{fig:Hirzebruch} for an Example.
\end{definition}

The  cones of $\NN(P)$ form again a poset, ranked by dimension, and this poset is antiisomorphic (that is, isomorphic with the order reversed) to the face poset of $P$. 
For example, $\NN(P)$ has a unique minimal cone, the normal cone of $P$ itself, which equals $\{0\}$ if and only if $P$ is full-dimensional. 
When this happens  1-dimensional cones of $P$ are \emph{rays}, that is, vector half-lines, and they are in bijection to facets of $P$: each ray in $\NN(P)$ is generated by a normal vector to the corresponding facet. In this case $P$ has a unique minimal expression as an intersection of half-spaces, namely the one that uses the half-spaces containing facets of $P$ in their boundary.

The following are some properties that a polyhedron $P$ may or may not have, and the corresponding properties for its normal fan $\NN(P)$. We assume $P$ to be full-dimensional and call $n$ the dimension of $P$ and $V$:
\begin{itemize}
\item $P$ is bounded, i.e., a polytope, if and only if its normal fan is \emph{complete}; that is, if every $\alpha\in V^*$ lies in the normal cone of some face.
\item $P$ is full-dimensional if and only if its normal fan is \emph{pointed}, that is, if the lineality space of $\NN(P)$ is the cone $\{0\}$. 
Here, the \emph{lineality space} of a cone $C$ is  
\[
\lin(C):= \Big\{x\in C \,\Big|\, -x \in C\Big\}.
\]
It equals the unique maximal linear space contained in $C$, as well as the unique minimal face of $C$. In a fan, all cones have the same lineality space. For example, for a polytope (or polyhedron) $P$
the lineality space of $\NN(P)$ equals the normal cone of $P$ considered as a face of itself, that is, the space of linear functionals that are constant on $P$. It is also the linear space orthogonal to the affine span of $P$.

\item A full-dimensional polytope (or polyhedron) $P$ is called \emph{simple} if every vertex lies in exactly $n$ facets. This happens if and only if $\NN(P)$ is simplicial.
\end{itemize}

In dimension two every complete fan is the normal fan of some polygon. In dimension three or higher the same is not true.  We say that a complete fan is \emph{polytopal} if it equals the normal fan of some polytope. Otherwise it is \emph{non-polytopal}.

\begin{example}[A non-polytopal icosahedral fan]
\label{example:cubocta}
Consider the cubeoctahedron, that is, the convex hull of the mid-points of the twelve edges of the cube $[-1,1]^3\subset \R^3$. 
(See, e.g., Example 3.6.17 and Figure 3.47 in \cite{triangbook}).
It has six square facets and eight triangular ones. If each square facet is triangulated by inserting one of its two diagonals, we get a triangulated sphere with 20 triangles. A particularly symmetric way of doing this is by choosing the six diagonals to use each of the twelve vertices once, producing a complex isomorphic to the icosahedron. 
It turns out that the simplicial fan obtained by coning from the origin to each of the 20 triangles is not polytopal, as shown, in a different language, in \cite[Example 3.6.17]{triangbook}. 
\end{example}

\begin{example}[The smallest non-polytopal fan]
\label{example:moae}
Consider the negative standard basis vectors $-{\rm e}_i$ and the vectors 
\[
a_i=({\rm e}_1+{\rm e}_2+{\rm e}_3)-{\rm e}_i,\,\,\,\, i=1,2,3.
\] 
Observe that their convex hull is a triangular prism. Triangulate the boundary of the prism inserting diagonals in the quadrilateral faces in a cyclic way: ${\rm e}_1a_2$, ${\rm e}_2a_3$ and ${\rm e}_3a_1$. Consider the fan obtained coning from the origin to this triangulation, which has eight cones and six rays. Combinatorially, but not metrically, this fan coincides with the normal fan of the standard 3-cube, that is, the central fan of the regular octahedron.
But geometrically this fan is not polytopal; it is closely related to the so-called ``mother of all examples'' featured prominently in \cite{triangbook}. 
Every complete fan of dimension $n$ with less than $n-3$ rays is polytopal (\cite[Section 5.5]{triangbook}), so this is the smallest possible non-polytopal fan.
\end{example}

There even exist complete simplicial fans that are not \emph{combinatorially equivalent} to the normal fan of any polytope.\footnote{In fact, these fans are the ones usually called \emph{non-polytopal} in geometric combinatorics. See, e.g., \cite[Section 9.5]{triangbook}. What we call polytopal fans are called \emph{coherent}, \emph{regular}, or \emph{projective} fans, depending on the author.} The smallest one has dimension four, eight rays and 19 maximal cones, and can be derived by coning over Barnette's sphere~\cite[Example 9.5.23]{triangbook}.

\subsubsection{Unimodular fans and Delzant polytopes}

From now on we assume that $V=\R^n$. 
A fan in $\R^n$ is called \emph{rational} if the linear span of every face is a rational subspace. When the fan is pointed (e.g., when it is the normal fan of a full-dimensional polytope) a necessary and sufficient condition for rationality is that every ray contains a non-zero lattice point.
For example, the normal fan of a full-dimensional polytope $P$ is rational if, and only if, all facet normals of $P$ have rational directions.

\begin{lemma}
\label{lemma:rational}
Let $P$ be a full-dimensional polytope.
 Then, all edge directions of $P$ are rational if and only if all facet-normal directions of $P$ are rational.
\end{lemma}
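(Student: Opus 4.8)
The plan is to prove both implications by passing to a local analysis at a vertex, using the fact that $P$ is simple at ``most'' vertices — actually, we should not assume $P$ is simple, so I need to be slightly careful. The cleanest route is the following. For a full-dimensional polytope (or polyhedron) $P$, consider any vertex $v$ and the tangent cone $T(P,v)$. Since $P$ is a polyhedron, $T(P,v)$ is a full-dimensional polyhedral cone with apex at $v$; after translating $v$ to the origin it is $\Cone(w_1,\dots,w_k)$ where the $w_i$ are (positive multiples of) the primitive direction vectors of the edges emanating from $v$. Dually, the normal cone $N(P,v)$ is the full-dimensional cone $T(P,v)^\vee$, and its extreme rays are generated by the (inner or outer, depending on convention) normal vectors of the facets containing $v$. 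The edge directions of $P$ are, collectively, the generators of all the tangent cones $T(P,v)$ over all vertices $v$; the facet normals of $P$ are, collectively, the generators of the rays of $\NN(P)$, which are partitioned among the maximal cones $N(P,v)$.

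First I would establish the key linear-algebra fact: a full-dimensional polyhedral cone $C = \Cone(w_1,\dots,w_k)$ in $\R^n$ is rational (i.e., each extreme ray contains a rational vector) if and only if its dual $C^\vee$ is rational. One direction: if $C$ is rational then $C^\vee = \{ \alpha : \langle \alpha, w_i\rangle \le 0 \ \forall i\}$ is defined by a system of linear inequalities with rational coefficients, hence is a rational polyhedron, hence each of its extreme rays is rational (the extreme rays are obtained by solving rational linear systems). The converse follows by symmetry from $(C^\vee)^\vee = C$, which is quoted in the excerpt. Applying this to $C = T(P,v)$ for each vertex $v$: the generators of $T(P,v)$ are the edge directions at $v$, and the extreme rays of $N(P,v) = T(P,v)^\vee$ are generated by the facet normals of the facets through $v$. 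So ``all edge directions at $v$ are rational'' is equivalent to ``all facet normals at $v$ are rational,'' for each individual vertex $v$.

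Then I would finish by a global patching argument. If all edge directions of $P$ are rational, then for every vertex $v$ all edge directions at $v$ are rational, so all facet normals at facets through $v$ are rational; since every facet of $P$ contains at least one vertex (as $P$ is a polytope, or more generally as $P$ has at least one vertex because it is full-dimensional — here one should note that a line-free polyhedron has vertices, and a full-dimensional polyhedron with a rational normal fan considerations aside, we may as well reduce to the polytope case or note $\NN(P)$ is pointed so $P$ is line-free), every facet normal of $P$ is rational. Conversely, if all facet normals are rational, then every $N(P,v)$ is a rational cone, so every $T(P,v)$ is rational, so every edge direction (being an edge direction at some vertex) is rational.

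The main obstacle, such as it is, is bookkeeping around degenerate cases: a polyhedron need not have vertices (e.g.\ it could contain a line), and then ``edge direction'' and ``facet normal'' need the right interpretation, and the statement could become vacuous or false. I expect the intended reading is that $P$ is line-free (equivalently $\NN(P)$ is pointed, which is part of the standing setup since $P$ is full-dimensional and the relevant polyhedra in the paper are normal cones / have pointed normal fans), in which case $P$ has vertices and every facet and every edge meets the vertex set, making the patching step immediate. The genuinely substantive content — that rationality passes between a cone and its dual — is elementary linear algebra over $\Q$, so there is no deep difficulty; the lemma is essentially a translation statement, and the work is just in choosing the cleanest formulation (tangent cones at vertices versus normal cones) so that the two lists of vectors are visibly dual generating sets.
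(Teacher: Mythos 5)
Your proof is correct, but it takes a more circuitous route than the paper's, which is worth noting. Both arguments ultimately rest on the same principle — that rationality passes to orthogonal complements / dual cones over $\Q$ — but the paper applies it locally at each edge and each facet, whereas you apply it to the full tangent and normal cones at each \emph{vertex} and then patch globally. Concretely, the paper observes that the direction of an edge $e$ is the orthogonal complement (in $\R^n$) of the span of the normals to the facets containing $e$, and dually the normal of a facet $F$ is the orthogonal complement of the span of the directions of the edges contained in $F$; since orthogonal complements of rational subspaces are rational, the equivalence is immediate, with no need to mention vertices, tangent cones, or $(C^\vee)^\vee=C$. Your vertex-based formulation is perfectly valid and makes the duality crisply visible, but it forces you into the bookkeeping you flagged yourself (does every facet meet a vertex? does the polyhedron have vertices at all if it contains a line?). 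The paper's edge--facet incidence formulation sidesteps that patching step entirely, since every edge lies in facets and every facet contains edges as long as $P$ has any 1-dimensional faces; the degenerate case of a polyhedron whose minimal faces have dimension $\ge 1$ is a non-issue for both proofs only in the sense that the statement is vacuous there. So: same key lemma, different decomposition; the paper's is shorter and avoids a case split you correctly worried about.
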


\begin{proof}
The direction of an edge $e$ is the unique orthogonal line to all the  normals of facets containing $e$. The direction normal to a facet $F$ is the unique orthogonal line to all the edges contained in $F$.
\end{proof}

Observe that every rational ray $R$ has a unique \emph{primitive generator}, the unique nonzero integer vector in $R$ with relatively prime entries. The same happens in a rational line, except that the primitive generator is now unique only up to sign.
We will often abuse language and say that a certain vector $v\ne 0$ ``is'' a ray in a fan, meaning that it spans a ray. When the ray is rational we will typically assume $v$ to be primitive.
The \emph{multiplicity} of a rational simplicial cone $C$ with primitive rays $v_1,\dots,v_m$ is the index of the sublattice generated by $\{v_1,\dots,v_m\}$ as a subgroup of the ambient lattice $\R C \cap \Z^n$. If $C$ is full-dimensional, and hence $m=n$, then the multiplicity of $C$ equals the absolute value of the determinant $\det(v_1,\dots,v_n)$.

\begin{definition}
A \emph{unimodular fan} in $\R^n$ is a rational simplicial full-dimensional fan in which every cone has multiplicity one.
\end{definition}

If the cone $C_2$ is a face of a cone $C_1$ then the multiplicity of $C_2$ divides that of $C_1$. Hence, in the definition of unimodular only the maximal cones need to be considered.

 \begin{proposition}
 \label{prop:smooth}
 For each vertex of a full-dimensional simple polytope (or polyhedron) $P$, the tangent cone is unimodular if and only if the normal cone is unimodular. 
 \end{proposition}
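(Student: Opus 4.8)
The plan is to relate the two cones explicitly and then reduce the multiplicity-one condition on one side to the multiplicity-one condition on the other. Fix a vertex $v$ of the simple polytope $P$, let $F_1,\dots,F_n$ be the $n$ facets through $v$, let $u_1,\dots,u_n\in (\R^n)^*$ be their primitive inner (or outer, consistently chosen) facet normals, and let $w_1,\dots,w_n\in\R^n$ be the primitive edge-direction vectors of the $n$ edges of $P$ emanating from $v$. By simplicity of $P$, both $\{u_i\}$ and $\{w_j\}$ are bases of $\R^n$ up to scaling, so both the normal cone $N(P,\{v\}) = \Cone(u_1,\dots,u_n)$ and the tangent cone $T(P,\{v\}) = \Cone(w_1,\dots,w_n)$ are full-dimensional simplicial. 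The key combinatorial fact, which I would spell out first, is the incidence relation: the edge in direction $w_j$ lies in every facet $F_i$ except $F_j$ (this is exactly what simplicity at $v$ says), hence $\langle u_i, w_j\rangle = 0$ for $i\neq j$ and $\langle u_i,w_i\rangle \neq 0$. In matrix terms, if $U$ is the matrix with rows $u_i$ and $W$ the matrix with columns $w_j$, then $UW = D$ is a diagonal matrix with nonzero diagonal entries $d_i = \langle u_i,w_i\rangle$.

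Next I would compute the relevant multiplicities as absolute values of determinants, using the remark in the paper that for a full-dimensional simplicial cone the multiplicity is $|\det(\text{primitive rays})|$. So the multiplicity of the tangent cone is $|\det W|$ and that of the normal cone is $|\det U|$. From $UW = D$ we get $|\det U|\cdot|\det W| = |\det D| = \prod_i |d_i|$. Thus it suffices to show that each $|d_i| = |\langle u_i,w_i\rangle|$ equals $1$, because then $|\det U|\cdot|\det W| = 1$ with both factors positive integers forces $|\det U| = |\det W| = 1$, i.e. one cone is unimodular iff the other is (and in fact both are simultaneously).

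The heart of the argument — and the step I expect to be the main obstacle — is therefore the claim that $|\langle u_i, w_i\rangle| = 1$ for each $i$. The clean way to see this is to work in the edge $e_i$ through $v$: among the facets through $v$, the facet $F_i$ is the unique one \emph{not} containing $e_i$, so $e_i \cap F_i = \{v\}$ is a vertex of $P$ transverse to $F_i$. Write the supporting inequality of $F_i$ as $\langle u_i, x\rangle \geq c_i$, with equality on $F_i$. Since $w_i$ is the primitive lattice vector pointing from $v$ along $e_i$ into $P$, and $e_i$ meets the hyperplane $\{\langle u_i,x\rangle = c_i\}$ only at its endpoint $v$, the quantity $\langle u_i, v + t w_i\rangle - c_i = t\langle u_i,w_i\rangle$ is nonzero for $t>0$; moreover, the \emph{next} lattice point of $\R^n$ along $e_i$ from $v$ is $v + w_i$ (primitivity), and the argument that $|\langle u_i,w_i\rangle|=1$ comes from the fact that $u_i$ is primitive together with the fact that the pairing of a primitive normal against a primitive edge direction across a facet-edge incidence in a lattice polytope is $\pm 1$. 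To make this fully rigorous I would instead argue directly on the cones: the vectors $u_1,\dots,u_n$ and $w_1,\dots,w_n$ are biorthogonal up to the scalars $d_i$, so $\{w_j/d_j\}$ is the dual basis of $\{u_i\}$; now unimodularity of the normal cone means $\{u_i\}$ is a $\Z$-basis of $\Z^n$, whose dual basis $\{u_i^\vee\}$ is then also a $\Z$-basis, so $w_j = d_j u_j^\vee$ with $w_j$ primitive forces $|d_j|=1$ — and symmetrically from the other side. This gives the equivalence without ever needing to pin down $|d_j|$ in isolation, and it is the argument I would write up: assume one cone unimodular, deduce biorthogonality scalars are units, conclude the other cone's rays form a $\Z$-basis.
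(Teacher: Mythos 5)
Your closing paragraph is the correct argument and is essentially the paper's proof recast in concrete terms: the paper observes that the lattice $\Lambda_v$ generated by the primitive edge directions sits inside $\Z^n$ with some index $K$, so its dual $\Lambda_v^*$ (generated by the suitably rescaled facet normals) contains $\Z^n$ with the same index $K$, giving $\Lambda_v=\Z^n \Leftrightarrow \Lambda_v^*=\Z^n$; you say the same thing at the level of bases, using that the dual of a $\Z$-basis is a $\Z$-basis and that $w_j = d_j u_j^\vee$ with $w_j$ primitive forces $|d_j|=1$. That part is fine.

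However, the middle of your write-up pursues a false claim and you should cut it. You assert that it "suffices to show" $|\langle u_i, w_i\rangle|=1$ for all $i$, and you offer the heuristic that "the pairing of a primitive normal against a primitive edge direction across a facet-edge incidence is $\pm 1$." This is not true in general: take the cone in $\R^2$ with primitive ray generators $w_1=(1,0)$, $w_2=(1,2)$; its primitive facet normals are $u_1=(2,-1)$, $u_2=(0,1)$, and $\langle u_1,w_1\rangle = 2$. Here $|\det W| = |\det U| = 2$ and $\prod|d_i|=4$, so nothing is $1$. The product formula $|\det U|\cdot|\det W| = \prod_i|d_i|$ is correct, but the $|d_i|$ are only forced to be $1$ \emph{after} assuming one of the two determinants is $1$ — which is exactly the conditional form you arrive at in the end. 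As written, the text reads as though you first attempt an unconditional proof that both cones are always unimodular (which would make the proposition vacuous) and then patch it; better to go straight to the conditional argument. Also note a small point of rigor: you should state explicitly that $d_i=\langle u_i,w_i\rangle\in\Z\setminus\{0\}$ (integer because $u_i,w_i\in\Z^n$, nonzero because $U,W$ are invertible), since that is what licenses the step $|d_i|\ge 1$ followed by primitivity forcing $|d_i|=1$.
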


 \begin{proof}
Let $n$ be the dimension.
Since the tangent cone and the normal cone at a vertex are orthogonal to one another,  we can consider the lattice $\Lambda_v$ generated by the primitive edge vectors at a vertex $p$. This is a sublattice of $\Z^n$, of a certain index $K$, and its dual lattice $\Lambda^*_v$ is generated by the facet-normals at $v$, suitably normalized. Hence, $\Lambda^*_v$ is a superlattice of $\Z^n$ with the same index $K$. We then have 
$
\Lambda_v=\Z^n \text{ if and only if } \Lambda^*_v=\Z^n. 
$
That is, the primitive facet normals are a basis of $\Z^n$ if and only if the primitive edge-direction vectors are.
\end{proof}

The following class of polytopes owe their name to Delzant's paper~\cite{De1988} (see \cite[p. 8]{Guillemin}),
 in which he uncovers a deep and far-reaching relation between symplectic geometry and geometric combinatorics, which we will exploit in the last section of this paper.

\begin{definition} \label{delpol}
A \emph{Delzant polytope} of dimension $n$ is a full-dimensional polytope $P\subset \R^n$ whose normal fan is unimodular. Equivalently, it is a full-dimensional polytope satisfying:
\begin{itemize}
\item Its normal fan is \emph{rational}. Equivalently, its edge directions are rational;
\item It is \emph{simple}: each vertex is contained in exactly $n$ edges or, equivalently, in exactly $n$  edges;
 \item It is \emph{smooth}: all its cones have multiplicity one. Equivalently, for every vertex $v$, 
 the primitive vectors along the $n$ edges containing $v$ form a basis of the lattice $\Z^n$.  
 \end{itemize}
 \end{definition}
 
 Unimodular fans may be non-polytopal, and hence not be the  normal fans of any Delzant polytope. In fact, slight modifications of examples \ref{example:cubocta} and \ref{example:moae} show this:

 \begin{itemize}
\item In Example~\ref{example:cubocta} all the full-dimensional cones have determinant equal to $\pm 2$, but their generators lie in the sublattice 
\[
\Big\{(x,y,z)\in \Z^2 \,\Big|\, x+y+z\equiv 2\pmod 2\Big\},
\]
 of index two. Thus, a linear transformation sending this sublattice to $\Z^2$ produces a unimodular fan that is still not polytopal.
 In fact, if this unimodular fan was polytopal it would be the normal fan of a Delzant polytope combinatorially isomorphic to the  dodecahedron, which does not exist: by \cite[Theorem 3.1 and Lemma 3.3]{Delaunay} every Delzant $3$-polytope has at least one triangular or quadrilateral face.
 
 \item The fan in Example~\ref{example:moae} is almost unimodular: it has a unique non-unimodular full-dimensional  cone, namely the cone of $(a_1,a_2,a_3)$ which has determinant two. But this cone can be easily divided into three unimodular cones by a stellar subdivision (see definition below) inserting the vector ${\rm e}_1+{\rm e}_2+{\rm e}_3 = \frac12(a_1,a_2,a_3)$. This gives a unimodular but not polytopal fan, with seven rays and 10 cones.
\end{itemize}

However, by definition the normal fans of Delzant polytopes are exactly the complete unimodular \emph{and polytopal} fans. For this reason most combinatorial properties of Delzant polytopes can be stated, and are often easier to deal with, via their normal fans.

\begin{remark}
Unimodular fans modulo $\GL(n,\Z)$ equivalence are in bijection to smooth 
toric varieties modulo isomorphism. 
More generally,  rational full-dimensio\-nal fans biject to normal toric varieties.
In this bijection simplicial fans correspond to varieties that have only finite quotient singularities, complete  fans correspond to complete varieties, and polytopal fans to projective ones. 
Faces of a fan are in bijection with (algebraic) torus orbits, and the multiplicity of each quotient singularity (or orbit of them) equals the multiplicity of the corresponding simplicial cone. 
\end{remark}

\subsection{Stellar subdivisions, desingularization and $2$-dimensional classification}
\label{sec:refinement}

\subsubsection{Stellar subdivision and desingularization}

Let $\Sigma$ and $\Sigma'$ be two complete fans in $\R^n$. We say that $\Sigma'$ refines $\Sigma$ if every cone $C'\in \Sigma'$ is contained in some cone $C\in \Sigma$. The fact that cones in a fan intersect properly  then implies that every cone of $\Sigma$ equals a union of cones of $\Sigma'$. 

A particularly simple and useful method to refine fans is by introducing a single new ray, as follows.

\begin{definition}[Stellar subdivision, \protect{\cite[Section 11.1]{CLS}, \cite[Definition 2.1]{Ewald}}]
\label{defi:stellar}
Let $\Sigma$ be a fan in $\R^n$ and let $\gamma\in V$ be a vector contained in some cone of $\Sigma$.
The  \emph{stellar subdivision} of $\Sigma$ by $\gamma$ is the fan consisting of the cones of $\Sigma$ not containing $\gamma$ together with the cones of the form $\Cone\{\gamma\cup C\}$ such that  $\gamma \not \in C\in \Sigma$ and there is a $C'\in \Sigma$ with $C\cup \{\gamma\}\subset C'$.
\end{definition}

Differently put, if we call \emph{star of $\gamma$ in $\Sigma$} the subfan of $\Sigma$ consisting of cones containing $\gamma$, what the stellar subdivision does is to introduce in $\Sigma$ (the ray generated by) $\gamma$ by centrally subdividing all cones in the star of $\gamma$.

In the case of a simplicial fan, which is our main interest, a more explicit definition of stellar subdivision is possible. Let $C=\Cone\{\alpha_1,\dots,\alpha_k\}$ be the unique minimal cone of $\Sigma$ containing $\gamma$, with $\alpha_1,\dots,\alpha_k$ being ray generators.
That is, $C$ is the cone containing $\gamma$ in its relative interior. Then, every cone containing $\gamma$ is of the form
\[
D=\Cone\big\{\alpha_1,\dots,\alpha_k, \beta_1,\dots,\beta_l\big\},
\]
for some, perhaps none, generators $\beta_1,\dots,\beta_l$. The stellar subdivision by $\gamma$ removes each such cone $D$ and inserts instead the $k$ cones
\[
D_i:=\Cone\left\{\big\{\gamma,\alpha_1,\dots,\alpha_k, \beta_1,\dots,\beta_{l}\big\} \setminus \big\{\alpha_i\big\}\right\}.
\]

The following properties of stellar subdivisions are easy to prove and left to the reader.

\begin{proposition}
\label{prop:stellar}
Let $\Sigma$ be a complete fan in $\R^n$ and let 
$\Sigma'$ be obtained from $\Sigma$ by performing a stellar subdivision at $\gamma$.
\begin{itemize}
\item[(1)] If $\Sigma$ is polytopal, then $\Sigma'$ is polytopal.
\item[(2)] If $\Sigma$ is simplicial, then $\Sigma'$ is simplicial.
\item[(3)] If $\Sigma$ is complete, then $\Sigma'$ is complete.
\item[(4)] If $\Sigma$ is rational and $\gamma$ is rational, then $\Sigma'$ is rational.
\item[(5)] If $\Sigma$ is unimodular and (with the notation above) the $\alpha_i$ are primitive and $\gamma= \sum_{i=1}^k \alpha_i$, then $\Sigma'$ is unimodular.
\end{itemize}
\end{proposition}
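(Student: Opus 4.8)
The plan is to verify each of the five items by reducing everything to the explicit local description of a stellar subdivision given just before the proposition, so that all the work happens inside a single maximal cone $C'$ of $\Sigma$ (or its star). Fix notation as in the text: let $\gamma$ lie in the relative interior of the minimal cone $C=\Cone(\alpha_1,\dots,\alpha_k)$ of $\Sigma$, and recall that the stellar subdivision replaces each cone $D=\Cone\{\alpha_1,\dots,\alpha_k,\beta_1,\dots,\beta_l\}$ containing $\gamma$ by the $k$ cones $D_i=\Cone(\{\gamma,\alpha_1,\dots,\alpha_k,\beta_1,\dots,\beta_l\}\setminus\{\alpha_i\})$, and leaves all other cones of $\Sigma$ untouched.

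First I would dispose of the easy combinatorial items (2), (3), (4). For (2), in the simplicial case the $\alpha_i,\beta_j$ are linearly independent, and since $\gamma\in\relint C$ the set $\{\gamma,\alpha_1,\dots,\widehat{\alpha_i},\dots,\alpha_k,\beta_1,\dots,\beta_l\}$ is again linearly independent (replacing one generator of $C$ by an interior vector of $C$ keeps the span and the cardinality, hence independence); so each $D_i$ is simplicial, and faces of the $D_i$ are simplicial too. For (3), completeness is the statement that the union of all cones is $V$; stellar subdivision only re-triangulates the star of $\gamma$ without changing the underlying point set it covers, because $D=\bigcup_i D_i$ (this is the standard fact that $\gamma\in\relint C\subseteq D$ lets one write any point of $D$ in exactly one of the $D_i$), so the union of all cones is unchanged. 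For (4), if $\Sigma$ is rational and $\gamma$ is rational then every new generating set consists of rational vectors, so every new cone is rational.

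Next, item (5): assuming $\Sigma$ unimodular, the $\alpha_i$ primitive, and $\gamma=\alpha_1+\dots+\alpha_k$, I would check that each full-dimensional $D_i$ has multiplicity one, which by the remark after the definition of unimodular suffices. Take a maximal $D=\Cone\{\alpha_1,\dots,\alpha_k,\beta_1,\dots,\beta_{n-k}\}$ with $|\det(\alpha_1,\dots,\alpha_k,\beta_1,\dots,\beta_{n-k})|=1$; then $D_i$ is obtained by replacing $\alpha_i$ with $\gamma=\sum_j\alpha_j$, and expanding the determinant by multilinearity in that column gives $\det(\dots,\gamma,\dots)=\sum_j\det(\dots,\alpha_j,\dots)$, where every term with $j\ne i$ has a repeated column and vanishes, leaving $\pm\det(\alpha_1,\dots,\alpha_k,\beta_1,\dots,\beta_{n-k})$, of absolute value $1$. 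Also each $D_i$ is rational (item (4)) and simplicial (item (2)), so $\Sigma'$ is unimodular; and one should note $\gamma$ is primitive here since it has determinant $\pm1$ with the remaining generators. I would also remark that $\gamma$ being an \emph{interior} ray of $C$, not already a ray of $\Sigma$, is what makes the subdivision proper, but this is not needed for the stated conclusions.

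The one item that needs a genuine idea rather than bookkeeping is (1), polytopality, and I expect it to be the main obstacle. The cleanest route is via support functions: a complete fan $\Sigma$ is polytopal iff it admits a strictly convex piecewise-linear support function $h\colon V^*\to\R$ (linear on each cone, strictly convex across walls), the polytope being $P=\{v : \langle\alpha,v\rangle\le h(\alpha)\ \forall\alpha\}$ with $\NN(P)=\Sigma$; this is the standard dictionary in \cite[Section 11.1]{CLS} for the fans and \cite[Section 5.5]{triangbook} for the equivalent ``regular subdivision'' picture. Given such an $h$ for $\Sigma$, I would construct one for the stellar subdivision $\Sigma'$ by perturbing: set $h'=h-\varepsilon g$ where $g$ is the piecewise-linear function that is $0$ on all rays of $\Sigma$, equals $1$ on $\gamma$, and is extended linearly on each new cone $D_i$ (this is well defined because $\{\gamma\}\cup(\text{rays of }\Sigma)$ is exactly the ray set of $\Sigma'$ and each $D_i$ is simplicial). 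For $\varepsilon>0$ small, $h'$ remains strictly convex across every old wall that survives, and becomes strictly convex across the new walls created inside the star of $\gamma$ precisely because pushing the value at $\gamma$ down below the linear interpolation forces a genuine ``crease''; convexity across the remaining walls is preserved by compactness/openness of the strict-convexity conditions. Hence $h'$ witnesses $\Sigma'$ as polytopal. Equivalently, and perhaps more transparently for a combinatorics audience, one can cite that a stellar subdivision of a regular (coherent) subdivision is regular — this is \cite[Section 5.5]{triangbook}'s ``placing/pushing'' construction — and translate through the fan–subdivision dictionary; I would present the support-function argument as the primary proof and mention the combinatorial reference as an alternative. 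The routine $\varepsilon$-estimates I would leave to the reader, as the proposition already says these properties are "easy to prove".
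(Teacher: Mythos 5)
The paper gives no proof of this proposition---it says the properties are ``easy to prove and left to the reader''---so there is no proof to compare your attempt against. What matters is whether your arguments hold up, and by and large they do.

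Items (2)--(5) are all correct. For (2), the basis-exchange argument (replacing $\alpha_i$ by $\gamma\in\relint C$ preserves span and cardinality) is exactly right. For (3), the key fact that the cone $D$ is exactly the union of the $D_i$ is what is needed; your one-line justification is the right idea, though ``exactly one'' should read ``at least one'' since boundary points lie in several $D_i$. For (5), the column expansion $\det(\dots,\gamma,\dots)=\sum_j\det(\dots,\alpha_j,\dots)$ with all cross terms vanishing from repeated columns is the clean way to see the multiplicity is unchanged, and your observation that it suffices to check maximal cones is correct by the remark following the definition of unimodularity.

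Item (1) is the substantive one and your support-function perturbation $h'=h-\varepsilon g$ is the standard argument and is correct in spirit. One small gap: your parenthetical justification that $g$ is well-defined on each new cone ``because each $D_i$ is simplicial'' quietly assumes $\Sigma$ is simplicial, which (1) does not. The statement and your construction still work in the non-simplicial case, but for a different reason: for a new cone $\Cone(\gamma\cup C)$ one has $\gamma\notin\lin(C)$ (since $C$ is a face of some $C'$ containing $\gamma$, and a face is the intersection of $C'$ with the face's linear span, so $\gamma\in\lin(C)\cap C'$ would force $\gamma\in C$), hence $\lin(\Cone(\gamma\cup C))=\lin(C)\oplus\R\gamma$ and the linear function vanishing on $\lin(C)$ with value $1$ at $\gamma$ is uniquely determined. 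With that patch, and the routine $\varepsilon$-estimates you defer, the argument goes through. Your alternative route through regularity of pushing/placing subdivisions in \cite{triangbook}, or the nef-cone discussion in \cite{CLS}, is indeed the literature the paper implicitly points to.
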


The unimodular case is particularly important, so we give it a name:

\begin{definition}
\label{defi:blowup}
Let $\Sigma$ be a fan in $\R^n$ and let $C\in \Sigma$ be a unimodular simplicial cone of $\Sigma$ with primitive generators $\alpha_1,\dots,\alpha_k$. The \emph{blow-up of $\Sigma$ at $C$} is the stellar subdivision by the vector $\gamma=\sum_{i=1}^k \alpha_i$.
We also say that $\Sigma$ is obtained from $\Sigma'$ by a \emph{blow-down}.
\end{definition}

The reason why we call this a \emph{blow-up} is that the toric variety of the fan $\Sigma'$ is the blow-up of the variety of $\Sigma$ at the torus-invariant orbit corresponding to the cone $C$. (See \cite[Definition 3.3.14 and Proposition 3.3.15]{CLS}, or \cite[Definition 7.1]{Ewald}).

\begin{theorem}[\protect{\cite[Theorem 11.1.9]{CLS}, \cite[Theorem 8.5]{Ewald}, \cite[Section 2.6]{Fulton}}]
\label{thm:desingular}
Let $\Sigma$ be any rational complete fan in $\R^n$. Then there is a sequence of stellar subdivisions producing a unimodular fan $\Sigma'$ that refines $\Sigma$. 
\end{theorem}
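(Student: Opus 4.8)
\textbf{Proof plan for Theorem~\ref{thm:desingular}.}
The plan is to prove the statement by a carefully ordered sequence of stellar subdivisions, using a monovariant (a nonnegative integer statistic on fans) that strictly decreases with each subdivision and vanishes exactly when the fan is unimodular. First I would reduce to the simplicial case: starting from a rational complete fan $\Sigma$, I would repeatedly perform stellar subdivisions through a point in the relative interior of a non-simplicial cone of minimal dimension (for concreteness, the sum of the primitive ray generators of a minimal non-simplicial cone). By Proposition~\ref{prop:stellar}(2)--(4) each step preserves completeness and rationality; and a standard induction on dimension shows that one may make every cone simplicial in finitely many steps, since a stellar subdivision centered inside a cone $C$ only affects cones containing $C$, and replaces $C$ (and the cones above it) by cones with strictly fewer rays among the "old" generators. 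So without loss of generality $\Sigma$ is simplicial, complete, and rational.

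Next, for a rational simplicial full-dimensional fan I would define the monovariant
\[
\mu(\Sigma) := \sum_{C}\bigl(\operatorname{mult}(C)-1\bigr),
\]
the sum being over the maximal cones $C$ of $\Sigma$ (equivalently, over all cones, since by the remark after Definition~\ref{delpol} the multiplicity of a face divides that of the cone, and in any case only the top-dimensional terms can be nonzero once we argue about maximal cones). Then $\mu(\Sigma)\geq 0$ with equality if and only if $\Sigma$ is unimodular. If $\Sigma$ is not unimodular, pick a maximal cone $C=\Cone(\alpha_1,\dots,\alpha_n)$ with $\operatorname{mult}(C)=|\det(\alpha_1,\dots,\alpha_n)|=:k>1$, with the $\alpha_i$ primitive. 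The key classical lemma is that $C$ then contains a nonzero lattice point $\gamma\in C\cap\Z^n$ that can be written $\gamma=\sum_i c_i\alpha_i$ with all $c_i\in[0,1)$ and not all $c_i=0$ (this is just the statement that the half-open parallelepiped $\{\sum c_i\alpha_i : c_i\in[0,1)\}$ contains exactly $k>1$ lattice points). Choosing $\gamma$ to be a \emph{primitive} such lattice point, I perform the stellar subdivision of $\Sigma$ by $\gamma$. The point lies in the relative interior of some face $C'=\Cone(\alpha_i : i\in I)$ of $C$ (those $i$ with $c_i\neq 0$), so by Definition~\ref{defi:stellar} the subdivision replaces each cone $D=\Cone(\{\alpha_i:i\in I\}\cup\{\beta_1,\dots,\beta_l\})\supseteq C'$ by the $|I|$ cones $D_i$ obtained by swapping $\alpha_i$ for $\gamma$. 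For the top cone $C$ itself, the new maximal cones $C_i$ have multiplicity $|\det(\alpha_1,\dots,\widehat{\alpha_i},\dots,\alpha_n,\gamma)| = c_i\cdot k < k$ (since $0\le c_i<1$ and $c_i\ne 0$ forces this to be a positive integer strictly less than $k$, using primitivity of $\gamma$ to see it is genuinely an integer $\le k-1$; the cones with $c_i=0$ disappear). So the total multiplicity contribution of the descendants of $C$ drops strictly, and one checks that for every other affected cone $D$ the new multiplicities $\operatorname{mult}(D_i)$ still sum appropriately so that $\mu$ does not increase — in fact the same parallelepiped-counting argument gives $\sum_i \operatorname{mult}(D_i)\le \operatorname{mult}(D)\cdot(\text{number of }i) $ with a strict drop coming from $C$. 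Hence $\mu(\Sigma')<\mu(\Sigma)$, and by Proposition~\ref{prop:stellar}(2)--(4) the new fan is again rational, simplicial and complete. Iterating, after finitely many steps we reach $\mu=0$, i.e. a unimodular refinement, as desired.

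The step I expect to be the main obstacle is the bookkeeping in the reduction to the simplicial case and, more subtly, the verification that the monovariant $\mu$ is really non-increasing on \emph{all} cones affected by a stellar subdivision and strictly decreasing overall — one must be careful that introducing the new ray $\gamma$ does not create new cones of larger multiplicity somewhere in the fan. The clean way to handle this is to note that every maximal cone of $\Sigma'$ is contained in a unique maximal cone of $\Sigma$, and to prove a local statement: if $D$ is a maximal cone of $\Sigma$ that gets subdivided into $D_1,\dots,D_r$, then $\sum_{j}(\operatorname{mult}(D_j)-1)\le \operatorname{mult}(D)-1$, with strict inequality for at least one $D$ (namely any $D$ whose minimal face containing $\gamma$ has multiplicity $>1$; such a $D$ exists because we chose $\gamma$ inside a non-unimodular cone). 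This reduces everything to the single linear-algebra identity for multiplicities under a stellar subdivision, which is exactly the half-open parallelepiped computation above, applied relative to the face $C'$. Once that local inequality is in hand, summing over maximal cones of $\Sigma$ gives the global strict decrease of $\mu$, and the theorem follows by induction on $\mu(\Sigma)$. Alternatively — and this is the route the paper likely takes given the citations to \cite{CLS, Ewald, Fulton} — one simply quotes the classical desingularization theorem from those sources; but the argument above is the standard proof and is entirely self-contained given the tools already developed in this section.
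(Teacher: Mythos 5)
Your overall strategy matches the paper's: first reduce to a simplicial rational complete fan by stellar subdivisions, then desingularize by picking a lattice point $\gamma$ in the half-open parallelepiped of a non-unimodular cone, subdividing, and arguing via a decreasing monovariant. But the monovariant you propose, $\mu(\Sigma)=\sum_C(\operatorname{mult}(C)-1)$, does \emph{not} decrease under the stellar subdivision, and the local inequality you state as the crux of the argument, namely $\sum_j(\operatorname{mult}(D_j)-1)\le \operatorname{mult}(D)-1$ for each subdivided maximal cone $D$, is false. For a concrete counterexample take $C=\Cone\bigl((1,0,0),(0,1,0),(1,1,4)\bigr)$, which has multiplicity $4$, and the parallelepiped lattice point $\gamma=(1,1,1)=\tfrac34\alpha_1+\tfrac34\alpha_2+\tfrac14\alpha_3$. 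Since $\gamma$ lies in the relative interior of $C$, the stellar subdivision at $\gamma$ replaces $C$ by three cones of multiplicities $\tfrac34\cdot 4=3$, $\tfrac34\cdot 4=3$, and $\tfrac14\cdot 4=1$; thus $\sum_j(\operatorname{mult}(D_j)-1)=2+2+0=4>3=\operatorname{mult}(C)-1$, so $\mu$ strictly \emph{increases}. The inequality $\sum_i\operatorname{mult}(D_i)\le r\cdot\operatorname{mult}(D)$ you invoke is trivially true but goes the wrong way: it only gives $\sum_i(\operatorname{mult}(D_i)-1)\le r(\operatorname{mult}(D)-1)$, which is weaker, not stronger, than what you need when $r\ge 2$.

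What \emph{is} true, and what the paper uses, is the cone-by-cone statement $\operatorname{mult}(D_i)=\lambda_i\cdot\operatorname{mult}(D)<\operatorname{mult}(D)$ for every new cone $D_i$, without any attempt to sum. The correct monovariant is the lexicographic pair $\bigl(M,\ \#\{\text{maximal cones of multiplicity }M\}\bigr)$, where $M$ is the maximum multiplicity: choose $\gamma$ inside the parallelepiped of a cone of multiplicity $M$; the subdivision destroys that cone and creates only cones of multiplicity $<M$ (since each affected $D$ has $\operatorname{mult}(D)\le M$ and each $\lambda_i<1$), so the count of multiplicity-$M$ cones strictly drops, and once it reaches zero the maximum $M$ itself drops. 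This terminates. You should also be slightly careful in the reduction to the simplicial case: citing Proposition~\ref{prop:stellar}(2) there is circular, since at that stage $\Sigma$ is not yet simplicial; the paper sidesteps this by performing a full barycentric subdivision, a finite sequence of stellar subdivisions ordered by decreasing dimension, which is manifestly simplicial.
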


\begin{proof}
In a first step we refine $\Sigma$ to be simplicial. One systematic way of doing this is using the \emph{barycentric subdivision}, which can be expressed as a sequence of stellar subdivisions, one with a $\gamma$ chosen in the relative interior of each cone of any dimension of $\Sigma$, and performed in decreasing order of dimension.

Once we have a simplicial and still rational fan, each cone has a well-defined multiplicity. As mentioned above, the multiplicity of a simplicial cone $C$ with primitive generators $\alpha_1,\dots,\alpha_k$ is the index of $\sum_{i=1}^k \alpha_i\Z$ as a sublattice of the lattice $\lin(C) \cap \Z^n$. 
This equals the number of lattice points in the half-open cube
\[
\sum_{i=1}^k [0,1) \alpha_i \subset C.
\]
Hence, if the multiplicity of $C$ is greater than one, there is a $\gamma \in \Z^n\setminus \{0\}$ which can be expressed as
\[
\gamma = \sum_{i=1}^k \lambda_i \alpha_i,
\]
with $\lambda_i\in [0,1)$ for every $i$.
Under these conditions, a stellar subdivision by $\gamma$ lowers the multiplicity of all the cones involved. More precisely, with the notation after Definition~\ref{defi:stellar}, each cone $D$ of a certain multiplicity $m$  is substituted by $k$ cones $D_i$ of respective multiplicities $\lambda_im < m$, for $i\in\{1,\dots,k\}$. Of course, the number of cones increases, but iterating as follows we can make all the resulting cones unimodular: if we call $M>1$ the maximum multiplicity among the cones of $\Sigma$, performing a stellar subdivision with $\gamma$ chosen in the half-open cube of a cone of  multiplicity $M$ we make  the number of cones of that multiplicity decrease, and do not introduce cones of higher multiplicity. Hence, eventually we get a fan with maximum multiplicity $M' <M$.  Iterating this process, we eventually get a unimodular fan. 
\end{proof}

Observe that by Proposition~\ref{prop:stellar}, if $\Sigma$ is polytopal then $\Sigma'$ is polytopal too.
We often use the following immediate consequence of this theorem.

\begin{corollary}
\label{coro:desingular}
Let $\Sigma_1,\dots,\Sigma_k$ be rational complete fans in $\R^n$. Then, there is a unimodular fan $\Sigma'$ that refines every $\Sigma_i$. 
\end{corollary}

\subsubsection{Corner chopping and blow-ups}

In what follows two subsets $X,Y$ of $\R^n$ are called \emph{$\AGL(n,\Z)$\--congruent} if there exists
$c \in \R^n$ and $A \in \GL(n,\Z)$ such that $Y=A(X)+c$.

\begin{definition}
\label{defi:length}
The \emph{rational length} of a line segment $I \subset\R^n$ of rational slope is the unique number $\ell\in [0,\infty)$ such that $I$ is $\AGL(n,\Z)$-congruent
to an interval of length $\ell$ on a coordinate axis. It is denoted by $\len(I)$.
\end{definition}

\begin{definition} \label{cc}
Let $P$ be a Delzant polytope in $\R^n$. Let $v$ a vertex 
of $P$. Let 
\[
\Big\{v+t u_i \,\Big|\, 0\leq t\leq \ell_i\}\Big\}
\]
be the set consisting of edges emanating from $v$, where the vectors
$u_1,\ldots,u_n$ generate  $\Z^n$ and
$\ell_i=\len(u_i)$ for every $i \in \{1,\ldots, n\}$. Let  $\varepsilon>0$ be smaller than all the $\ell_i$'s. 
We define the \emph{corner chopping of size} $\varepsilon$ of $P$ 
at $v$ is the polytope $P'$ obtained from $P$ by 
intersecting it with the half space  
\[
\left\{v+\sum_{i=1}^n t_i u_i \Bigg|  \sum_{i=1}^n t_i\geq 
\varepsilon\right\}.
\]
\end{definition}

Let us translate this operation into the normal fan.

\begin{proposition} \label{prop:cc}
Let $P$ be a Delzant polytope in $\R^n$. Let $v$ a vertex of $P$ and let $P'$ be obtained from $P$ by a \emph{corner chopping of size} $\varepsilon$ at $v$. Let $\alpha_1,\dots, \alpha_n$ be the primitive  normal vectors of $P$ at the facets containing $v$ and let $a_i=\langle \alpha_i, v\rangle $ be the corresponding right-hand sides for the facet inequalities. Then
\[
P' = P \cap \left\{x\in V \,\Bigg| \, \big \langle  \sum_{i=1}^n\alpha_i, x\big\rangle \geq \sum_{i=1}^n a_i + \varepsilon \right\}.
\]
\end{proposition}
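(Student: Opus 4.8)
The plan is to pass to the affine coordinate system on $\R^n$ with origin $v$ and with the primitive edge vectors $u_1,\dots,u_n$ as coordinate axes, and to show that in these coordinates the linear functional $\big\langle\sum_{i}\alpha_i,\,\cdot\,\big\rangle$ is, up to the additive constant $\sum_i a_i$, exactly the function $t_1+\cdots+t_n$ occurring in Definition~\ref{cc}. Granting this, the half-space $\big\{x \,|\, \big\langle\sum_i\alpha_i,x\big\rangle\geq\sum_i a_i+\varepsilon\big\}$ is literally the half-space $\{v+\sum_i t_i u_i \,|\, \sum_i t_i\geq\varepsilon\}$ used to cut the corner, and intersecting it with $\Delta$ yields the asserted description of $\Delta'$.

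The geometric content is the duality relation $\langle\alpha_i,u_j\rangle=\delta_{ij}$. For the off-diagonal vanishing I would use only the combinatorics of a simple polytope at a vertex: $v$ lies in exactly $n$ facets $F_1,\dots,F_n$ and in exactly $n$ edges, and every edge at $v$ is the intersection of exactly $n-1$ of those facets; this gives a bijection between the edges at $v$ and the facets at $v$, so, after relabelling, we may assume the edge in direction $u_j$ equals $\bigcap_{i\neq j}F_i$. A short initial segment $v+tu_j$ ($t\geq0$ small) then lies on $F_i$ for every $i\neq j$, so $\langle\alpha_i,v+tu_j\rangle=\langle\alpha_i,v\rangle=a_i$, forcing $\langle\alpha_i,u_j\rangle=0$ whenever $i\neq j$. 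Since both $\sum_i\alpha_i$ and $\sum_j t_j$ are symmetric under relabelling, the precise matching between the $u_j$ and the $\alpha_i$ is irrelevant to the final identity.

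Fixing the diagonal entries $\langle\alpha_j,u_j\rangle$ --- in sign and in size --- is, I expect, the only genuinely delicate step. For the sign: adopting the (standard, for Delzant polytopes) convention that the $\alpha_i$ are the inward facet normals, so that the facet inequalities read $\langle\alpha_i,x\rangle\geq a_i$, moving from $v$ into $\Delta$ along $u_j$ strictly increases $\langle\alpha_j,\cdot\rangle$, because the edge in direction $u_j$ meets the facet $F_j=\Delta\cap\{\langle\alpha_j,\cdot\rangle=a_j\}$ only at $v$; hence $\langle\alpha_j,u_j\rangle>0$. For the size I would use that $\Delta$ is Delzant: by Definition~\ref{delpol} the primitive facet normals $(\alpha_i)_i$ form a $\Z$-basis of $\Z^n$, and by Proposition~\ref{prop:smooth} so do the primitive edge vectors $(u_j)_j$; therefore the integer matrix $\big(\langle\alpha_i,u_j\rangle\big)_{i,j}$ has determinant $\pm1$, and being diagonal with positive integer diagonal entries it must be the identity. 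Thus $(\alpha_i)_i$ and $(u_j)_j$ are dual bases.

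It then remains to compute. Writing an arbitrary $x\in\R^n$ uniquely as $x=v+\sum_{j=1}^n t_j u_j$, the relations just proved give
\[
\Big\langle\sum_{i=1}^n\alpha_i,\;x\Big\rangle
=\sum_{i=1}^n\langle\alpha_i,v\rangle+\sum_{j=1}^n t_j\sum_{i=1}^n\langle\alpha_i,u_j\rangle
=\sum_{i=1}^n a_i+\sum_{j=1}^n t_j,
\]
so $\big\langle\sum_i\alpha_i,x\big\rangle\geq\sum_i a_i+\varepsilon$ holds precisely when $t_1+\cdots+t_n\geq\varepsilon$. Intersecting with $\Delta$ identifies the polytope appearing in the statement with the corner chopping $\Delta'$ of Definition~\ref{cc}, which is what we wanted.
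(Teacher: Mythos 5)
Your proof is correct and follows essentially the same route as the paper: establish the duality $\langle\alpha_i,u_j\rangle=\delta_{ij}$ and then compute in the $(t_1,\dots,t_n)$ coordinates. The paper merely \emph{observes} that $\{u_i\}$ and $\{\alpha_i\}$ are dual bases, whereas you supply a careful derivation (off-diagonal vanishing from the combinatorics of a simple vertex, diagonal entries equal to $1$ from unimodularity of both the tangent and normal cones via Proposition~\ref{prop:smooth}), and you also correctly flag that the inequality as written requires the inward-normal sign convention; this is a legitimate filling-in of a step the paper takes for granted.
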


\begin{proof}
With the notation of Definition~\ref{cc}, observe that the bases $\{u_i\}_i$ of $V$ and $\{\alpha_i\}_i$ of $V^*$ are dual to one another; that is, $\langle \alpha_i, u_j\rangle = \delta_{i,j}$. 

Since any vector $x\in V$ can be written uniquely as $x=v+ \sum_{i=1}^n t_i u_i$ we only need to check that the inequality $v+\sum_{i=1}^n t_i u_i \mid \sum_{i=1}^n t_i \geq  \varepsilon$ from the definition is equivalent to the inequality $\big \langle  \sum_{i=1}^n\alpha_i, x\big\rangle \geq \sum_{i=1}^n a_i + \varepsilon$ in the statement. This follows from
\[
\left \langle  \sum_{i=1}^n\alpha_i, x\right\rangle =  
\left\langle \sum_{i=1}^n  \alpha_i, v+ \sum_{j=1}^n t_ju_j\right\rangle =
\sum_{i=1}^n \left\langle  \alpha_i, v\right\rangle + \sum_{i=1}^n t_i
= \sum_{i=1}^n a_i + \sum_{i=1}^n t_i.
\qedhere
\]
\end{proof}

That is to say, the normal fan of $P'$ is constructed from that of $P$ by inserting the ray $\sum_{i=1}^n \alpha_i$ in 
$\NN(P, v) = \Cone\{\alpha_1,\dots,\alpha_n\}$.

\begin{corollary}
If a Delzant polytope $P'$ 
is obtained from another one $P$  by a corner chopping then its normal fan $\NN(P')$ is obtained from $\NN(P)$ by a  blow-up at a full-dimensional cone  $C\in\NN(P)$.
\end{corollary}

The converse is also true, in the following sense:

\begin{corollary}
Let $P'$ be a Delzant polytope in $\R^n$. If its normal fan  $\NN(P')$ is the blow-up of a certain unimodular fan $\Sigma$ at a full-dimensional cone then there is a Delzant polytope $P$ with $\NN(P)=\Sigma$ and of which $P'$ is a corner chopping.
\end{corollary}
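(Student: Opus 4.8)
The plan is to reverse the construction behind the previous Corollary. Write $C=\Cone(\alpha_1,\dots,\alpha_n)$ for the full-dimensional cone of $\Sigma$ that is blown up, $\alpha_1,\dots,\alpha_n$ for its primitive generators, and $\gamma=\alpha_1+\dots+\alpha_n$ for the inserted ray; thus $\NN(\Delta')$ is obtained from $\Sigma$ by replacing $C$ with the $n$ cones $C_i:=\Cone(\{\gamma\}\cup\{\alpha_k:k\ne i\})$ and their faces, while every ray of $\Sigma$ survives. Since $\Delta'$ is a full-dimensional simple polytope, its irredundant facet presentation (in the inner-normal conventions of Proposition~\ref{prop:cc}, so that facet inequalities read $\langle\alpha,x\rangle\ge b_\alpha$) has exactly one inequality per ray of $\NN(\Delta')$, namely one per ray of $\Sigma$ together with one for $\gamma$. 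I would simply discard the $\gamma$-inequality and set
\[
\Delta:=\{x\in\R^n:\langle\beta,x\rangle\ge b_\beta\text{ for every ray }\beta\text{ of }\Sigma\},
\]
so that $\Delta'=\Delta\cap\{x:\langle\gamma,x\rangle\ge c\}$, where $c$ is the right-hand side of the discarded inequality. It then has to be shown that $\Delta$ is a Delzant polytope, that $\NN(\Delta)=\Sigma$, and that $\Delta'$ is a corner chopping of $\Delta$.

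That $\Delta$ is a polytope is immediate: it contains $\Delta'$, so it is nonempty and full-dimensional, and its recession cone $\{x:\langle\beta,x\rangle\ge0\text{ for all }\beta\}$ is $\{0\}$ because the rays of the complete fan $\Sigma$ positively span $(\R^n)^*$. Now let $u_1,\dots,u_n$ be the basis of $\R^n$ dual to $\alpha_1,\dots,\alpha_n$ (a lattice basis of $\Z^n$, since $C$ is unimodular), let $v$ be the unique point with $\langle\alpha_i,v\rangle=b_{\alpha_i}=:a_i$ for all $i$, and put $\varepsilon:=c-\sum_i a_i$. A short computation in the coordinates afforded by $v$ and the $u_i$ identifies the $n$ vertices of the facet $F_\gamma$ of $\Delta'$ as $v_i=v+\varepsilon u_i$, and shows $\varepsilon>0$ (otherwise $\langle\gamma,x\rangle\ge c$ would follow from $\langle\gamma,x\rangle=\sum_i\langle\alpha_i,x\rangle\ge\sum_i a_i$ and hence be redundant in $\Delta'$). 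The step I expect to be the main obstacle is proving $v\in\Delta$, i.e.\ that discarding the $\gamma$-inequality genuinely creates a new vertex rather than leaving $\Delta'$ unchanged or making a surviving inequality redundant. For the inequalities $\langle\alpha_i,\cdot\rangle\ge a_i$ this is an equality. For any other ray $\beta$ of $\Sigma$, the fan axioms force $\Cone(\beta)\cap C$ to be a face of the simplicial cone $C$; as $\beta$ is primitive and distinct from every $\alpha_i$, that face cannot be $\Cone(\beta)$ itself, so $\beta\notin C$ and therefore $\langle\beta,u_k\rangle<0$ for some $k$. Using $v_k\in\Delta'\subseteq\Delta$ and $\varepsilon>0$ one gets
\[
\langle\beta,v\rangle=\langle\beta,v_k\rangle-\varepsilon\langle\beta,u_k\rangle>\langle\beta,v_k\rangle\ge b_\beta .
\]
Hence $v$ lies on exactly the $n$ facet hyperplanes $\langle\alpha_i,\cdot\rangle=a_i$, so it is a simple vertex of $\Delta$ with normal cone $C$ and primitive edge directions $u_1,\dots,u_n$.

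The same coordinate computation also yields the clean decomposition $\Delta=\Delta'\cup\Conv\{v,v_1,\dots,v_n\}$, the second piece being precisely the chopped-off simplex at $v$ of size $\varepsilon$, and from this the three assertions follow quickly. Every vertex of $\Delta$ is either $v$, whose normal cone $C\in\Sigma$ is unimodular, or a vertex $w\ne v$; by the decomposition such a $w$ is a vertex of $\Delta'$ off the facet $F_\gamma$ (the vertices of $\Delta'$ on $F_\gamma$ being the $v_i$, which lie in the interior of edges of $\Delta$), so $\langle\gamma,w\rangle>c$, the polytope $\Delta$ agrees with $\Delta'$ near $w$, and $N(\Delta,w)=N(\Delta',w)$ is unimodular; thus $\NN(\Delta)$ is unimodular and $\Delta$ is Delzant. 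Next, since the primitive edge vectors of $\Delta$ at $v$ are the $u_i$ and each of the $n$ edges at $v$ strictly contains the segment $[v,v_i]$ (which has rational length $\varepsilon$, as $u_i$ is primitive) because it continues past $v_i$ into $\Delta'$, Definition~\ref{cc} applies and the corner chopping of $\Delta$ at $v$ of size $\varepsilon$ equals $\Delta\cap\{x:\langle\sum_i\alpha_i,x\rangle\ge\sum_i a_i+\varepsilon\}=\Delta\cap\{x:\langle\gamma,x\rangle\ge c\}=\Delta'$. Finally, by the previous Corollary $\NN(\Delta')$ is the blow-up of $\NN(\Delta)$ at the full-dimensional cone $N(\Delta,v)=C$, so $\NN(\Delta')$ is simultaneously the blow-up of $\Sigma$ at $C$ and of $\NN(\Delta)$ at $C$; since a blow-up at $C$ alters a fan only inside $C$ (replacing $C$ by $C_1,\dots,C_n$ and adding the ray $\gamma$), while $\Sigma$ and $\NN(\Delta)$ are complete fans each having $C$ as a maximal cone, we conclude $\NN(\Delta)=\Sigma$.
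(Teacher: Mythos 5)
Your argument is correct and follows essentially the same route as the paper: drop the $\gamma$-inequality, verify that the resulting new vertex $v$ has the unimodular normal cone $\Cone(\alpha_1,\dots,\alpha_n)$, and identify $\Delta'$ as a corner chopping of $\Delta$. You additionally supply several verifications that the paper leaves implicit---strictness of the remaining facet inequalities at $v$ (so nothing becomes redundant), that $\varepsilon$ is smaller than the edge lengths at $v$ so Definition~\ref{cc} applies, and that $\NN(\Delta)$ equals $\Sigma$ and not merely some fan with the same rays---which are worthwhile but do not change the approach.
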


\begin{proof}
Let $\alpha_1,\dots,\alpha_n$ and $\alpha_0=\sum_{i=1}^n \alpha_i$ be the rays of $\Sigma'$ involved in the blow-up, and let $b_0,b_1,\dots,b_n$ be the corresponding right-hand sides in the inequality description of $P'$. Observe that all the cones $C_i$ in the definition of blow-up automatically have the same determinant as the cone $C$ generated by $\alpha_1,\dots,\alpha_n$. In particular the blow-down fan $\Sigma$ is unimodular, since $\Sigma'$ is.

The facet of $P'$ corresponding to $\alpha_0$ is an $(n-1)$-simplex, since it has $n$ vertices, corresponding to the $n$ cones $C_i$ in the definition of blow-up. Removing from the inequality description of $P'$ the inequality $\langle  \alpha_0, x\rangle \geq b_0$ gives a polytope $P$ in which that facet disappears and the adjacent $n$ facets (corresponding to the rays $\alpha_1,\dots,\alpha_n$) are enlarged, producing a unique new vertex $v$, the intersection of the hyperplanes $\{\langle  \alpha_i, x\rangle = b_i\}$, $i \in \{1, \dots,n\}$. 
The normal cone of $v$, generated by $\{\alpha_1,\dots,\alpha_n\}$,  is unimodular as said above.
Hence, $P$ is Delzant and $P'$ is obtained from it as in Proposition \ref{prop:cc}, that is, by a corner chopping.
\end{proof}

\subsubsection{Classification of Delzant polygons}

A two-dimensional fan is uniquely determined by its set of rays, since each two-dimensional cone is bounded by two rays and the rays must appear in their cyclic order. This easily implies that all two-dimensional fans are polytopal: if $\alpha_1,\dots,\alpha_n$ are generators for the rays of the fan $\Sigma$ and we identify $\R^2$ and $(\R^2)^*$ via the standard scalar product, then the polytope defined by the linear inequalities $\langle \alpha_i, x\rangle \leq \|\alpha_i\|$ has $\Sigma$ as its normal fan.

Any unimodular two-dimensional fan can be obtained by the following recursive recipe. 

\begin{lemma} [\protect{Oda~\cite[Theorem 8.2]{OdaMiyake}. See also \cite[Theorem 10.4.3]{CLS},\cite[Theorem 6.6]{Ewald},\cite[Section 2.5 and Notes to Chapter 2]{Fulton}}]
\label{lemma:2dim}
Let $\Sigma$ be a complete unimodular fan in $\R^2$.
\begin{itemize}
\item[(1)] If $\Sigma$ has only three rays then it is $\GL(2,\mathbb{Z})$-equivalent to the fan of the projective plane, with rays $(1,0)$, $(0,1)$ and $(-1,-1)$. 
\item[(2)] If $\Sigma$ has  four rays then it is $\GL(2,\mathbb{Z})$-equivalent to the fan of the Hirzebruch surface $\mathbb H_k$,  with rays $(0,-1)$, $(0,1)$ $(-1,0)$, and $(1,k)$, for some $k\in \Z_{\geq 0}$.
\item[(3)] If $\Sigma$ has more than four rays then it is an iterated blow-up of some unimodular fan with four rays.
\end{itemize}
\end{lemma}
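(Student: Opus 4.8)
The plan is to prove the three cases by a careful analysis of the cyclic sequence of primitive ray generators of a complete unimodular $2$-fan, exploiting the observation (made just before the lemma) that such a fan is completely determined by its rays in cyclic order, and that consecutive rays $\alpha_i,\alpha_{i+1}$ span a unimodular cone, i.e.\ $\det(\alpha_i,\alpha_{i+1})=1$ for the right orientation. First I would set up notation: let $\alpha_0,\alpha_1,\dots,\alpha_{n-1}$ (indices mod $n$) be the primitive generators in counterclockwise order, so that $\det(\alpha_i,\alpha_{i+1})=1$ for all $i$. The key elementary fact is that three consecutive rays satisfy a ``wall-crossing'' relation: since $\alpha_i$ and $\alpha_{i+2}$ both pair unimodularly with $\alpha_{i+1}$ and lie on the same side, there is an integer $c_i$ with
\[
\alpha_i + \alpha_{i+2} = c_i\,\alpha_{i+1}.
\]
This is the combinatorial heart of the classification and follows from writing $\alpha_i$ and $\alpha_{i+2}$ in the $\Z$-basis $(\alpha_{i+1}, \alpha_i)$ (or applying a $\GL(2,\Z)$ transformation sending $\alpha_{i+1}\mapsto(0,1)$, $\alpha_{i+2}\mapsto(1,0)$, after which $\alpha_i=(-1,c_i)$ forcedly).

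For case (1), if $n=3$ the relation $\alpha_0+\alpha_2=c_1\alpha_1$ together with its cyclic companions, plus the fact that the three rays positively span $\R^2$, pins down $c_0=c_1=c_2=-1$, i.e.\ $\alpha_0+\alpha_1+\alpha_2=0$; applying the $\GL(2,\Z)$ transformation that sends $\alpha_0\mapsto(1,0)$, $\alpha_1\mapsto(0,1)$ then forces $\alpha_2=(-1,-1)$, giving the stated normal form. For case (2), with $n=4$, I would normalize $\alpha_0=(0,-1)$, $\alpha_1=(1,0)$ (possible since they span a unimodular cone), then use $\alpha_0+\alpha_2=c_1\alpha_1$ and completeness to get $\alpha_2=(1,k)$ shape or, after a further shear, exactly the Hirzebruch form $(0,-1),(1,k),(0,1),(-1,0)$ with $k\ge 0$ (the sign of $k$ is fixed by reordering/reflecting, noting $\mathbb H_k\cong\mathbb H_{-k}$). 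This is just bookkeeping once the normalization is chosen.

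Case (3) is the main obstacle and the part requiring a genuine argument. The standard approach, which I would follow, is to show that if $n\ge 5$ then some $c_i\ge 0$, equivalently there exist three consecutive rays $\alpha_{i},\alpha_{i+1},\alpha_{i+2}$ with $\alpha_i+\alpha_{i+2}\in\Cone(\alpha_{i+1})$ pointing ``inward enough'' that $\alpha_{i+1}=\alpha_i+\alpha_{i+2}$, so that $\Sigma$ is the blow-up (in the sense of Definition~\ref{defi:blowup}) of the fan $\Sigma'$ obtained by deleting the ray $\alpha_{i+1}$; then one iterates, decreasing $n$ by one each time, until reaching $n=4$. The existence of such an $i$ is a convexity/counting statement: going once around the circle, the ``turning'' accumulated by the sequence of rays is $2\pi$, while each step with $c_i\le -1$ contributes a turn that is too large, so if all $c_i\le-1$ then $n\le 4$ (this is the discrete Gauss--Bonnet / $\sum(3-c_i)=12$ type identity; concretely $\sum_i c_i = 3n-12$, which for all $c_i\le -1$ forces $n\le 3$, and one checks $n=4$ separately). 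Hence for $n\ge5$ some $c_i\ge 0$; a short additional argument upgrades ``$c_i\ge 0$'' to ``$c_i=0$ or the deleted ray was a blow-up ray'', or more simply one picks $i$ maximizing $c_i$ and shows the blow-down at $\alpha_{i+1}$ keeps the fan unimodular (automatic, as noted in the proof of the last Corollary of Section~2.2) and still complete with one fewer ray. Iterating reaches a four-ray fan, and combining with case (2) finishes the proof. The one technical point to be careful about is verifying that the blow-down really is a blow-down of a \emph{unimodular} fan — but this is exactly Proposition~\ref{prop:stellar} read in reverse together with the determinant remark, so no new difficulty arises.
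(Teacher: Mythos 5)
The paper does not give its own proof of this lemma; it cites \cite{CLS}, \cite{Ewald}, \cite{Fulton}. Your overall strategy (analyze the cyclic sequence of primitive ray generators, use the wall-crossing relation $\alpha_i + \alpha_{i+2} = c_i\alpha_{i+1}$, find a blow-downable ray when $n\ge 5$, and iterate) is indeed the standard route taken in those references, and your treatment of cases (1) and (2) is essentially fine: once you normalize a unimodular cone to $(1,0),(0,1)$ or $(0,-1),(1,0)$ by $\GL(2,\Z)$, it really is just bookkeeping. The identity $\sum_i c_i = 3n-12$ is also correct (it is the Noether formula $K^2 + e = 12$ read off from the fan).

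However, there is a genuine gap in your case (3). From $\sum_i c_i = 3n-12 > 0$ for $n\ge 5$ you can conclude that \emph{some} $c_i\ge 1$, but what you need in order to perform a blow-down is that some $c_i = 1$ \emph{exactly}. Your proposed shortcut --- ``pick $i$ maximizing $c_i$ and show the blow-down at $\alpha_{i+1}$ keeps the fan unimodular (automatic, as noted in the last Corollary of Section~2.2)'' --- is incorrect. Deleting the ray $\alpha_{i+1}$ produces the cone $\Cone(\alpha_i,\alpha_{i+2})$, which has determinant $\det(\alpha_i,\alpha_{i+2}) = c_i\det(\alpha_i,\alpha_{i+1}) = c_i$; this is unimodular if and only if $c_i=1$. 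If $c_i\ge 2$ the resulting cone is not unimodular (so the deletion is not a blow-down of a unimodular fan), and if $c_i=0$ the two remaining rays are antipodal and the deletion does not even yield a pointed fan. The corollary you invoke assumes the deletion \emph{is} a blow-down (i.e.\ that the deleted ray equals the sum of its two neighbors) and concludes that the blown-down fan is unimodular; it does not say that an arbitrary ray can be deleted. Moreover $\max_i c_i \ge 2$ can certainly occur for $n\ge 5$ (e.g.\ blow up a Hirzebruch surface $\mathbb{H}_2$ at a torus-fixed point on its $(-2)$-curve: one ray then has $c_i=3$ and the fan has five rays), so the ``max'' choice does not automatically land on a blow-downable ray.

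The missing ingredient is a positional argument showing that some $c_i$ equals $1$ when $n\ge 5$. The classical proofs (e.g.\ \cite[Section 2.5]{Fulton}, \cite[Theorem 6.6]{Ewald}) do this by normalizing two adjacent rays to $(1,0),(0,1)$, looking at the chain of rays in the open upper half-plane $\alpha_2,\dots,\alpha_k$, and picking the one with maximal second coordinate; the wall-crossing relation applied there, together with unimodularity of the adjacent determinants and careful treatment of the boundary indices $j\in\{2,k\}$ (and the separate case where two rays are antipodal), forces either $a_j=-1$ for some interior $j$ or $n\le 4$. This is a real step, not merely an application of the Noether identity, and it is the part your sketch does not supply. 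You should replace the ``max $c_i$'' step with such a geometric argument, or with an explicit citation to one of the references.
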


The following translation of Lemma~\ref{lemma:2dim} into Delzant polygons is formulated in~\cite[Lemma 2.16]{KKP}.

\begin{corollary} 
\label{coro:2dim}
Let $P$ be a Delzant polygon in $\R^2$. 
\begin{itemize}
\item[(1)] If $P$ has exactly three edges then there exists a unique $\lambda>0$ such that 
$P$ is $\AGL(2,\mathbb{Z})$\--congruent to the Delzant triangle 
$$ 
P_\lambda := \left\{ \left.(x_1,x_2) \in \mathbb{R}^2 
\ \right| \
 x_1 \geq 0 ,\; x_2 \geq 0 ,\,\,\, x_1 + x_2 \leq \lambda 
 \right\}.
$$
\item[(2)]
 If $P$ has exactly with $4$ edges, then there exists $k\in \Z_{\geq 0}$ and positive numbers $a , b >0$ with $2a > bk$ such that $P$ is 
$\AGL(2,\mathbb{Z})$\--congruent to a Delzant polygon obtained from the Hirzebruch trapezoid 
$$ 
{\rm H}_{a,b,k} := \left\{ (x_1,x_2)\in\mathbb{R}^2 \ \left| \
   -\frac{b}{2} \leq x_2 \leq \frac{b}{2}, \right.\,\,\, 0 \leq x_1 
   \leq a - kx_2 \right\}, 
$$
 by a sequence of $s$ corner choppings (as defined in Definition~\ref{cc}).
\item[(3)] If $P$ has $4+s$ edges for $s\geq 1$ then it is obtained from a Hirzebruch trapezoid by a sequence of $s$ corner choppings.
\end{itemize}
\end{corollary}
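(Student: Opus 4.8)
The plan is to deduce Corollary~\ref{coro:2dim} directly from Lemma~\ref{lemma:2dim} by translating each of its three cases into the language of polygons via the normal-fan correspondence, together with the dictionary between blow-ups of unimodular fans and corner choppings established in the two corollaries following Proposition~\ref{prop:cc}. Throughout I identify $\R^2$ with $(\R^2)^*$ via the standard scalar product, so that a Delzant polygon and its normal fan live in the same space, and I use that a full-dimensional polygon is determined by its normal fan together with the right-hand sides $b_i$ of its (necessarily irredundant) facet inequalities $\langle \alpha_i, x\rangle \le b_i$, one per ray $\alpha_i$ of the fan.

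For part (1), a Delzant polygon with three edges has a normal fan that is a complete unimodular fan with three rays, so by Lemma~\ref{lemma:2dim}(1) the fan is $\GL(2,\Z)$-equivalent to the one with rays $(1,0)$, $(0,1)$, $(-1,-1)$. Applying the corresponding element of $\AGL(2,\Z)$ to $\Delta$ we may assume its normal fan is exactly this one, so $\Delta$ is cut out by $x_1 \le b_1$, $x_2 \le b_2$, $-x_1 - x_2 \le b_3$; after composing with $(x_1,x_2)\mapsto(b_1 - x_1, b_2 - x_2)$, which lies in $\AGL(2,\Z)$, this becomes $x_1\ge 0$, $x_2\ge 0$, $x_1 + x_2 \le b_1 + b_2 + b_3 =: \lambda$. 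Nonemptiness and full-dimensionality of $\Delta$ force $\lambda > 0$, and the value of $\lambda$ is determined (it is the rational length of any edge, or equivalently the lattice-normalized area data), giving uniqueness. This is the routine base case.

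For part (3), a Delzant polygon with $4+s$ edges, $s\ge 1$, has a normal fan with $4+s$ rays, which by Lemma~\ref{lemma:2dim}(3) is an iterated blow-up of some unimodular fan $\Sigma_0$ with four rays; each such blow-up in dimension two is a stellar subdivision at a $2$-dimensional (hence full-dimensional) cone, i.e.\ a blow-up at a full-dimensional cone. Now I run the second corollary after Proposition~\ref{prop:cc} inductively: since $\NN(\Delta)$ is a blow-up of a unimodular fan at a full-dimensional cone, there is a Delzant polygon $\Delta_1$ with that blown-down normal fan such that $\Delta$ is a corner chopping of $\Delta_1$; iterating $s$ times produces a Delzant polygon $\Delta_s$ with normal fan $\GL(2,\Z)$-equivalent to $\Sigma_0$, i.e.\ with four edges, obtained from $\Delta_s$ by $s$ corner choppings. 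Then part (2) — or rather its normal-fan content — identifies $\Delta_s$ up to $\AGL(2,\Z)$ with a corner chopping of a Hirzebruch trapezoid, but since a Hirzebruch trapezoid itself has four edges this is vacuous for $\Delta_s$: we just need that every four-edged Delzant polygon is $\AGL(2,\Z)$-congruent to some $H_{a,b,k}$, which is the heart of part (2).

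So the one genuinely computational point is part (2): a Delzant polygon with exactly four edges has normal fan $\GL(2,\Z)$-equivalent, by Lemma~\ref{lemma:2dim}(2), to the Hirzebruch fan $\mathbb H_k$ with rays $(0,-1)$, $(0,1)$, $(-1,0)$, $(1,k)$; transporting $\Delta$ by $\AGL(2,\Z)$ we may assume this is its normal fan, so $\Delta$ is defined by $-x_2 \le c_1$, $x_2 \le c_2$, $-x_1 \le c_3$, $x_1 + k x_2 \le c_4$. Translating in the $x_2$-direction to symmetrize the first two inequalities to $-b/2 \le x_2 \le b/2$ with $b = c_1 + c_2 > 0$, and translating in $x_1$ to make the third inequality $x_1 \ge 0$, the fourth becomes $x_1 \le a - k x_2$ for the appropriate $a$; full-dimensionality and the requirement that all four inequalities be facet-defining (no vertex lost) is exactly the condition $2a > bk$, and $a,b>0$. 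This is the straightforward part of the plan; the main obstacle is bookkeeping — making sure the successive translations used to normalize really lie in $\AGL(2,\Z)$ (they are integer translations composed with the original $\GL(2,\Z)$ change of coordinates, so they do) and that the inequalities $2a>bk$, $a,b>0$ are precisely the open conditions carving out honest four-edged Delzant polygons rather than degenerate or triangular ones. I would also remark that in (2) and (3) the classification is only up to the combinatorial type plus the stated parameters and is \emph{not} claimed to be unique, in contrast with (1), since different $H_{a,b,k}$ and different corner-chopping sequences can yield $\AGL(2,\Z)$-congruent polygons.
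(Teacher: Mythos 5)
Your proposal is correct and carries out exactly the ``translation'' of Lemma~\ref{lemma:2dim} into the language of polygons that the paper alludes to (the paper itself just cites \cite[Lemma 2.16]{KKP} and does not spell out the argument): apply Lemma~\ref{lemma:2dim} to the normal fan, then go back to the polygon using the dictionary between right-hand sides and support functions and between blow-downs and corner choppings via Proposition~\ref{prop:cc} and the two corollaries following it. Two small remarks: in (2) the algebra you deferred does work out --- if $\Delta$ is cut out by $-b_1\le x_2\le b_2$, $x_1\ge -b_3$, $x_1+kx_2\le b_4$, then the translation taking it to standard form gives $b=b_1+b_2$ and $a=b_3+b_4+k(b_1-b_2)/2$, so $2a-kb=2(b_3+b_4-kb_2)$, and $b_3+b_4>kb_2$ is precisely the condition that the top-right vertex lie strictly to the right of the top-left one, i.e.\ that $\Delta$ is a genuine quadrilateral; one also checks $a>k(b_1+b_2)/2\ge 0$ so $a>0$. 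And in (3), as you observe, the blow-downs supplied by Lemma~\ref{lemma:2dim}(3) are automatically at full-dimensional cones in dimension two (blowing up a ray is the identity), so the second corollary after Proposition~\ref{prop:cc} applies at every step; the statement of (3) is implicitly up to $\AGL(2,\Z)$-congruence, and you handle this correctly by applying a single $\AGL(2,\Z)$ transformation at the end to normalize the four-edge polygon to a Hirzebruch trapezoid.
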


The parameters in ${\rm H}_{a,b,k}$ have the following interpretation: 
$b$ is the height of the trapezoid, 
$a$ is its average width, and the integer $k$ governs the difference in slopes between the two non-parallel edges (with the trapezoid being a rectangle if $k=0$).
Observe also that ${\rm H}_{a,b,1}$ is a corner chopping of (a translation of) the triangle $P_{a+\frac{b}2}$. See Figure~\ref{fig:Hirzebruch} for a picture of ${\rm H}_{3/2,1,2}$.

\begin{remark}[The maximum area of a Delzant polygon]

One interesting question is how big can the area of a Delzant polygon be in terms of its number $k$ of vertices (equivalently, edges) and its perimeter $\ell$. Here, the perimeter is the sum of lengths of edges, with length as in Definition~\ref{defi:length}.
The following results answer this question asymptotically.%
\footnote{The statements in \cite{9authors} assume the Delzant polytopes in question to have only integer vertices, but the proofs go through without this assumption.}
Then:
\begin{itemize}
\item[(1)] Every Delzant polygon with $k$ vertices and perimeter $\ell$ has area bounded above by $\phi^{2k} \ell^2$, where $\phi=\frac{1+\sqrt5}2$ is the golden ratio~\cite[Theorem 28]{9authors}.

\item[(2)] There are Delzant polygons with $k$ vertices and perimeter $\ell$ with area $\geq \phi^{2k/3} \ell^2$~\cite[Example 30]{9authors}.
\end{itemize}
A lower bound in terms of $k$ and $\ell$ alone makes no sense:  one can construct Delzant polygons 
with a pair of parallel edges and arbitrarily many additional edges. Making the two parallel edges very long and the rest of edges very short one can keep $\ell$ and $k$ arbitrarily large, yet make area go to zero. 

\end{remark}

\begin{remark}
Via Delzant's correspondence, Lemma~\ref{lemma:2dim} and Corollary~\ref{coro:2dim} imply a classification of (compact) toric 
integrable systems in dimension $4$. We refer to~\cite{PeVN09,PeVN11} for an analogue classification in the  \emph{semitoric} case, and~\cite[Theorem 1.3]{KPP1} and~\cite[Theorem 1.1]{KPP2} for a discussion of these concepts (fan, etc.) in that case.
The topology on the moduli space of semitoric integrable systems was defined by Palmer~\cite{Pa17}.
\end{remark}

\subsection{Infinitely many minimal Delzant $3$-polytopes}

In dimension three and higher, corner chopping alone can certainly not produce all Delzant polytopes from a finite list of combinatorial types of them. Consider, for example, the prism obtained as the Cartesian product $P\times I$ of a Delzant $m$-gon $P$ and a segment $I$. This is a Delzant $3$-polytope with $m+2$ facets, none of them triangular (unless $m=3$), hence it is not a corner chopping of any Delzant polytope with less facets.

However, in this example we can still say that if $m\geq 5$ then $P$ is a corner chopping of a Delzant $(m-1)$-gon $P'$, and then $P\times I$ is an ``edge chopping'' of $P'\times I$. Equivalently, we have that the normal fan of $P\times I$ is a blow-up of the fan of $P'\times I$ at a non-fulldimensional cone, in the sense of Definition~\ref{defi:blowup}.

The following two constructions show more drastically that one should not hope any analogue of Lemma~\ref{lemma:2dim} or Corollary~\ref{coro:2dim} 
in dimension three or higher.  In the first example we show not only that there are arbitrarily large $3$-dimensional unimodular fans that do not admit blow-downs, but also that such examples cannot be considered ``sporadic'', since any given fan can be refined to one of them. In the following proof we call \emph{valency} of a ray in a $3$-dimensional fan the number of full-dimensional cones containing it.

\begin{theorem}
\label{thm:dim3}
Let $\Sigma$ be any rational complete fan in $\R^3$. Then, there is a unimodular fan $\Sigma'$ that refines $\Sigma$ and such that $\Sigma'$ is not the blow up of any unimodular fan.
If $\Sigma$ is polytopal then $\Sigma'$ can be obtained polytopal too.
\end{theorem}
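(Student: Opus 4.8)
\emph{Plan and first reduction.} The plan is to reduce to a statement about unimodular fans and then build the required minimal refinement by a controlled sequence of stellar subdivisions, the delicate point being how to absorb the low-degree rays that these create. By Theorem~\ref{thm:desingular} and the remark following it, every rational complete $3$-fan $\Sigma$ is refined by a unimodular complete fan $\Sigma_1$, which may be taken polytopal if $\Sigma$ is, and a refinement of a refinement of $\Sigma$ is a refinement of $\Sigma$; so it suffices to refine an arbitrary unimodular polytopal complete $3$-fan $\Sigma_1$ to a unimodular polytopal complete fan $\Sigma'$ that is not the blow-up of any unimodular fan. It will be convenient to apply a couple of barycentric subdivisions to $\Sigma$ before desingularizing, so that one may also assume each cone of $\Sigma$ is a union of many cones of $\Sigma_1$; this gives room later to keep modifications inside single cones of $\Sigma$.

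\emph{A local criterion for minimality.} Since $\Sigma_1$ is complete and simplicial, the link of any ray $\gamma$ is a cycle $v_1-v_2-\dots-v_d-v_1$ whose vertices are the rays $\rho$ with $\Cone(\gamma,\rho)$ a $2$-cone and whose edges record the $3$-cones $\Cone(\gamma,v_i,v_{i+1})$. A blow-down of a unimodular fan is by definition the inverse of a stellar subdivision, and inspecting which stellar subdivisions stay within the unimodular category (using the linear dependence $\det(v_2,v_3,v_4)\,v_1-\det(v_1,v_3,v_4)\,v_2+\det(v_1,v_2,v_4)\,v_3-\det(v_1,v_2,v_3)\,v_4=0$ among four vectors of $\R^3$) yields: a ray $\gamma$ of a unimodular complete $3$-fan can be blown down to a unimodular fan if and only if \emph{either} its link is a triangle $\{v_1,v_2,v_3\}$ with $\gamma=v_1+v_2+v_3$ (equivalently, $\Cone(v_1,v_2,v_3)$ is unimodular), \emph{or} its link is a $4$-cycle with cyclic order $v_1,v_3,v_2,v_4$ and $\gamma\in\{v_1+v_2,\,v_3+v_4\}$. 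Consequently a unimodular complete $3$-fan all of whose ray-links have length $\geq5$ is not a blow-up; more generally it is enough that every length-$3$ link span a non-unimodular cone and every length-$4$ link satisfy neither ``opposite-sum'' relation.

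\emph{The construction.} It remains to refine $\Sigma_1$ to a unimodular polytopal complete fan meeting the criterion. Blowing up all $3$-cones of a unimodular complete $3$-fan at once replaces every edge of every ray-link by a path of length two, hence doubles the length of each pre-existing link while introducing new rays $\gamma=a+b+c$ with link-triangle $\{a,b,c\}$ spanning the (unimodular) cone $\Cone(a,b,c)$. Thus one round makes every ray of $\Sigma_1$ safe (link-length $\geq6$), and only the new rays are dangerous. Each such $\gamma$ is rigid, because $\Cone(a,b,c)$ is unimodular and contains no ray besides $\gamma$: no re-triangulation of the star of $\gamma$ can change its situation, so $\gamma$ has to be dealt with by enlarging its link from outside. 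I would do this by running from each $\gamma$ a short spiralling sequence of edge blow-ups (at $\Cone(a,b)$, then at $\Cone(a+b,a)$, and so on), which raises the link-length of $\gamma$ past $4$ at the cost of a single dangerous ray pushed one step further out; the extra fineness from the first reduction keeps all of this inside one cone of $\Sigma$, so ``refines $\Sigma$'' is preserved, and one then needs to close off these tails consistently and check that the tails of neighbouring $3$-cones do not clash. Throughout, polytopality is preserved by blow-ups (Proposition~\ref{prop:stellar}), and the controlled blow-downs used to close off tails preserve it too, since each removes a triangular facet created with room to spare in a polytope realizing the current fan.

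\emph{The main obstacle.} The hard part is the last step. Every blow-up creates a low-degree dangerous ray, and such a ray is rigid: it can be removed only by a blow-down, which undoes precisely the move that had just made the previous ray safe. Hence no naive ``fix one ray at a time'' procedure terminates, and what is really required is a global bookkeeping for the spiralling tails (equivalently, a direct definition of $\Sigma'$ followed by a verification of the criterion of the second step), controlling in particular how the tails emanating from adjacent $3$-cones overlap, while simultaneously maintaining unimodularity, polytopality, and the refinement of $\Sigma$. Setting this up so that it terminates with \emph{no} dangerous ray left is the crux.
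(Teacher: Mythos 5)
Your first reduction (to $\Sigma$ already unimodular via Theorem~\ref{thm:desingular}), your local blow-down criterion for rays of complete unimodular $3$-fans, and your first round of construction --- simultaneously blowing up all $3$-cones --- are all correct and coincide with what the paper does. You also correctly isolate the obstacle: after that round every new ray $\gamma=a+b+c$ has valency three with link $\{a,b,c\}$ spanning the unimodular cone $\Cone(a,b,c)$, hence is blow-downable, and no re-triangulation of the star of $\gamma$ that fixes the boundary $2$-cones $\Cone(a,b),\Cone(b,c),\Cone(c,a)$ can repair this. But the remedy you sketch --- spiralling edge blow-ups to push the link-length of each dangerous ray past $4$ --- is not carried to completion, and as you yourself observe, each such move creates a fresh dangerous ray, so the procedure has no evident termination. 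That is a genuine gap, and it is exactly where the theorem is proved.

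The missing idea is that one does not need to increase the valency of a dangerous ray at all; it is enough to break the arithmetic relation $\gamma=\text{(sum of link)}$, and a uniform second refinement does that for every dangerous ray at once. Concretely, after the first round one subdivides each resulting $3$-cone $\Cone(\alpha_1,\alpha_2,\alpha_3)$ into six unimodular cones $\Cone(\alpha_i,\alpha_{ij},\alpha_{ik})$ and $\Cone(\alpha_{123},\alpha_{ij},\alpha_{ik})$, where $\alpha_{ij}=\alpha_i+\alpha_j$ and $\alpha_{123}=\alpha_1+\alpha_2+\alpha_3$. In the resulting fan every ray has valency at least three, the original rays of $\Sigma$ and all $\alpha_{ij}$ have valency $\geq 6$, and the only rays of valency three are the old $\gamma=a+b+c$'s and the new $\alpha_{123}$'s. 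But the link of $\gamma$ is now $\{\gamma+a,\gamma+b,\gamma+c\}$, which sums to $4\gamma$ (equivalently, the cone these three span has multiplicity $4$), and the link of $\alpha_{123}$ is $\{\alpha_{12},\alpha_{13},\alpha_{23}\}$, which sums to $2\alpha_{123}$; so by your own criterion neither can be blown down. All new rays are nonnegative integer combinations of rays of $\Sigma$, hence lie in cones of $\Sigma$, so the result refines $\Sigma$, and polytopality is preserved since both rounds can be realized by stellar subdivisions (Proposition~\ref{prop:stellar}). In short: rather than chase dangerous rays with tails that never close, the paper makes valency three harmless by spoiling the determinant, and a single symmetric round of refinement accomplishes this everywhere simultaneously with no global bookkeeping.
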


\begin{proof}
By Theorem~\ref{thm:desingular} we may assume without loss of generality that $\Sigma$ is already unimodular. We proceed in two steps, illustrated in  Figure~\ref{fig:sigmas}.

Let $\Sigma''$ be the  unimodular fan obtained from $\Sigma$ by blowing up all its $3$-dimensional cones. These blow-ups double the valency of each ray of $\Sigma$. Since all rays in a complete $3$-dimensional fan have valency at least three, all the original rays of $\Sigma$ have valency at least six in $\Sigma''$, while the rays introduced by the blow-ups have valency three.

We now define $\Sigma'$ subdividing each full-dimensional cone $\Cone\{\alpha_1,\alpha_2,\alpha_3\}\in \Sigma''$ into the following six cones:
\[
\begin{array}{ccc}
\Cone\big\{\alpha_1,\alpha_{12},\alpha_{13}\big\}, &
\Cone\big\{\alpha_2,\alpha_{12},\alpha_{23}\big\}, &
\Cone\big\{\alpha_3,\alpha_{13},\alpha_{23}\big\}, \\
\Cone\big\{\alpha_{123},\alpha_{12},\alpha_{13}\big\}, &
\Cone\big\{\alpha_{123},\alpha_{12},\alpha_{23}\big\}, &
\Cone\big\{\alpha_{123},\alpha_{13},\alpha_{23}\big\},
\end{array}
\]
where $\alpha_{ij}:=\alpha_i + \alpha_j$ and $\alpha_{123}:= \alpha_1 + \alpha_{2} + \alpha_{3}$. Since the $\alpha_{ij}$ and $\alpha_{123}$ are primitive and the
 six $3$-dimensional cones are unimodular, $\Sigma'$ is again a unimodular fan. 
The rays of $\Sigma''$ keep their valency in $\Sigma'$, while the new rays have valency three if they are of the form $\alpha_{123}$ and valency eight if they are of the form $\alpha_{ij}$. (For the latter, let $\alpha_{ij}$ be one of them; Since $\Sigma'$ is complete, $\alpha_{ij}$ lies in the common boundary of two three-dimensional cones of $\Sigma'$, and is contained in four of the six three-dimensional cones refining each). 

\begin{figure}[htb]
\includegraphics[height=4cm]{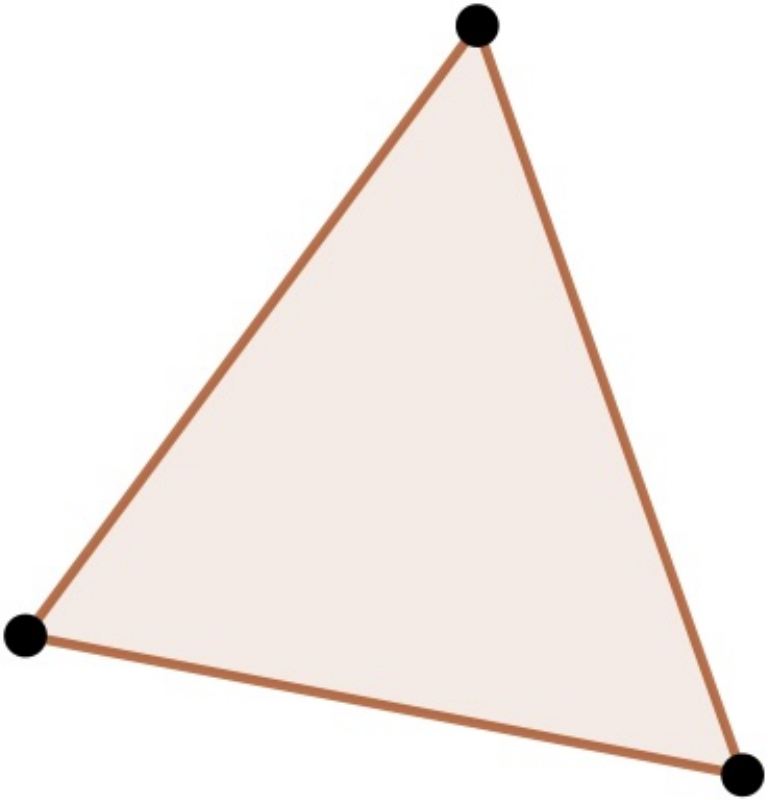}
\qquad
\includegraphics[height=4cm]{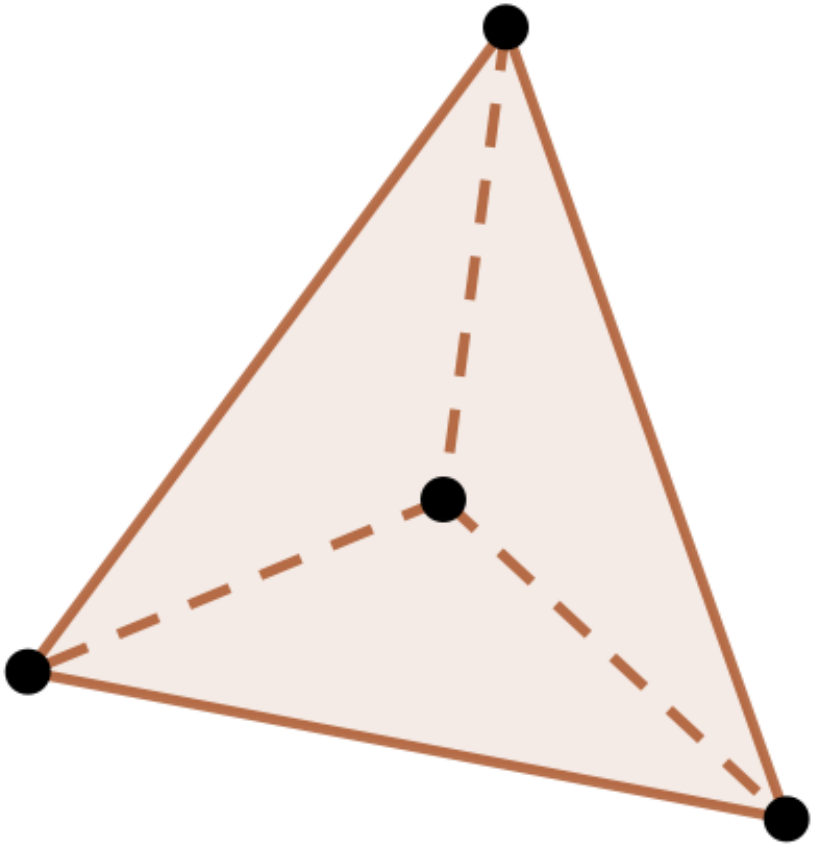}
\qquad
\includegraphics[height=4cm]{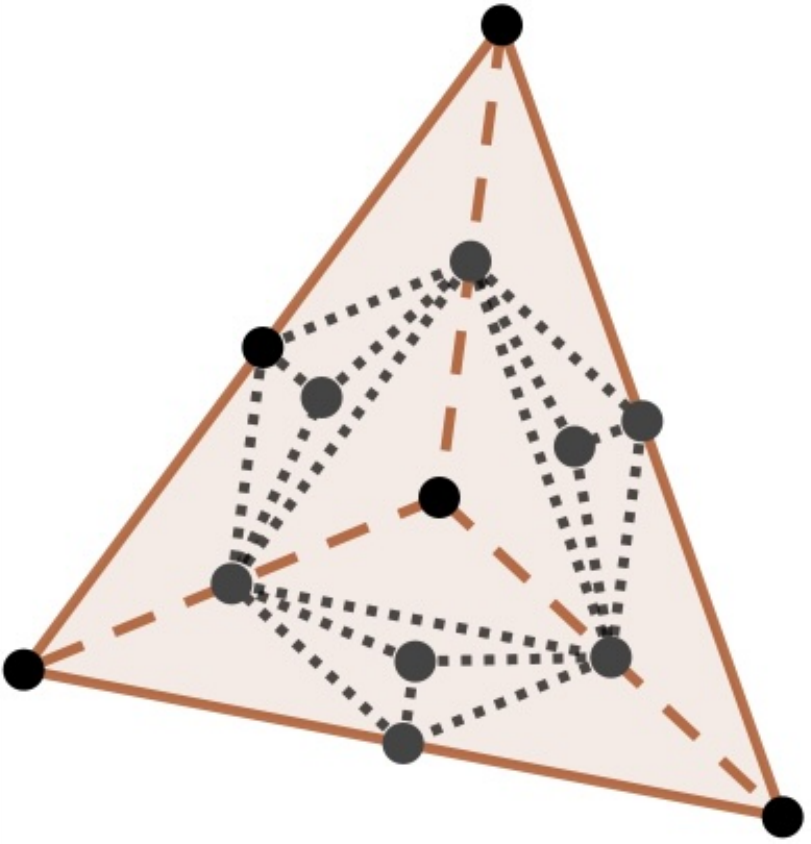}\\
$\Sigma$ \hskip 4.3cm
$\Sigma''$ \hskip 4.3cm 
$\Sigma'$  

\caption{The refinement constructed in the proof of Theorem~\ref{thm:dim3}.
Left: A $3$-dimensional cone of the original fan $\Sigma$, represented as a triangle, via an intersection with an affine plane.
Center: its blow-up in $\Sigma''$, consisting of three $3$-dimensional cones separated via three new two-dimensional cones (dashed lines) with one new ray (center dot).
Right: In $\Sigma'$ each $3$-cone from $\Sigma''$ is subdivided into six $3$-cones, introducing several new $2$-cones (pointed lines) and new rays of two types: one along each $2$-cone of $\Sigma''$, denoted $\alpha_{ij}$ in the proof, and one in each $3$-cone of $\Sigma''$, denoted $\alpha_{ijk}$ in the proof.
}
\label{fig:sigmas}
\end{figure}

To show that $\Sigma'$ is not the blow-up of any unimodular fan we observe that the new ray in a blow-up of a three-dimensional complete fan has valency three or four, depending on whether the blown-up cone is $3$-dimensional or $2$-dimensional. That is, only  rays of valency three or four  in $\Sigma'$ are candidates to be blown-down.
By the discussion above, $\Sigma'$ has no rays of valency four, and two types of rays of valency three:

\begin{enumerate}

\item The rays introduced in the blow-ups from $\Sigma$ to $\Sigma''$. Let $\alpha$ be one of them, and let $\beta,\gamma,\delta$ be its neighbors in $\Sigma''$, so that $\alpha = \beta+\gamma+\delta$. The three neighbors of $\alpha$  in 
$\Sigma'$ are 
$\alpha+\beta $,
$\alpha+\gamma$, and
$\alpha+\delta $.
Since their sum is $4\alpha$, $\alpha$ cannot be blown down. (Combinatorially it may look like it can, but the cone $\Cone\{\alpha+\beta , \alpha+\gamma , \alpha+\delta\}$ obtained by this supposed blow-down has multiplicity four, not one).

\item The rays of the form $\alpha_{ijk}$ introduced when going from $\Sigma''$ to $\Sigma'$. These have the same problem; blowing down $\alpha_{123}$ would produce the cone $\Cone\{\alpha_{12},\alpha_{12},\alpha_{23}\}$, of  multiplicity $2$.
\qedhere
\end{enumerate}
\end{proof}

In dimension two, a unimodular fan is a blow-up if and only if it is the refinement of another unimodular fan. 
In dimension three, the fans of the previous theorem do not admit blow-downs but they still are refinements of unimodular fans. 
Thus, one could perhaps hope that an analogue of Lemma \ref{lemma:2dim} can exist in dimension three except the refinements needed in the classification are a more complicated list than only blow-ups.
The following examples kill this hope, in any dimension $n\geq 3$:

\begin{theorem}
 \label{thm:isolated}
In any dimension $n\geq 3$ there are complete unimodular polytopal fans with arbitrarily many rays which are not proper refinements of any other unimodular fan.
\end{theorem}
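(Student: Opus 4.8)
The plan is to exhibit an explicit infinite family of complete unimodular polytopal fans in $\R^3$, indexed by a parameter $m$, with the number of rays growing in $m$, each of which is \emph{minimal} in the strong sense that no stellar subdivision can be "undone": there is no unimodular fan $\Sigma$ and ray $\gamma$ such that a stellar subdivision of $\Sigma$ by $\gamma$ yields our fan. The natural candidates are variations on the fan of a product $\Delta_m \times I$ (where $\Delta_m$ is a Delzant $m$-gon), or better, the fans built in the proof of Theorem~\ref{thm:dim3}: there, the doubly-subdivided fan $\Sigma'$ already has the property that \emph{every} ray has valency at least three with the "wrong" neighbor-sum, so no ray can be blown down. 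What remains is to upgrade "not a blow-up" to "not a proper refinement of any unimodular fan at all," which is a genuinely stronger statement since a refinement can introduce \emph{several} new rays at once and need not be a single stellar subdivision.

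**Key steps, in order.** First I would recall that by Theorem~\ref{thm:desingular} any refinement relation among \emph{complete} fans can be witnessed by a sequence of stellar subdivisions, but for the \emph{minimality} direction I need the contrapositive: if $\NN$ properly refines a unimodular fan $\Sigma$, then $\NN$ has strictly more rays than $\Sigma$, and in fact $\NN$ contains all rays of $\Sigma$ plus at least one more. So the strategy is to find a unimodular polytopal fan $\NN$ such that \emph{no proper subset of its rays spans a unimodular fan that $\NN$ refines}. Concretely: for each ray $\rho$ of $\NN$, I must show that deleting $\rho$ (and the cones through it) and trying to "merge" the surrounding cones into larger unimodular cones is impossible — i.e., the star of $\rho$ in $\NN$ cannot be replaced by fewer cones while keeping unimodularity and convexity of the union. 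The second step is therefore a local computation at each ray: if $\rho$ has neighbors $\beta_1,\dots,\beta_k$ (in cyclic order, since we are in dimension three and the link of $\rho$ is a circle), then blowing down $\rho$ would require the cones $\Cone(\beta_i,\beta_{i+1})$ to be subdivided cones of some unimodular $2$-dimensional link, forcing $k=3$ and $\rho = \beta_1+\beta_2+\beta_3$ up to the arithmetic constraints; if in our construction every ray fails this (either by having $k\ge 4$, or $k=3$ with $\rho\ne\beta_1+\beta_2+\beta_3$), then no single ray is removable. The third step handles the multi-ray case: if $\NN$ refines $\Sigma$ with $\Sigma$ missing $\ge 2$ rays of $\NN$, pick a ray $\rho$ of $\NN\setminus\Sigma$ that is \emph{extremal} in an appropriate sense (e.g., contained in the relative interior of a minimal cone of $\Sigma$, or lexicographically first); a careful argument — essentially that one can always peel off new rays one at a time in a valid order, a standard fact about refinements of fans, cf. the pulling/pushing subdivisions in \cite[Chapter 4]{triangbook} — reduces to the single-ray case already handled. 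Finally, I would verify polytopality: the fans in the family should be obtained from an honestly polytopal fan (e.g., the normal fan of $\Delta_m\times I$, or the output of Theorem~\ref{thm:dim3} applied to it, which stays polytopal by Proposition~\ref{prop:stellar}(1)) via stellar subdivisions, so polytopality is automatic.

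**Main obstacle.** The hard part will be the multi-ray reduction in the third step: ruling out that $\NN$ is a proper refinement requires more than checking that no \emph{single} ray is a blow-down, because a priori $\Sigma$ could differ from $\NN$ by a complicated simultaneous coarsening at many rays, and coarsening is not simply "reverse a stellar subdivision" — the intermediate stages of $\Sigma \rightsquigarrow \NN$ need not be refinements of $\Sigma$ in a way that lets me peel rays off individually. The clean way around this is to design the family so that the \emph{rays themselves} already obstruct refinement: arrange that for every ray $\rho$ of $\NN$ and every cone $C$ of $\NN$ containing $\rho$, the set $C \setminus (\text{cones through }\rho)$ is not contained in any unimodular cone avoiding $\rho$ — for instance because the relevant arithmetic (the determinant of the candidate merged cone) is forced to exceed $1$, exactly as in the "multiplicity four" computation at the end of the proof of Theorem~\ref{thm:dim3}. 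If every ray is "locally non-removable" in this strong determinant sense, then $\NN$ cannot refine any unimodular fan missing even one of its rays, and we are done. I would therefore spend the bulk of the real proof pinning down this local determinant obstruction and verifying it holds at every ray of the explicit family.
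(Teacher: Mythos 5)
Your proposal correctly diagnoses the central difficulty: ``no blow-down exists'' is strictly weaker than ``no proper unimodular coarsening exists,'' and the paper explicitly points out that the fans from Theorem~\ref{thm:dim3} satisfy only the weaker property, so your first-instinct candidates would not suffice. You also correctly observe that a properly stated local obstruction --- no cone of $\NN$ through $\rho$ sits inside a unimodular cone that omits $\rho$ as a ray --- would settle the matter, since any coarser $\Sigma$ must contain some cone $D\supseteq C$ with $\rho$ not among the rays of $D$.

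However, the proposal stops short of an actual proof in two respects. First, the condition you formulate is never pinned down precisely (``$C\setminus(\text{cones through }\rho)$'' does not parse; read charitably, you want either a for-every-$C$ or a for-some-$C$ quantifier, and these have very different strengths), nor do you exhibit a family of fans for which it can be checked. Second, and more importantly, you never show that such a local obstruction is \emph{achievable simultaneously at every ray}: it is easy for a given unimodular cone $C$ through $\rho$ to sit inside a larger unimodular cone omitting $\rho$ (already in dimension two, $\Cone((1,0),(1,1))\subset\Cone((1,0),(0,1))$), so it is not clear a priori that a fan can be engineered so that \emph{every} ray is protected in your determinant sense. Your closing sentence --- ``I would spend the bulk of the real proof pinning down this local determinant obstruction and verifying it holds at every ray of the explicit family'' --- is exactly the content that is missing.

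The paper's route is genuinely different and avoids the local-at-every-ray verification entirely. It writes down an explicit fan $\NN_k$ with $k+6$ rays and then shows directly that any coarser unimodular $\NN'$ equals $\NN_k$, using three global observations: (1) four of the rays must appear in $\NN'$ because each is the unique ray of the configuration in some open half-space (an extremality argument, not a determinant argument); (2) most of the remaining rays lie on an affine plane where the induced picture is a lattice triangle that admits a \emph{unique} unimodular triangulation, which forces all the corresponding cones; (3) a local argument around one remaining ray, whose star is already the coarsest possible decomposition of a neighborhood. This is a global forcing argument rather than a pointwise determinant obstruction, and it has the advantage that each step is a concrete finite check. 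Your strategy could likely be made to work if you supplied an explicit family and verified the obstruction ray by ray, but as written it is a plan rather than a proof, and it would end up doing at least as much case-checking as the paper's more structured argument.
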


\begin{proof}
We first construct them for $n=3$.
Let $\VV_k$ be the following set of $k+6$ rays in $\R^3$, for an arbitrary $k\in \N$:
\begin{align*}
&\alpha_+=(0,0,1),\quad
\alpha_-=(k+1,1,-1),\\
&\beta_{-2}=(0,-1,0),\quad
\beta_{-3}=(1,0,0),\\
&\beta_i=(i,1,0), \quad i\in \{-1,\dots,k\}.
\end{align*}
We consider the following complete simplicial fan $\NN_k$ in it, with $2k+8$ maximal cones:
\begin{align*}
&\Cone\big\{\beta_{-3}, \alpha_+,\alpha_-\big\},\quad
\Cone\big\{\beta_{k}, \alpha_+,\alpha_-\big\},\\
&\Cone\big\{\beta_{i-1}, \beta_i, \alpha_+\big\},\quad
\Cone\big\{\beta_{i-1}, \beta_i, \alpha_-\big\}, \quad i=-2,\dots,k.
\end{align*}
Observe that the configuration and the fan are invariant under the unimodular involution 
\begin{align*}
\R^3&\quad\longrightarrow\quad\R^3\\
(x_1,\ x_2,\ x_3)&\quad\longmapsto \quad(x_1+ (k+1)x_3, \ x_2+x_3, \ -x_3) 
\end{align*}
 which exchanges $\alpha_+$ and $\alpha_-$ and fixes the rest of $\VV_k$.

\begin{figure}[htb]
\includegraphics[height=4cm]{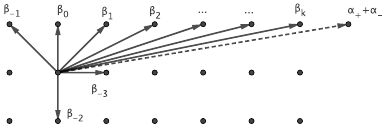}
\caption{The configuration $\VV_k$ and complete fan $\NN_k$ proving Theorem~\ref{thm:isolated}, pictured for $k=5$.
The picture shows it sliced with the hyperplane $H= \{x_3=0\}$ containing all generators except $\alpha_+$ and $\alpha_-$; the vector $\alpha_+ + \alpha_-\in H$ is not in $\VV_k$, but it is shown to represent the intersection of $H$ with the $2$-face $\Cone\{\alpha_+,\alpha_-\}\in \NN_k$. 
}
\label{fig:ops}
\end{figure}
One crucial property of this construction is that all the $\beta_i$  lie in the plane $H:=\{x_3=0\}$ and the other two generators $\alpha_+$ and $\alpha_-$ lie in opposite sides of it. 
This allows us to picture $\VV_k$ and $\NN_k$ in two dimensions, by intersecting them with $H$, as we do in Figure~\ref{fig:ops}.
In fact, the fan  $\NN_k$ can be understood as follows: start with the unimodular two-dimensional complete fan of Figure~\ref{fig:ops}  (including the ray $\alpha_++\alpha_-$), which is clearly unimodular. Take the cones
 of it with  both $\alpha_+$ and $\alpha_-$, which gives a unimodular complete fan in $\R^3$ with four maximal cones surrounding the ray $\alpha_+ +\alpha_-$. Then, blow-down  $\alpha_++\alpha_-$; that is, remove the four maximal cones containing it and insert $\Cone\{\beta_{-3}, \alpha_+,\alpha_-\}$ and $\Cone\{\beta_{k}, \alpha_+,\alpha_-\}$ instead.
This description shows that $\NN_k$ is indeed a complete unimodular fan.

We claim that if $\NN'$ is a complete unimodular fan refined by $\NN_k$ then $\NN'=\NN_k$. For this:
\begin{enumerate}
\item The rays $\alpha_+$ and $\alpha_-$ must be used in $\NN'$ since each of them is the only element of $\VV_k$ in a certain open half-plane (namely, the half-planes $x_3>0$ and $x_3<0$).
\item This implies that the cone $C=\Cone\{\alpha_+,\alpha_-\}$  must be in $\NN'$ too. Indeed,  $C$ must be contained in some cone $C'$ of $\NN'$ and since the generators of $C$ are used in $\NN'$ and $C'$ is simplicial, $C$ must be a face of $C'$.
\item The only $\beta_i$'s that form a unimodular cone together with $\Cone\{\alpha_+,\alpha_-\}$ are $\beta_{-3}$ and $\beta_{k}$. In order to check this, observe that the determinant of $\Cone\{\beta_i, \alpha_+,\alpha_-\}$ as a $3$-dimensional cone is the same as that of $\Cone\{\beta_i, \alpha_+ +\alpha_-\}$ as a $2$-dimensional one. We conclude that 
\[
\Cone\big\{\beta_{-3}, \alpha_+,\alpha_-\big\}, \ 
\Cone\big\{\beta_{k}, \alpha_+,\alpha_-\big\} \in \NN'.
\]

\item  Once we know this, all the other maximal cones must be of the form $\Cone\{\beta_i, \beta_j,\alpha_\pm\}$ for some $i,j$ and some choice of sign for $\alpha_\pm$. Hence, the rest of the fan must consist of two-dimensional cones in $H$ joined to both $\alpha_+$ and $\alpha_-$, and the cones used in $H$ must be unimodular themselves. An easy inspection of Figure~\ref{fig:ops} shows that the only way to complete the unimdular fan is exactly as $\NN_k$ does.
\end{enumerate}

Once the statement is proved for $n=3$ we prove it for any $n\geq 4$ by induction. Hence, we assume that for every $k$ there is a full-dimensional complete polytopal unimodular fan $\Sigma_k$ with at least $k$ rays in $\R^{n-1}$ that is not the proper refinement of any unimodular fan. 
We consider the \emph{suspension} of $\Sigma_k$ defined as:
\[
\operatorname{susp}(\Sigma_k):=
\Big\{\sigma\times \{0\} \,\Big|\, \sigma\in \Sigma_k\Big\}\cup
\Big\{\sigma\times [0,\infty) \,\Big|\,  \sigma\in \Sigma_k\Big\}\cup
\Big\{\sigma\times (-\infty,0] \,\Big|\,  \sigma\in \Sigma_k\Big\}.
\]
Put differently, from each cone $\Cone\{\alpha_1,\dots,\alpha_\ell\}$ in $\Sigma_k$ (including the $0$ cone, generated by the empty set) we have the following three cones in $\operatorname{susp}(\Sigma_k)$:
\[
\Cone\big\{\alpha'_1,\dots,\alpha'_\ell\big\}, \quad
\Cone\big\{\alpha'_1,\dots,\alpha'_\ell, {\rm e}_n\big\},\quad
\Cone\big\{\alpha'_1,\dots,\alpha'_\ell, -{\rm e}_n\big\},
\]
where $\alpha'_i:= \alpha_i \times \{0\}$, $i=1,\dots,\ell$.

$\operatorname{susp}(\Sigma_k)$ is an  $n$ dimensional complete  unimodular fan with $2$ more rays than $\Sigma_k$. It is not a refinement of any fan since any such fan would need to be of the form $\operatorname{susp}(\Sigma')$ where $\Sigma_k$ refines $\Sigma'$. It is polytopal since it is the normal fan of $P\times [0,1]$ for any polytope $P$ with normal fan $\Sigma_k$.
 \end{proof}

\begin{remark}
The example in the first part of the proof of Theorem~\ref{thm:isolated} is based on the \emph{one-point suspension} construction, described for example in \cite[Section 4.2.5]{triangbook}. We say that a configuration $\VV$ of dimension $n$ is a one-point suspension if there are two vectors $\alpha_+,\alpha_- \in \VV$ such that $\VV':=\VV\setminus \{\alpha_+,\alpha_-\} \cup \{\alpha_+ + \alpha_-\}$ is $(n-1)$-dimensional. Every simplicial fan $\NN$ with generators contained in $\VV$ (called a ``triangulation of $\VV$'' in the language of \cite{triangbook}) is obtained as the example in the proof: starting with a triangulation $\NN'$ of $\VV'$ do the following to each maximal cone $C$ of it:
\begin{itemize}
\item If $\alpha_+ + \alpha_-$ is a generator of $C$, substitute $\alpha_+ + \alpha_-$ by $\alpha_+$ and $\alpha_-$ as generators.
\item If $\alpha_+ + \alpha_-$ is not a generator of $C$, cone $C$ separately to  
$\alpha_+$ and to $\alpha_-$.
\end{itemize}
The suspension used in the second part of the proof is the case $\alpha_+ + \alpha_-=0$.
\end{remark}

\section{Topology of the space of Delzant polytopes} 
\label{sec:dp2}

In this section we study natural geometric structures (metric, topology) on the spaces of Delzant polytopes of any dimension.

\subsection{Two metrics and corresponding topologies on the space of convex bodies}
\label{sec:metric}

Following Gruber~\cite{Gruber} we denote by $\CC(n)$  the space of all convex bodies (i.e., compact convex sets) in $\R^n$ and by $\CC_p(n)$ the subset of \emph{proper} ones; that is,  those with non-empty interior or, equivalently, with positive volume. Also, we denote by $\PP(n)$  the subspace of perhaps not full-dimensional polytopes, and by $\PP_\Q(n)$  the subspace of polytopes with rational edge directions (or, equivalently by Lemma~\ref{lemma:rational}, those with rational facet normals).

\begin{definition}
We consider the following two  functions 
\[
\dist^V, \dist^H: \CC(n)\times\CC(n)\longrightarrow \R_{\geq 0},
\] 
called respectively the \emph{symmetric difference distance} and the \emph{Hausdorff distance}:
\begin{align*}
\dist^V (P,Q) & := \Vol(P\setminus Q) + \Vol(Q\setminus P), \\
\dist^H (P,Q) & := \max \Big\{\max_{x\in P} \distance(x,Q), \,\max_{y\in Q} \distance(y,P)\Big\}, 
\end{align*}
where $$\distance(x,Q)=\min_{y\in Q} \|x-y\|.$$
\end{definition}

Observe that for $\dist^V$ to be a distance function we need to restrict ourselves to $\CC_p$ since in it every two improper 
convex bodies are at distance zero. Alternatively, we can consider $\dist^V$  defined on $\CC_p\cup\{0\}$, where ``$0$'' denotes the class of all non-proper convex bodies.

The following is immediate. 

\begin{proposition}
The pairs $(\CC_p\cup\{0\},\dist^V)$  and $(\CC,\dist^H)$ are metric spaces.
\end{proposition}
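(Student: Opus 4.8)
The plan is to check, for each of the two functions, the metric axioms: finiteness and non-negativity (both immediate), symmetry, the identity of indiscernibles, and the triangle inequality.

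For $(\CC(n),\dist^H)$, symmetry is built into the definition. Finiteness of $\dist^H(P,Q)$ holds because $\distance(x,Q)=\min_{y\in Q}\|x-y\|$ is attained (minimum of a continuous function on the compact set $Q$) and depends continuously on $x$, so its maximum over the compact set $P$ is attained and finite. For the identity of indiscernibles, $\dist^H(P,Q)=0$ forces $\distance(x,Q)=0$ for every $x\in P$; since $Q$ is closed this gives $P\subseteq Q$, and by symmetry $Q\subseteq P$, hence $P=Q$ (both are nonempty). For the triangle inequality I would first record the one-sided estimate for the excess $e(P,Q):=\max_{x\in P}\distance(x,Q)$, namely $e(P,R)\le e(P,Q)+e(Q,R)$: given $x\in P$, pick $y\in Q$ with $\|x-y\|=\distance(x,Q)$; then for every $z\in R$ one has $\|x-z\|\le\|x-y\|+\|y-z\|$, and taking the minimum over $z\in R$ yields $\distance(x,R)\le \distance(x,Q)+\distance(y,R)\le e(P,Q)+e(Q,R)$; now take the maximum over $x\in P$. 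Since $\dist^H(P,Q)=\max\{e(P,Q),e(Q,P)\}$, applying this to both orientations gives $\dist^H(P,R)\le \dist^H(P,Q)+\dist^H(Q,R)$.

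For $\dist^V$ on $\CC_p(n)\cup\{0\}$, finiteness holds because convex bodies are bounded, and symmetry is visible in the definition. The key reformulation is
\[
\dist^V(P,Q)=\Vol\big((P\setminus Q)\cup(Q\setminus P)\big)=\int_{\R^n}|\one_P-\one_Q|,
\]
the $L^1$-distance between the two indicator functions. The triangle inequality is then immediate: $|\one_P-\one_R|\le|\one_P-\one_Q|+|\one_Q-\one_R|$ pointwise, and integrating gives $\dist^V(P,R)\le\dist^V(P,Q)+\dist^V(Q,R)$. For the identity of indiscernibles, suppose $\dist^V(P,Q)=0$ with $P,Q$ proper. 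Then $\inter(P)\setminus Q$ is an open set contained in $P\setminus Q$, hence of measure zero, hence empty, so $\inter(P)\subseteq Q$; taking closures and using that a proper convex body equals the closure of its interior, $P=\overline{\inter(P)}\subseteq\overline{Q}=Q$, and symmetrically $Q\subseteq P$. Finally, the class $0$ is represented by convex bodies of zero volume, so $\dist^V(P,0)=\Vol(P)>0$ for proper $P$ while $\dist^V(0,0)=0$; this also shows that the formula descends to $\CC_p(n)\cup\{0\}$, since it does not depend on the choice of zero-volume representative.

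The argument is entirely routine, and there is no real obstacle. The only mildly delicate points worth stating carefully are that a proper convex body equals the closure of its interior (this is what turns ``zero symmetric-difference volume'' into genuine set equality) and the standard excess-function reformulation used for the triangle inequality of the Hausdorff distance.
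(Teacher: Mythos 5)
Your proof is correct and complete. The paper does not actually give a proof here --- it states the proposition with the remark ``The following is immediate'' --- so there is no approach to compare against; you have simply filled in the routine verification the paper leaves to the reader. The two observations you flag as worth making explicit are indeed the right ones: the $L^1$ reformulation $\dist^V(P,Q)=\int_{\R^n}|\one_P-\one_Q|$ handles the triangle inequality for $\dist^V$ (including the cases involving the class $0$) in one line, and the fact that a proper convex body equals the closure of its interior is exactly what converts ``zero symmetric-difference volume'' into set equality. The excess-function estimate $e(P,R)\le e(P,Q)+e(Q,R)$ for the Hausdorff distance is the standard argument and is carried out correctly.
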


Restricted to $\CC_p$,  $\dist^H$ and $\dist^V$ are not equivalent as distances; for example, the sequence of rectangles $[0,m]\times [0,1/m^2]$, with $m\in \N$ is a Cauchy sequence with respect to $\dist^V$ since the volume of the rectangles, hence of their symmetric differences, goes to zero. However, it is not Cauchy with respect to $\dist^H$ since the Hausdorff distance between any two of these rectangles is at least 1.

Pelayo\--Pires\--Ratiu\--Sabatini~\cite[Section~2.1]{PPRS14} consider the space of Delzant polytopes with the topology induced by $\dist^V$. We here will primarily work with $\dist^H$ since it relates better to normal fans. 
Fortunately, the two distances define the same topology, which gives us a well-defined space of Delzant polytopes:

\begin{proposition}[$\dist^H$ versus $\dist^V$] \label{prop:topology}
The following statements hold.
\begin{itemize}
\item[(1)]
Restricted to $\CC_p(n)$ the Hausdorff distance and the symmetric difference induce the same topology, which is locally compact.
\item[(2)]
$\CC_p(n)$ is not complete for any of the two distances. Its completion for $\dist^H$ is $\CC(n)$ and its completion for $\dist^V$ is 
$\CC_p(n)\cup \{0\}$. 
\end{itemize}
\end{proposition}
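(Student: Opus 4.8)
The plan is to handle the two distances separately but with parallel strategies, treating each claim in part (1) and part (2) as a short lemma.

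For part (1), the key point is a two-sided comparison between $\dist^H$ and $\dist^V$ valid on \emph{uniformly bounded, uniformly full-dimensional} families of convex bodies. One direction is elementary: if $\dist^H(P,Q)\le\varepsilon$ then $P$ is contained in the $\varepsilon$-neighborhood of $Q$ and vice versa, so the symmetric difference $(P\setminus Q)\cup(Q\setminus P)$ is contained in the $\varepsilon$-neighborhood of $\bd(P)\cup\bd(Q)$; bounding its volume by (surface area)$\,\times\,\varepsilon$ plus lower-order terms (or simply by $\Vol((P\cup Q)_\varepsilon)-\Vol((P\cap Q))$ and the Steiner-type estimate) shows $\dist^V(P,Q)=O(\varepsilon)$ with a constant depending only on a common bounding ball. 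The reverse direction is where one must use properness: if $\dist^H(P,Q)\ge\varepsilon$, say a point $x\in P$ has $\distance(x,Q)\ge\varepsilon$, then since $P$ contains a ball of some fixed radius $r>0$ (properness, locally uniform), the cone from that ball to $x$ lies in $P\setminus Q$ up to a fixed fraction, giving $\dist^V(P,Q)\ge c(r,\varepsilon)>0$. Both estimates are \emph{local and uniform} on compact neighborhoods in either metric — this is exactly where I expect the only real subtlety, namely making ``locally uniformly proper'' precise: given $P_0\in\CC_p(n)$, there is a $\dist^H$-ball (equivalently, by the first estimate, a $\dist^V$-ball) around $P_0$ inside which all bodies contain a common ball of radius $r>0$ and sit inside a common ball of radius $R$. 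Once that is set up, the two estimates show the identity map is a homeomorphism near $P_0$, proving the topologies agree. Local compactness then follows from the Blaschke selection theorem: closed $\dist^H$-bounded subsets of $\CC(n)$ are compact, and a small closed $\dist^H$-ball around a proper body $P_0$ that stays inside $\CC_p(n)$ (again using uniform properness) is such a set, hence compact; by the topology comparison it is also a compact $\dist^V$-neighborhood.

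For part (2), incompleteness and the identification of completions: for $\dist^H$, Blaschke selection gives that $(\CC(n),\dist^H)$ is complete (every Cauchy sequence is bounded, hence has a convergent subsequence, hence converges). So it suffices to exhibit a $\dist^H$-Cauchy sequence of proper bodies converging to an improper one — e.g. $[0,1/m]\times[0,1]^{n-1}$ — and to note $\CC_p(n)$ is dense in $\CC(n)$ (any convex body is a $\dist^H$-limit of its full-dimensional $\varepsilon$-neighborhoods, or of slightly fattened copies), which identifies $\CC(n)$ as the completion. For $\dist^V$: on $\CC_p(n)\cup\{0\}$ one checks completeness by a measure-theoretic argument — a $\dist^V$-Cauchy sequence of proper convex bodies has indicator functions Cauchy in $L^1(\R^n)$, hence converging in $L^1$ to the indicator of some measurable set $A$; a subsequence converges a.e., and one argues $A$ agrees a.e.\ with a convex set (the limit, in the Hausdorff sense on the ``essential'' part, of the bodies — or directly: a.e.-limits of indicators of convex sets that are $L^1$-Cauchy are a.e.\ indicators of convex sets, since convexity of $\{f=1\}$ up to null sets is preserved), and this convex set is either proper or has measure zero, landing in $\CC_p(n)\cup\{0\}$. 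Then the example $[0,m]\times[0,1/m^2]\times[0,1]^{n-2}$ already used in the text (its volume $\to 0$, so it is $\dist^V$-Cauchy converging to $0$, while it is not $\dist^H$-Cauchy) shows $\CC_p(n)$ itself is not complete, and density of $\CC_p(n)$ in $\CC_p(n)\cup\{0\}$ is immediate (approach $0$ by shrinking boxes). Combining, neither metric is complete on $\CC_p(n)$, and the stated completions are as claimed.

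The main obstacle, as noted, is the uniform properness step in part (1): the comparison constants degenerate as a body becomes thin, so everything must be phrased with explicit inner/outer radii that are controlled on a neighborhood; and in part (2) the only nonroutine point is verifying that an $L^1$-limit of indicators of convex bodies is (a.e.) the indicator of a convex body, for which I would argue that the limit set $A$ satisfies: for a.e.\ pair $x,y\in A$ and every $t\in[0,1]$, $tx+(1-t)y\in A$ up to the null set, and then replace $A$ by its convex hull up to measure zero — or, more cleanly, observe the bodies are $\dist^H$-bounded once we know volumes are bounded below along a subsequence, so Blaschke again produces a convex limit and one reconciles the $\dist^H$ and $\dist^V$ limits using part (1). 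Everything else is a routine neighborhood/Steiner-formula computation that I would not spell out in full.
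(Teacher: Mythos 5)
Your overall structure is sound and genuinely different from the paper's in two places, so let me compare and also flag one real gap.

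For part (1), the paper simply cites Shephard--Webster and Gruber for the equivalence of topologies and local compactness. Your from-scratch argument via a two-sided radius comparison (inner ball + outer ball uniform on a neighborhood, Steiner in one direction, a cone estimate in the other, Blaschke for compactness) is a legitimate re-derivation of those facts and would work; the ``locally uniformly proper'' step you flag is indeed the crux, and your sketch of why a convex $P$ with $\dist^V(P,P_0)$ small must contain a fixed ball and be contained in a fixed ball is correct.

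For the $\dist^V$-completion in part (2), you and the paper take genuinely different routes. The paper passes to a fast-Cauchy subsequence, forms the non-convex unions $Q_m=\bigcup_{m'\geq m}P_{m'}$, shows the decreasing sequence $(Q_m)$ converges to $Q=\overline{\bigcap_m Q_m}$, and argues directly that this $Q$ is convex with nonempty interior (hence a body) or has volume zero. Your route is via $L^1$: indicators are $L^1$-Cauchy, extract an a.e.-convergent subsequence, and argue the a.e.\ limit is the indicator of a convex set. This works, but your justification is too vague. Two comments. First, your fallback -- ``the bodies are $\dist^H$-bounded once volumes are bounded below, so Blaschke applies'' -- is simply false: take $P_m=[0,1]^n$ with a thin spike of volume $1/m$ and length $m$ attached convexly; then $(P_m)$ is $\dist^V$-Cauchy with $\Vol(P_m)\to 1$ but $\operatorname{diam}(P_m)\to\infty$, so the family is not $\dist^H$-bounded and Blaschke does not apply. (It is the \emph{limit} that one eventually shows is bounded, not the terms.) Second, the direct claim that ``a.e.-limits of indicators of convex sets are a.e.\ indicators of convex sets'' needs an argument; the cleanest one, which your sketch gestures at but does not quite say, is that along the a.e.-convergent subsequence the limit set agrees a.e.\ with $\liminf_m P_m=\bigcup_m\bigcap_{m'\geq m}P_{m'}$, which is an increasing union of convex sets and hence convex, has finite volume, and so is either Lebesgue-null or has bounded closure which is the desired convex body. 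With that substitution your argument becomes rigorous and is in fact a nice alternative to the paper's $\limsup$-based construction (convexity of the $\liminf$ is automatic, whereas the paper must work to extract convexity from $\bigcap Q_m$).

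Part (2) for $\dist^H$ is essentially the same as the paper's (Blaschke / bounded compactness of $(\CC(n),\dist^H)$ plus density of $\CC_p(n)$), so no further comment there.
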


\begin{proof}
That $\dist^H$ and $\dist^V$  induce the same topology is proven in~\cite{SheWeb} (see also~\cite[Remark 10]{PPRS14}), and that the topology induced by $\dist^H$ is locally compact is \cite[Theorem 6.4]{Gruber}.

That the completion of $\dist^H$ is $\CC(n)$ appears in~\cite[Theorem 6.4]{Gruber}, where it is moreover proved that $\CC(n)$ is boundedly compact (stronger than being complete).

That the completion for $\dist^V$ is $\CC_p(n)\cup \{0\}$ can  be derived as follows (the case of $n=2$ is \cite[Theorem 7]{PPRS14}, with a different proof): let $(P_m)_{m\in \N}$ be a sequence of polytopes that is Cauchy in $\dist^V$. Since we only need to find a convergent subsequence, there is no loss of generality in assuming that for  every $m\in\N$ and $m'>m$
\[
\dist^V(P_m,P_{m'}) \leq 2^{-m}.
\]

For each $m$ let
\[
Q_m = \displaystyle\bigcup_{m' \geq m} P_{m'}.
\]
This is not a convex body but it is a measurable set and this is enough in order for $\dist^V$ to be well-defined. We have that 
\[
\dist^V(Q_m,P_m) \leq \sum_{m'>m} \dist^V(P_{m'},P_m) \leq 2^{-m+1}.
\]
Hence, the sequence $(P_m)_{m\in \N}$ converges if, and only if, the decreasing sequence $(Q_m)_{m\in \N}$ converges. We claim that the latter converges to
\[
Q= \overline{\bigcap_{m\in \N} Q_m},
\]
and that $Q$ either has volume zero or is a convex body. For the first part:
\begin{eqnarray*}
\dist^V(Q_m,Q) &=& \Vol(Q_m\setminus Q)   \\
&\leq & \sum_{m'\geq m} \Vol(Q_{m'}\setminus Q_{m'+1})  \\
&\leq & \sum_{m'\geq m} \Vol(P_{m'}\setminus P_{m'+1})  \\
&\leq & 2^{-m+1}. 
\end{eqnarray*}
For the second part, suppose that $Q$ has positive volume, so its interior is not empty. We first claim that the interior of $Q$ is convex. 
For this, let $p$ and $q$ be two interior points, so that there are balls 
${\rm B}(p,\varepsilon)$ and ${\rm B}(q,\varepsilon)$ contained in $Q$, that is, contained in infinitely many of the original $P_m$. 
Since the original sequence was Cauchy, ${\rm B}(p,\varepsilon/2)$ and ${\rm B}(q,\varepsilon/2)$ must be contained in all but perhaps finitely 
many of the $P_m$ and, by convexity of $P_m$, the same happens for the segment $pq$.

We now claim that $Q$ equals the closure of its interior, and hence it is convex. For this, let $q\in Q$ and $p\in Q^\circ$. Choosing as before a ball 
${\rm B}(p,\varepsilon)$ contained in $Q$, we have that $q$ lies in the closure of the interior of $\Conv\left\{{\rm B}(p,\varepsilon)\cup\{q\}\right\}$, which in turn is contained in the closure of the interior of $Q$.

Finally, since $Q$  is a closed convex set with non-empty interior and of finite volume, it must be bounded, hence a convex body.
\end{proof}

In what follows we denote by $\widetilde{\CC(n)}, \widetilde{\CC_p(n)}$ respectively the quotients of $\CC(n), \CC_p(n)$ by the action of $\AGL(n,\Z)$, with the quotient topology, 
and consider the map  
\begin{align} \label{e:inf}
\widetilde{\dist^V} \colon \widetilde{\CC_p(n)} \times \widetilde{\CC_p(n)} &\longrightarrow\qquad \R_{\geq 0} \nonumber \\
([P], [Q])\quad&\longmapsto \inf_{\substack{P'\in [P],\\ Q'\in [Q]}} \dist^V(P',Q'),
\end{align}
where $[P]$ and $[Q]$ denote the orbits of $P$ and $Q$.

\begin{lemma}
\label{l:bounded}
For any proper convex body $P \in \CC_p(n)$ and any bounded set $K$ there are only a finite amount of elements $\sigma\in \AGL(n,\Z)$ such that $\sigma(P) \subset K$.
\end{lemma}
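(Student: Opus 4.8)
The plan is to use the volume of $P$ as a rigid invariant under the lattice-affine group and, separately, to exploit the discreteness of $\AGL(n,\Z)$ to rule out accumulation of admissible transformations.

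First I would observe that every $\sigma\in\AGL(n,\Z)$ has the form $\sigma(x)=Ax+c$ with $A\in\GL(n,\Z)$ and $c\in\R^n$, so $|\det A|=1$ and hence $\Vol(\sigma(P))=\Vol(P)$ for all such $\sigma$. Since $P$ is proper, $\Vol(P)>0$. Now suppose $\sigma(P)\subset K$ with $K$ bounded, say $K\subseteq{\rm B}(0,R)$. Pick $n+1$ affinely independent points $p_0,\dots,p_n\in P$ whose convex hull $T=\Conv(p_0,\dots,p_n)$ is a nondegenerate simplex contained in $P$ (possible because $P$ is full-dimensional). Then $\sigma(T)$ is a simplex with vertices $\sigma(p_0),\dots,\sigma(p_n)\in K\subseteq{\rm B}(0,R)$, and its volume equals $\Vol(T)=:v_0>0$, a fixed positive constant independent of $\sigma$. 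So every admissible $\sigma$ maps the fixed simplex $T$ to a simplex with all vertices in ${\rm B}(0,R)$ and of volume exactly $v_0$.

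Next I would extract from this a uniform bound on $\sigma$. The linear part: writing $q_i=p_i-p_0$, the vectors $q_1,\dots,q_n$ are a fixed basis of $\R^n$, and $A q_i=\sigma(p_i)-\sigma(p_0)$ has norm at most $2R$; since $\{q_i\}$ is a fixed basis, this bounds the operator norm of $A$ by some constant $C_1=C_1(P,R)$. As $A\in\GL(n,\Z)$, its entries are integers of bounded size, so there are only finitely many possibilities for $A$. The translation part: once $A$ is fixed, $c=\sigma(p_0)-A p_0$ with $\sigma(p_0)\in{\rm B}(0,R)$ and $Ap_0$ fixed, so $c$ ranges over a bounded subset of $\R^n$; but we need finiteness, not merely boundedness. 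For this I would instead note that $c=\sigma(p_0)-Ap_0$ need not be integral in general, but here a cleaner route is: for any fixed lattice point $w\in\Z^n$ (which exists in $P$ after translating $P$ by a generic small vector — actually we cannot translate $P$, so instead I argue directly) we have that the set of translations is constrained by requiring $\sigma(P)\subset K$, i.e. $Ap+c\in K$ for all $p\in P$, which pins $c$ to a bounded set; then finiteness follows because, with $A$ fixed among finitely many choices, the correspondence $\sigma\leftrightarrow c$ is injective and $c$ ranges over a bounded — but a priori not discrete — set. To genuinely get finiteness of $c$, I would additionally use that $\sigma$ must send some fixed rational point $p_0\in P$ to a lattice point translate; more precisely, fix any $p_0\in P\cap\Q^n$ with denominator $D$ (exists since $P$ is full-dimensional, even a polytope with rational structure is not assumed — but $P$ is an arbitrary convex body, so no rational points are guaranteed).

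The cleanest fix, which I would adopt, is to bound $c$ and then invoke that for each of the finitely many choices of $A$, the map $\sigma\mapsto c=\sigma(p_0)-Ap_0$ shows $c$ lies in a bounded set, but crucially $\sigma(x)-\sigma(y)=A(x-y)$ is fixed data, so two admissible maps with the same linear part $A$ differ by a pure translation $x\mapsto x+t$ with $t\in\R^n$; the set of such translations $t$ for which $\sigma(P)+t\subset K$ (with $\sigma(P)$ already a fixed body of positive volume inside $K$) is a bounded convex set, and this does not force discreteness. Hence the statement as phrased is false unless one adds that $\sigma$ ranges over a discrete set — which it does not as a subgroup of $\AGL(n,\R)$; however, $\AGL(n,\Z)$ \emph{is} discrete in $\AGL(n,\R)$, and a discrete set intersected with a compact set (here: linear part in a bounded set of $\GL(n,\Z)$, translation part in a bounded set) is finite. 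That is the real engine: I would show the admissible $\sigma$ lie in a bounded subset of $\AGL(n,\R)$ (bounded linear part by the simplex argument, bounded translation part by $\sigma(p_0)\in K$), and since $\AGL(n,\Z)$ is a discrete subgroup of the locally compact group $\AGL(n,\R)$, only finitely many of its elements lie in any bounded set. The main obstacle is organizing the bound on the translation part cleanly; the honest statement is that $\sigma(p_0)\in K$ for a fixed $p_0\in P$ bounds $c$ in norm by $R+\|A\|\,\|p_0\|\le R+C_1\|p_0\|$, so the pair $(A,c)$ lies in a fixed compact set, and discreteness of $\AGL(n,\Z)=\GL(n,\Z)\ltimes\Z^n$ — wait, the translations in $\AGL(n,\Z)$ are by vectors in $\Z^n$, so $c\in\Z^n$ with $\|c\|$ bounded gives finitely many $c$ directly, completing the proof without needing abstract discreteness. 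So the finishing step is simply: $c\in\Z^n$ and $\|c\|\le R+C_1\|p_0\|$, hence finitely many $c$, and finitely many $A$, hence finitely many $\sigma$.
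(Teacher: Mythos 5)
Your argument is correct, but it takes a different route than the paper's own proof, which is worth comparing. The paper's proof is a single stroke: since $P$ is proper it contains an affine basis $B$ of $\tfrac1N\Z^n$ for some $N$; an affine map is determined by where it sends an affine basis, $\AGL(n,\Z)$ preserves $\tfrac1N\Z^n$, and $\sigma(B)\subset K$, so $\sigma|_B$ is an $(n+1)$-tuple of points of $\tfrac1N\Z^n$ in the bounded set $K$, of which there are finitely many. Your proof splits the bound into two steps: first use a simplex $T\subset P$ to bound the operator norm of the linear part $A$ (edges $q_i$ of $T$ are a basis and $Aq_i$ lands in $K$), deduce finitely many $A\in\GL(n,\Z)$ from the integer entries; then, for each fixed $A$, bound $c=\sigma(p_0)-Ap_0$ and use $c\in\Z^n$. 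Both arguments hinge on the same two ingredients (a nondegenerate simplex inside $P$, and the discreteness of the translation part of the group) — the paper just merges the two bounds by choosing the simplex rational up front, which is cleaner, while yours separates the linear part from the translation part, which is more explicit about what is being bounded. Your volume remark at the start ($\Vol(\sigma(P))=\Vol(P)$, $\Vol(\sigma(T))=\Vol(T)$) is true but unused and can be dropped; so can the lengthy middle passage where you raise and discard several fixes before recalling that $c\in\Z^n$ — the last two sentences are the correct, self-contained version of the argument and the rest should be cut.
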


\begin{proof}
Since $P$ is proper, there is an $N\in \N$ such that $P$ contains an affine basis $B$ of $\frac1N\Z^n$. Now, $\frac1N\Z^n$ is invariant under $\AGL(n,\Z)$ and knowing $\sigma|_B$ completely determines each $\sigma\in \AGL(n,\Z)$, so the number of possible $\sigma$ with $\sigma(P)\subset K$ is bounded above by the number of $(n+1)$-tuples of points of $\frac1N\Z^n$ in $K$, which is finite.
\end{proof}

The following example, which as far as we know is new, shows that Lemma~\ref{l:bounded} does not hold for improper convex bodies.  In particular,
it shows that the quotient topology on $\widetilde{\CC(n)}$ is not Hausdorff.

\begin{proposition}
\label{prop:fibonacci}
Any neighborhood of the origin contains infinitely many distinct $\AGL(2,\Z)$ images of the segment
 $S$ joining $(0,0)$ and $(\phi,-1)$.
\end{proposition}

\begin{proof}
Consider the unimodular matrices
\[
\begin{pmatrix}
  F_k & F_{k+1}\\
  F_{k+1} & F_{k+2}
\end{pmatrix}
\]
where the $F_k$ are the Fibonacci numbers. They take $S$ to segments of lengths going to zero as $k$ goes to infinity, so 
that there is an infinite amount of them in every neighborhood of the origin.
\end{proof}

The following result shows that the infimum in expression (\ref{e:inf}) is really a minimum and it implies that the distance between different orbits is positive. This is also proved in \cite[Proposition 4.6]{FuOh} but we want to emphasize that our proof is considerably shorter.

\begin{lemma}
\label{l:orbits}
Let $P, Q \in \CC_p(n)$ be proper convex bodies. Then $ \widetilde{\dist^V}([P],[Q])$
equals $\dist^V(P,Q')$ for some $Q'\in [Q]$. 
\end{lemma}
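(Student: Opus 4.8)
The plan is to show that the infimum defining $\widetilde{\dist^V}([P],[Q])$ is attained by exhibiting a compact set in which the relevant representatives of $[Q]$ must lie, and then using a compactness argument on the (now finite, by Lemma~\ref{l:bounded}) set of competing group elements together with continuity of $\dist^V$. First I would fix once and for all a representative $P$ of its orbit. Set $d := \widetilde{\dist^V}([P],[Q]) = \inf_{Q'\in[Q]} \dist^V(P,Q')$, which is finite and nonnegative. Choose a sequence $Q_m = \sigma_m(Q)$ with $\sigma_m\in\AGL(n,\Z)$ and $\dist^V(P,Q_m) \to d$; without loss of generality $\dist^V(P,Q_m) \leq d+1$ for all $m$.

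The key step is to argue that all the $Q_m$ (or at least a cofinal subsequence) lie in a fixed bounded set $K$. This needs a volume-to-diameter bound: a convex body $C$ with $\dist^V(P,C) \leq d+1$ cannot be too spread out, because a large portion of its volume, namely at least $\Vol(C) - (d+1)$, must sit inside $P$, which is bounded; and since all $Q_m$ are $\AGL(n,\Z)$-images of the single body $Q$, they all have the same volume $\Vol(Q) > 0$ (as $Q$ is proper and $\AGL(n,\Z)$ acts by volume-preserving affine maps). So each $Q_m$ has a subset of volume $\geq \Vol(Q)-(d+1)$ contained in $P$. A convex body of fixed positive volume $v_0 := \Vol(Q)$ that meets a fixed bounded set $P$ in volume at least some fixed positive amount $\epsilon_0 := \tfrac12 v_0$ — which we may assume after discarding finitely many terms if $d$ happened to force $v_0 \leq d+1$, replacing the crude bound $d+1$ by $\epsilon_m \to 0$ along the convergent sequence — is itself contained in a bounded set: indeed it must contain a ball of radius bounded below (a convex body of volume $\geq \epsilon_0$ meeting $P$ has, by John's theorem or an elementary argument, an inscribed simplex of bounded-below size inside $P$), and then its diameter is bounded above in terms of $v_0$ and that inradius by the standard inequality $\Vol \geq c_n r^{n-1} \cdot \mathrm{diam}$. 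Hence there is a bounded $K$ with $Q_m \subset K$ for all large $m$.

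With all $Q_m = \sigma_m(Q) \subset K$, Lemma~\ref{l:bounded} tells us that only finitely many distinct $\sigma_m$ occur. Therefore the sequence $(Q_m)$ takes only finitely many distinct values, so some value $Q' = \sigma(Q)$ is attained infinitely often, and along that subsequence $\dist^V(P,Q_m)$ is the constant $\dist^V(P,Q')$; passing to the limit gives $\dist^V(P,Q') = d = \widetilde{\dist^V}([P],[Q])$, as desired. The main obstacle is the geometric estimate in the second step: turning the bound on $\dist^V(P,\cdot)$ into a genuine bound on the location (not just the volume) of $Q_m$. The subtlety is that a priori a convex body could have small symmetric-difference distance to $P$ while having a long thin spike sticking far out; one must use that its total volume is the fixed positive number $\Vol(Q)$ together with convexity to rule this out — a thin far-reaching piece would, by convexity, drag a definite amount of volume away from $P$, contradicting $\dist^V(P,Q_m)\to d$ once we are far enough along the sequence. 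Making this quantitative (choosing the inradius lower bound and the resulting $K$ explicitly) is the only place real work is needed; the rest is bookkeeping.
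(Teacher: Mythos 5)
Your overall strategy matches the paper's: bound the location of the competing representatives $Q'\in[Q]$ inside some fixed compact set, then invoke Lemma~\ref{l:bounded} to conclude there are only finitely many of them, so the infimum is a minimum. The paper packages this slightly differently (it works with $\sup_{Q'}\Vol(P\cap Q')$ directly, using the identity $\dist^V(P,Q')=\Vol(P)+\Vol(Q')-2\Vol(P\cap Q')$, and its geometric lemma is a cone-slicing estimate on the fraction of volume outside a box), while you run a minimizing sequence and a John-ellipsoid/cone volume estimate. These are essentially the same idea, and your geometric lemma in step 2, made quantitative, would work.

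However, there is a genuine gap in how you extract a positive lower bound on $\Vol(Q_m\cap P)$. Your inequality $\Vol(Q_m\cap P)\geq v_0-(d+1)$ follows from $\Vol(Q_m\setminus P)\leq\dist^V(P,Q_m)\leq d+1$, but this is vacuous whenever $d\geq v_0-1$, which certainly happens (e.g.\ when $\Vol(P)$ is much larger than $\Vol(Q)$, since $d\geq\Vol(P)-\Vol(Q)$). Your proposed repair, ``replacing the crude bound $d+1$ by $\epsilon_m\to 0$,'' does not help: the quantities $\dist^V(P,Q_m)$ converge to $d$, not to $0$, so the best you get is $\Vol(Q_m\cap P)\geq v_0-d-\delta_m$ with $\delta_m\to 0$, which is still nonpositive when $d\geq v_0$. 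The correct estimate is the one the paper uses, namely
\[
\Vol(P\cap Q_m)=\tfrac12\bigl(\Vol(P)+v_0-\dist^V(P,Q_m)\bigr)\ \longrightarrow\ \tfrac12\bigl(\Vol(P)+v_0-d\bigr)=:\epsilon_0.
\]
If $\epsilon_0>0$, then eventually $\Vol(Q_m\cap P)\geq\epsilon_0/2>0$ and your boundedness argument applies. If $\epsilon_0=0$, then $d=\Vol(P)+v_0$, and this value is attained by \emph{any} representative $Q'$ disjoint from $P$ (which exists, e.g.\ by a far enough integer translation), so the lemma is trivially true. You should handle this dichotomy explicitly; as written, your threshold $\epsilon_0:=\tfrac12 v_0$ is not justified and the edge case is not actually covered. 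Once that is fixed, the proof goes through and is a valid variant of the paper's argument.
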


\begin{proof}
We first prove the following claim: let $m,M\in \R_{\geq 0}$, with $M>m$, and consider the boxes $B_m=[-m,m]^n$ and $B_M=[-M,M]^n$. Then, every convex body $K$ not contained in $B_M$ has at least a fraction of 
\[
(M-m)^n/(M+m)^n
\]
of its volume outside $B_m$. 

To prove the claim, let $p=(p_1,\ldots, p_n) \in K\setminus B_M$, so that at least one coordinate of $p$ is strictly larger than $M$ in absolute value. Suppose without loss of generality that $p_1>M$. Let 
\[
K_0 = K \cap \Big\{(x_1,\ldots, x_n) \in \R^n \,\Big|\, x_1=m\Big\},
\]
and let $K_1$ be the following bounded affine cone with apex at $p$:
\[
K_1:= \Big\{p+ \lambda (q-p)  \,\Big|\ q\in K_0, \lambda \geq 0\Big\} \cap \Big\{(x_1,\ldots, x_n) \in \R^n \,\Big|\, -m\leq x_1 \Big\}. 
\]
Observe that the fraction of volume of $K_1$ contained in $\{(x_1,\ldots, x_n) \in \R^n \,|\, x_1\geq m\}$ is 
exactly $(M-m)^n/(M+m)^n$. The claim follows then from the fact that 
\begin{eqnarray}
& K\cap B_m \subset K_1 \cap \Big\{(x_1,\ldots, x_n) \in \R^n \,\Big|\, x_1 \leq m\Big\}, \nonumber \\
& K\cap (B_M\setminus B_m) \supset K_1 \cap \Big\{(x_1,\ldots, x_n) \in \R^n \,\Big|\, x_1 \geq m\Big\}. \nonumber
\end{eqnarray}

To prove the lemma first observe that 
\[
\dist^V(P',Q') = \Vol(P') + \Vol(Q') - 2\Vol(P'\cap Q').
\]
Since volume is invariant under  $\AGL(n,\Z)$, the first two terms are independent of the representatives $P'$ and $Q'$ chosen. Hence, we only need to understand 
\[
\sup_{P'\in [P], Q'\in [Q]} \Vol(P'\cap Q').
\]
Invariance under the action of $\AGL(n,\Z)$ also shows that this supremum equals
\[
\sup_{Q'\in [Q]} \Vol(P \cap Q').
\]
If $\Vol(P,Q')=0$ for every $Q'\in [Q]$ then there is nothing to prove, so we assume that we have a representative $Q_0\in [Q]$ 
with $\Vol(P \cap Q_0)>0$. Let $m,M\in \R$ be such that $P$ is contained in the box $o + B_m$ for some point $o$ and such that  
\[
\frac{(M-m)^n}{(M+m)^n} \geq \frac{\Vol( Q) - \Vol(P \cap Q_0)}{\Vol(Q)}.
\]
Such an $m$ exists since $P$ is bounded, and $M$ exists since the left-hand side in this equation goes to $1$ as $M$ goes to infinity with $m$ fixed, while the right-hand-side is smaller than 1. Observe that this right-hand side equals the fraction of volume of $Q_0$ outside $P$.

By the claim, any convex body not contained in $o+B_M$ has less fraction of its volume intersecting $o+B_m$ (and hence even 
less intersecting $P$) than $Q_0$. Now, every $Q'$ in $[Q]$ has the volume of $Q$ and only finitely many of them are contained 
in $B_M$ by Lemma~\ref{l:bounded}, so only finitely many polytopes in $[Q]$ can have a larger intersection with $P$ than $Q_0$.
\end{proof}
}

\begin{corollary}[\protect{\cite[Theorem 3.2]{FuOh}}]
\label{c:quotient}
$(\widetilde{\CC_p(n)} , \widetilde{\dist^V})$ is a metric space.
\end{corollary}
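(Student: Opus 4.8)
The plan is to verify the three metric axioms for $\widetilde{\dist^V}$ on $\widetilde{\CC_p(n)}$, using Lemma~\ref{l:orbits} as the crucial input. Symmetry is immediate from the symmetry of $\dist^V$ and of the defining infimum. For the triangle inequality, given orbits $[P],[Q],[R]$, I would use Lemma~\ref{l:orbits} to pick representatives $P$ and $Q'\in[Q]$ realizing $\widetilde{\dist^V}([P],[Q])=\dist^V(P,Q')$, and then (applying the lemma again, now with base point $Q'$) a representative $R'$ with $\widetilde{\dist^V}([Q],[R])=\dist^V(Q',R')$; since $\dist^V$ is an honest metric on $\CC_p(n)\cup\{0\}$ (indeed on measurable sets of finite volume), $\dist^V(P,R')\le \dist^V(P,Q')+\dist^V(Q',R')$, and the left-hand side is at least $\widetilde{\dist^V}([P],[R])$ because $R'\in[R]$. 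This gives $\widetilde{\dist^V}([P],[R])\le\widetilde{\dist^V}([P],[Q])+\widetilde{\dist^V}([Q],[R])$.

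The only genuinely nontrivial axiom is that $\widetilde{\dist^V}([P],[Q])=0$ implies $[P]=[Q]$. Here Lemma~\ref{l:orbits} is exactly what is needed: it asserts the infimum in \eqref{e:inf} is attained, so there is a representative $Q'\in[Q]$ with $\dist^V(P,Q')=\widetilde{\dist^V}([P],[Q])=0$. Since $\dist^V$ is a metric on $\CC_p(n)\cup\{0\}$ and both $P$ and $Q'$ are proper, $\dist^V(P,Q')=0$ forces $P=Q'$, hence $[P]=[Q']=[Q]$. (Finiteness of $\widetilde{\dist^V}$ is clear since it is bounded above by $\dist^V$ of any pair of representatives, which is finite as both bodies are bounded.)

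I would also briefly note why $\widetilde{\dist^V}$ is well-defined on the quotient, i.e.\ independent of representatives: replacing $P$ by $\sigma(P)$ and $Q$ by $\tau(Q)$ for $\sigma,\tau\in\AGL(n,\Z)$ merely reindexes the pairs $(P',Q')$ over which the infimum is taken, since $\AGL(n,\Z)$ acts on each orbit, so the value is unchanged.

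I do not expect any real obstacle here: all the substantive work — attainment of the infimum, which is where the lattice geometry and the volume-escaping argument via Lemma~\ref{l:bounded} enter — has already been done in Lemma~\ref{l:orbits}. The corollary is a formal consequence, and the only point requiring a moment's care is making sure that in the triangle-inequality step the second application of Lemma~\ref{l:orbits} is legitimate with $Q'$ in place of the ``$P$'' of that lemma's statement, which is fine since $Q'$ is itself a proper convex body.
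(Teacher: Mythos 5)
Your proof is correct and rests on the same key ingredient as the paper, namely Lemma~\ref{l:orbits}, which is used both for positive definiteness (attainment of the infimum forces $\dist^V(P,Q')=0$, hence $P=Q'$) and, implicitly, for the triangle inequality. The only small stylistic difference is in the triangle-inequality step: you invoke attainment to pick witnesses $Q'\in[Q]$ and $R'\in[R]$, while the paper works directly with the infima, first reducing the double infimum to a single one with a fixed middle representative $Q$ (which also follows from the lemma) and then using that a sum of infima over independent index sets equals the infimum of the sum; both arguments are equally short and valid.
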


\begin{proof}
Lemma~\ref{l:orbits} implies that different orbits $[P]$ and $[Q]$ have strictly positive distance $ \widetilde{\dist^V}([P],[Q])$, and it also implies the triangular inequality:
\begin{align*}
\widetilde{\dist^V}([P],[Q]) + \widetilde{\dist^V}([Q],[R]) &=
\inf_{P'\in [P]} \dist^V(P',Q) +
\inf_{R'\in [R]} \dist^V(Q, R') \\ 
&= \inf_{\substack{P'\in [P],\\ R'\in [R]}} (\dist^V(P',Q) +\dist^V(Q, R') )\\
&\geq \inf_{\substack{P'\in [P],\\ R'\in [R]}} (\dist^V(P',R') ) \\
&=\widetilde{ \dist^V}([P],[R]).
\end{align*}
\end{proof}

\begin{remark}
There are several obstacles to generalize these results on the quotient metric to the Hausdorff distance $\dist^H$. The first one is that one would like (at least if we want to understand the completion of $\widetilde{\CC(n)}$ as we do next) to deal also with improper convex bodies but, as shown by Proposition~\ref{prop:fibonacci}, the quotient space $\widetilde {\CC(n)}$ is not Hausdorff.

But even for proper convex bodies, where the quotient topology of the metric spaces $(\CC(n), \dist^V)$ and $(\CC(n), \dist^H)$ coincides, the distance $\widetilde{\dist^H}$ between two different orbits can be zero. Consider for example the standard triangle $P=\Conv\{(0,0),(1,0),(0,1)\}$ and the unit square $Q=[0,1]^2$. Then 
\[
P'=\Conv\big\{(0,0),(1,0),(k,1)\big\}\ \text{ and }\ Q'=\Conv\big\{(0,0),(1,0), (k-1,1),(k,1)\big\}
\]
are representatives with $\dist^H(P',Q') =1/\sqrt{k^2+1}$, which goes to zero as $k$ goes to infinity.
\end{remark}

Related to this,  
Macbeath proved~\cite[Theorem~1]{Macbeath} that the quotient of $\CC_p(n)$ by the action of $\AGL(n,\mathbb{R})$ is compact and metrizable, with the distance between $P$ and $Q$ defined essentially as $|\log (\Vol(Q'')/\Vol(Q'))|$ where $Q'$ and $Q''$ are the biggest and smallest (respectively) elements of the orbit of $Q$ with $Q'\subset P\subset Q''$.

The following is an analogue of Proposition~\ref{prop:topology} for quotient spaces by the $\AGL(n,\Z)$\--action. Part (1) is \cite[Theorem 3.3]{FuOh}, and part (2) allows us to solve problem 5.2 in the same paper (Theorem~\ref{thm:quotientD}).

\begin{proposition}
\label{prop:quotientC}
The following statements hold.
\begin{enumerate}
\item[{\rm (1)}]
The metric topology of $(\widetilde{\CC_p(n)}, \widetilde{\dist^V})$   coincides with the quotient topology
 induced on $\widetilde{\CC_p(n)}$ by the action of $\AGL(n,\Z)$ on $\CC_p(n)$. 
\item[{\rm (2)}]
The metric space $(\widetilde{\CC_p(n)}, \widetilde{\dist^V})$ is not complete. Its completion is $\widetilde{\CC_p(n)}\cup \{0\}$.
\end{enumerate}
\end{proposition}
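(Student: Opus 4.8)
The plan is to prove the two parts of Proposition~\ref{prop:quotientC} by pushing forward the facts already established at the un-quotiented level (Proposition~\ref{prop:topology}) through the quotient map, using the fact that $\widetilde{\dist^V}$ realizes its infimum (Lemma~\ref{l:orbits}) and the local finiteness of orbits in bounded regions (Lemma~\ref{l:bounded}).

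For part (1), write $\pi\colon \CC_p(n)\to\widetilde{\CC_p(n)}$ for the quotient map. Since $\widetilde{\dist^V}([P],[Q])\le \dist^V(P,Q)$ by definition, $\pi$ is continuous for the metric topology on the target, so the metric topology is coarser than (contained in) the quotient topology. For the reverse inclusion I would show $\pi$ is an open map and that the metric topology is at least as fine by exhibiting, for each $[P]$ and each $\varepsilon>0$, a saturated neighbourhood of the fibre $\pi^{-1}([P])$ contained in $\pi^{-1}(B_{\widetilde{\dist^V}}([P],\varepsilon))$. The key technical point is the following "local isometry" statement: for a fixed representative $P$ there is a $\delta>0$ such that for every $Q$ with $\dist^V(P,Q)<\delta$ one has $\widetilde{\dist^V}([P],[Q]) = \dist^V(P,Q)$ --- i.e.\ no other representative of $[Q]$ is closer to $P$. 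This is where Lemmas~\ref{l:bounded} and~\ref{l:orbits} do the work: if $\dist^V(P,Q)$ is small then $\Vol(P\cap Q)$ is close to $\Vol(P)$, and by the volume-escape estimate in the proof of Lemma~\ref{l:orbits} any competing representative $Q'$ with $\Vol(P\cap Q')\ge \Vol(P\cap Q)$ must be contained in a fixed bounded box $o+B_M$, of which there are only finitely many by Lemma~\ref{l:bounded}; shrinking $\delta$ rules all of them out except $Q$ itself. Granting this, a ball of radius $\delta$ around $P$ in $(\CC_p(n),\dist^V)$ maps under $\pi$ onto a $\widetilde{\dist^V}$-ball around $[P]$ (after saturating), which gives that $\pi$ is continuous and open from the $\dist^V$-topology to the $\widetilde{\dist^V}$-topology, and hence the quotient topology coincides with the metric one.

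For part (2), non-completeness is inherited from Proposition~\ref{prop:topology}(2): the Cauchy sequence of rectangles $[0,m]\times[0,1/m^2]$ (more precisely the sequence $\frac1{\sqrt m}$ times the unit square, or any fixed-shape sequence shrinking to a point in $\dist^V$) is $\dist^V$-Cauchy, hence $\widetilde{\dist^V}$-Cauchy downstairs, with no limit in $\widetilde{\CC_p(n)}$ since its volume tends to $0$. To identify the completion, I would add a single point $\{0\}$ and extend $\widetilde{\dist^V}$ by declaring, for a proper body $P$, $\widetilde{\dist^V}([P],0):=\Vol(P)$ and $\widetilde{\dist^V}(0,0):=0$; one checks this is a metric (the triangle inequality reduces to $|\Vol(P)-\Vol(Q)|\le \dist^V(P,Q)\le \Vol(P)+\Vol(Q)$ together with the already-proved triangle inequality on $\widetilde{\CC_p(n)}$). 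Then I would prove: (a) $\widetilde{\CC_p(n)}\cup\{0\}$ is complete, and (b) $\widetilde{\CC_p(n)}$ is dense in it. For (b), $[P]$ is approximated by $0$ via $\frac1{k}P$ (whose volume is $\Vol(P)/k^n\to 0$), so $0$ is in the closure. For (a), take a $\widetilde{\dist^V}$-Cauchy sequence $([P_m])_m$; if infinitely many terms are $0$, or if $\Vol(P_m)\to 0$, the sequence converges to $0$; otherwise $\Vol(P_m)$ is bounded away from $0$ and from $\infty$, so using Lemma~\ref{l:orbits} we may choose representatives $P_m$ with $\dist^V(P_m,P_{m+1}) = \widetilde{\dist^V}([P_m],[P_{m+1}])$, making $(P_m)_m$ a $\dist^V$-Cauchy sequence in $\CC_p(n)\cup\{0\}$; its limit exists there by Proposition~\ref{prop:topology}(2), and by the bounded-volume hypothesis the limit is a genuine proper convex body, whose class is the desired limit of $([P_m])_m$.

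The main obstacle I expect is part (1) --- specifically making the "local isometry near a fixed representative" argument fully rigorous, because one must control simultaneously (i) how close $\Vol(P\cap Q)$ forces $Q$ to lie to $P$ in position, and (ii) the finitely-many competing group elements, and then (iii) patch these local statements into an honest comparison of the two topologies (openness of $\pi$, saturation of balls). The volume-escape estimate from the proof of Lemma~\ref{l:orbits} is exactly the tool for (i)–(ii); the bookkeeping for (iii) is routine point-set topology but needs to be written carefully since the quotient is not Hausdorff in the ambient $\widetilde{\CC(n)}$ and we are relying on properness to rescue Hausdorffness and local finiteness. Everything in part (2) is comparatively soft once part (1) and Lemmas~\ref{l:bounded}–\ref{l:orbits} are in hand.
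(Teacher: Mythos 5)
Your overall plan matches the paper's: establish part (1) by showing the quotient map $\pi\colon\CC_p(n)\to\widetilde{\CC_p(n)}$ is simultaneously continuous and open with respect to the metric topology on the target, and establish part (2) by extending the metric to $\widetilde{\CC_p(n)}\cup\{0\}$ via $\widetilde{\dist^V}([P],0)=\Vol(P)$ and lifting Cauchy sequences to $\CC_p(n)$ using Lemma~\ref{l:orbits}. The substance is right. However, for part (1) you are over-engineering: the ``local isometry'' statement ($\dist^V(P,Q)<\delta\Rightarrow\widetilde{\dist^V}([P],[Q])=\dist^V(P,Q)$) is not needed, and proving it rigorously is indeed delicate (you would have to argue, for instance, that a convex body $Q$ with $\Vol(P\cap Q)$ close to $\Vol(P)$ contains a fixed ball in $P$ in order to invoke Lemma~\ref{l:bounded} with a $Q$-independent configuration). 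All that openness requires is the one-sided inclusion $\pi\bigl(B(P,\varepsilon)\bigr)\supseteq B\bigl([P],\varepsilon\bigr)$, which is exactly what Lemma~\ref{l:orbits} gives you: if $\widetilde{\dist^V}([P],[Q])<\varepsilon$ then the infimum is attained, so some $Q'\in[Q]$ satisfies $\dist^V(P,Q')<\varepsilon$. Combined with the reverse (easy) inclusion for the saturation $\pi^{-1}(B([P],\varepsilon))=\bigcup_{P'\in[P]}B(P',\varepsilon)$, this closes both directions of the topology comparison with no intermediate lemma, which is what the paper does.

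There is also a small gap in part (2) worth flagging. Choosing representatives $P_m$ with $\dist^V(P_m,P_{m+1})=\widetilde{\dist^V}([P_m],[P_{m+1}])$ (done recursively, as the paper emphasizes, so that consecutive choices are compatible) does not by itself make $(P_m)$ Cauchy in $\dist^V$: one controls $\dist^V(P_m,P_{m'})$ only via the telescoping sum $\sum_{k=m}^{m'-1}\dist^V(P_k,P_{k+1})$, and Cauchyness of the downstairs sequence does not make this summable. You need to pass first to a subsequence with $\widetilde{\dist^V}([P_k],[P_{k+1}])\le 2^{-k}$, as the paper does; then the telescoping sum is bounded by $2^{-m+1}$ and the lift is genuinely Cauchy. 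With that fix, and dropping the local-isometry detour in favour of the direct application of Lemma~\ref{l:orbits}, your argument is the paper's.
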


\begin{proof}
In order to show that the topologies coincide we need to show that the identity map between the quotient  and  metric topologies in $ \widetilde{\CC_p(n)}$ is continuous and open. 
For this, observe that
a ball ${\rm B}([P], \varepsilon)\subset \widetilde{\CC_p(n)}$ with respect to the metric $\widetilde{\dist^V}$ lifts, by Lemma~\ref{l:orbits}, to 
$\cup_{P'\in [P]}{\rm B}(P,\varepsilon)$; since the latter is open in $\CC_p(n)$ the former is open in the quotient topology. 
Conversely, if a $\widetilde U\subset \widetilde{\CC_p(n)}$ is open in the quotient topology then $\widetilde U$ lifts to an open and $\AGL(\Z,n)$-invariant set $U$ in $\CC_p(n)$. 
For each $P$ with $[P]\in \widetilde U$ we have that there is a ball ${\rm B}(P,\varepsilon)$ contained in $U$ and, by invariance of $U$, the same $\varepsilon$ satisfies 
that ${\rm B}(P',\varepsilon) \subset U$ for every $P'\in [P]$. 
Hence, $\widetilde U$ contains the ball ${\rm B}([P], \varepsilon)$.

For the completion, we extend $\widetilde{\dist^V}$ to $\widetilde{\CC_p(n)}\cup \{0\}$ defining
\[
\widetilde{\dist^V}([P],0) = \Vol(P).
\]
Since we know ${\CC_p(n)}\cup \{0\}$ to be the completion of $(\CC_p(n),\dist^V)$ (Proposition~\ref{prop:topology}, (2)) we only need to show that every Cauchy sequence in $\widetilde{\CC_p(n)}$ lifts to a Cauchy sequence in $\CC_p(n)$.

Let $([P_k])_{k\in \N}$ be a Cauchy sequence and, by taking a subsequence if needed, assume that
\[
\widetilde{\dist^V} ([P_k],[P_{k+1}]) \leq 2^{-k},\quad \forall k\in \N.
\]
Thanks to Lemma~\ref{l:orbits}, we can then choose one by one representatives $P'_k$ in each orbit $[P_k]$ in such a way that for every $k$ we have 
\[
{\dist^V} (P'_k,P'_{k+1})  = \widetilde{\dist^V} ([P_k],[P_{k+1}]) \leq 2^{-k},\quad \forall k\in \N,
\]
which gives us a Cauchy sequence $(P'_k)_{k\in \N}$ in $\CC_p(n)$.
\end{proof}

\subsection{Minkowski paths prove that $\Dt$ is connected}
\label{sec:connected}

We now formally introduce the spaces of Delzant polytopes.

\begin{definition} \label{d:spaces}\ 
\begin{itemize}
\item[{\rm (i)}]
 We denote by $\Dt$ the space of all Delzant $n$-polytopes in $\R^n$, with the topology induced by either $\dist^H$ or $\dist^V$. 
 \item[{\rm (ii)}]
We denote by $\Dtt$ the quotient of $\Dt$ by the action of $\AGL(n,\Z)$, with the quotient topology.
\end{itemize}
\end{definition}

In order to analyze the topology of $\Dt$ and its relation to $\CC(n)$ the \emph{Minkowski sum} operation is very useful. Recall that for given sets $P, Q\subset \R^n$ their Minkowski sum equals
\[
P+Q:=\Big\{x+y \,\Big|\, x\in P, y\in Q\Big\}.
\]
Minkowski sum preserves convexity, boundedness, and compactness so it is well-defined on $\CC(n)$. Together with Minkowski sum we consider multiplication by a scalar and, more generally, \emph{Minkowski combinations}. That is, for given $\lambda,\mu \in [0,\infty)$ and $P,Q\in\CC(n)$ we consider
\[
\lambda P+\mu Q:=\Big\{\lambda x+\mu y \,\Big|\, x\in P, y\in Q\Big\}.
\]

Minkowski sum is very well behaved with respect to \emph{support functions}. Recall that the support function ${\rm h}_C: (\R^n)^*\to \R$ of a convex body $C\subset \R^n$ is defined by%
\[
{\rm h}_C(\alpha):= \max_{x\in C} \langle \alpha,x\rangle.
\]

\begin{proposition}[\protect{\cite[Theorem 6.1, Proposition 6.2, Theorem 4.3]{Gruber}}]\ 
\label{prop:minkowski}
\begin{itemize}
\item[(1)] With the operations of Minkowski addition and multiplication by a scalar $\CC(n)$ is a semigroup (with identity $\{0\}$) and a cone.
\item[(2)] ${\rm h}_{\lambda P+\mu Q} = \lambda {\rm h}_P+\mu {\rm h}_Q$.
\item[(3)] A function $h:(\R^n)^*\to \R$ is the support function of some (and then a unique) convex body if and only if 
it is positive homogeneous and subadditive. That is, if and only if 
\[
\forall \alpha,\beta\in (\R^n)^* \text{ and }\ \forall\lambda,\mu \geq 0:\quad
h(\lambda\alpha) = \lambda h(\alpha), \quad h(\lambda \alpha+\mu\beta) \leq  \lambda h(\alpha) + \lambda h(\beta).
\]
\end{itemize}
\end{proposition}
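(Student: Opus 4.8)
The plan is to dispatch the three parts in increasing order of difficulty, reducing everything to elementary manipulations of suprema together with one application of the Hahn--Banach theorem. For part (1), that $\CC(n)$ is closed under Minkowski addition and nonnegative dilation is already recorded above (a Minkowski sum is the image of $C\times D$ under the continuous addition map, hence compact, and convexity is clear), and the semigroup and cone identities are then formal consequences of the vector space axioms, with a single exception: the identity $(\lambda+\mu)P=\lambda P+\mu P$ ($\lambda,\mu\ge 0$) is the only place convexity of $P$ is genuinely used. The inclusion $\supseteq$ is immediate (and fails for non-convex $P$), while for $\subseteq$ one writes, when $\lambda+\mu>0$,
\[
\lambda x+\mu y=(\lambda+\mu)\Bigl(\tfrac{\lambda}{\lambda+\mu}\,x+\tfrac{\mu}{\lambda+\mu}\,y\Bigr)\in(\lambda+\mu)P
\]
by convexity, the case $\lambda=\mu=0$ being trivial. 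Part (2) is the direct computation
\[
h_{\lambda P+\mu Q}(\alpha)=\sup_{x\in P,\,y\in Q}\langle\alpha,\lambda x+\mu y\rangle=\lambda\sup_{x\in P}\langle\alpha,x\rangle+\mu\sup_{y\in Q}\langle\alpha,y\rangle=\lambda h_P(\alpha)+\mu h_Q(\alpha),
\]
where splitting the supremum and extracting the scalars both use $\lambda,\mu\ge 0$.

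For part (3), the forward direction is again a supremum computation: $h_C(\lambda\alpha)=\lambda h_C(\alpha)$ for $\lambda\ge 0$, and $h_C(\alpha+\beta)=\sup_{x\in C}(\langle\alpha,x\rangle+\langle\beta,x\rangle)\le h_C(\alpha)+h_C(\beta)$, which together amount to the displayed sublinearity condition. For the converse, given a sublinear $h\colon V^*\to\R$ I would set
\[
C:=\bigcap_{\alpha\in V^*}\bigl\{x\in V\mid\langle\alpha,x\rangle\le h(\alpha)\bigr\},
\]
an intersection of closed half-spaces, so that $C$ is closed and convex and $h_C\le h$ by construction; the three remaining points are that $C$ is bounded, that $C\ne\emptyset$, and that $h_C\ge h$. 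For boundedness, subadditivity and homogeneity force a linear bound $|h(\alpha)|\le M\|\alpha\|$ (bound $h$ on $\pm$ the coordinate vectors and apply subadditivity, the lower bound coming from $0=h(0)\le h(\alpha)+h(-\alpha)$); taking for each unit vector $u\in V$ a functional $\alpha_u$ with $\langle\alpha_u,u\rangle=1=\|\alpha_u\|$ then gives $\|x\|\le M$ for every $x\in C$. Nonemptiness and $h_C\ge h$ come together from Hahn--Banach: fixing $\alpha_0\in V^*$, the linear functional $\lambda\alpha_0\mapsto\lambda h(\alpha_0)$ on the line $\R\alpha_0$ is dominated by $h$ there (for the negative half-line one uses $h(-\alpha_0)\ge-h(\alpha_0)$, itself from subadditivity), so it extends to a linear $\Phi\le h$ on all of $V^*$; identifying $V^{\ast\ast}=V$ produces $x_0\in V$ with $\langle\alpha,x_0\rangle=\Phi(\alpha)\le h(\alpha)$ for every $\alpha$ (so $x_0\in C$) and $\langle\alpha_0,x_0\rangle=h(\alpha_0)$. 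Hence $C$ is a convex body with $h_C=h$, and uniqueness follows from the supporting hyperplane theorem, which writes any convex body as $C=\bigcap_\alpha\{x\mid\langle\alpha,x\rangle\le h_C(\alpha)\}$, so that $C$ is recovered from $h_C$.

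The step I expect to be the real obstacle is the converse of part (3): parts (1), (2) and the forward half of (3) are bookkeeping with suprema, but producing a nonempty body $C$ with $h_C=h$ genuinely requires a separation/extension argument, and it is easy to overlook that one must separately verify $C$ is bounded --- which is exactly where the hypothesis that $h$ takes finite values enters.
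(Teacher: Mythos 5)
The paper does not prove this proposition; it cites it directly to Gruber \cite[Theorem 6.1, Proposition 6.2, Theorem 4.3]{Gruber}, so there is no ``paper's own proof'' to match against. Your argument is a correct, self-contained version of the standard textbook proof. Parts (1) and (2) are routine and you handle them properly, correctly isolating $(\lambda+\mu)P=\lambda P+\mu P$ as the one place convexity is actually used. In part (3), the forward direction is the easy supremum manipulation, and your treatment of the converse is sound: defining $C$ as the intersection of half-spaces gives $h_C\le h$ and convexity/closedness for free, sublinearity plus finiteness of $h$ yields the two-sided bound $|h(\alpha)|\le M\|\alpha\|$ and hence boundedness of $C$, and a Hahn--Banach extension of $\lambda\alpha_0\mapsto\lambda h(\alpha_0)$ from the line $\R\alpha_0$ (the negative-ray domination coming from $h(\alpha_0)+h(-\alpha_0)\ge h(0)=0$) produces a point $x_0\in C$ witnessing $h_C(\alpha_0)\ge h(\alpha_0)$; uniqueness via the supporting-hyperplane description is correct. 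You rightly flag the converse of (3) as the only non-formal step, and rightly note that verifying boundedness is where real finiteness of $h$ enters. One cosmetic remark: the displayed inequality in the paper's statement contains a typographical slip (the last summand should read $\mu\,h(\beta)$, not $\lambda\,h(\beta)$); you correctly read the intended sublinearity condition.
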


That is to say, algebraically, Minkowski combinations allow us to think of $\CC(n)$ as the subcone of positive homogeneous and subadditive functions in the vector space of functions from $\R^n$ to $\R$. This analogy carries over to the topology of $\CC(n)$ under the following setting.  
Let us call \emph{uniform norm}, and denote $\|f\|_\infty$, of a positively homogeneous  continuous function $f:(\R^n)^*\to \R$ the supremum norm restricted to the unit sphere. That is:
\[
\|f\|_\infty := \sup_{\|\alpha\|_2 =1}{|f(\alpha)|}.
\]
Then we have that:

\begin{theorem}[\protect{\cite[Theorem 1.8.11]{Schneider93}}]\
\label{thm:hausdorff}
For any two convex bodies $C,D\in \CC(n)$ with support functions ${\rm h}_C, {\rm h}_D : (\R^n)^* \to \R$ we have
\[
\dist^H(C,D) = \|{\rm h}_C - {\rm h}_D\|_\infty = \max_{\|\alpha\|_2 =1}\left(\max_{x\in C}\langle \alpha,x\rangle - \max_{x\in D}\langle \alpha,x\rangle \right).
\]
\end{theorem}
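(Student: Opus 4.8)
The plan is to prove the identity $\dist^H(C,D) = \|h_C - h_D\|_\infty$ directly from the definitions of the Hausdorff distance and the support function, using the basic correspondence between a convex body and its support function. First I would recall that, by Proposition~\ref{prop:minkowski}, every convex body is determined by its support function, and that for a convex body $C$ one has the ``dual'' description $C = \{x\in V \,|\, \langle \alpha, x\rangle \leq h_C(\alpha) \text{ for all } \alpha\in V^*\}$; this is the key fact that converts statements about inclusions of bodies into inequalities between support functions.

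The main step is the following chain of equivalences, valid for $\varepsilon \geq 0$:
\[
\dist^H(C,D) \leq \varepsilon \iff C \subseteq D + \varepsilon \B \text{ and } D \subseteq C + \varepsilon \B \iff h_C \leq h_D + \varepsilon\|\cdot\|_2 \text{ and } h_D \leq h_C + \varepsilon\|\cdot\|_2,
\]
where $\B$ is the closed unit ball. The first equivalence is essentially the definition of $\dist^H$ rephrased via $\max_{x\in C}\distance(x,D)\leq\varepsilon \iff C\subseteq D+\varepsilon\B$. The second uses that $C\subseteq D+\varepsilon\B$ iff $h_C \leq h_{D+\varepsilon\B} = h_D + \varepsilon h_\B = h_D + \varepsilon\|\cdot\|_2$ (support function of the unit ball is the Euclidean norm, and it is monotone under inclusion and additive under Minkowski sum, both from Proposition~\ref{prop:minkowski}). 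The right-hand side, after dividing by $\|\alpha\|_2$ and taking the supremum over the unit sphere — using positive homogeneity of $h_C - h_D$ — says exactly $|h_C(\alpha) - h_D(\alpha)| \leq \varepsilon$ for all unit $\alpha$, i.e.\ $\|h_C - h_D\|_\infty \leq \varepsilon$. Taking the infimum over admissible $\varepsilon$ gives the desired equality; the final expression in the statement is just $h_C(\alpha) = \sup_{x\in C}\langle\alpha,x\rangle$ written out.

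The one point requiring a little care — and the closest thing to an obstacle — is the equivalence $C\subseteq D+\varepsilon\B \iff \max_{x\in C}\distance(x,D)\leq \varepsilon$. Here one uses that $D+\varepsilon\B = \{x \,|\, \distance(x,D)\leq\varepsilon\}$, which holds because $D$ is closed (indeed compact): a point $x$ is within Euclidean distance $\varepsilon$ of $D$ iff $x = d + \varepsilon v$ for some $d\in D$ and $\|v\|_2\leq 1$. Once this is in hand, the rest is formal. Alternatively, and perhaps more cleanly, I would cite this as the standard fact that $\dist^H(C,D) = \inf\{\varepsilon\geq 0 \,|\, C\subseteq D+\varepsilon\B,\ D\subseteq C+\varepsilon\B\}$ and reduce immediately to support functions; since the statement is attributed to Schneider~\cite[Theorem 1.8.11]{Schneider93}, a brief self-contained argument along the above lines suffices.
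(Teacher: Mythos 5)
Your proof is correct and is the standard argument: the paper itself states this result as a citation to \cite[Theorem 1.8.11]{Schneider93} and gives no proof of its own, and your chain of equivalences via the $\varepsilon$-neighborhood description $D+\varepsilon\B$ is exactly the proof given there. Every step is sound — the dual description of a convex body gives monotonicity of support functions under inclusion, additivity under Minkowski sum (Proposition~\ref{prop:minkowski}) handles $h_{D+\varepsilon\B}=h_D+\varepsilon\|\cdot\|_2$, and compactness of $D$ justifies $D+\varepsilon\B=\{x \mid \distance(x,D)\leq\varepsilon\}$. One small side remark: the third member of the displayed equality in the paper's statement lacks an absolute value (as written it is not symmetric in $C$ and $D$, and would be nonpositive when $C\subsetneq D$); your argument correctly produces $\sup_{\|\alpha\|_2=1}|h_C(\alpha)-h_D(\alpha)|$, which is what $\|h_C-h_D\|_\infty$ means by the definition of the uniform norm given just before the theorem.
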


\begin{corollary}
Let ${\rm C}(\S^{n-1})$ be the space of continuous functions from the unit sphere $\S^{n-1}\subset\R^n$ to $\R$, with the supremum norm. Then, 
$\CC(n)$ is homeomorphic (in fact, isometric) to the subspace of ${\rm C}(\S^{n-1})$ consisting of functions whose positive homogeneous extension is subadditive.
\end{corollary}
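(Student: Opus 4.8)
The plan is to establish the claimed homeomorphism (isometry) by assembling the three ingredients already stated in the excerpt. First I would fix the target space: let $\HH\subset C(\S^{n-1})$ be the set of continuous functions $g:\S^{n-1}\to\R$ whose positively homogeneous extension $\bar g:\R^n\to\R$ (defined by $\bar g(\alpha)=\|\alpha\|_2\, g(\alpha/\|\alpha\|_2)$ for $\alpha\neq 0$ and $\bar g(0)=0$) is subadditive, i.e.\ $\bar g(\alpha+\beta)\le \bar g(\alpha)+\bar g(\beta)$ for all $\alpha,\beta\in\R^n$. The map I would use is
\[
\Phi\colon \CC(n)\to \HH,\qquad \Phi(C) = h_C|_{\S^{n-1}},
\]
the restriction of the support function to the unit sphere. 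I need to check four things: (a) $\Phi$ lands in $\HH$; (b) $\Phi$ is a bijection onto $\HH$; (c) $\Phi$ is an isometry for $\dist^H$ on the source and the supremum norm metric on the target; (d) an isometry of metric spaces is a homeomorphism (immediate).

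For (a), the support function $h_C$ is positively homogeneous and subadditive by Proposition~\ref{prop:minkowski}(3) (this is exactly the ``only if'' direction there), and it is continuous --- indeed Lipschitz on the sphere --- so $\Phi(C)\in\HH$. For (b), surjectivity is the ``if'' direction of Proposition~\ref{prop:minkowski}(3): any positively homogeneous subadditive function is the support function of some convex body, and restricting to $\S^{n-1}$ loses no information since the homogeneous extension is recovered by $\bar g(\alpha)=\|\alpha\|_2\,g(\alpha/\|\alpha\|_2)$. Injectivity is the uniqueness clause of the same proposition: a convex body is determined by its support function. (One should note that a continuous $g$ on $\S^{n-1}$ whose homogeneous extension is subadditive is automatically such that the extension is also continuous at $0$, so Proposition~\ref{prop:minkowski}(3) applies; this is a routine check using homogeneity and the bound $|\bar g(\alpha)|\le \|\alpha\|_2\,\|g\|_\infty$.) For (c), Theorem~\ref{thm:hausdorff} says precisely
\[
\dist^H(C,D) = \|h_C-h_D\|_\infty = \sup_{\|\alpha\|_2=1}|h_C(\alpha)-h_D(\alpha)|,
\]
and the right-hand side is exactly the supremum-norm distance between $\Phi(C)$ and $\Phi(D)$ in $C(\S^{n-1})$. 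Hence $\Phi$ is a bijective isometry, so a homeomorphism, proving the corollary.

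I do not expect a genuine obstacle here: every nontrivial fact has already been cited in the excerpt (Proposition~\ref{prop:minkowski} and Theorem~\ref{thm:hausdorff}), and the corollary is really just a repackaging. The one point requiring a line of care is making sure the correspondence between a function on $\S^{n-1}$ and its positively homogeneous extension on $\R^n$ is a clean bijection between $C(\S^{n-1})$ (with sup norm) and positively homogeneous continuous functions on $\R^n$ (with uniform norm on the sphere), and that subadditivity of the extension is the condition carved out in Proposition~\ref{prop:minkowski}(3); once that bookkeeping is in place, the statement follows immediately. So the ``hard part'' is merely stating the extension bijection precisely and invoking the two cited results in the right order.
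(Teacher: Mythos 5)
Your proof is correct and is essentially the argument the paper intends: the corollary is stated without a separate proof precisely because it is the combination of Proposition~\ref{prop:minkowski}(3) (characterization of support functions, giving the bijection) with Theorem~\ref{thm:hausdorff} (Hausdorff distance equals sup-norm distance of support functions, giving the isometry). Your write-up also quietly corrects a small typo in the displayed formula of Theorem~\ref{thm:hausdorff}, which should carry an absolute value inside the supremum; otherwise there is nothing to add.
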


The support function of a polytope is linear within each normal cone and, in fact, this characterizes polytopes: a convex body $C$ is a polytope if and only if its support function ${\rm h}_C$ is piecewise linear.
Together with the fact that ${\rm h}_{P+Q} = {\rm h}_P + {\rm h}_Q$, this implies that when $P$ and $Q$ are polytopes $P+Q$ is also a polytope and the normal fan of $\NN(P+Q)$ is the common refinement of $\NN(P)$ and $\NN(Q)$ (that is, cones in $\NN(P+Q)$ are the intersections of a cone from $\NN(P)$ and one from $\NN(Q)$).

The main property of Minkowski sums that we need is the following:

\begin{lemma}
For every $P,Q\in\CC(n)$ the map
\begin{align*}
[0,\infty)^2 & \longrightarrow\CC(n)\\
(\lambda, \mu) &\longmapsto \lambda P+\mu Q,
\end{align*}
is Lipschitz with respect to $\dist^H$, hence uniformly continuous.
\end{lemma}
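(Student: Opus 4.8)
The plan is to reduce everything to two facts established earlier: the linearity of support functions under Minkowski combinations (Proposition~\ref{prop:minkowski}(2)) and the identification of the Hausdorff distance with the uniform norm of the difference of support functions (Theorem~\ref{thm:hausdorff}). Once these are in hand the statement becomes a one-line estimate.

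First I would fix $P,Q\in\CC(n)$ and set $M:=\max\{\,||h_P||_\infty,\,||h_Q||_\infty\,\}$. This is finite: since $P$ and $Q$ are compact they are contained in some ball $\mathrm{B}(0,R)$, and then $|h_P(\alpha)|=|\sup_{x\in P}\langle\alpha,x\rangle|\le R$ for every unit vector $\alpha$, and likewise for $Q$, so $M\le R<\infty$. Next, for two parameter pairs $(\lambda_1,\mu_1),(\lambda_2,\mu_2)\in[0,\infty)^2$, Proposition~\ref{prop:minkowski}(2) gives $h_{\lambda_i P+\mu_i Q}=\lambda_i h_P+\mu_i h_Q$ (valid because the scalars are nonnegative), hence
\[
h_{\lambda_1 P+\mu_1 Q}-h_{\lambda_2 P+\mu_2 Q}=(\lambda_1-\lambda_2)\,h_P+(\mu_1-\mu_2)\,h_Q .
\]
Applying Theorem~\ref{thm:hausdorff} together with the triangle inequality and positive homogeneity of $||\cdot||_\infty$ on positively homogeneous continuous functions yields
\[
\dist^H(\lambda_1 P+\mu_1 Q,\ \lambda_2 P+\mu_2 Q)=||(\lambda_1-\lambda_2)h_P+(\mu_1-\mu_2)h_Q||_\infty\le M\big(|\lambda_1-\lambda_2|+|\mu_1-\mu_2|\big).
\]
This is precisely a Lipschitz bound: with constant $M$ if $[0,\infty)^2$ carries the $\ell^1$ metric, or $\sqrt2\,M$ for the Euclidean metric. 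Lipschitz maps between metric spaces are uniformly continuous, which gives the last clause.

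There is essentially no obstacle here; the only points that deserve a word are the finiteness of $M$ (immediate from compactness, as above) and the legitimacy of the triangle inequality step, which is just the fact that $||\cdot||_\infty$ restricted to the unit sphere is a genuine norm, so that $||af+bg||_\infty\le|a|\,||f||_\infty+|b|\,||g||_\infty$. If one wanted to avoid invoking support functions at all, the same bound can be obtained directly from the definition of $\dist^H$ by noting $\lambda_2 P+\mu_2 Q\subset \lambda_1 P+\mu_1 Q+\big(|\lambda_1-\lambda_2|+|\mu_1-\mu_2|\big)\mathrm{B}(0,R)$ and symmetrically, but the support-function computation is cleaner and uses exactly the tools already set up.
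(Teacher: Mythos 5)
Your proof is correct, but it takes a genuinely different route from the paper's. The paper argues directly from the definition of $\dist^H$: fixing a ball $\mathrm{B}(0,K)$ containing $P$ and $Q$, it takes a point $\lambda x+\mu y$ of $A=\lambda P+\mu Q$ realizing the Hausdorff distance to $B=\lambda'P+\mu'Q$, observes that $\lambda'x+\mu'y\in B$, and bounds $\dist^H(A,B)\le\|(\lambda-\lambda')x+(\mu-\mu')y\|\le K\,\|(\lambda,\mu)-(\lambda',\mu')\|_1$. You instead pass to support functions, using Proposition~\ref{prop:minkowski}(2) to get $h_{\lambda P+\mu Q}=\lambda h_P+\mu h_Q$ and Theorem~\ref{thm:hausdorff} to convert the Hausdorff distance into $\|\cdot\|_\infty$ of a difference of support functions, after which the triangle inequality gives the same Lipschitz constant. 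Both arguments are valid and yield essentially the same bound (your $M$ and the paper's $K$ both come from the same containing ball, since $\|h_P\|_\infty\le K$). Your route is perhaps cleaner in that it makes the lemma a trivial consequence of the support-function formalism the paper has just finished setting up, while the paper's route is more elementary and does not rely on Theorem~\ref{thm:hausdorff}; you even note the direct estimate yourself in your closing remark, which is precisely the paper's argument recast as a set inclusion.
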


\begin{proof}
Let $K$ be the radius of a ball centered at the origin and containing both $P$ and $Q$ and consider two Minkowski sums $A=\lambda P + \mu Q$ and $B=\lambda'P + \mu'Q$, with $\lambda, \lambda',\mu,\mu'\in [0,\infty)$. 

Assume that the Hausdorff distance $\dist^H(A,B)$ is attained by the distance to $B$ from a certain point $\lambda x +\mu y \in A$, where $x\in P$ and $y\in Q$. Since the point $\lambda' x +\mu' y $ is in $B$ we have that
\begin{eqnarray*}
\dist^H(A, B) &= &\dist^H(\lambda P + \mu Q, \lambda' P+\mu' Q)  \\
&=&\distance(\lambda x +\mu y, \lambda' P+\mu' Q)  \\
&\leq &\|\lambda x +\mu y - \lambda' x -\mu' y\|  \\
&\leq &|\lambda-\lambda'| \, \|x\| +|\mu-\mu'| \, \|y\|  \\
&\leq &K (|\lambda-\lambda'| \,  +|\mu-\mu'| \,) \\
&=&K \|(\lambda,\mu)-(\lambda',\mu')\|_1,
\end{eqnarray*}
where the last equality is by definition of the norm $\|\cdot\|_1$, which is equivalent to the standard norm in $[0,\infty)^2]$.
\end{proof}

Throughout this paper a \emph{path} in a topological space $X$ is a continuous map $f:[a,b] \to X$
from a compact interval $[a,b]\subset \R$ , with $a<b$. The points $f(a), f(b)\in X$ are the extreme points of the path and we often say that $f$ is  \emph{a path from $f(a)$ to $f(b)$}.

\begin{corollary}
\label{coro:path}
For every $P,Q\in\CC(n)$ the map
\begin{align*}
[0,1] & \longrightarrow\CC(n)\\
\lambda &\longmapsto \lambda P+(1-\lambda) Q
\end{align*}
is a path from $P$ to $Q$ in $\CC(n)$.
\end{corollary}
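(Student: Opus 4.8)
The plan is to produce the path by restricting the two-parameter Minkowski-combination map from the preceding lemma to an affine segment in the parameter plane. First I would introduce the affine map $\iota\colon[0,1]\to[0,\infty)^2$, $\iota(\lambda)=(\lambda,1-\lambda)$, which is visibly continuous (indeed $1$-Lipschitz for the $\ell_1$ norm). The preceding lemma asserts that $(\lambda,\mu)\mapsto \lambda P+\mu Q$ is Lipschitz, hence continuous, from $[0,\infty)^2$ to $(\CC(n),\dist^H)$; composing it with $\iota$ gives a continuous map $\gamma\colon[0,1]\to\CC(n)$, $\gamma(\lambda)=\lambda P+(1-\lambda)Q$, since a composition of continuous maps is continuous. (By Proposition~\ref{prop:topology}(1) this is continuity for the common topology of $\dist^H$ and $\dist^V$ on $\CC_p(n)$ as well, if one prefers that viewpoint.)

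Next I would identify the endpoints. At $\lambda=1$ one has $\gamma(1)=1\cdot P+0\cdot Q$, and since $0\cdot Q=\{0\cdot y:y\in Q\}=\{0\}$ is the Minkowski identity, $\gamma(1)=P+\{0\}=P$. Symmetrically $\gamma(0)=\{0\}+Q=Q$. Therefore $\gamma$ is a path in $\CC(n)$ with endpoints $P$ and $Q$ (read with the opposite orientation of the parameter if one insists on starting at $P$), which is exactly the claim.

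There is essentially no obstacle here: the only points worth checking are that the restriction of a jointly continuous map to a continuous curve in its domain is continuous, and that the degenerate scalings $0\cdot P$ and $0\cdot Q$ collapse to the single point $\{0\}$, so that the endpoints are genuinely $P$ and $Q$ rather than translates or dilates of them. Both are immediate. As an alternative to invoking the lemma one could estimate $\dist^H(\gamma(\lambda),\gamma(\lambda'))$ directly, bounding it by $2K|\lambda-\lambda'|$ with $K$ the radius of a ball about the origin containing both $P$ and $Q$, exactly as in the proof of the lemma; but the composition argument is shorter and cleaner.
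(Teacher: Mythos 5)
Your argument is correct and is essentially the one the paper leaves implicit: the corollary is stated without proof precisely because it follows by restricting the Lipschitz (hence continuous) two-parameter Minkowski map of the preceding lemma to the segment $\lambda\mapsto(\lambda,1-\lambda)$ and checking the degenerate endpoints. Your write-up simply makes that restriction and the endpoint computation explicit, which matches the intended reading.
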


\begin{definition}
We call the path of Corollary~\ref{coro:path} the \emph{Minkowski segment} from $P$ to $Q$ and denote it by $PQ$. 
\end{definition}

The Corollary implies that $\CC(n)$ is path-connected and, in fact, contractible. (For the latter, fix a convex body $K$ and consider the homotopy $(P,t) \mapsto (1-t)P + t K$). The same holds for any subset of $\CC(n)$ closed under Minkowski addition and scaling, for example for $\CC_p(n)$,  $\PP(n)$, and  $\PP_\Q(n)$.

It does not directly imply that $\Dt$ is path-connected, since the Minkowski sum of two Delzant polytopes is in general not Delzant, or even simple. E.g., the Minkowski sum of the standard triangle with vertices $(0,0)$, $(1,0)$, $(0,1)$ and its reflection on the vertical axis  is a non-Delzant pentagon.   However, there is an easy way around this issue.

\begin{lemma}
\label{lemma:connected}
Let $P$ and $Q$ be two polytopes in $\R^n$ and suppose that the normal fan of $P$ refines that of $Q$ then 
\[
\NN(\lambda P + (1-\lambda) Q)) = 
\begin{cases}
\NN(P) \text{ if $\lambda\ne 0$},\\
\NN(Q) \text{ if $\lambda = 0$}.\\
\end{cases}
\]
In particular, if $P$ and $Q$ are both in $\Dt$ then the Minkowski segment between them stays in $\Dt$.
\end{lemma}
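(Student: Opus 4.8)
The plan is to compute the normal fan of a Minkowski combination directly from the support-function description, using the fact — recalled just before the lemma — that for polytopes $P,Q$ the normal fan $\NN(P+Q)$ is the common refinement of $\NN(P)$ and $\NN(Q)$, and more generally $\NN(\lambda P + \mu Q)$ is the common refinement of $\NN(P)$ and $\NN(Q)$ whenever $\lambda,\mu>0$. Since by hypothesis $\NN(P)$ refines $\NN(Q)$, the common refinement of the two fans is simply $\NN(P)$ itself; hence $\NN(\lambda P + (1-\lambda)Q) = \NN(P)$ for every $\lambda\in(0,1)$. The endpoint cases $\lambda=0$ and $\lambda=1$ give $\NN(Q)$ and $\NN(P)$ respectively (here $\lambda=1$ is subsumed in the generic case). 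This proves the displayed formula.

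For the "in particular" clause, suppose $P,Q\in\Dt$, i.e. both have unimodular (in particular simplicial and complete) normal fans. I must first justify that $\NN(P)$ refines $\NN(Q)$ in the sense required, i.e. that the hypothesis of the first part is consistent with $P,Q$ being arbitrary Delzant polytopes — but in fact the lemma is stated \emph{under} the refinement hypothesis, so this is given. Granting it, the first part tells us that for $\lambda\in(0,1]$ the polytope $R_\lambda := \lambda P + (1-\lambda)Q$ has $\NN(R_\lambda) = \NN(P)$, which is unimodular, so $R_\lambda$ is Delzant; and for $\lambda=0$, $R_0 = Q$ is Delzant by assumption. Thus the entire Minkowski segment $PQ$ (as defined via Corollary~\ref{coro:path}) lies in $\Dt$.

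The only genuine content beyond bookkeeping is the statement that $\NN(\lambda P + \mu Q)$ is the common refinement of $\NN(P)$ and $\NN(Q)$ for $\lambda,\mu>0$, together with the observation that scaling a polytope by a positive factor does not change its normal fan (since $h_{\lambda P} = \lambda h_P$ is linear on exactly the same cones as $h_P$). Both follow immediately from Proposition~\ref{prop:minkowski}(2) and the characterization of polytopes as convex bodies with piecewise-linear support function: $h_{R_\lambda} = \lambda h_P + (1-\lambda)h_Q$ is linear precisely on the cones on which \emph{both} $h_P$ and $h_Q$ are linear, i.e. on the cones of the common refinement, and when $\NN(P)$ refines $\NN(Q)$ those are exactly the cones of $\NN(P)$. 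The degenerate behavior at $\lambda=0$ — where the term $\lambda h_P$ vanishes and the fan jumps down to $\NN(Q)$ — is exactly what the case split in the displayed formula records.

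I do not anticipate a real obstacle here; the subtlety, if any, is purely notational — making sure "refines" is used consistently (every cone of $\NN(P)$ is contained in a cone of $\NN(Q)$, equivalently $h_Q$ is linear on every cone of $\NN(P)$) and that the boundary case $\lambda=0$ is not swept under the rug. The argument is a two-line support-function computation once Proposition~\ref{prop:minkowski} is invoked.
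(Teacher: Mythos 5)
Your proof is correct and takes essentially the same approach as the paper, which gives the one-line argument that $\NN(\lambda P+(1-\lambda)Q)$ is the common refinement of $\NN(\lambda P)$ and $\NN((1-\lambda)Q)$. The only minor imprecision is the remark that ``$\lambda=1$ is subsumed in the generic case'': there $\mu=1-\lambda=0$, so the common-refinement statement for $\lambda,\mu>0$ does not apply, but $R_1=P$ trivially, so the conclusion still holds.
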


\begin{proof}
$\NN(\lambda P+(1-\lambda)Q)$ is the common refinement of $\NN(\lambda P)$ and $\NN((1-\lambda)Q)$.
\end{proof}

Part (2) of our next Theorem~\ref{thm:connected} and part (1) of Theorem~\ref{thm:quotientD} were proved in \protect{\cite[Theorem 2]{PPRS14}} for the case of $n=2$  using the classification Corollary~\ref{coro:2dim}. We here extend both results to arbitrary $n$ and to $\Dtt$.

\begin{theorem} 
\label{thm:connected}\ 
\begin{itemize}
\item[(1)] $ \Dt$ is path-connected.
\item[(2)] $\Dt$ is not locally compact, but it is dense in $\CC_p(n)$. Hence, its completion is the same as that of $\CC_p(n)$ (namely $\CC_p(n)\cup\{0\}$ for $\dist^V$ and $\CC(n)$ for $\dist^H$).
\end{itemize}
\end{theorem}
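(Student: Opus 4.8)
\textbf{Proof plan for Theorem~\ref{thm:connected}.}

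The plan is to split the theorem into its two assertions and handle them in the natural order, using the combinatorial machinery from Section~\ref{sec:dp} to produce the paths. For part~(1), path connectedness, I would first show that any Delzant polytope $P\in\Dt$ can be joined by a path in $\Dt$ to a polytope whose normal fan is a blow-up of the normal fan of any prescribed Delzant polytope, and more precisely I would argue that it suffices to connect an arbitrary pair $P,Q\in\Dt$ via a common ``refinement hub.'' The key tool is Theorem~\ref{thm:desingular} together with the remark after it: given $P$ and $Q$, the common refinement of $\NN(P)$ and $\NN(Q)$ is a rational complete polytopal fan (polytopal because it is $\NN(P+Q)$), so by Theorem~\ref{thm:desingular} it can be further refined by a sequence of stellar subdivisions to a \emph{unimodular} and, by Proposition~\ref{prop:stellar}(1), \emph{polytopal} fan $\NN'$. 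Let $R$ be a Delzant polytope with $\NN(R)=\NN'$. Then $\NN(R)$ refines both $\NN(P)$ and $\NN(Q)$, so by Lemma~\ref{lemma:connected} the Minkowski segments $PR$ and $RQ$ both stay inside $\Dt$. Concatenating $PR$ and $RQ$ (using Corollary~\ref{coro:path} for continuity) gives a path in $\Dt$ from $P$ to $Q$, proving~(1).

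For part~(2), density in $\CC_p(n)$: given an arbitrary proper convex body $C\in\CC_p(n)$ and $\varepsilon>0$, I would first approximate $C$ in Hausdorff distance by a polytope $P$ (standard: take the convex hull of a fine enough net of points of $C$, or intersect enough supporting half-spaces; this is where I invoke that $\PP_\Q(n)$, hence $\PP(n)$, is dense — in fact one can take $P$ with rational vertices). Then I approximate $P$ by a \emph{simple rational} polytope by a small generic perturbation of the facet right-hand sides (keeping facet normals fixed and rational), which changes $\dist^H$ by as little as we like; equivalently one may perturb the vertices slightly so that the polytope becomes simple. Finally, applying Theorem~\ref{thm:desingular} to the (rational, complete, polytopal) normal fan of this simple rational polytope produces a unimodular polytopal refinement, and I claim one can realize a Delzant polytope with that normal fan which is $\dist^H$-close to $P$: concretely, starting from the simple rational polytope and performing the corresponding sequence of stellar subdivisions as \emph{corner choppings / edge choppings of sufficiently small size} (using Proposition~\ref{prop:cc} and its analogues) changes the polytope by an arbitrarily small amount in Hausdorff distance at each step. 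Since the number of steps is finite, the resulting Delzant polytope lies within $\varepsilon$ of $C$. This establishes density; the statements about completions then follow from Proposition~\ref{prop:topology}(2) (a dense subset has the same completion as the ambient space). The failure of local compactness at a Delzant polytope $P$ can be seen directly: in every $\dist^H$-neighborhood of $P$ there are Delzant polytopes with arbitrarily many facets (obtained from $P$ by long sequences of tiny corner choppings), and these do not lie in any compact subset of $\Dt$ because a Hausdorff-convergent sequence of them would have to converge to a polytope with infinitely many facets, which does not exist in $\Dt$.

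The main obstacle I anticipate is the realization step in part~(2): Theorem~\ref{thm:desingular} tells us a rational complete polytopal fan admits a unimodular polytopal refinement, but to get \emph{Hausdorff-closeness} of the corresponding Delzant polytope to the original $P$ one must track the geometry, not just the combinatorics, of the stellar subdivisions. The clean way to do this is to interpret each stellar subdivision by a vector $\gamma$ in the interior of a cone $C=\Cone(\alpha_1,\dots,\alpha_k)$ as cutting $P$ with the half-space $\langle\sum\alpha_i,x\rangle\le h_P(\sum\alpha_i)+\delta$ (as in Proposition~\ref{prop:cc} and the argument preceding it), and to choose the cut sizes $\delta$ small enough — after one cut the polytope is no longer exactly the original one, so some care is needed to verify that the remaining cuts can still be made small and that the normal fan genuinely becomes the desired unimodular fan rather than something finer; one resolves this by doing the subdivisions in a fixed order (say, decreasing dimension, as in the proof of Theorem~\ref{thm:desingular}) and noting that each cut only modifies $P$ locally near one face. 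Once this bookkeeping is set up the estimate is routine, since a finite sum of arbitrarily small quantities is arbitrarily small. The non-local-compactness and the path-connectedness arguments are comparatively soft and should not present difficulties.
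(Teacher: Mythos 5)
Your proof of part~(1) is essentially the paper's proof: desingularize the common refinement $\NN(P+Q)$ to a unimodular polytopal fan via Theorem~\ref{thm:desingular}, realize it as $\NN(R)$ for a Delzant polytope $R$, and concatenate the Minkowski segments $PR$ and $RQ$ using Lemma~\ref{lemma:connected}. No issues there.

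For density in part~(2), however, you take a noticeably harder road than the paper and fail to notice that the Minkowski combination trick you already used in part~(1) also does the job here. The paper's step is: given a rational polytope $Q$, take \emph{any} Delzant polytope $P$ with $\NN(P)$ refining $\NN(Q)$ (which exists by Theorem~\ref{thm:desingular} and Proposition~\ref{prop:stellar}), and observe that for every $\lambda\in(0,1]$ the polytope $\lambda P + (1-\lambda)Q$ has normal fan $\NN(P)$ by Lemma~\ref{lemma:connected}, so it is Delzant, while $\dist^H(\lambda P + (1-\lambda)Q,\,Q)\le\lambda\,\dist^H(P,Q)\to 0$. This one-line argument replaces your multi-step realization of the desingularizing stellar subdivisions by carefully sized corner and edge choppings, which, as you yourself note, needs nontrivial bookkeeping (the cuts interact with one another, the intermediate fans must be tracked, one must verify the final fan is exactly the target unimodular fan and not something finer). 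Your approach is in principle salvageable, but the clean way to make it rigorous is again to use Minkowski combinations at each step, which collapses the whole thing back to the paper's argument. You also insert an unnecessary step, perturbing $P$ to a \emph{simple} rational polytope first: Theorem~\ref{thm:desingular} applies directly to the (possibly non-simplicial) normal fan of any rational polytope.

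There is also a genuine flaw in your sketch of non-local-compactness: the claim that a Hausdorff-convergent sequence of polytopes with unboundedly many facets ``would have to converge to a polytope with infinitely many facets'' is false; such a sequence can converge to a polytope with \emph{fewer} facets (e.g.\ to $P$ itself, if the corner choppings shrink fast enough), or to a convex body that is not a polytope at all. The correct argument is simpler and comes for free from density: since $\Dt$ is dense in $\CC_p(n)$ and $\CC_p(n)\setminus\Dt$ meets every ball (e.g.\ it contains small smoothings of $P$), $\Dt$ is not closed in $\CC_p(n)$, hence no closed $\dist^H$-ball $\bar B(P,r)\cap\Dt$ is closed in $\CC(n)$, hence none is compact. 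Equivalently, choose a sequence of Delzant polytopes in a small ball around $P$ converging in $\CC(n)$ to a \emph{non-Delzant} convex body; such a sequence has no subsequence convergent in $\Dt$. (The paper itself simply cites \cite[Theorem~2]{PPRS14} and says the argument carries over.)
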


\begin{proof}
In order to prove part (1) we simply note that for given $P_1,P_2\in \Dt$ there is, by Corollary~\ref{coro:desingular}, a Delzant polytope $Q$ whose normal fan refines that of $P_1+P_2$, hence it refines the normal fans of both $P_1$ and $P_2$. By Lemma~\ref{lemma:connected}, concatenating the Minkowski segments from $P_1$ to $Q$ and from $Q$ to $P_2$ we get a path from $P_1$ to $P_2$ that stays in $\Dt$.

In part (2), the fact that $\Dt$ is neither complete nor locally compact is proved in \protect{\cite[Theorem 2]{PPRS14}} for $n=2$ and the proof carries over for higher $n$. 

In order to show that $\Dt$ is dense in $\CC_p(n)$ let $K\in \CC_p(n)$ and let us show how to approach $K$ via Delzant polytopes, in two steps:

\begin{itemize}
\item \emph{$K$ can be approached via (perhaps not Delzant) rational polytopes}. This is classical and easy to prove. For example, let 
\[
K_m:= \Conv\left(K\cap \frac1m\Z^n\right),
\]
which is a rational polytope contained in $K$ for each $m\in \N$. Since every neighborhood of each $x\in K$ contains points of $K_m$ for every sufficiently large $m$, we have that 
\[
\lim_{m\to \infty}\dist^H(K_m,K) =0.
\]
Here we are implicitly using compactness of $K$, which allows us to cover $K$ by finitely many arbitrarily small neighborhoods of points $x\in K$.

\item \emph{Every rational polytope $Q$ can be approached by Delzant polytopes}. Let $P$ be a Delzant polytope such that $\NN(P)$ refines $\NN(Q)$, which exists by Theorem~\ref{thm:desingular}. Then the path $\lambda P + (1-\lambda)Q$ from $P$ to $Q$ consists only of Delzant polytopes with the normal fan of $P$, except at its origin $\lambda=0$ where it equals $Q$.
\qedhere
\end{itemize}
\end{proof}

\begin{theorem} 
\label{thm:quotientD}\ 
\begin{itemize}
\item[(1)] The metric topology of $(\Dtt, \widetilde{\dist^V})$ coincides with the quotient topology of $\Dt$ by the action of $\AGL(n,\Z)$.
\item[(2)] $ \Dtt$ is path-connected.
\item[(3)] $(\Dtt, \widetilde{\dist^V})$ is not locally compact, but it is dense in $\widetilde{\CC_p(n)}$. 
Hence, its completion is the same as that of $\widetilde{\CC_p(n)}$, namely $\widetilde{\CC_p(n)}\cup\{0\}$.
\end{itemize}
\end{theorem}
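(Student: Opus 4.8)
The plan is to mirror, in the quotient setting, the three statements we have just established for $\CC_p(n)$ and $\Dt$, using the quotient machinery of \Cref{prop:quotientC} together with the path-connectedness of $\Dt$ and the density of $\Dt$ in $\CC_p(n)$. For part (1), I would argue exactly as in the proof of \Cref{prop:quotientC}(1): the identity map between the metric topology of $\widetilde{\dist^V}$ and the quotient topology on $\Dtt$ is continuous and open, because \Cref{l:orbits} shows that a $\widetilde{\dist^V}$-ball $\rm{B}([P],\varepsilon)$ with $P$ Delzant lifts to $\bigcup_{P'\in[P]}\rm{B}(P,\varepsilon)\cap\Dt$ (open in $\Dt$), and conversely an $\AGL(n,\Z)$-invariant open set in $\Dt$ contains, around each representative, a ball of the same radius. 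The only point to note is that $\Dt$ is stable under the $\AGL(n,\Z)$-action, so that \Cref{l:orbits} and \Cref{l:bounded} apply verbatim to Delzant polytopes; no new idea is needed.

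For part (2), path-connectedness of $\Dtt$ follows immediately from \Cref{thm:connected}(1): the quotient map $\Dt\to\Dtt$ is continuous and surjective, so the continuous image of a path-connected space is path-connected. Concretely, given $[P_1],[P_2]\in\Dtt$, lift to $P_1,P_2\in\Dt$, take the path in $\Dt$ constructed in \Cref{thm:connected}(1) (concatenate the Minkowski segments $P_1Q$ and $QP_2$, where $\NN(Q)$ refines $\NN(P_1+P_2)$, using \Cref{thm:desingular} and \Cref{lemma:connected}), and project it down.

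For part (3), the failure of local compactness is inherited from $\Dt$: if $\Dtt$ were locally compact at some $[P]$, then by part (1) a compact neighborhood would lift, near a representative $P$, to a neighborhood of $P$ in $\Dt$ whose closure maps onto a compact set; one then reproduces the non-local-compactness argument of \Cref{thm:connected}(2) at the level of representatives, or, more cleanly, one can exhibit directly a sequence of Delzant polytopes near $P$ whose orbits have no convergent subsequence. Density of $\Dtt$ in $\widetilde{\CC_p(n)}$ is the image under the (continuous, open by \Cref{prop:quotientC}(1)) quotient map of the density of $\Dt$ in $\CC_p(n)$ from \Cref{thm:connected}(2): the quotient map is open, hence sends dense sets to dense sets. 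Finally, for the completion, one extends $\widetilde{\dist^V}$ to $\Dtt\cup\{0\}$ by $\widetilde{\dist^V}([P],0):=\Vol(P)$ and argues as in \Cref{prop:quotientC}(2): since $\widetilde{\CC_p(n)}\cup\{0\}$ is the completion of $\widetilde{\CC_p(n)}$ and $\Dtt$ is dense in the latter, it suffices to show every Cauchy sequence of Delzant orbits either converges in $\Dtt$ or has volumes tending to $0$ — but it is Cauchy in $\widetilde{\CC_p(n)}$, so it converges there to some $[K]$ or to $0$, and in the first case $[K]$ need not itself be Delzant, so the completion is genuinely $\widetilde{\CC_p(n)}\cup\{0\}$ and not $\Dtt\cup\{0\}$.

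The main obstacle is purely bookkeeping: making sure that \Cref{l:orbits} (the ``infimum is a minimum'' statement) is what powers the coincidence of topologies in part (1), and then being careful in part (3) that ``completion of $\Dtt$'' means the completion of $\Dtt$ as a metric subspace of the complete space $\widetilde{\CC_p(n)}\cup\{0\}$, i.e. its closure there, which by density is all of $\widetilde{\CC_p(n)}\cup\{0\}$. There is no substantive new mathematics beyond \Cref{thm:connected}, \Cref{prop:quotientC} and the openness of the quotient map; the write-up should simply cite those and indicate the (routine) adaptations.
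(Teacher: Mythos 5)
Your proposal is correct and follows essentially the same route as the paper, which simply invokes Proposition~\ref{prop:quotientC} for part (1), path-connectedness of $\Dt$ for part (2), and Theorem~\ref{thm:connected}(2) together with Lemma~\ref{l:orbits} for part (3); you have just spelled out the routine transfer to the $\AGL(n,\Z)$-invariant subspace $\Dt$ that the paper leaves implicit.
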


\begin{proof}
Part (1) follows from the same result for $\CC(n)$ and $\widetilde{\CC(n)}$ (Proposition~\ref{prop:quotientC}) and part (2) is obvious from path-connectedness of $\Dt$.

Part (3) is also immediate from the same result for $\Dt$ together with Lemma~\ref{l:orbits}.
\end{proof}

The last part of  Theorem~\ref{thm:quotientD} solves a problem posed by  Fujita\--Ohashi~\cite[Problem~5.2]{FuOh} in 2018. 
We later solve Problems 5.3 and~5.5 of the same paper.

\subsubsection{Local and global complexity: comments on the proof of Theorem~\ref{thm:connected}} \label{sec:pm}

Let us call a path in $\CC(n)$ a \emph{Minkowski path} if it is a finite concatenation of Minkowski segments.

Our proof of connectedness for $\Dt$ shows that any two Delzant polytopes $P_1$ and $P_2$ can be connected by a Minkowski path $P_1QP_2$ consisting of two segments. Metrically, this path can be made as close to the Minkowski segment from $P_1$ to $P_2$ as one wishes, via the following trick. As in the proof of the theorem, let $Q$ be a Delzant polytope whose normal fan refines those of $P_1$ and $P_2$. Now, for each $\varepsilon\in (0,1)$ let
\[
Q_\varepsilon= (1-\varepsilon) P_1 + \varepsilon Q,
\]
which has the same normal fan as $Q$ but is very close to $P_1$ if $\varepsilon$ is very small. Then, the  path $P_1 Q_\varepsilon  P_2$, which is contained in $\Dt$, is very close to the Minkowski segment  $P_1 P_2$.

However, our proof does not give a bound for the following two measures of complexity.

\subsubsection*{Local complexity}

Let us call \emph{complexity of the Minkowski segment  $PQ$} the number of rays of $\NN(P)$ that are not in $\NN(Q)$ plus the number of  rays of $\NN(Q)$ that are not in $\NN(P)$. Equivalently, this is the number of rays of the intermediate normal fan (which is the same in the interior of the path) that are not common to $P$ and $Q$. Let us call \emph{local complexity of a Minkowski path} the maximum complexity of its Minkowski segments.

For example, a Minkowski segment between two Delzant polytopes has complexity $1$ if and only if $P$ is a blow-up of $Q$, or viceversa. 
Hence, a Minkowski path has local complexity 1 if and only if it is a sequence of blow-ups and blow-downs.
Our proof of Theorem~\ref{thm:connected} does not give any bound on the local complexity of the path, but the following result does:

\begin{theorem}[Weak Factorization Theorem \protect{\cite[Theorem A]{Wlodarczyk}}, see also \cite{AKMW}]
\label{thm:factorization}
Every two unimodular fans can be joined by a sequence of blow-downs and blow-ups.
\end{theorem}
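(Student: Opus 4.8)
This is W{\l}odarczyk's toric weak factorization theorem \cite{Wlodarczyk}; no short proof is known, so I only describe the plan, which is Morelli's combinatorial cobordism method as completed in \cite{AKMW}. Throughout, ``fan'' means a complete unimodular fan in $\R^n$. \emph{Reduction to a refinement.} Given unimodular fans $\Sigma_1,\Sigma_2$, form their common refinement $\Sigma_1\wedge\Sigma_2$ (the fan of all intersections $C_1\cap C_2$); it is rational and complete but in general not unimodular, so apply Theorem~\ref{thm:desingular} to obtain a unimodular fan $\Sigma$ refining it, hence refining both $\Sigma_1$ and $\Sigma_2$. Since a sequence of blow-ups read backwards is a sequence of blow-downs, it suffices to connect, by blow-ups and blow-downs, a unimodular fan $\Sigma_0$ to any unimodular fan $\Sigma$ refining it. One cannot simply read off blow-ups from the star-subdivision sequence producing $\Sigma$ from $\Sigma_0$ that the resolution algorithm provides, because its intermediate fans need not be unimodular: a star subdivision of a unimodular fan is a blow-up precisely when the inserted ray equals the sum of the primitive generators of the cone it subdivides (the only way unimodularity is preserved), and the algorithm does not respect this.

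\emph{The cobordism.} Encode the proper birational toric morphism $X_\Sigma\to X_{\Sigma_0}$ combinatorially by a \emph{birational cobordism}: a rational polyhedral fan $\Omega$ in $\R^{n+1}=\R^n\times\R$ with support $\R^n\times[0,1]$, together with the projection $\pi\colon\R^{n+1}\to\R$, arranged so that the cones of $\Omega$ ``at the bottom'' reproduce $\Sigma_0$ and those ``at the top'' reproduce $\Sigma$. Concretely one may take $\Omega$ to interpolate between the graphs of the support functions of two polytopes realizing $\Sigma_0$ and $\Sigma$; the cones strictly between the two ends record the birational transformation as a single one-parameter family.

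\emph{$\pi$-desingularization.} Refine $\Omega$ to a unimodular fan $\widetilde\Omega$ by star subdivisions whose new rays have $\pi$-value strictly inside $(0,1)$, so that the boundary fans $\Sigma_0$ and $\Sigma$ are untouched, and so that no cone of $\widetilde\Omega$ becomes \emph{vertical}, i.e.\ contained in a fibre of $\pi$. This is Morelli's $\pi$-desingularization; forcing a resolution of $\Omega$ to respect the cobordism structure is the technical heart of the proof and, I expect, the main obstacle.

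\emph{Reading off the factorization.} After a generic perturbation the rays of $\widetilde\Omega$ not lying over the endpoints have pairwise distinct $\pi$-values $0<t_1<\cdots<t_k<1$. For $t$ in an open interval between consecutive $t_j$'s one extracts from $\widetilde\Omega$ a unimodular fan $\Sigma_t$ on $\R^n$, constant on that interval, equal to $\Sigma_0$ for $t$ near $0$ and to $\Sigma$ for $t$ near $1$. Crossing $t_j$ modifies $\Sigma_t$ by an elementary operation determined by the star of the $j$-th ray: a single blow-up, a single blow-down, or a blow-up immediately followed by a blow-down. Concatenating these modifications over $t_1,\dots,t_k$ yields the required sequence joining $\Sigma_0$ to $\Sigma$, and together with the reduction above it joins $\Sigma_1$ to $\Sigma_2$.
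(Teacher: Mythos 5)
The paper itself offers no proof of this statement; it is cited directly from W{\l}odarczyk \cite{Wlodarczyk} and \cite{AKMW}, exactly as you do. Your sketch of the cobordism argument is accurate at the level of detail you give: the reduction to the refinement case via a common unimodular refinement (using Theorem~\ref{thm:desingular}), the encoding of the birational morphism as a fan on $\R^n\times[0,1]$, Morelli's $\pi$-desingularization respecting the two ends and avoiding vertical cones, and the extraction of elementary transformations (blow-up, blow-down, or a flip, i.e.\ blow-up followed by blow-down) by sweeping the $\pi$-values of the new rays. You also correctly single out the $\pi$-desingularization as the genuine difficulty — it is precisely the step where Morelli's original argument had a gap, later repaired in \cite{AKMW}. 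Since you explicitly present this as a plan rather than a proof, and the paper likewise treats the theorem as a black box, there is no discrepancy to flag; your account is a faithful summary of what the cited sources do.
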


\begin{corollary}
\label{coro:factorization}
Every two Delzant polytopes can be joined by a Minkowski path of local complexity one.
\end{corollary}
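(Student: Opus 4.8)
The plan is to combine the Weak Factorization Theorem (Theorem~\ref{thm:factorization}) with Lemma~\ref{lemma:connected}, which says that if $\NN(P)$ refines $\NN(Q)$ then the Minkowski segment $PQ$ stays inside $\Dt$ and all of its interior points share the normal fan $\NN(P)$. The key observation is that a single blow-up or blow-down is precisely the situation where two Delzant polytopes have normal fans one of which refines the other by inserting (or deleting) a single ray; in that case the Minkowski segment between them has complexity one, so it is a Minkowski path of local complexity one of length one.

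First I would fix two Delzant polytopes $P_1$ and $P_2$. Their normal fans $\NN(P_1)$ and $\NN(P_2)$ are complete unimodular polytopal fans. By Theorem~\ref{thm:factorization} there is a finite sequence of unimodular fans
\[
\NN(P_1) = \Sigma_0,\ \Sigma_1,\ \dots,\ \Sigma_N = \NN(P_2),
\]
in which each $\Sigma_{i+1}$ is obtained from $\Sigma_i$ by a single blow-up or blow-down. The one point that needs care here is \emph{polytopality}: the Minkowski-segment machinery only connects actual Delzant polytopes, so each $\Sigma_i$ in the chain must be realizable as a normal fan. This is where I would use the fact (Proposition~\ref{prop:stellar}(1), recorded also in Definition~\ref{defi:blowup} and the surrounding discussion) that a stellar subdivision of a polytopal fan is polytopal, together with its converse for blow-downs: if $\Sigma_{i+1}$ is polytopal and $\Sigma_i$ is obtained from it by a blow-down, then $\Sigma_i$ is polytopal too — indeed $\Sigma_i$ is a coarsening of the polytopal fan $\Sigma_{i+1}$ by merging cones along the star of the deleted ray, which corresponds to dropping one facet inequality from a realizing polytope (exactly as in the Corollary following Proposition~\ref{prop:cc} for the full-dimensional case, and analogously in lower dimension). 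So, starting from the polytopal endpoint $\Sigma_N = \NN(P_2)$ and walking backwards, or from $\Sigma_0 = \NN(P_1)$ forwards, every intermediate $\Sigma_i$ is polytopal; choose a Delzant polytope $R_i$ with $\NN(R_i) = \Sigma_i$, taking $R_0 = P_1$ and $R_N = P_2$.

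Then I would connect consecutive $R_i$ and $R_{i+1}$ by a Minkowski segment. Since $\Sigma_i$ and $\Sigma_{i+1}$ differ by one stellar subdivision at a single ray, one of the two fans refines the other, and the refining one has exactly one ray not present in the coarser one. By Lemma~\ref{lemma:connected} the Minkowski segment $R_iR_{i+1}$ lies in $\Dt$, and its complexity — the number of rays in the symmetric difference of $\NN(R_i)$ and $\NN(R_{i+1})$ — equals one. Concatenating the $N$ segments $R_0R_1, R_1R_2, \dots, R_{N-1}R_N$ yields a Minkowski path from $P_1$ to $P_2$ inside $\Dt$ all of whose segments have complexity one, i.e.\ of local complexity one, as claimed.

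\textbf{Main obstacle.} The routine Lipschitz/continuity estimates are already supplied by the lemma preceding Corollary~\ref{coro:path}, so the only genuine point requiring thought is the preservation of polytopality along the factorization chain produced by Theorem~\ref{thm:factorization}: that theorem is stated for unimodular fans without a polytopality hypothesis, so one must verify that, when the two endpoints \emph{are} polytopal, the intermediate fans can also be taken polytopal (or at least that the blow-down direction preserves polytopality, which suffices if one orients the chain correctly). I expect this to be the crux of the write-up, handled as sketched above via Proposition~\ref{prop:stellar}(1) and the facet-dropping description of blow-downs.
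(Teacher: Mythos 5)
Your strategy is essentially the paper's implicit one — use Theorem~\ref{thm:factorization} to chain the two normal fans together through single blow-ups and blow-downs, realize each intermediate fan by a Delzant polytope, and connect consecutive polytopes by Minkowski segments of complexity one via Lemma~\ref{lemma:connected} — and you rightly flag polytopality of the intermediate fans as the one point that is not automatic.

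Where the argument stops short is the claim that blow-downs preserve polytopality, justified by appeal to the corollary after Proposition~\ref{prop:cc} ``and analogously in lower dimension.'' That corollary's proof depends crucially on the blow-up being at a \emph{full-dimensional} cone: only then is the new facet corresponding to $\gamma$ an $(n-1)$-simplex with exactly $n$ neighbours, so that dropping its inequality simply re-creates a single vertex. When $\gamma$ subdivides a cone $C$ with $\dim C = k < n$, that facet has more vertices and more neighbouring facets, and deleting its inequality does not obviously produce a polytope whose normal fan is the blow-down $\Sigma$ rather than some further coarsening — one has to argue that the apexes $v_D$ for the maximal cones $D\supset C$ all actually lie in the enlarged polytope, which is not a formal analogue of the $k=n$ argument. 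In general coarsenings of polytopal fans need not be polytopal (this is exactly why a toric Chow's lemma is a non-trivial statement), so ``coarsening along the deleted star'' is not by itself a justification. Two ways to close the gap: (i) use Theorem~\ref{thm:secondary} to observe that the secondary cone of the blow-down $\Sigma$ is the facet of the closed secondary cone of $\Sigma'$ cut out by turning the wall-crossing inequality $\sum_{\alpha_i \in C} b_i \geq b_\gamma$ into an equality, hence nonempty, hence $\Sigma$ is polytopal; or (ii), more directly in the spirit of the paper's reference to \cite{AKMW}, invoke the projective form of the Weak Factorization Theorem proved there, which guarantees that when both endpoint fans are polytopal, all intermediate fans may be chosen polytopal. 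With either replacing the ``analogously'' clause, the rest of your argument is complete and coincides with the paper's.
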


Observe that our proof of path-connectedness is elementary and self-contained, while the Weak Factorization Theorem and its proof are far from elementary, as shown by its history of claimed proofs that turned out to be incorrect.

\begin{remark}
There is also the Strong Factorization statement, asserting that every two unimodular fans can be joined by a sequence of blow-ups followed by a sequence of blow-downs. This was conjectured by Hironaka (1960)~\cite{Hironaka} and Oda (1978)~\cite{OdaMiyake}. Morelli (1996)~\cite{Morelli} claimed to have a proof, but it was later discovered that the proof had gaps.
The recent preprint~\cite{AdiPak} contains a new proof.
\end{remark}

In dimension 2, Corollary~\ref{coro:factorization} easily follows from Lemma~\ref{lemma:2dim}, as follows:
\begin{enumerate}
\item By the Lemma, we can go from any Delzant polygon other than a triangle to a quadrilateral by a sequence of blow-downs. Such quadrilaterals are $\GL(\Z,2)$-equivalent to Hirzebruch quadrilaterals ${\rm H}_{a,b,k}$.

\item The Hirzebruch quadrilaterals with consecutive parameters $k$ and $k+1$ can be joined by a blow-up followed by a blow-down (see~\cite[Figure 4]{PPRS14}).

\item The Hirzebruch quadrilateral with $k=1$ is a blow-up of the standard Delzant triangle 
\[
\Big\{(x_1,x_2) \,\Big|\, x_1\geq 0, x_2\geq 0, x_1+x_2\leq 1\Big\}.
\]

\item Any two different Delzant triangles can be joined by a sequence of blow-ups and blow-downs. This follows easily from the fact that $\GL(2,\Z)$ is generated by the transposition of coordinates plus the  matrix $\begin{pmatrix}1&0\\1&1\end{pmatrix}$. 
\end{enumerate}

In fact, these statements (without the last one) are essentially the proof of path-connectedness of $\Dtt$ in \cite[Theorem 5]{PPRS14}.

\subsubsection*{Global complexity}
By \emph{global complexity of a path} we mean the total number of rays used along the whole path apart of those at the end-points. The global complexity of the Minkowski path from any Delzant polygon to the quadrilateral that it refines is zero, so only the path between two quadrilaterals needs to be considered.

Let $P$ and $Q$ be the two Delzant quadrilaterals that we want to join by a path, and consider their normal fan $P+Q$. In dimension two there is a unique minimal way to desingularize $\NN(P+Q)$, namely introducing the rays that belong to the \emph{Hilbert basis} of each cone. The size of the Hilbert basis  of a two-dimensional rational cone is bounded by its determinant, so we can get a path from $P$ to $Q$ with global complexity bounded by the sum of the determinants of the cones of $P+Q$. This can be arbitrarily large if $P$ and $Q$ are very ``skew'' with respect to one another, but we can consider this skewness to be the natural parameter if we want to measure the ``global distance'' between $\NN(P)$ and $\NN(Q)$.

\subsection{The stratification of $\Dt$ by normal equivalence}
\label{sec:stratification}

\subsubsection{The secondary fan of a vector configuration, and the secondary cone of a fan $\NN$.}
We say that two polytopes $P$ and $Q$ are \emph{normally equivalent} if they have the same normal fan. Normal equivalence gives a natural stratification of the space of all polytopes. Restricted to $\PP_\Q(n)$ and hence also to $\Dt$, it has countably many strata. 

To make this more clear, let $\VV= \{\alpha_1,\dots,\alpha_m\} \subset (\R^n)^*$ be a finite number of vectors \emph{positively spanning} $(\R^n)^*$, that is, with $\Cone \VV=(\R^n)^*$.
We will later assume that the $\alpha_i$ are integer and primitive, but for the time being we don't need that.

Let us denote by $\PP(\VV)\subset\PP(n)$ the space of all polytopes $\{{\rm P}(\VV,b)\,|\,b\in \R^\VV\} $ where
\begin{equation}
{\rm P}(\VV,b) := \Big\{x\in \R^n\,\Big|\, \langle \alpha_i, x\rangle \leq b_i, \ i\in \{1,\dots,m\}\Big\},
\label{eq:polytope}
\end{equation}
and $\R^\VV\cong \R^m$ denotes the space of possible right-hand sides of the defining inequalities.
$\PP(\VV)$ contains the full-dimensional polytopes whose facet normals are contained in $\VV$, but it also contains lower-dimensional polytopes and, in fact, it contains all points. Indeed, for each $p\in \R^n$, taking $b_i=\langle \alpha_i, p\rangle$ we have that ${\rm P}(\VV,b)=\{p\}$.

Not every vector $b\in \R^\VV$ defines a polytope, since the corresponding system of inequalities may be infeasible. 
Also, if one of the equations in the system defining ${\rm P}(\VV,b)$ happens to be redundant then increasing the value of the corresponding $b_i$ does not change the polytope. Hence,  we can assume without loss of generality that the $b$ we use equals the support function of ${\rm P}(\VV,b)$ at the vectors $\alpha_i$ of $\VV$. 
With this assumption we have a bijection between $\PP(\VV)$ and the subset of $\R^\VV$ obtained by restricting to $\VV$ the support functions of polytopes in $\PP(\VV)$. Since we show below (Theorem~\ref{thm:cells}) that this bijection is in fact an isometry, we can slightly abuse notation and identify $\PP(\VV)$ with that subset. This subset is a cone, whose explicit description can be derived from part (3) of Proposition~\ref{prop:minkowski} with the help of Farkas' Lemma.

\begin{lemma}[Farkas' Lemma~\protect{\cite[Proposition 1.7]{Ziegler}}]
\label{lemma:Farkas}
Let $\alpha_1,\dots,\alpha_m\in (\R^n)^*$ and $b\in \R^m$. Then, the system of inequalities $\langle \alpha_i , x\rangle  \leq b_i, i \in \{1,\dots, m\}$, is infeasible if and only if there are $\lambda_1,\dots,\lambda_m\geq 0$ such that
\[
\sum_{i=1}^m \lambda_i\alpha_i  = 0, \qquad
\sum_{i=1}^m \lambda_i b_i < 0.
\]
\end{lemma}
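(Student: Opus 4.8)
The ``if'' direction is immediate: if such $\lambda_1,\dots,\lambda_m\geq 0$ exist and some $x\in\R^n$ satisfied $\langle\alpha_i,x\rangle\leq b_i$ for every $i$, then multiplying the $i$-th inequality by $\lambda_i$ and summing would give $0=\langle\sum_i\lambda_i\alpha_i,\,x\rangle\leq\sum_i\lambda_ib_i<0$, a contradiction, so the system has no solution. For the ``only if'' direction the plan is to argue by contrapositive: assuming that no such tuple $(\lambda_i)$ exists, I would exhibit a feasible point by encoding the nonexistence of $(\lambda_i)$ as the statement that a finitely generated (hence closed) convex cone avoids a particular point, applying strict separation, and reading the feasible point off the separating functional.

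Concretely, I would set
\[
K:=\Cone\{(\alpha_1,b_1),\dots,(\alpha_m,b_m)\}\subset (\R^n)^*\times\R .
\]
Observe that $K$ contains a point $(0,c)$ with $c<0$ if and only if it contains $(0,-1)$ (divide the coefficients by $-c>0$), and this happens if and only if there are $\lambda_1,\dots,\lambda_m\geq 0$ with $\sum_i\lambda_i\alpha_i=0$ and $\sum_i\lambda_ib_i<0$. So the hypothesis of the contrapositive is precisely $(0,-1)\notin K$. Since $K$ is finitely generated it is a polyhedral cone (as recalled above), in particular closed and convex; as it misses the point $(0,-1)$, the separating hyperplane theorem yields a linear functional on $(\R^n)^*\times\R$, which I write as a pair $(x,s)$ with $x\in\R^n$ and $s\in\R$ acting by $(\beta,t)\mapsto\langle\beta,x\rangle+st$, that is strictly positive at $(0,-1)$ and, being bounded above on the cone $K$, is $\leq 0$ everywhere on $K$.

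Evaluating this functional at $(0,-1)$ gives $-s>0$, hence $s<0$; evaluating it at each generator $(\alpha_i,b_i)$ gives $\langle\alpha_i,x\rangle+sb_i\leq 0$, that is $\langle\alpha_i,x\rangle\leq -sb_i$. Dividing by $-s>0$, the point $\hat x:=-x/s$ then satisfies $\langle\alpha_i,\hat x\rangle\leq b_i$ for every $i$, so the system is feasible. This establishes the contrapositive and hence the lemma.

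The only genuinely nontrivial inputs here are that a finitely generated cone is closed --- part of the Minkowski--Weyl circle of facts already invoked in the paper --- and the strict separation theorem for a point and a closed convex set; the step I would be most careful with is therefore the closedness of $K$, since \emph{strict} separation (not merely separation) is what makes the inequality at $(0,-1)$ strict and hence forces $s\neq 0$. If one wished to avoid convex-analytic input entirely, the same conclusion follows from Fourier--Motzkin elimination: induct on the number $n$ of variables, at each stage eliminating $x_n$ by replacing each opposite-signed pair of inequalities by the appropriate nonnegative combination; carrying these combinations through the induction produces the certificate $(\lambda_i)$ once the final variable-free system turns out to be infeasible. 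That argument is elementary but notation-heavy, so I would present the cone-and-separation version.
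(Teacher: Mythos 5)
The paper does not prove this statement itself; Farkas' Lemma is cited as a black box from Ziegler, where the proof proceeds by Fourier--Motzkin elimination (exactly the alternative route you sketch at the end). Your cone-separation argument is correct and complete: the ``if'' direction is the standard nonnegative-combination contradiction, and in the ``only if'' direction you correctly reduce the nonexistence of a certificate $(\lambda_i)$ to $(0,-1)\notin K$ for the finitely generated (hence closed) cone $K$, after which strict separation together with the conicity of $K$ (a linear functional bounded above on a cone is $\leq 0$ on it) produces the functional $(x,s)$ with $s<0$, and the normalization $\hat x=-x/s$ gives feasibility. The two genuinely nontrivial inputs you flag --- closedness of finitely generated cones (part of Minkowski--Weyl, which the paper has already invoked) and strict separation of a point from a closed convex set --- are exactly the right things to be careful about, and both are used correctly.
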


\begin{theorem}
\label{thm:secondarycone}
Let $\VV= \{\alpha_1,\dots,\alpha_m\} \subset (\R^n)^*$ be a finite number of vectors positively spanning $(\R^n)^*$. 
The set $\PP(\VV)\subset \R^\VV$ of vectors $b$ that are support functions of polytopes with normals contained in $\VV$ is the cone defined by the following two types of inequalities:
\begin{itemize}
\item[(1)] For each nonnegative linear dependence $\sum_{i=1}^m \lambda_i \alpha_i =0$ among the vectors in $\VV$, (that is, linear dependence with all $\lambda_i\geq 0$) the inequality 
$\sum_{i=1}^m \lambda_i b_i \geq 0$.
\item[(2)] For each equality  $\sum_{i=1}^m \lambda_i \alpha_i = \alpha_j $ with all $\lambda_i\geq 0$,
the inequality $\sum_{i=1}^m \lambda_i b_i \geq b_j$.
\end{itemize} 
Moreover, only the linear dependences and combinations with minimal support (which are finitely many) are needed.
\end{theorem}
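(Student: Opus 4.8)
The plan is to prove the two inclusions separately, working in $\R^\VV\cong\R^m$ and invoking Farkas' Lemma twice. I would phrase everything through the support function $h_{P(\VV,b)}$, using the identification set up in the paragraphs preceding the statement, under which $b\in\PP(\VV)$ means precisely that $P(\VV,b)\ne\emptyset$ and $b_j=h_{P(\VV,b)}(\alpha_j)$ for all $j$. A preliminary remark: since $\Cone(\VV)=V^*$, the polyhedron $P(\VV,b)$ is bounded whenever it is nonempty; and by Farkas' Lemma the inequalities of type (1) are \emph{equivalent} to $P(\VV,b)\ne\emptyset$, because the negation of (1) is exactly a Farkas infeasibility certificate for the system $\{\langle\alpha_i,x\rangle\le b_i\}_i$.

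For the inclusion $\PP(\VV)\subseteq\{b:\text{(1) and (2) hold}\}$ I would use only that support functions are positively homogeneous and subadditive (Proposition~\ref{prop:minkowski}), so that $h:=h_{P(\VV,b)}$ satisfies $h(\sum_i\lambda_i\alpha_i)\le\sum_i\lambda_i h(\alpha_i)=\sum_i\lambda_i b_i$ for all $\lambda_i\ge0$: taking $\sum_i\lambda_i\alpha_i=0$ gives (1) (as $h(0)=0$), and taking $\sum_i\lambda_i\alpha_i=\alpha_j$ gives (2) (as $h(\alpha_j)=b_j$). For the converse, assume (1) and (2). By the preliminary remark $P:=P(\VV,b)$ is a nonempty polytope, so $h:=h_P$ is a genuine support function with $h(\alpha_j)\le b_j$ for all $j$; the content is to prove $b_j\le h(\alpha_j)$, i.e.\ that the system $\langle\alpha_i,x\rangle\le b_i$ ($1\le i\le m$), $\langle\alpha_j,x\rangle\ge b_j$ is feasible. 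Applying Farkas once more, infeasibility would produce scalars $\mu_1,\dots,\mu_m\ge0$ and $\mu\ge0$ with $\sum_i\mu_i\alpha_i=\mu\alpha_j$ and $\sum_i\mu_ib_i<\mu b_j$. Moving the $\alpha_j$\--terms to one side gives $\sum_{i\ne j}\mu_i\alpha_i+(\mu_j-\mu)\alpha_j=0$; if $\mu_j\ge\mu$ this is a nonnegative dependence and (1) gives $\sum_i\mu_ib_i\ge\mu b_j$, while if $\mu_j<\mu$ then dividing by $\mu-\mu_j$ exhibits $\alpha_j$ as a nonnegative combination of the remaining vectors and (2) again gives $\sum_i\mu_ib_i\ge\mu b_j$ --- either way a contradiction, so the system is feasible and $b_j=h(\alpha_j)$. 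I expect this Farkas casework to be the only delicate point; the rest is formal.

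Finally, for the ``moreover'' I would argue as follows. The nonnegative dependences $\{\mu\in\R^\VV_{\ge0}:\sum_i\mu_i\alpha_i=0\}$ form a pointed cone (it lies in the nonnegative orthant), hence the nonnegative hull of its finitely many extreme rays, and these are exactly the dependences of minimal support; so every type-(1) inequality is a nonnegative combination of the minimal-support ones. For fixed $j$, the set $\{\lambda\in\R^\VV_{\ge0}:\sum_i\lambda_i\alpha_i=\alpha_j\}$ is a pointed polyhedron whose vertices are the representations of minimal support and whose recession cone is the dependence cone just described; by the decomposition theorem for polyhedra every element is a convex combination of vertices plus a nonnegative dependence, so the corresponding type-(2) inequality is a nonnegative combination of minimal-support type-(2) and type-(1) inequalities. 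Since $\{1,\dots,m\}$ and the index $j$ range over finite sets and scaling an inequality does not change it, only finitely many inequalities remain.
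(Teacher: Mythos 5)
Your proof is correct and follows essentially the same route as the paper's: necessity from positive homogeneity and subadditivity of the support function, sufficiency via Farkas' Lemma, and a reduction to minimal-support certificates for the finiteness claim. Two small remarks on presentation differences.

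First, you argue the sufficiency direction directly (assume (1) and (2), prove $b\in\PP(\VV)$) rather than by contrapositive as the paper does, and you introduce a case split on $\mu_j$ versus $\mu$ to recast the Farkas certificate $\sum_i\mu_i\alpha_i=\mu\alpha_j$ as either a pure nonnegative dependence or a representation of $\alpha_j$ with $\lambda_j=0$. This is correct but unnecessary: the statement of type~(2) allows $\lambda_j\neq0$ in a representation of $\alpha_j$, so after dividing by $\mu>0$ the certificate is already a violated type-(2) inequality. (The $\mu=0$ case is a violated type-(1) inequality.) This is exactly what the paper does, and it shaves off the casework.

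Second, for the ``moreover'' part the paper gives a direct descent argument: subtract a multiple of a smaller-support certificate to strictly shrink the support, and iterate. You instead invoke the structural description of the relevant cones and polyhedra (extreme rays of the pointed dependence cone, vertices plus recession cone for the representations of $\alpha_j$), which are the minimal-support certificates, and observe that the decomposition theorem lets you write any type-(1) or type-(2) inequality as a nonnegative combination of minimal-support ones. These are two phrasings of the same fact; yours is a bit more conceptual and makes the finiteness transparent at once, while the paper's is more elementary and self-contained. Both are fine.
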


\begin{proof}
For each $b\in \R^\VV$, let us denote
\[
{\rm P}(b):= \bigcap_{i=1}^m\{x\in \R^n\,|\,\langle \alpha_i , x\rangle \leq b_i, i \in \{1,\dots,m\}\}.
\]
That the inequalities are satisfied for every $b\in \PP(\VV)$ follows from part (3) of Proposition~\ref{prop:minkowski}. Indeed, if
 we have
$\sum_{i=1}^m \lambda_i \alpha_i =0$ for some nonnegative $\lambda_i$'s then:
\[
0 = {\rm h}_{{\rm P}(b)}(0) = {\rm h}_{{\rm P}(b)}\Big(\sum_{i=1}^m \lambda_i \alpha_i\Big) \leq \sum_{i=1}^m \lambda_i {\rm h}_{{\rm P}(b)}(\alpha_i) = \sum_{i=1}^m \lambda_i b_i .
\]
Similarly, if $\sum_{i=1}^m \lambda_i \alpha_i =\alpha_j$ then
\[
b_j = {\rm h}_{{\rm P}(b)}(\alpha_j) = {\rm h}_{{\rm P}(b)}\Big(\sum_{i=1}^m \lambda_i \alpha_i\Big) \leq \sum_{i=1}^m \lambda_i {\rm h}_{{\rm P}(b)}(\alpha_i) = \sum_{i=1}^m \lambda_i b_i .
\]

Conversely, suppose that we have a certain $b\not\in \PP(\VV)$. That is, either the polytope ${\rm P}(b)$ is empty, or it is not empty but there is a $j$ for which $b_j$ is strictly larger than ${\rm h}_{{\rm P}(b)}(\alpha_j)$.

If the first thing happens, that is, if the system  $\bigcap_{i=1}^m\{x\in \R^n\,|\,\langle \alpha_i , x\rangle \leq b_i\}$ is infeasible, then Farkas' Lemma (Lemma~\ref{lemma:Farkas}) directly says that 
there is a linear dependence $\sum_{i=1}^m \lambda_i \alpha_i =0$ for which the inequality $\sum_{i=1}^m \lambda_i b_i \geq 0$ is violated.
If the second happens, then the following system is infeasible:
\[
\langle \alpha_i , x\rangle \leq b_i\ \forall i\in\{1,\dots,m\}, \quad \langle -\alpha_j , x\rangle \leq -b_j.
\]
Hence, Farkas' Lemma tells us that there are nonnegative $\lambda_1,\dots, \lambda_m, \mu$ such that 
$\sum_{i=1}^m \lambda_i \alpha_i - \mu \alpha_j = 0$ and $\sum_{i=1}^m \lambda_i b_i - \mu b_j  < 0.$ If $\mu$ happens to be zero, we are again violating one of the inequalities in part (1). If $\mu >0$ then we can assume without loss of generality that $\mu=1$ and we are violating the inequality $\sum_{i=1}^m \lambda_i b_i \geq b_j$ in part (2).

This proves that the inequalities of types (1) and (2) indeed define the cone $\PP(\VV)$. That the ones with minimal support are enough is easy. Suppose, for example, that we have an inequality of type (1) for which the corresponding nonnegative linear dependence does not have minimal support. Thus, there is another nonnegative dependence $\sum_{i=1}^m \mu_i \alpha_i =0$ with strictly smaller support. Let $t>0$ be the greatest number such that the vector $\lambda_i - t \mu_i\geq 0$ for every $i\in\{1,\dots,m\}$. Then, $\sum_{i=1}^m (\lambda_i - t \mu_i) \alpha_i =0$ is another nonnegative dependence with support strictly contained in that of the original one, and the inequalities for the dependences $\mu$ and $\alpha-t\mu$ imply the original inequality for $\lambda$. The proof for the inequalities of the second type is similar.
\end{proof}

\begin{remark}
We need to allow for $b_i$'s that produce polytopes
with fewer than $m$ facets (implying that some of the inequalities in \eqref{eq:polytope} become redundant) in order for $\PP(\VV)$ to be a closed cone. For example, the $b_i$'s defined by $b_i=\langle\alpha_i,p\rangle$ for a fixed $p\in \R^n$ produce, as mentioned above, the $0$-dimensional polytope $\{p\}$. The vectors $b$ of this form are a linear subspace of dimension $n$ of $\R^\VV$, namely the image of the linear map $\R^n\to \R^\VV$ whose matrix has $\alpha_1,\dots,\alpha_m$ as columns, and this linear subspace is the lineality space of $\PP(\VV)$. 

If $b,b'\in \PP(\VV)$ then the polytope produced by $b+b'$ is the Minkowski sum of the polytopes of $b$ and of $b'$. In particular, if one of them, say $b$ is in the lineality space, adding $b$ to a $b'$ corresponds to translating the polytope. That is, modding out the lineality space of $\PP(\VV)$ considered as a cone in $\R^\VV$ corresponds to modding out translations from $\PP(\VV)$ considered as a space of polytopes.
\end{remark}

\begin{example}
\label{exm:Hirzebruch}
Let $\VV$ be the set of rays in the Hirzebruch fan, that is, $\VV$ consists of
\[
\alpha_1=(0,-1), \alpha_2=(0,1), \alpha_3=(-1,0), \alpha_4=(1,k),
\]
where $k$ is a positive integer. See Figure~\ref{fig:Hirzebruch}.
We are looking at the polytopes of the form
\[
-x_2\leq b_1,\ x_2\leq b_2,\ -x_1 \leq b_3,\ x_1+kx_2\leq b_4,
\]
for different choices of $(b_1,b_2,b_3,b_4)$.

According to Theorem~\ref{thm:secondarycone} the inequalities that define the cone $\PP(\VV)$ 
come from the following linear equations among the $\alpha_i$:
\[
\alpha_1+\alpha_2=0,\quad
 k \alpha_1 + \alpha_3 + \alpha_4 = 0,\quad
\alpha_3 + \alpha_4 =   k \alpha_2,
\]
producing  according to Proposition~\ref{prop:minkowski} the inequalities
\begin{align*}
b_1+b_2  \geq  0,\quad
k b_1 + b_3 + b_4  \geq  0, \quad
b_3 + b_4  \geq  k b_2.
\end{align*}
The first inequality is easy to understand. Since we have $-b_1\leq x_2 \leq b_2$ among the inequalities defining our polytopes, we need $-b_1\leq b_2$ for the polytope to be non-empty. The second inequality is redundant since it is derived adding to the third $k$ times the first. Hence we have
\[
\PP(\VV) = \Big\{(b_1,b_2,b_3,b_4)\in \R^4 \,\Big|\, b_1+b_2\geq 0, b_3 + b_4 \geq k b_2\Big\}.
\]

Let us relate this  with the description of Hirzebruch trapezoids in Corollary~\ref{coro:2dim}.
Observe that, as said above, $\PP(\VV)$ has a \emph{lineality space} of dimension two, namely
\[
\Big\{(b_1,b_2,b_3,b_4)\in \R^4 \,\Big|\, b_1+b_2=0, k b_2 + b_3 + b_4 =0\Big\} = \left<  (0,0,1,-1), (1,-1,0,k) \right>.
\]
Modding out this subspace is equivalent to modding out translations in our space of polytopes. There are different ways of doing it, one of which is taking $b_1=b_2$ and $b_3=0$. This corresponds to having the origin as the mid-point of the left edge of the trapezoid, as we did in Corollary~\ref{coro:2dim}. Once this is done, the parameters $a$ and $b$ from that description correspond to our $b_4$ and $2b_1=2b_2$. Hence, the inequality $b_3+b_4 \geq kb_2$ that we have now corresponds exactly to the inequality $ 2a < bk$ that we had there, except for the fact that we are now including the equality case in the cone. This equality case corresponds to the trapezoid degenerating to a triangle. There is another degeneration associated to the inequality $b_1+b_2\geq 0$: when it is met with equality the two parallel edges of the trapezoid coincide so the trapezoid becomes a segment. When both inequalities become equalities we are in the lineality space, corresponding to the trapezoid degenerating to a point.
\end{example}

The cone $\PP(\VV)$ gets stratified into a finite number of subcones corresponding to the possible complete polytopal fans that we can obtain using the rays $\VV$.

\begin{definition}
Let $\NN$ be a complete  fan with set of ray generators contained in $\VV=\{\alpha_1,\dots,\alpha_m\}\subset (\R^n)^*$. We call \emph{secondary cone of $\NN$} the closure of 
\[
\Bigg\{b \in \R^\VV \,\Big|\, \text{the polytope $\bigcap_{i=1}^m\Big\{x \in \R^n \,|\, \langle \alpha_i,x\rangle \leq b_i \Big\}$ has normal fan $\NN$}\Bigg\} \subset \PP(\VV)\subset\R^\VV.
\]
\end{definition}

We show below that the stratification of $\PP(\VV)$ into the secondary cones of the finitely many possible fans with rays in $\VV$ is itself a  fan called the \emph{secondary fan} of $\VV$. This stratification is quite well understood in geometric combinatorics and is one of the main objects in the book \cite{triangbook} (see, e.g., Theorem 9.5.6 in \cite{triangbook}). 
Observe that in the language of \cite{triangbook} the complete fans with rays contained in $\VV$ are called \emph{polyhedral subdivisions} of $\VV$, with polytopal ones being called \emph{regular subdivisions} and simplicial ones called \emph{triangulations}.

This stratification also plays a role in toric geometry. If $\NN$ is a fan with set of rays $\VV$ then $\R^\VV$ can naturally be identified with the space of torus-invariant Weyl divisors, tensored with $\R$, on the toric variety $X_\NN$. Under this identification, the (open) secondary cone of $\NN$ is the cone spanned by \emph{ample Cartier} divisors in $X_\NN$ and its closure is spanned by \emph{nef} divisors. Because of this the secondary cone of $\NN$ is called the \emph{nef cone} of $X_\NN$ in \cite[Chapter 6]{CLS}.

In particular, the following statement is a rewriting of \cite[Corollary 5.2.7]{triangbook}. Its counterpart in toric geometry 
is \cite[Theorem 4.2.8 and Lemma 6.1.13]{CLS}:

\begin{theorem}
\label{thm:secondaryfan}
Let $\NN$ be a full-dimensional simplicial fan in $(\R^n)^*$ with set of ray generators $\VV=\{\alpha_1,\dots,\alpha_m\}$. Then the secondary cone of $\NN$
is a relatively open polyhedral cone in $\R^m$ defined by the following inequalities:
\begin{itemize}
\item[(1)] For each linear dependence $\sum_{i=1}^m \lambda_i \alpha_i =0$ with support contained in a cone of $\NN$, the equality $\sum_{i=1}^m \lambda_i b_i = 0$.
\item[(2)] For each linear combination $\sum_{i=1}^m \lambda_i \alpha_i$ with support contained in a cone of $\NN$ and producing as a result a certain $\alpha_j$, 
the inequality $\sum_{i=1}^m \lambda_i b_i < b_j$.
\end{itemize} 
Moreover, only the linear dependences and combinations with minimal support (which are finitely many) are needed.
The secondary cone is obtained by turning the strict inequalities in part {\rm (2)} into equalities.
\end{theorem}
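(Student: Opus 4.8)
The plan is to convert the condition ``$P(\VV,b)$ has normal fan $\NN$'' into an explicit, vertex-by-vertex feasibility test, using that a complete full-dimensional simplicial fan has exactly one candidate vertex per maximal cone. (Recall that $\NN$ is complete, as required for its secondary cone to be defined.) For a maximal cone $\sigma$ of $\NN$ write $\sigma(1)\subseteq\{1,\dots,m\}$ for the indices of its ray generators; then $\{\alpha_i:i\in\sigma(1)\}$ is a basis of $V^*$, so for every $b\in\R^\VV$ there is a unique $v_\sigma(b)\in V$ with $\langle\alpha_i,v_\sigma(b)\rangle=b_i$ for all $i\in\sigma(1)$, and $v_\sigma(b)$ depends linearly on $b$. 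Expanding a non-generator $\alpha_j$ (with $j\notin\sigma(1)$) in this basis as $\alpha_j=\sum_{i\in\sigma(1)}\lambda^{\sigma,j}_i\alpha_i$ gives $\langle\alpha_j,v_\sigma(b)\rangle=\sum_{i\in\sigma(1)}\lambda^{\sigma,j}_ib_i$. Two preliminary remarks will be useful: since $\VV$ is exactly the ray set of $\NN$, every $\alpha_i$ is a facet normal of any polytope with normal fan $\NN$, so no inequality $\langle\alpha_i,x\rangle\le b_i$ is redundant for such a polytope; and every equality of part (1) holds automatically for all $b$, since a linear dependence with support inside a simplicial cone must be trivial.

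The heart of the argument is the equivalence: \emph{$P(\VV,b)$ has normal fan $\NN$ if and only if $\sum_{i\in\sigma(1)}\lambda^{\sigma,j}_ib_i<b_j$ for every maximal cone $\sigma$ and every $j\notin\sigma(1)$.} For the ``if'' direction, these strict inequalities say precisely that $v_\sigma(b)$ lies in $P:=P(\VV,b)$ and meets the defining inequalities with equality exactly on $\sigma(1)$; since $\{\alpha_i:i\in\sigma(1)\}$ spans $V^*$, this makes $v_\sigma(b)$ a vertex of $P$ with normal cone $\Cone(\alpha_i:i\in\sigma(1))=\sigma$. Thus $\NN(P)$ contains every maximal cone of $\NN$; as these cover $V^*$ and cones of a fan meet in common faces, $\NN(P)$ can contain nothing else, so $\NN(P)=\NN$ (and $P$ is bounded). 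For the ``only if'' direction, each maximal $\sigma$ is the normal cone of some vertex $v$ of $P$; the facet normals of $P$ through $v$ are the $\alpha_i$ with $\alpha_i\in N(P,\{v\})=\sigma$, and by the fan axiom those are exactly the generators of $\sigma$, whence $v=v_\sigma(b)$ and $\langle\alpha_j,v\rangle<b_j$ for all $j\notin\sigma(1)$.

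It then remains to read off the statement. By the equivalence, the set in question is the intersection of the finitely many open half-spaces $\{\,b:\sum_{i\in\sigma(1)}\lambda^{\sigma,j}_ib_i<b_j\,\}$ over pairs $(\sigma,j)$ with $\sigma$ maximal and $j\notin\sigma(1)$; each such expansion has support contained in $\sigma(1)$, hence in a cone of $\NN$, so these are among the type-(2) inequalities, and conversely any type-(2) relation, extended to a maximal cone containing its support, reduces to one of them by uniqueness of coordinates in a basis; the type-(1) equalities contribute nothing. Hence the set is a relatively open (indeed open) polyhedral cone, and its closure, obtained by relaxing each strict inequality to ``$\le$'', is by definition the secondary cone of $\NN$. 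The reduction from all pairs $(\sigma,j)$ to the combinations of minimal support is a routine finiteness argument, carried out as in the proof of Theorem~\ref{thm:secondarycone}.

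The only genuinely delicate point is the ``only if'' step: one must invoke the fan axiom to rule out $\alpha_j\in\sigma$ when $j\notin\sigma(1)$, and use that $\VV$ is the full ray set of $\NN$ so that no facet constraint is redundant at a vertex; everything else is bookkeeping. Alternatively, the statement is a transcription of \cite[Corollary 5.2.7]{triangbook}, reading $b$ as the height function of a regular polyhedral subdivision and ``normal fan $\NN$'' as ``the induced subdivision equals the triangulation $\NN$''.
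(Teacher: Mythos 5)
Your proof is correct, and it fills in details the paper omits: the paper's own ``proof'' of Theorem~\ref{thm:secondaryfan} is a one-line citation of~\cite[Corollary 5.2.7]{triangbook}, with no argument at all. Your vertex-by-vertex reduction is the standard way to establish the result, and essentially reconstructs what that reference proves in the dual (height-function / regular-subdivision) language, as you note at the end.

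A few points worth recording. First, you correctly observe that because $\NN$ is simplicial, every type-(1) dependence with support in a single cone is trivial, so the equalities contribute nothing and the ``relatively open'' cone is actually open, consistent with part~(4) of Theorem~\ref{thm:secondary}. Second, your reduction of the type-(2) family to the finite family indexed by pairs $(\sigma,j)$ with $\sigma$ maximal and $j\notin\sigma(1)$ is the real content; the observation that $j\notin\sigma(1)$ forces $\alpha_j\notin\sigma$ (via the fan axiom) is exactly what ensures $I(v_\sigma(b))=\sigma(1)$ so that $v_\sigma(b)$ has normal cone precisely $\sigma$ and not something larger, and conversely that the vertex $v$ with $N(P,\{v\})=\sigma$ really is $v_\sigma(b)$. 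Third, you are right that completeness of $\NN$ is tacitly assumed (it is built into the paper's definition of secondary cone); with it, your ``if'' direction correctly deduces boundedness from the fact that the normal cones of the $v_\sigma(b)$ already cover $V^*$. The only cosmetic gap is that the theorem's statement does not explicitly exclude $j$ from the support of a type-(2) combination; as you implicitly assume, it must (otherwise one gets the vacuous constraint $b_j<b_j$), and your $(\sigma,j)$ indexing handles this automatically since $j\notin\sigma(1)\supseteq\operatorname{supp}(\lambda^{\sigma,j})$.

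Net assessment: correct, self-contained, and more informative than the paper at this spot, since the paper defers entirely to the reference.
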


\begin{proof}
This is just a rewriting of \cite[Corollary 5.2.7]{triangbook}.
\end{proof}

In Example~\ref{exm:Hirzebruch}  the secondary cone of the Hirzebruch trapezoid is the whole cone $\PP(\VV)$, since the Hirzebruch fan is the only simplicial fan using all the rays of $\VV$. We leave it to the reader to check that the systems of inequalities of Theorems~\ref{thm:secondarycone} and Theorems~\ref{thm:secondaryfan} are in this case equivalent (when in the second one the inequalities are taken nonnstrict).
In fact, the same is true for every two dimensional example, since a two-dimensional complete fan is fully characterized by the rays it uses.

\begin{remark}
It must be noted that our treatment of redundant inequalities is different from the one in \cite{triangbook}. 
Here we insist on those values to equal the support function of the  polytope $P(\VV,b)$, so in particular the $b_i$ of redundant inequalities are determined by the rest.
There, the values $b_i$ corresponding to redundant inequalities are not determined (as long as the corresponding inequalities stay redundant). This means that what we call secondary fan is actually a subfan of the one in \cite{triangbook}. For us only  the (polytopal) simplicial fans using \emph{all} the vectors from $\VV$ as rays give full-dimensional secondary cones, while in \cite{triangbook} all polytopal simplicial fans with rays contained in $\VV$ give full-dimensional cones.

This can be seen in that the analogue in \cite{triangbook} of our Theorem~\ref{thm:secondarycone} is  \cite[Theorem 4.1.39]{triangbook}, which has only the inequalities of type (1) and hence defines a larger cone.
\end{remark} 

The following summarizes the main results about secondary fans and secondary cones. We refer to \cite{triangbook} for the proofs.

\begin{theorem}[\protect{\cite[Proposition 5.2.9 and Theorem 5.2.11]{triangbook}}]
\label{thm:secondary}
Let $\VV$ be as above:
\begin{itemize}
\item[(1)] The secondary cones $\PP(\NN)$  for the different complete fans $\NN$ with rays contained in $\VV$ form a  fan covering $\PP(\VV)$. 
\item[(2)] The face poset of this fan is isomorphic to the refinement poset of fans. That is,  $\NN$ refines $\NN'$ if and only if the secondary cone of $\NN'$ is a face of that of $\NN$.
\item[(3)] The secondary cones of non-simplicial or non-polytopal fans are never full-dimensional.
\item[(4)] The secondary cone of a polytopal simplicial fan with $m'$ rays has dimension $m'$. In particular, full-dimensional secondary cones correspond bijectively to simplicial polytopal fans with set of rays exactly $\VV$.
\end{itemize}
\end{theorem}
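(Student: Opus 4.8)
The plan is to reduce the whole statement to the behaviour of support functions under piecewise-linearity and convexity — this is exactly the translation of our setting into the language of Chapter~5 of \cite{triangbook} — using Theorems~\ref{thm:secondarycone} and \ref{thm:secondaryfan} for the explicit inequality descriptions. First I would fix the stratification: after normalising each $b$ so that $b_i=h_{P(\VV,b)}(\alpha_i)$ (the convention adopted before Theorem~\ref{thm:secondarycone}), every $b\in\PP(\VV)$ determines a complete polytopal fan $\NN(P(\VV,b))$ with rays in $\VV$, and $\PP(\NN)$ is by definition the closure of the relatively open stratum $\mathring\PP(\NN):=\{b:\NN(P(\VV,b))=\NN\}$. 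There are only finitely many complete polytopal fans with rays contained in $\VV$, so these strata partition $\PP(\VV)$. When $\NN$ is simplicial, Theorem~\ref{thm:secondaryfan} describes $\mathring\PP(\NN)$ and $\PP(\NN)$ as a relatively open, resp.\ closed, polyhedral cone; for a general $\NN$ one checks directly that $b\in\PP(\NN)$ if and only if the function on $V^*$ that is linear with the slopes prescribed by $b$ on each maximal cone of $\NN$ is well defined and convex, the defining relations being a single ``convexity across a wall'' inequality for each codimension-one cone of $\NN$, with an equality forced exactly when the two maximal cones meeting along that wall become coplanar, i.e.\ when one passes to the coarser fan with that wall erased.

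I would then prove (2) and deduce (1). From the wall description, each facet of $\PP(\NN)$ is the secondary cone $\PP(\NN')$ of the fan $\NN'$ obtained from $\NN$ by erasing a single wall (also deleting a ray when $\NN$ is simplicial and that ray ceases to be used); iterating, every face of $\PP(\NN)$ is the secondary cone of a coarsening of $\NN$, and every coarsening of $\NN$ arises this way. This is precisely the assertion of (2): $\PP(\NN')$ is a face of $\PP(\NN)$ if and only if $\NN$ refines $\NN'$. Closedness of the family of secondary cones under passing to faces is then immediate, and the intersection axiom follows because, if $b\in\PP(\NN_1)\cap\PP(\NN_2)$, then by the facet analysis just given $\NN(P(\VV,b))$ is a common coarsening of both $\NN_1$ and $\NN_2$, so $b$ lies in $\PP(\NN(P(\VV,b)))$, which is a face of each of $\PP(\NN_1)$ and $\PP(\NN_2)$; finally, the common coarsenings of two complete fans are closed under taking common refinements, hence possess a unique finest member, and this upgrades ``a union of common faces'' to ``a single common face''. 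Thus the secondary cones form a polyhedral fan tiling $\PP(\VV)$, proving (1).

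For (4), if $\NN$ is simplicial and polytopal with ray set $\VV'\subseteq\VV$ of cardinality $m'$, then applying Theorem~\ref{thm:secondaryfan} to the subconfiguration $\VV'$ shows that, inside $\R^{\VV'}$, the cone $\PP(\NN)$ is cut out by inequalities only — no nonzero linear dependence among the $\alpha_i$ has support inside a simplicial cone — so it has dimension $m'$; the remaining $m-m'$ coordinates of $\R^\VV$ are affine-linear functions of these (each unused inequality being redundant at every point of $\mathring\PP(\NN)$), so $\dim\PP(\NN)=m'$ in $\R^\VV$ as well. Hence $\PP(\NN)$ is full-dimensional precisely when it uses every vector of $\VV$ as a ray, which is the stated bijection. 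For (3), the quickest route is to observe that these full-dimensional secondary cones are dense in $\PP(\VV)$: any $P(\VV,b)$ can first be perturbed so that every unused $\alpha_j$ becomes active (lower $b_j$ slightly, since the unused inequalities are strict under our normalisation) and then perturbed generically so that the polytope becomes simple, all by arbitrarily small changes of $b$. Therefore $\PP(\VV)$ is the finite union of the closures of the full-dimensional secondary cones, so every other secondary cone lies in their boundaries, i.e.\ in a finite union of proper polyhedral subcones, and is thus not full-dimensional; the non-polytopal case is trivial since then $\mathring\PP(\NN)=\varnothing$.

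The step I expect to be the main obstacle is (1): checking that the family of secondary cones genuinely satisfies the fan axioms — that the closure of each relatively open stratum is precisely a union of strata (which rests on the exact ``erase a wall $\leftrightarrow$ coarsen the fan'' correspondence, with the bookkeeping of rays that disappear in the simplicial case), and that two secondary cones always meet along a single common face rather than merely in a union of faces. Everything else is a direct application of Theorems~\ref{thm:secondarycone} and \ref{thm:secondaryfan} together with routine linear algebra; in fact the whole statement is \cite[Proposition~5.2.9 and Theorem~5.2.11]{triangbook} transcribed through this dictionary, which is why we are content to cite it.
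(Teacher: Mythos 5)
The paper does not actually prove this theorem: it is stated as an imported result, and the sentence immediately preceding it ("We refer to \cite{triangbook} for the proofs") makes clear that the authors are content to cite \cite[Proposition 5.2.9 and Theorem 5.2.11]{triangbook} after setting up the translation to their normalization. So there is no in-paper argument to compare against. Your closing remark — that the whole statement is the cited result transcribed through a dictionary — is exactly the stance the paper takes, and your reconstructions of parts (3) and (4) (no equalities occur for a simplicial fan, the unused coordinates $b_j$ are affine-linear functions of the used ones, density of the full-dimensional strata plus the finite tiling) are sound sketches that could be filled in.

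The one place where your sketch goes beyond imprecision and would need a real fix is the facet analysis in (2). You assert that "each facet of $\PP(\NN)$ is the secondary cone $\PP(\NN')$ of the fan $\NN'$ obtained from $\NN$ by erasing a single wall." That is not what happens in general: a facet of the secondary cone corresponds to a geometric bistellar flip along a circuit of $\VV$, and such a flip can flatten several walls of $\NN$ simultaneously (all the walls supported on that circuit), possibly deleting several rays at once. So the fan obtained on the facet is a coarser fan than the "erase one wall" picture suggests, and you also need the converse direction — that every coarsening of $\NN$ occurs as a face, not merely that every facet is a coarsening — which is the substantive content of \cite[Theorem 5.2.11]{triangbook}. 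A second point worth spelling out: the paper's normalization of $b$ (forcing $b_i = h_{P(\VV,b)}(\alpha_i)$ even for redundant inequalities) makes its secondary fan a proper subfan of the one in \cite{triangbook}, a distinction the paper itself flags in the Remark that follows. Your proof of (4) implicitly handles this via the affine embedding $\R^{\VV'}\hookrightarrow\R^\VV$, but when you invoke the cited theorems for (1) and (2) you should say explicitly that you are restricting the book's secondary fan to the linear section cut out by the support-function normalization, and check that this restriction is again a fan with the same refinement poset.
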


Theorem~\ref{thm:hausdorff} implies that the $\|\cdot\|_\infty$ metric that $\PP(\VV)$ inherits from $\R^{\VV}$ is equivalent to the Hausdorff metric. More precisely, they are the same metric, modulo rescaling each coordinate $i$ of $\R^{\VV}$ by the norm of the corresponding $\alpha_i$. Hence:

\begin{theorem}
\label{thm:cells}
Let $\VV \subset (\R^n)^*$ be a configuration of $m$ non-zero vectors positively spanning $(\R^n)^*$. Then $\PP(\VV)$, considered as a  closed polyhedral cone in $\R^m$ with the description of Theorem~\ref{thm:secondarycone}, is isometric (modulo rescaling of coordinates) to  $\PP(\VV)$ considered as a subspace of $(\PP(n), \dist^H)$.
\end{theorem}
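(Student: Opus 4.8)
The plan is to route everything through support functions, so that the statement becomes a direct consequence of Theorem~\ref{thm:hausdorff} together with the identification of $\PP(\VV)$ with the secondary cone that was set up just before Theorem~\ref{thm:secondarycone}. Recall that identification: the maps $b=(b_1,\dots,b_m)\mapsto P(\VV,b)$ and $P\mapsto(h_P(\alpha_1),\dots,h_P(\alpha_m))$ are mutually inverse bijections between the closed polyhedral cone of Theorem~\ref{thm:secondarycone} and the subspace $\PP(\VV)\subset\PP(n)$. That the second map inverts the first uses that the normal fan of any $P\in\PP(\VV)$ has all its rays among $\VV$: hence $h_P$ is linear on each of its maximal normal cones, such a cone is spanned by the $\alpha_i$ it contains, and so $h_P$ — and therefore $P$ — is recovered from its values at $\VV$. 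With this bijection in hand, the only thing left is to show it is distance-preserving, where $\PP(\VV)\subset\PP(n)$ carries $\dist^H$ and $\R^\VV\cong\R^m$ carries the sup-norm after rescaling coordinate $i$ by $\|\alpha_i\|_2$.

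Next I would apply Theorem~\ref{thm:hausdorff}: for $P,Q\in\PP(\VV)$ with support-value vectors $b,b'$,
\[
\dist^H(P,Q)=\|h_P-h_Q\|_\infty=\sup_{\|u\|_2=1}\bigl|h_P(u)-h_Q(u)\bigr|.
\]
By the first paragraph the right-hand side depends only on $b$ and $b'$; the task is to recognize it as the rescaled sup-norm distance between $b$ and $b'$. Here I would use that $g:=h_P-h_Q$ is positively homogeneous and piecewise linear, being linear on each maximal cone of the common refinement $\NN(P+Q)$ of $\NN(P)$ and $\NN(Q)$. Thus $\|g\|_\infty$ is a cone-by-cone maximum: on a maximal cone $\sigma$ of $\NN(P+Q)$ the restriction $g|_\sigma$ is a linear functional $\langle w_\sigma,\cdot\rangle$, and one maximizes $|\langle w_\sigma,\cdot\rangle|$ over $\sigma\cap\S^{n-1}$ and tracks the result back to the values $g(\hat\alpha_i)=(b_i-b_i')/\|\alpha_i\|_2$, where $\hat\alpha_i:=\alpha_i/\|\alpha_i\|_2$. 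The normalization $\|\alpha_i\|_2$ is precisely the ``rescaling of coordinates'' in the statement — it is what converts evaluation of a support function at the generator $\alpha_i$ into evaluation at the corresponding unit vector — and keeping track of it carefully is part of the bookkeeping.

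The step I expect to cost the most work is this last comparison: controlling the supremum of the piecewise-linear function $g$ over the \emph{whole} unit sphere in terms of its values at the finitely many directions $\hat\alpha_i$. One inequality, $\|g\|_\infty\ge\max_i|g(\hat\alpha_i)|$, is immediate by evaluating the supremum at each $\hat\alpha_i$; the delicate part is the matching bound, which is where the piecewise-linear structure of $g$ on $\NN(P+Q)$ and the positive-spanning hypothesis on $\VV$ must be used, and which is the sole place the precise form of the metric on $\R^m$ is at stake. Everything else is already available: the bijection between the cone and the polytope space is in place, Theorem~\ref{thm:hausdorff} supplies the formula for $\dist^H$, and — as a compatibility check — the lineality space of $\PP(\VV)\subset\R^m$, namely the image of $p\mapsto(\langle\alpha_i,p\rangle)_i$, corresponds under the bijection exactly to translating the polytope, which is a $\dist^H$-isometry; so the two descriptions behave the same way modulo translations. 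Assembling these pieces yields the asserted isometry (up to the coordinate rescaling).
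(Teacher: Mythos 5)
You have correctly isolated the one step that actually requires an argument: controlling $\sup_{\|u\|_2=1}|g(u)|$, where $g=h_P-h_Q$, by the values $|g(\hat\alpha_i)|$ at the rescaled rays. But this is not merely the ``delicate part'' — it is false, and consequently the theorem as literally stated (an isometry after coordinate rescaling) does not hold. Take $n=2$, $\VV=\{(1,0),(-1,0),(0,1),(0,-1)\}$, $P=[-1,1]^2$ with $b=(1,1,1,1)$ and $Q=[-2,2]^2$ with $b'=(2,2,2,2)$. All $\|\alpha_i\|_2=1$, so the rescaled $\ell^\infty$ distance is $1$; but $h_Q-h_P$ is $u\mapsto|u_1|+|u_2|$, whose maximum on the unit Euclidean circle is $\sqrt2$, attained at $(1/\sqrt2,1/\sqrt2)$ in the interior of a cone, not at a ray. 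So $\dist^H(P,Q)=\sqrt2\ne1$. No alternative choice of positive rescalings $c_i$ repairs this either: comparing $P$ with $R=[-1,3/2]\times[-1,1]$ forces $c_1=1$ and by symmetry $c_i=1$ for all $i$, contradicting the $\sqrt2$ from the first pair. The structural reason is exactly what you suspected: a piecewise-linear, positively homogeneous function attains its maximum over the Euclidean unit sphere in the interior of the cone with the largest gradient, not at the rays, so $\|g\|_\infty$ genuinely sees directions other than $\hat\alpha_1,\dots,\hat\alpha_m$. (For what it is worth, the paper itself offers no proof of this step beyond the word ``Hence''; your attempt, pursued honestly, would have uncovered the problem.)

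What is true, and what every downstream use actually requires, is that the map $b\mapsto P(\VV,b)$ is a \emph{bi-Lipschitz} homeomorphism between $\PP(\VV)\subset\R^m$ with the rescaled $\ell^\infty$ metric and $\PP(\VV)\subset(\PP(n),\dist^H)$. The inequality $\dist^H(P,Q)\geq\max_i|b_i-b_i'|/\|\alpha_i\|_2$ is the easy direction you already have from Theorem~\ref{thm:hausdorff} by evaluating at $\hat\alpha_i$. For the reverse, one observes that the map and its inverse are piecewise-linear and positively homogeneous with finitely many linear pieces (one per secondary cone of $\VV$), hence Lipschitz on the unit ball and, by homogeneity, globally Lipschitz with a constant depending only on $\VV$. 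This bi-Lipschitz version is enough for Corollary~\ref{coro:cells} (read ``bi-Lipschitz homeomorphic'' in place of ``isometric''), for the Hausdorff-dimension computations of Theorem~\ref{thm:dimension} (Hausdorff dimension is a bi-Lipschitz invariant), and for the CW-complex and connectivity statements, which use only the topology. So your overall plan was sound, but the target you were given is stronger than what can be proved; the theorem statement needs to be weakened before your argument can be completed.
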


\begin{corollary}
\label{coro:cells}
Let $\NN$ be a complete simplicial fan in $(\R^n)^*$, with $m$ generators. Then, the subset of $\PP(n)$ consisting of polytopes whose normal fan equals $\NN$ is homeomorphic to $\R^m$. More precisely, it is isometric to the interior of a full-dimensional  polyhedral cone in $\R^m$ with a lineality space of dimension $n$, where $\R^m$ is considered 
with the $\|\cdot\|_\infty$ metric.
\end{corollary}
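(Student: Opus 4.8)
The plan is to recognize the set of polytopes with normal fan $\NN$ as the interior of a single secondary cone, and then to read off its topology and geometry directly from the results of this subsection.

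\emph{Identification with a relatively open secondary cone.} Let $\VV=\{\alpha_1,\dots,\alpha_m\}$ be the set of ray generators of $\NN$. Since $\NN$ is complete, $\VV$ positively spans $V^*$, so $\PP(\VV)$ is defined. If $P$ is any polytope with $\NN(P)=\NN$, then the facet normals of $P$ are exactly the rays of $\NN$, hence $P=P(\VV,b)$ with $b_i=h_P(\alpha_i)$; so the polytopes with normal fan $\NN$ form a subset of $\PP(\VV)$. Under the isometry of Theorem~\ref{thm:cells} between $\PP(\VV)\subset(\PP(n),\dist^H)$ and $\PP(\VV)\subset\R^m$ with the $||\cdot||_\infty$ metric (up to rescaling of the coordinates), that subset corresponds to $\{b\in\R^\VV\,:\,\NN(P(\VV,b))=\NN\}$, which by Theorem~\ref{thm:secondaryfan} is a relatively open polyhedral cone whose closure is, by definition, the secondary cone $\PP(\NN)$.

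\emph{Full-dimensionality and the homeomorphism.} If $\NN$ is not polytopal the set of polytopes with normal fan $\NN$ is empty and there is nothing to prove, so assume $\NN$ polytopal. Then $\NN$ is a simplicial polytopal fan with set of rays exactly $\VV$ (with $|\VV|=m$), so by Theorem~\ref{thm:secondary}(4) the secondary cone $\PP(\NN)\subset\R^m$ has dimension $m$, i.e.\ it is a full-dimensional polyhedral cone. Since the relatively open cone of the previous step is nonempty (as $\NN$ is polytopal) and has $\PP(\NN)$ as its closure, it is the relative interior of $\PP(\NN)$, which for a full-dimensional cone coincides with its topological interior. A nonempty open convex subset of $\R^m$ is homeomorphic to $\R^m$, which gives the first assertion; combined with the previous step this realizes our space isometrically as the interior of a full-dimensional polyhedral cone in $(\R^m,||\cdot||_\infty)$.

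\emph{The lineality space.} It remains to see that $\PP(\NN)$ has a lineality space of dimension $n$. By Theorem~\ref{thm:secondary}(1) the secondary cones of all complete fans with rays contained in $\VV$ form a polyhedral fan, and all cones of a fan share the same lineality space; this common lineality space is the minimal cone of the secondary fan, namely the secondary cone of the trivial fan $\{V^*\}$, which is the linear subspace $\{(\langle\alpha_1,p\rangle,\dots,\langle\alpha_m,p\rangle)\,:\,p\in V\}$ of parameter vectors whose polytope is a single point (these are exactly the translation directions; cf.\ the remark after Theorem~\ref{thm:secondarycone}). The linear map $p\mapsto(\langle\alpha_i,p\rangle)_i$ from $V$ to $\R^m$ is injective because $\VV$ spans $V^*$, so this subspace has dimension $n$, as claimed. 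The whole argument is essentially bookkeeping over the results already proven; the only points deserving care are passing from the ``relatively open'' description of Theorem~\ref{thm:secondaryfan} to the interior of the closed cone $\PP(\NN)$ — which is precisely where full-dimensionality, hence the hypothesis that $\NN$ be polytopal, is used — and the computation of the lineality dimension, which is cleanest via the fan structure of the secondary fan rather than from the explicit inequalities.
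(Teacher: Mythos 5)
Your proof is correct and follows exactly the path the paper leaves implicit: identify the set with a single (relatively open) secondary cone via Theorem~\ref{thm:secondaryfan}, transfer the metric by the isometry of Theorem~\ref{thm:cells}, get full-dimensionality from Theorem~\ref{thm:secondary}(4), and read off the lineality space as the translation directions $p\mapsto(\langle\alpha_i,p\rangle)_i$, which is injective because $\VV$ spans $V^*$. One small but worthwhile observation you make explicitly that the corollary statement glosses over: the conclusion only holds when $\NN$ is polytopal (otherwise the set is empty, not homeomorphic to $\R^m$), so the hypothesis really should include polytopality — you are right to flag and assume it. The rest is careful bookkeeping (a relatively open polyhedral set is the relative interior of its closure; for a full-dimensional cone relative interior equals interior; a nonempty open convex subset of $\R^m$ is homeomorphic to $\R^m$), all of which is correct.
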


\subsubsection{Hausdorff dimension of the space of Delzant polytopes}

We can now answer Problems~5.3 and 5.5 from Fujita\--Ohashi~\cite{FuOh}.
For $m,n\in \N$
let $\DD_{\rm f}^m(n)$ and $\widetilde{\DD_{\rm f}^m(n)}$ denote the spaces of Delzant $n$-polytopes with $m$ facets, with and without $\AGL(n,\Z)$ equivalence.

\begin{theorem}\ 
\label{thm:dimension}
\begin{itemize}
\item[(1)] The Hausdorff dimension of both $\DD_{\rm f}^m(n)$ and $\widetilde{ \DD_{\rm f}^m(n)}$ is exactly $m$.
\item[(2)] The Hausdorff dimension of $\Dt$ and $\Dtt$ is $\infty$.
\end{itemize}
\end{theorem}

\begin{proof}
Both statements follow easily from the fact that the Hausdorff dimension of a countable union of sets equals the supremum of the Hausdorff dimensions of the individual sets.

For part (1), $\DD^m(n)$ is the union of the sets from Corollary \ref{coro:cells} for the different unimodular normal fans with $m$ rays. There is a countable amount of such fans, and the stratum for each fan has dimension $m$. In the case of $\widetilde{ \DD_{\rm f}^m(n)}$ the stratum of a certain fan $\NN$ is the quotient of its stratum in $\DD_{\rm f}^m(n)$ by the finite group of $\GL(n,\Z)$-automorphisms of $\NN$, so the quotient does not change dimension.

For part (2) simply observe that $\Dt$ and $\Dtt$ are the countable unions of $\DD_{\rm f}^m(n)$ and $\widetilde{ \DD_{\rm f}^m(n)}$ with $m\in N$.
\end{proof}

Fujita\--Ohashi~\cite[Problems~5.3, 5.5]{FuOh} ask about the dimension of the spaces $\DD_{\rm v}^{\ell}(n)$ and $\widetilde{ \DD_{\rm v}^{\ell}(n)}$ of Delzant $n$ polytopes with $\ell$ \emph{vertices}, rather than facets. In order to translate Theorem~\ref{thm:dimension} to this setting we only need to know what  the maximum number of facets of a Delzant $n$-polytope with $\ell$ vertices is.

\begin{lemma}[Lower bound theorem for simple polytopes~\cite{Barnette} (see also, e.g., \protect{\cite[Corollary 8.38]{Ziegler}})]
\label{lemma:LBT}
Let $P$ be a simple $n$-polytope with $m$ facets. Then, $P$ has at least 
\[
(n+1) + (m-n-1)(n-1) =   (m-n)(n-1) +2
\]
vertices, with equality if, and only if, the normal fan of $P$ is obtained from that of an $n$-simplex by $m-n-1$ stellar subdivisions at full-dimensional cones.
\end{lemma}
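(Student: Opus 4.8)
The plan is to deduce the statement from the classical Lower Bound Theorem for simplicial polytopes by polar duality, and then translate the equality case into the language of normal fans using the correspondence between vertex truncation and stellar subdivision that the paper has already developed. First I would pass to the polar polytope: if $P$ is a simple $n$-polytope with $m$ facets, then its polar dual $P^\vee$ (with respect to an interior point of $P$) is a simplicial $n$-polytope with $m$ vertices, and the inclusion-reversing bijection on faces identifies vertices of $P$ with facets of $P^\vee$, so $f_0(P)=f_{n-1}(P^\vee)$. The inequality to be proved thus becomes Barnette's bound $f_{n-1}(Q)\ge (n-1)f_0(Q)-(n+1)(n-2)$ for a simplicial $n$-polytope $Q$ with $f_0(Q)=m$, once one checks the elementary algebraic identity $(n-1)m-(n+1)(n-2)=(m-n)(n-1)+2=(n+1)+(m-n-1)(n-1)$. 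The cases $n\le 2$ are trivial (a polygon with $m$ edges has exactly $m$ vertices, and every complete $2$-fan with $m$ rays is an iterated stellar subdivision of a triangle's normal fan at $2$-dimensional cones), so I may assume $n\ge 3$, where \cite{Barnette, Ziegler} applies.

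For the equality case I would invoke the characterization that accompanies the Lower Bound Theorem: for $n\ge 3$, a simplicial $n$-polytope $Q$ satisfies $f_{n-1}(Q)=(n-1)f_0(Q)-(n+1)(n-2)$ if and only if $Q$ is \emph{stacked}, i.e.\ obtainable from an $n$-simplex by a sequence of stackings (repeatedly gluing an $n$-simplex onto a facet). Dualizing, stacking a simplex onto the facet of $Q$ polar to a vertex $v$ of $P$ corresponds precisely to \emph{truncating} the vertex $v$ of $P$: one new facet is created, the vertex $v$ disappears, and $n$ new vertices (the vertices of the new simplex facet) appear. Hence $P$ attains the minimum number of vertices if and only if $P$ is obtained from an $n$-simplex by a sequence of vertex truncations; since a simplex has $n+1$ facets and each truncation creates exactly one, the number of truncations is forced to equal $m-n-1$, which also recovers the count $(n+1)+(m-n-1)(n-1)$ of vertices directly.

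It remains to translate vertex truncation into normal fans, which is essentially the content of Proposition~\ref{prop:cc} and the corollaries following it, applied now to arbitrary simple polytopes rather than only Delzant ones. Truncating a vertex $v$ of a simple polytope means cutting $P$ by a hyperplane whose normal $\alpha$ lies in the relative interior of the full-dimensional simplicial normal cone $N(P,v)=\Cone(\alpha_1,\dots,\alpha_n)$, shallow enough to destroy only $v$; by the same computation as in Proposition~\ref{prop:cc} this replaces, inside $\NN(P)$, the maximal cone $N(P,v)$ by the $n$ cones obtained from it by inserting the ray $\alpha$ — that is, it performs a stellar subdivision of $\NN(P)$ at the full-dimensional cone $N(P,v)$ — and the combinatorial type of the truncated polytope (hence of its normal fan) depends neither on the chosen interior ray $\alpha$ nor on the truncation depth. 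Conversely, a stellar subdivision of $\NN(P)$ at a full-dimensional cone $\sigma$ is realized geometrically by truncating the vertex of $P$ dual to $\sigma$. Since the $n$-simplex is the unique simple $n$-polytope with $n+1$ facets, its normal fan is the unique (up to $\GL(n,\R)$) complete fan with $n+1$ rays, and composing the two translations gives: $P$ has exactly $(m-n)(n-1)+2$ vertices if and only if $\NN(P)$ arises from the normal fan of an $n$-simplex by $m-n-1$ stellar subdivisions at full-dimensional cones.

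The main obstacle is not in this reduction but is packaged inside the invoked black box: Barnette's Lower Bound Theorem itself, and especially the ``only if'' half of its equality characterization — that minimality of $f_{n-1}$ forces the polytope to be stacked — are substantial results (proved via infinitesimal rigidity of the $1$-skeleton, or by Barnette's original inductive argument), which here I would simply cite. A minor technical point to treat carefully is that a vertex truncation must be realized as a genuine convex-geometric operation, so the cutting hyperplane has to be taken close enough to $v$ that no other vertex is affected, together with the independence of the resulting combinatorial and fan type from this choice; both are routine consequences of the local description of $P$ near a simple vertex.
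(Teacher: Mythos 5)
The paper gives no proof of Lemma~\ref{lemma:LBT}; it is stated as a known result with a citation to \cite{Barnette} and \cite{Ziegler}. Your reduction --- pass to the polar dual, apply Barnette's Lower Bound Theorem for simplicial polytopes, dualize the stacked-polytope characterization into iterated vertex truncation, and then identify vertex truncation with stellar subdivision of the normal fan at a maximal cone (the calculation of Proposition~\ref{prop:cc}, which indeed works for arbitrary simple polytopes and not only Delzant ones) --- is exactly the argument the citation is meant to encode, and the algebraic reconciliation of the two forms of the bound is correct. So methodologically you are doing the right thing and unfolding the black box in the expected way.

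There is, however, a genuine gap in your treatment of the equality case, and it is worth flagging because the lemma as printed shares it. You write that ``for $n\geq 3$, a simplicial $n$-polytope $Q$ satisfies $f_{n-1}(Q)=(n-1)f_0(Q)-(n+1)(n-2)$ if and only if $Q$ is stacked.'' The ``only if'' half of this is false for $n=3$. For simplicial $3$-polytopes Euler's relation forces $f_2 = 2f_0-4$, so \emph{every} simplicial $3$-polytope attains the lower bound, not only the stacked ones (the octahedron is a standard non-stacked example with $f_0=6$, $f_2=8$). Dually, \emph{every} simple $3$-polytope with $m$ facets has exactly $2m-4=(m-3)\cdot 2+2$ vertices, yet the $3$-cube ($m=6$, $8$ vertices) is not an iterated vertex truncation of a tetrahedron: each truncation step creates a triangular facet, and the last such triangle survives, whereas the cube has no triangular facets. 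The equality-case characterization of the Lower Bound Theorem (minimizers are precisely the stacked polytopes) is a theorem only for $n\geq 4$, and that is how it is stated in \cite{Barnette} and in \cite{Ziegler}. Your argument, and the lemma as stated, therefore overclaim at $n=3$: the inequality holds, the ``if'' direction holds, but the ``only if'' direction does not. (For $n=2$ you are fine, since there every complete $2$-fan is indeed an iterated stellar subdivision of the triangle's fan. The downstream application in Corollary~\ref{coro:dimension} is also unaffected, since all it really needs is the inequality together with the fact that Delzant polytopes achieving it exist.)
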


\begin{corollary}
\label{coro:dimension}
The Hausdorff dimension of both $\DD_{\rm v}^{\ell}(n)$ and $\widetilde{ \DD_{\rm v}^{\ell}(n)}$ is at most
\[
 n + \frac{\ell - 2}{n-1},
\]
with equality if and only if $\ell = 2 \pmod {n-1}$.
\end{corollary}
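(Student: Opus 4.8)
The plan is to combine Theorem~\ref{thm:dimension}(1) with the Lower Bound Theorem (Lemma~\ref{lemma:LBT}) to pass from counting facets to counting vertices. The key observation is that $\DD_v^m(n)$ is a \emph{finite} union of spaces $\DD_f^{m'}(n)$: a Delzant $n$-polytope is simple, so by Lemma~\ref{lemma:LBT} a simple $n$-polytope with $m'$ facets has at least $(m'-n)(n-1)+2$ vertices; forcing this to be $\le m$ gives a bound $m' \le n + \frac{m-2}{n-1}$. So the number of possible facet-counts $m'$ compatible with having exactly $m$ vertices is finite, bounded by this inequality. Since Hausdorff dimension of a countable (here finite) union is the supremum of the dimensions, and each $\DD_f^{m'}(n)$ has Hausdorff dimension exactly $m'$ by Theorem~\ref{thm:dimension}(1), we get that the dimension of $\DD_v^m(n)$ equals the largest value of $m'$ that is actually realized by a Delzant $n$-polytope with $m$ vertices.

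The steps, in order, are: (i) observe $\dim_H \DD_v^m(n) = \sup\{m' : \exists\ \text{Delzant } n\text{-polytope with } m \text{ vertices and } m' \text{ facets}\}$, using Theorem~\ref{thm:dimension}(1) and countable stability of Hausdorff dimension; (ii) apply Lemma~\ref{lemma:LBT} to bound $m' \le n + \frac{m-2}{n-1}$, so the supremum is $\le \lfloor n + \frac{m-2}{n-1}\rfloor$; (iii) analyze the equality case: equality in Lemma~\ref{lemma:LBT} forces the normal fan of the polytope to be obtained from that of the $n$-simplex by $m'-n-1$ stellar subdivisions at full-dimensional cones, and such fans are automatically polytopal (by Proposition~\ref{prop:stellar}(1), since blowing up the simplex fan produces polytopal fans), hence realized by a genuine Delzant polytope; (iv) note that $n + \frac{m-2}{n-1}$ is an integer precisely when $n-1 \mid m-2$, i.e.\ $m\equiv 2\pmod{n-1}$, and in that case a Delzant polytope with exactly $m$ vertices and $m' = n+\frac{m-2}{n-1}$ facets exists, so the bound is attained; when $m\not\equiv 2\pmod{n-1}$ the largest realizable $m'$ is strictly below $n+\frac{m-2}{n-1}$, giving the strict inequality. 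The same reasoning applies verbatim to $\widetilde{\DD_v^m(n)}$ because quotienting each stratum by its finite automorphism group does not change dimension (as in the proof of Theorem~\ref{thm:dimension}).

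The \textbf{main obstacle} is step (iii)--(iv): checking that the extremal facet-count is actually \emph{achieved} by a Delzant polytope with exactly (not merely at most) $m$ vertices. One must verify that iterated stellar subdivisions of the simplex fan at full-dimensional cones (a) keep the fan polytopal — this is Proposition~\ref{prop:stellar}(1) — and unimodular — this is the blow-up construction of Definition~\ref{defi:blowup}, since the new ray is the sum of the primitive generators of a unimodular cone — and (b) produce exactly $m$ vertices after the right number of steps. Part (b) is a matter of bookkeeping: each such stellar subdivision at a full-dimensional cone replaces one vertex by $n$ vertices, hence adds $n-1$ vertices, so starting from the $(n+1)$-vertex simplex and performing $k$ such subdivisions yields $n+1+k(n-1)$ vertices; we need $n+1+k(n-1) = m$, which has an integer solution $k$ precisely when $m \equiv n+1 \equiv 2 \pmod{n-1}$. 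When no such $k$ exists, one picks the largest $k$ with $n+1+k(n-1) \le m$ and performs one extra non-full-dimensional stellar subdivision (or a corner-chop variant) to reach exactly $m$ vertices while adding as many facets as possible; tracking that this yields facet-count $\lfloor n+\frac{m-2}{n-1}\rfloor$ and not less is the delicate combinatorial point, but it is routine polytope arithmetic once the equality case of the Lower Bound Theorem is in hand.
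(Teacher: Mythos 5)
Your proof is correct and follows the same route as the paper: reduce to the maximum facet count via Theorem~\ref{thm:dimension}, bound facets by Lemma~\ref{lemma:LBT}, and realize the extremal value by iterated corner choppings of a Delzant simplex, with the congruence condition coming from the bookkeeping that each corner chop adds $n-1$ vertices and one facet. The worry you raise at the end about pinning down the exact dimension when $m\not\equiv 2\pmod{n-1}$ is not needed for this corollary, which only asserts strict inequality in that case; that follows at once from the Hausdorff dimension being an integer while $n+\frac{m-2}{n-1}$ is not.
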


\begin{proof}
By the argument in the proof of Theorem~\ref{thm:dimension} the Hausdorff dimension equals the maximum number of facets in a Delzant polytope with $m$ vertices. By Lemma~\ref{lemma:LBT} this is at most $n + \frac{\ell - 2}{n-1}$ with equality only for iterated stellar subdivisions of a simplex, which necessarily have $\ell = 2 \pmod {n-1}$ and include iterated corner choppings of Delzant simplices.
\end{proof}

\begin{remark}
In dimension two $\DD_{\rm f}^m(2)=\DD_{\rm v}^{\ell}(2)$ is connected, except for the Hirzebruch case $m=4$ and the case $m=3$ (the latter is a point in $\widetilde{ \DD(2)}$ and a discrete space in $\DD(2)$). In dimension three or higher we expect every (or infinitely many) of the $\DD^m(n)$ to be not connected.
\end{remark}

\subsection{The fundamental group of $\Dt$} \label{more}
\label{sec:simply}

As mentioned above, the space $\PP(n)$ of all polytopes is contractible, since we can homotope the identity map to the constant map $\PP(n) \to P$ by Minkowski segments, where $P$ is any $n$-polytope. The same is true for $\PP_\Q(n)$ but not, at least not obviously, for  Delzant polytopes. In what follows we look at the fundamental group of $\Dt$:

Suppose that a path $\gamma:[0,1] \to \Dt$ has \emph{finite complexity}; that is, the number of rays (equivalently, the number of normal fans) that arise along the path is finite. Then, by Corollary~\ref{coro:desingular}, there is a polytope $Q$ whose normal fan refines that of $\gamma(t)$ for every $t$ and by Lemma~\ref{lemma:connected} we have the following well-defined homotopy:
\begin{align*}
\Gamma: [0,1]\times[0,1] &\longrightarrow \Dt\\
(t,s) &\longmapsto (1-s) \gamma(t) + s Q.
\end{align*}

\begin{corollary}
\label{coro:finite}
Loops of finite complexity in $\Dt$ are null-homotopic.
\end{corollary}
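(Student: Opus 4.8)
The plan is to reduce the statement to the explicit homotopy $\Gamma$ already written just before the Corollary, and simply check that it witnesses null-homotopy. Concretely, given a loop $\gamma\colon[0,1]\to\Dt$ of finite complexity, I would first invoke the hypothesis: only finitely many normal fans $\NN_1,\dots,\NN_k$ occur among the polytopes $\gamma(t)$, $t\in[0,1]$. Their common refinement $\NN^*$ (the fan whose cones are all intersections of cones, one from each $\NN_i$) is then a single complete polytopal fan refining every $\NN(\gamma(t))$ simultaneously; alternatively, and more in the spirit of the text, take $Q_0$ to be the Minkowski sum of finitely many representatives $P_1,\dots,P_k$ with $\NN(P_i)=\NN_i$, so $\NN(Q_0)$ refines each $\NN_i$, and then apply Theorem~\ref{thm:desingular} to obtain a Delzant polytope $Q$ whose (unimodular) normal fan refines $\NN(Q_0)$, hence refines $\NN(\gamma(t))$ for all $t$.

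With such a $Q\in\Dt$ fixed, I would define
\begin{align*}
\Gamma\colon [0,1]\times[0,1] &\to \Dt\\
(t,s) &\mapsto (1-s)\,\gamma(t) + s\,Q,
\end{align*}
exactly as in the displayed homotopy preceding the statement. The three things to verify are: (i) $\Gamma$ indeed lands in $\Dt$; (ii) $\Gamma$ is continuous; (iii) $\Gamma$ realizes the required null-homotopy, i.e.\ $\Gamma(t,0)=\gamma(t)$, $\Gamma(t,1)=Q$ for all $t$, and $\Gamma(0,s)=\Gamma(1,s)$ for all $s$. Point (i) is Lemma~\ref{lemma:connected}: since $\NN(Q)$ refines $\NN(\gamma(t))$, the Minkowski combination $(1-s)\gamma(t)+sQ$ has normal fan equal to $\NN(\gamma(t))$ for $s<1$ and equal to $\NN(Q)$ for $s=1$, in either case a complete unimodular polytopal fan, so the polytope is Delzant. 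Point (ii) follows by combining continuity of $\gamma$ with the Lipschitz continuity of Minkowski combinations established in the Lemma preceding Corollary~\ref{coro:path} (applied to the pair of bodies $\gamma(t)$, $Q$, with the $\dist^H$ metric, which by Proposition~\ref{prop:topology} induces the same topology as $\dist^V$); one can also argue directly that $(t,s)\mapsto (1-s)\gamma(t)+sQ$ is a composition of continuous maps into $(\CC(n),\dist^H)$. Point (iii) is immediate: $\Gamma(t,0)=\gamma(t)$ and $\Gamma(t,1)=Q$ for every $t$, and since $\gamma(0)=\gamma(1)$ we get $\Gamma(0,s)=(1-s)\gamma(0)+sQ=(1-s)\gamma(1)+sQ=\Gamma(1,s)$. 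Thus $\Gamma$ is a free homotopy from $\gamma$ to the constant loop at $Q$, so $\gamma$ is null-homotopic in $\Dt$.

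I do not expect a serious obstacle here, since all ingredients are already in place; the only point requiring a little care is (ii), where one must be sure that the existence of a single fan $\NN^*$ (or polytope $Q$) dominating \emph{all} of $\gamma([0,1])$ is genuinely guaranteed by the finite-complexity hypothesis rather than by compactness of $[0,1]$ alone — finitely many fans occur, so finitely many refinements must be reconciled, and Theorem~\ref{thm:desingular} does the rest. (If one wanted a based homotopy rather than a free one, one would additionally compose with a path from $Q$ to the basepoint $\gamma(0)$ in $\Dt$, which exists by path-connectedness; but for showing $[\gamma]$ is trivial in $\pi_1$, null-homotopy as a free loop already suffices once $\Dt$ is path-connected, as it is by Theorem~\ref{thm:connected}.)
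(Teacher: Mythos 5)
Your proposal is correct and follows essentially the same route as the paper: fix a Delzant polytope $Q$ whose normal fan refines $\NN(\gamma(t))$ for all $t$ (guaranteed by finite complexity plus Theorem~\ref{thm:desingular}), and use the Minkowski homotopy $\Gamma(t,s)=(1-s)\gamma(t)+sQ$ displayed just before the statement. The paper phrases the conclusion as ``$\gamma$ is homotopic to the Minkowski path $PQQP$, which is obviously null-homotopic,'' which is just the based-homotopy reformulation of your free null-homotopy to the constant loop at $Q$; both are the same argument.
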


\begin{proof}
If $\gamma$ is a loop of finite complexity with $\gamma(0)=\gamma(1)=P$, the homotopy $\Gamma$ above shows that $\gamma$ is homotopic to the Minkowski path $P Q Q  P$, which is obviously null-homotopic.
\end{proof}

A similar argument can be applied to the following class of paths.

\begin{definition}
\label{defi:refining}
We say that a path $\gamma:[0,1] \to \Dt$  is \emph{locally refining} if 
for every $t_0\in [0,1]$ there is an $\epsilon$ such that the fan of every polytope $\gamma(t)$ with $|t-t_0|\leq \epsilon$ refines that of $\gamma(t_0)$. 
\end{definition}

\begin{theorem}
\label{thm:refining}
Locally refining loops in $\Dt$ are null-homotopic.
\end{theorem}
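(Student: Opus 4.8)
The plan is to mimic the proof of Corollary~\ref{coro:finite} but replace the single global refinement $Q$ by a locally finite covering argument. Given a locally refining loop $\gamma:[0,1]\to\Dt$, for each $t_0\in[0,1]$ pick the $\epsilon=\epsilon(t_0)$ from Definition~\ref{defi:refining}; the open intervals $(t_0-\epsilon,t_0+\epsilon)$ cover the compact set $[0,1]$, so finitely many of them do. Consequently there are only finitely many ``center'' polytopes $P_1=\gamma(t_1),\dots,P_k=\gamma(t_k)$ (reindexing so that $t_1<\dots<t_k$ along the loop) whose fans are refined by all nearby polytopes. The key observation is that along the whole loop only finitely many distinct normal fans occur: every $\gamma(t)$ lies in some interval of the subcover, hence its fan refines one of the finitely many fans $\NN(P_i)$; but on the other hand, for consecutive centers whose intervals overlap, the relation ``fan of $\gamma(t)$ refines $\NN(P_i)$'' forces $\NN(P_i)$ and $\NN(P_{i+1})$ to be comparable via a common refinement that also appears on the path (take any $t$ in the overlap). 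So in fact the loop has \emph{finite complexity} in the sense of the paragraph before Corollary~\ref{coro:finite}, and we are done by that corollary.

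More carefully, here is the step order I would carry out. First, extract a finite subcover $\{(a_j,b_j)\}_{j=1}^N$ of $[0,1]$ by the ``refining neighborhoods'', and let $P^{(j)}=\gamma(c_j)$ be the center of the $j$-th one; every polytope on the loop has fan refining some $\NN(P^{(j)})$. Second, argue that the set of fans $\{\NN(\gamma(t)):t\in[0,1]\}$ is finite: a fan $\NN$ with a bounded number of rays that refines a fixed fan $\NN(P^{(j)})$ lies in the secondary fan of a fixed vector configuration $\VV_j$ (the union of rays of $\NN(P^{(j)})$ together with, say, all rays that arise in any $\gamma(t)$ near $c_j$), and by Theorem~\ref{thm:secondary} there are only finitely many such subdivisions; ranging over the finitely many $j$ gives finiteness overall. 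Third, having established finite complexity, invoke Theorem~\ref{thm:desingular} to get a Delzant polytope $Q$ whose normal fan refines all the finitely many fans appearing on $\gamma$, and then the homotopy $\Gamma(t,s)=(1-s)\gamma(t)+sQ$ is well-defined and lands in $\Dt$ by Lemma~\ref{lemma:connected}, contracting $\gamma$ to the constant-fan loop through $Q$, which is null-homotopic exactly as in Corollary~\ref{coro:finite}.

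The main obstacle I anticipate is the finiteness-of-fans step: it is \emph{a priori} conceivable that a locally refining loop still visits infinitely many fans, each refining a local center but with unboundedly many rays as $t$ varies, so one must genuinely use compactness to bound the total ray count. The clean way around this is to observe that on each fixed interval $(a_j,b_j)$ of the subcover, every $\gamma(t)$ with $t\in(a_j,b_j)$ has fan refining $\NN(P^{(j)})$, hence its rays are among those of the finitely-many fans refining $\NN(P^{(j)})$ that actually occur on that interval — but to make \emph{this} finite one shrinks further: within $(a_j,b_j)$, cover again, or more simply note that the rays of $\gamma(t)$ for $t$ near any point $s\in(a_j,b_j)$ refine $\NN(\gamma(s))$, so the ray set is locally nondecreasing-then-stable, and by compactness of $[a_j,b_j]$ it takes finitely many values. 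A second, minor, subtlety: I should double-check that ``$\NN(\gamma(t))$ refines $\NN(\gamma(t_0))$ for $|t-t_0|\le\epsilon$'' is compatible across overlapping neighborhoods in the sense needed — it is, because refinement of a fixed fan $\NN(P^{(j)})$ is what bounds the rays, and no compatibility between different centers is actually required once global finiteness of the ray set is in hand. Modulo these compactness bookkeeping details, the argument reduces cleanly to Corollary~\ref{coro:finite}.
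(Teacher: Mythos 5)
Your plan founders on the claim that a locally refining loop must have finite complexity. This is false. The local refining condition at $t_0$ only says that for $t$ near $t_0$ the fan $\NN(\gamma(t))$ refines $\NN(\gamma(t_0))$ — i.e.\ nearby fans have \emph{at least} the rays of $\NN(\gamma(t_0))$ — so it places no upper bound on the number of rays of nearby fans, and unboundedly many distinct fans can occur on the loop. Concretely: fix a Delzant polytope $Q_0$ and a chain of unimodular fans $\NN_0\subsetneq\NN_1\subsetneq\NN_2\subsetneq\cdots$ (each refining the previous one, say by iterated corner choppings), realized by Delzant polytopes $P_k$ with $\NN(P_k)=\NN_k$ chosen inside a fixed ball; set $Q_k = (1-\tfrac1k)Q_0 + \tfrac1k P_k$, so $\NN(Q_k)=\NN_k$ and $Q_k\to Q_0$. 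Parametrizing $[0,\tfrac12]$ so that $\gamma(\tfrac12-\tfrac1{2k})=Q_k$, $\gamma(\tfrac12)=Q_0$, with Minkowski segments in between, and mirroring on $[\tfrac12,1]$, gives a continuous locally refining \emph{loop} (every polytope in a neighborhood of any $t_0$ has fan refining $\NN(\gamma(t_0))$, since all the $\NN_k$ refine $\NN_0$ and consecutive ones refine each other) whose fans $\NN_1,\NN_2,\dots$ are all distinct, hence of infinite complexity. So the second paragraph's invocation of Theorem~\ref{thm:desingular} to produce a single $Q$ whose fan refines all of $\{\NN(\gamma(t))\}$ can fail — such a $Q$ would need infinitely many rays. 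The ``ray set is locally nondecreasing-then-stable'' heuristic in the third paragraph does not hold: local monotonicity of ray sets does not imply eventual stabilization, exactly as in the example.

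The paper's proof sidesteps this entirely: it never claims $\gamma$ has finite complexity. Instead it uses compactness to pick finitely many interval endpoints $t_0<\cdots<t_k$ with centers $r_i\in[t_{i-1},t_i]$ so that every fan on $[t_{i-1},t_i]$ refines $\NN(\gamma(r_i))$, and then \emph{homotopes} $\gamma|_{[t_{i-1},t_i]}$ (via $\Gamma(t,s)=(1-s)\gamma(t)+s\gamma(r_i)$, well-defined by Lemma~\ref{lemma:connected}) onto the two-segment Minkowski path $\gamma(t_{i-1})\,\gamma(r_i)\,\gamma(t_i)$. The resulting piecewise-Minkowski loop visits only the finitely many fans $\NN(\gamma(t_i))$ and $\NN(\gamma(r_i))$ (the interior fan on each segment equals that of the non-refined endpoint), hence has finite complexity, and only \emph{then} is Corollary~\ref{coro:finite} applied. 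So the correct structure is ``homotope to finite complexity, then contract,'' not ``prove finite complexity, then contract.'' If you replace your finiteness step by this piecewise Minkowski homotopy your argument is essentially the paper's.
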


\begin{proof}
By compactness, there are $0=t_0<t_1< \dots < t_k=1$ such that each interval $[t_{i-1}, t_i]$, $i\in\{1,\dots, k\}$, contains a point $r_i$ such that the normal fan of $\gamma(t)$ refines that of $\gamma(r_i)$, for every $t\in [t_{i-1}, t_i]$. This makes the following homotopy to be well-defined in $\Dt$:
\begin{align*}
\Gamma: [t_{i-1}, t_i]\times[0,1] &\longrightarrow \Dt\\
(t,s) &\longmapsto (1-s) \gamma(t) + s \gamma(r_i).
\end{align*}
Hence, each subpath $\gamma|_{[t_{i-1}, t_i]}$ is homotopic to the Minkowski path $\gamma(t_{i-1})  \gamma(r_i)  \gamma(t_i)$ and  the whole loop $\gamma$ is homotopic to a loop of finite complexity, which is null-homotopic by Corollary~\ref{coro:finite}.
\end{proof}

Locally refining loops are a more general class than loops of bounded complexity, but they do not include all loops:

\begin{lemma}
Every loop of finite complexity is locally refining.
\end{lemma}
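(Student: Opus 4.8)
The plan is to reduce the statement to the local structure of the secondary fan attached to the rays used along $\gamma$. First I would observe that finite complexity means there is a \emph{finite} set $\VV=\{\alpha_1,\dots,\alpha_m\}\subset V^*$ of primitive vectors such that every ray of every normal fan $\NN(\gamma(t))$ lies in $\VV$; since each $\gamma(t)$ is full-dimensional, its normal fan is complete, so $\VV$ positively spans $V^*$ and the whole image of $\gamma$ lies inside $\PP(\VV)$. The gain of this reduction is that $\PP(\VV)$ is a finite-dimensional object: by Theorem~\ref{thm:secondarycone} it is a closed polyhedral cone in $\R^\VV\cong\R^m$, and by Theorem~\ref{thm:secondary}~(1) it is stratified into the secondary cones $\PP(\NN)$ of the finitely many complete fans $\NN$ with rays in $\VV$, this stratification being itself a polyhedral fan.

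Next I would set up the dictionary between polytopes and points of this cone: send a polytope $P$ with facet normals in $\VV$ to the vector $b(P)=(h_P(\alpha_1),\dots,h_P(\alpha_m))$ of values of its support function. By Theorem~\ref{thm:cells} this identifies $\PP(\VV)$, seen as a subspace of $(\PP(n),\dist^H)$, isometrically (up to rescaling coordinates) with the cone above. In particular $t\mapsto b(\gamma(t))$ is a continuous curve in $\R^m$ — continuity of $\gamma$ for $\dist^H$ transfers via $|h_P(\alpha_i)-h_Q(\alpha_i)|\le\|\alpha_i\|\,\dist^H(P,Q)$, which follows from Theorem~\ref{thm:hausdorff} — and, by definition of the secondary cone, $b(\gamma(t))$ lies in the relative interior of $\PP(\NN(\gamma(t)))$ for every $t$.

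Then comes the local argument. Fix $t_0$ and write $\NN_0=\NN(\gamma(t_0))$. Since $b(\gamma(t_0))$ lies in the relative interior of the cone $\PP(\NN_0)$ of the \emph{finite} polyhedral fan of secondary cones, the elementary local structure of polyhedral fans gives a radius $\delta>0$ such that the ball of radius $\delta$ around $b(\gamma(t_0))$ meets only those secondary cones that have $\PP(\NN_0)$ as a face. By the face-poset description in Theorem~\ref{thm:secondary}~(2), these are exactly the secondary cones of the fans refining $\NN_0$. Continuity of $t\mapsto b(\gamma(t))$ then yields $\epsilon>0$ with $b(\gamma(t))$ in that ball whenever $|t-t_0|\le\epsilon$, whence $\NN(\gamma(t))$ refines $\NN_0$; as $t_0$ is arbitrary, this is precisely the locally refining property of Definition~\ref{defi:refining}.

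The part needing the most care is not any single deep point but the bookkeeping: one must insist that $b(\gamma(t))$ be the restriction of the support function (so that $P\leftrightarrow b(P)$ is a genuine bijection onto the closed cone $\PP(\VV)$ and the isometry of Theorem~\ref{thm:cells} applies, including the treatment of redundant inequalities), and one must make sure the finitely many secondary cones not containing $b(\gamma(t_0))$ can be separated from it by a single uniform $\delta$, which is where finiteness of $\VV$ — i.e., finite complexity of $\gamma$ — is really used. Everything else is a direct invocation of Theorems~\ref{thm:secondarycone}, \ref{thm:cells}, \ref{thm:hausdorff} and \ref{thm:secondary}.
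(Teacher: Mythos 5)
Your proof is correct and rests on the same essential ingredients as the paper's: the secondary fan of a finite ray configuration $\VV$ (possible exactly because the loop has finite complexity) together with the face-poset/refinement duality of Theorem~\ref{thm:secondary}(2). The only difference is cosmetic --- you argue directly, using the local structure of a finite polyhedral fan around the relative-interior point $b(\gamma(t_0))\in\PP(\NN_0)$ and the isometry of Theorem~\ref{thm:cells}, whereas the paper argues by contradiction via a sequence $\gamma(t_i)\to\gamma(t_0)$ whose normal fan $\NN$ is constant and whose (closed) secondary cone would then have to contain $\gamma(t_0)$ in its boundary --- so the two proofs are essentially the same.
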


\begin{proof}
Suppose that a loop $\gamma$ has finite complexity, so that along the loop only a finite number of normal fans, say $\NN_1,\dots,\NN_k$ arise.
Observe that this does not imply that the fan  changes only at a finite number of values of $t$, since we can alternate infinitely many times between two or more fans.

Suppose, to seek a contradiction, that $\gamma$ is not locally refining. That is, 
there is a $t_0\in [0,1]$ such that in every interval $(t_0-\varepsilon, t_0+\varepsilon)$ a fan that does not refine that of $\gamma(t_0)$ arises. Call the latter $\NN_0$. By finiteness of the number of fans, there is a particular fan $\NN$ not refining $\NN_0$ and a sequence $(t_i)_{i\in \N}$ converging to $t_0$ such that the normal fan of $\gamma(t_i)$ equals $\NN$ for every $t_i$. Let $\VV$ be the set of primitive generators of rays in $\NN$. Then, the secondary cone of $\NN$ in the cone $\PP(\VV)$ of Section~\ref{sec:stratification}, which is closed in $\PP(n)$ since it is complete, contains $\gamma(t_0)$ in its boundary. This contradicts the fact that the boundary of the secondary cone of $\NN$ consists of the secondary cones of fans refined by $\NN$, while $\NN_0$ is not refined by $\NN$.
\end{proof}

\begin{example}[A loop that is not locally refining] \label{e:notlocref}
\label{exm:nonlocal}
Let 
\[
P=\Conv\big\{(0,0,0), (1,0,0), (0,1,0), (0,0,1)\big\}
\] 
be the standard tetrahedron in $\R^3$. Consider the following perturbations of it, with $k\in \N$:
\[
P_k = \Conv\big\{(0,0,0), (1,1/(k+1),-1/k), (0,1,0), (0,0,1)\big\}
\]
Since the Hausdorff distance between two polytopes is attained at a vertex of one of them, and $P$ and $P_k$ have all but one vertices in common, we have that
\[
\dist^H(P,P_k)\leq \|(1,0,0) - (1,1/(k+1),-1/k\| < 2/k.
\]

Of course, $P_k$ is not a Delzant polytope. The crucial property that we need is that its normal fan contains the 2-dimensional cone $C_k$ generated by  the vectors
\[
(-1,0,-k) \text{ and } (1,-k-1,0).
\]
This cone has multiplicity one for every $k$ since it is a face of a unimodular cone: 
\[
\left| \begin{matrix} -1&0&-k \\ 1&-k-1&0 \\ 1&1&k+1 \end{matrix}\right| = 1
\]
The relative interior of $C_k$ intersects the relative interior of 
$\Cone\{ (0,-1,0), (0,0,-1)\}$ at the ray generated by $(0,-k-1,-k)$. In particular, no  fan containing $C_k$ as a cone refines the normal fan of the standard tetrahedron $P$.

Now, let $Q_k$ be a Delzant polytope whose normal fan refines that of $P_k$, obtained by the method in Theorem~\ref{thm:desingular}; that is, by a sequence of stellar subdivisions at rays lying in the relative interior of cones of multiplicity greater than one. Then, the polytope $Q_k$ still contains the cone $C_k$ in its normal fan, so the normal fan of $Q_k$ does not refine that of the standard simplex $P$.
We also assume that 
\[
\dist^H(P,Q_k) < 2/k,
\]
which is no loss of generality since $\dist^H(P,P_k) < 2/k$ and the Minkowski segment between any $Q_k$ and $P_k$ gives us polytopes with the same normal fan as $Q_k$ and arbitrarily close to $P_k$. 

Finally, let $R_k$ be an arbitrary Delzant polytope whose normal fan refines that of both $Q_k$ and $Q_{k-1}$, and assumed to still be at distance at most $2/k$ from both $P$.

We are now ready to define the path $\gamma$. Restricted to each interval $[1/k,1/(k-1)]$ we define $\gamma$ as the Minkowski path going from $Q_k$ to $R_k$ and then from $R_k$ to $Q_{k-1}$. The concatenation of this infinitely many paths defines a path $\gamma:(0,1] \to \DD(3)$, and the fact that all the polytopes in the $k$-th interval are at distance $\leq 2/k$ from $P$ implies that we can extend this path to $[0,1]$ by simply defining $\gamma(0)=P$. The path so constructed is not locally refining since in every interval $[0,\varepsilon)$ infinitely many of the normal fans of $Q_k$ appear, and none of them refines that of $P=\gamma(0)$. Concatenating $\gamma$ with any Minkowski path from $\gamma(1)$ to $P$ we get a loop that is not locally refining.
\end{example}

\begin{remark}
The path $\gamma$ constructed in Example~\ref{exm:nonlocal} shows that Theorem~\ref{thm:refining} is not enough to conclude the simple connectedness of $\Dt$ but the path itself is null-homotopic by the following argument. Let $R'_k$ be a fan refining that of $P$ and of $R_k$, and still assumed to be at distance strictly less than $2/k$ of $P$. We can homotope $\gamma$ to the path $\gamma'$ that uses $R'_k$ instead of $R_k$ in each interval $[1/k,1/(k-1)]$, and then homotope $\gamma'$ to a path $\gamma''$; this substitutes the polytope $Q_k$ that we now have between $R'_k$ and $R'_{k+1}$ by a polytope $Q'_k$ whose fan refines that of $R'_k$ and $R'_{k+1}$, hence that of $P$. This produces a path homotopic to $\gamma$ but in which all normal fans refine that of $P$.
\end{remark}

However, for $n=2$ we have the following result,  stronger than \cite[Theorem 5.1.5]{FuKiMi}:

\begin{theorem}
\label{thm:polygon-converge}
If a sequence $(P_k)_{k\in \N}$ of Delzant polygons converges to a Delzant polygon $P$ then there is a $k_0$ such that the normal fan of $P_k$ refines that of $P$ for every $k\geq k_0$.
\end{theorem}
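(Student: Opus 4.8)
The plan is to reduce the statement to a purely local fact about how a single vertex of $P_k$ could ``hide'' a ray of $\NN(P)$, and then to defeat that scenario using the unimodularity (Delzant) constraint on $P_k$. Since in dimension two a complete fan is determined by its set of rays, $\NN(P_k)$ refines $\NN(P)$ precisely when every ray of $\NN(P)$ is a ray of $\NN(P_k)$; equivalently, when for every primitive facet normal $\alpha$ of $P$ the face $F_\alpha(P_k)$ is an edge rather than a single vertex. As $P$ has finitely many facets, it suffices to fix one primitive facet normal $\alpha$ of $P$, with corresponding edge $e:=F_\alpha(P)$ (of positive length, since $P$ is a genuine polygon), and to show that $F_\alpha(P_k)$ is an edge for all large $k$. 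Suppose not; passing to a subsequence we may assume $F_\alpha(P_k)=\{w_k\}$ is a vertex of $P_k$ for every $k$.

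First I would record what is known about $w_k$. Because $P_k$ is Delzant, its normal cone there is a pointed two-dimensional cone $N(P_k,w_k)=\Cone(\beta_k,\gamma_k)$ with $\beta_k,\gamma_k$ primitive integer vectors satisfying $|\det(\beta_k,\gamma_k)|=1$, and $\alpha$ lies in its \emph{relative interior}. Since $\dist^H(P_k,P)\to 0$ is equivalent to uniform convergence $h_{P_k}\to h_P$ of support functions (Theorem~\ref{thm:hausdorff}), the points $w_k$ stay in a bounded region, so after a further subsequence $w_k\to w_\infty$ and the unit vectors $\hat\beta_k,\hat\gamma_k$ converge to some $\hat\beta_\infty,\hat\gamma_\infty$. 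A short limiting argument with supporting lines, using $\langle\alpha,w_k\rangle=h_{P_k}(\alpha)$ and $\langle\hat\beta_k,w_k\rangle=h_{P_k}(\hat\beta_k)$ and letting $k\to\infty$, shows that $w_\infty\in e$ and that $\hat\beta_\infty,\hat\gamma_\infty\in N(P,w_\infty)$. Now $N(P,w_\infty)$ is either the ray $\R_{\geq 0}\alpha$ (if $w_\infty$ is in the relative interior of $e$) or a pointed two-dimensional cone having $\R_{\geq 0}\alpha$ as one of its two extreme rays (if $w_\infty$ is an endpoint of $e$); here one uses that $P$ is full-dimensional and simple. Since $\alpha$ stays in the relative interior of $\Cone(\beta_k,\gamma_k)$, the direction $\hat\alpha$ lies in the closed arc of angular width $<\pi$ from $\hat\beta_k$ to $\hat\gamma_k$, hence in the limiting arc joining $\hat\beta_\infty$ to $\hat\gamma_\infty$; as $\hat\alpha$ is an extreme ray of the pointed cone $N(P,w_\infty)$ containing both $\hat\beta_\infty$ and $\hat\gamma_\infty$, this forces $\hat\alpha\in\{\hat\beta_\infty,\hat\gamma_\infty\}$, say $\hat\gamma_\infty=\hat\alpha$.

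Now the arithmetic closes the argument. Because $\alpha$ is in the relative interior of $\Cone(\beta_k,\gamma_k)$, the extreme ray $\gamma_k$ is not parallel to $\alpha$; being an integer vector whose direction converges to the \emph{rational} direction $\hat\alpha$, we get $\|\gamma_k\|\to\infty$ (if $v\in\Z^2$ is not parallel to the primitive vector $\alpha$ then $|\det(v,\alpha)|\geq 1$, so $\sin\angle(v,\alpha)\geq 1/(\|v\|\,\|\alpha\|)$). Write $\alpha=\lambda_k\beta_k+\mu_k\gamma_k$ with $\lambda_k,\mu_k>0$; since $|\det(\beta_k,\gamma_k)|=1$ the pair $\{\beta_k,\gamma_k\}$ is a lattice basis, so $\lambda_k$ and $\mu_k$ are \emph{positive integers}. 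Hence $\|\lambda_k\beta_k\|\geq\|\mu_k\gamma_k\|-\|\alpha\|\geq\|\gamma_k\|-\|\alpha\|\to\infty$ as well. Finally, the angle between the vectors $\lambda_k\beta_k$ and $\mu_k\gamma_k$ equals $\angle(\hat\beta_k,\hat\gamma_k)$, which converges to the angle $\theta_0:=\angle(\hat\beta_\infty,\hat\alpha)$; since $\hat\beta_\infty$ and $\hat\alpha$ both lie in the pointed cone $N(P,w_\infty)$ we have $\theta_0<\pi$. The law of cosines then gives $\|\alpha\|^2=\|\lambda_k\beta_k+\mu_k\gamma_k\|^2\to\infty$, which is absurd because $\alpha$ is a fixed vector. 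This contradiction shows that $F_\alpha(P_k)$ is eventually an edge; taking the maximum of the finitely many thresholds over all facet normals $\alpha$ of $P$ yields the desired $k_0$.

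The step I expect to be the crux is the limiting argument of the second paragraph, specifically the claim that \emph{one} of the directions $\hat\beta_\infty,\hat\gamma_\infty$ equals $\hat\alpha$ \emph{exactly}, not merely approximately: the genuinely two-dimensional situation is when $w_\infty$ is an endpoint of $e$, where $N(P,w_\infty)$ is a two-dimensional cone, and one must use both that this cone is pointed (width $<\pi$) and that $\hat\alpha$ is one of its two extreme rays. Everything after that point is elementary, resting only on the two facts exploited above: that an integer vector whose direction approaches a fixed rational ray must have norm tending to infinity, and that the coefficients expressing a lattice point in a unimodular lattice basis are integers.
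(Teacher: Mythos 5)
Your proof is correct and takes a genuinely different route from the paper's. The paper cites \cite[Corollary 5.1.4]{FuKiMi} to produce a ray of $\NN(P_k)$ converging to a fixed ray $\alpha_0$ of $\NN(P)$, uses the unimodular determinant identity to force the clockwise-next ray of $\NN(P_k)$ to converge to $-\alpha_0$ (so the cone between them approaches a half-plane), and then derives a contradiction from the fact that another ray of $\NN(P)$, lying inside that near-half-plane, should also be approximated by rays of $\NN(P_k)$. Your argument is self-contained and local at the vertex $w_k$: the support-function limit replaces the citation and shows that $\hat\beta_\infty$ and $\hat\gamma_\infty$ both land in the pointed cone $N(P,w_\infty)$, so the angle between $\beta_k$ and $\gamma_k$ stays bounded away from $\pi$; combined with the fact that $\alpha$ is a positive-\emph{integer} combination of $\beta_k,\gamma_k$ (unimodularity) and $\|\gamma_k\|\to\infty$, the law-of-cosines estimate (using $1+\cos\theta_k$ bounded below) forces $\|\alpha\|\to\infty$, a contradiction. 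Both proofs exploit the same two facts---unimodularity of $N(P_k,w_k)$ and pointedness of the limiting normal cone $N(P,w_\infty)$---but yours buys self-containment (no appeal to \cite{FuKiMi}) and places the entire contradiction on a single facet normal of $P$. A minor remark: once $\hat\gamma_\infty=\hat\alpha$ and $\|\gamma_k\|\to\infty$, the identity $|\det(\beta_k,\gamma_k)|=1$ already forces $\hat\beta_k\to-\hat\alpha$ (this is the paper's computation), which contradicts $\hat\beta_\infty\in N(P,w_\infty)$ outright; your law-of-cosines step encodes the same incompatibility and is equally valid.
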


\begin{proof}
Since $\NN(P)$ has finitely many rays it is enough to concentrate on a particular normal ray $\alpha_0$ of $P$ and show that there is a $k_0$ such that every $\NN(P_k)$ with $k\geq k_0$ contains that ray. Without loss of generality, let $\alpha_0=(1,0)$. 

By  \cite[Corollary 5.1.4]{FuKiMi} there must be a ray $\alpha_k$ in each $\NN(P_k)$ such that the sequence $(\alpha_k)_k=(a_k,b_k)_k$ converges  to $\alpha_0$. (Here convergence is meant as rays,  not as generators; that is, their slopes $b_k/a_k$ converge to zero). This is because $P_k$ needs to have an edge, or a path of them, converging to the edge of $P$ having $\alpha_0=(1,0)$ as normal vector.

This $\alpha_k$ may have positive or negative slope, but we can take a subsequence where the slopes have the same sign, and assume it positive. That is, the ray has $a_k>0$, $b_k\geq 0$ and $b_k/a_k$ converging to zero. Moreover, there is no loss of generality in assuming that $\alpha_k$ is the last ray in $\NN(P_k)$, in clockwise order,  with these properties, since any ray between $\alpha_k$ and the ray $(1,0)$ would have a smaller slope and would still give a sequence with slopes converging to zero.

Then, the ray of $\NN(P_k)$ after $\alpha_k$ in clockwise direction is of the form $\beta_k=(-c_k, -d_k)$ with $d_k>0$. For the corresponding cone to be unimodular we need $a_k d_k - c_kb_k =  1$, which implies that  $c_k$ is also positive and that $c_k/d_k$ converges to infinity too. That is, the rays $\beta_k$ lie in the negative quadrant and have slopes converging to zero.

Now, let $\beta=(e,-1)$ be the next ray after $(1,0)$ in the normal fan of $P$, in clockwise order ($e$ can be positive or negative). 
The contradiction is that the polygons $P_k$ need to also have rays converging to $\beta$, but this is impossible by the presence of the cone
$\Cone\{\alpha_k,\beta_k\}$ in $\NN(P_k)$.
\end{proof}

\begin{corollary}
\label{coro:polygon-converge}
All paths in $\DD(2)$ are locally refining.
\end{corollary}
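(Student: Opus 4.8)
The plan is to derive this corollary directly from the preceding theorem by a short argument by contradiction. Suppose $\gamma\colon[0,1]\to\DD(2)$ is a path that fails to be locally refining. Unwinding Definition~\ref{defi:refining}, there is a parameter $t_0\in[0,1]$ with the property that for every $\epsilon>0$ one can find some $t$ with $|t-t_0|\le\epsilon$ for which $\NN(\gamma(t))$ does \emph{not} refine $\NN(\gamma(t_0))$. Applying this with $\epsilon=1/k$ for each $k\in\N$ produces a sequence $(t_k)_{k\in\N}$ converging to $t_0$ such that $\NN(\gamma(t_k))$ does not refine $\NN(\gamma(t_0))$ for any $k$.

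Next I would invoke continuity of $\gamma$: since $t_k\to t_0$, the Delzant polygons $P_k:=\gamma(t_k)$ converge to $P:=\gamma(t_0)$ in $\DD(2)$ with respect to $\dist^H$ (equivalently $\dist^V$, the two inducing the same topology by Proposition~\ref{prop:topology}). Now the preceding theorem applies verbatim to the sequence $(P_k)$ and its limit $P$: it yields some $k_0$ such that $\NN(P_k)$ refines $\NN(P)$ for every $k\ge k_0$. This contradicts the defining property of $(t_k)$, so no such $t_0$ exists and $\gamma$ is locally refining.

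I do not anticipate any genuine obstacle here: all the substantive work is carried out by the theorem just above, and the corollary is a routine packaging of it. The only two points requiring a little care are (i) negating the somewhat involved condition ``locally refining'' correctly so as to extract the sequence $(t_k)$, and (ii) making sure that ``$\gamma$ continuous'' and ``$(P_k)$ converges to $P$'' are being read in the same metric topology on $\DD(2)$ — which is unproblematic since $\dist^H$ and $\dist^V$ induce the same topology on $\PP_\Q(n)\supset\DD(2)$ (Proposition~\ref{prop:topology}). No choice of a subsequence is even needed, since the theorem already delivers a tail condition rather than a subsequential one.
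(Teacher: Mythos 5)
Your proof is correct and is essentially the same argument as the paper's. The paper phrases it positively — the theorem implies that each Delzant polygon $P$ has a neighborhood in $\DD(2)$ in which every polygon's normal fan refines $\NN(P)$, and then continuity of $\gamma$ gives the local refining property — whereas you argue by contradiction, extracting a sequence $t_k\to t_0$ and applying the theorem to $(\gamma(t_k))$; these are two packagings of the same idea and both are fine.
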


\begin{proof}
By Theorem~\ref{thm:polygon-converge}, each Delzant polygon $P$ has a neighborhood in $\DD(2)$ such that the normal fan of every polygon in the neighborhood refines that of $P$.
\end{proof}

\begin{corollary}
\label{coro:simply2}
$\DD(2)$ is simply connected.
\end{corollary}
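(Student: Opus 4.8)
The plan is simply to combine the two results immediately preceding this corollary with the path-connectedness of $\DD(2)$. Recall that $\DD(2)$ is path connected by Theorem~\ref{thm:connected}(1) applied with $n=2$, so to establish simple connectedness it remains only to check that $\pi_1(\DD(2))$ is trivial, i.e.\ that every loop in $\DD(2)$ is null-homotopic.

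To that end I would take an arbitrary loop $\gamma\colon[0,1]\to\DD(2)$ with $\gamma(0)=\gamma(1)$. Being a path in $\DD(2)$, $\gamma$ is \emph{locally refining} in the sense of Definition~\ref{defi:refining} by the Corollary just proved (every Delzant polygon has a neighbourhood in $\DD(2)$ on which the normal fan of each polygon refines the one at the centre). But then Theorem~\ref{thm:refining} applies verbatim and yields that $\gamma$ is null-homotopic in $\DD(2)$. Since $\gamma$ was an arbitrary loop, $\pi_1(\DD(2))$ is trivial; together with path-connectedness this gives that $\DD(2)$ is simply connected.

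There is essentially no obstacle at this final step: all the real work has been done upstream, in the statement that every path in $\DD(2)$ is locally refining (which in turn rests on the finiteness results about rays of convergent sequences of Delzant polygons) and in Theorem~\ref{thm:refining} (whose proof chops the loop into finitely many subintervals using compactness of $[0,1]$, homotopes each subpath to a two-segment Minkowski path through a common refining polytope, and then reduces to Corollary~\ref{coro:finite}). If one preferred not to cite Theorem~\ref{thm:refining} as a black box, the same compactness-plus-Minkowski-homotopy argument can be run directly from Corollary~\ref{coro:finite}; but invoking Theorem~\ref{thm:refining} is the shortest route.
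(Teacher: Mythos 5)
Your argument is exactly the paper's: invoke the corollary that all paths in $\DD(2)$ are locally refining, then apply Theorem~\ref{thm:refining} to conclude that every loop is null-homotopic, together with path-connectedness from Theorem~\ref{thm:connected}. This matches the paper's proof verbatim in substance.
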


\begin{proof}
Every loop in $\DD(2)$ is locally refining by the Corollary~\ref{coro:polygon-converge}, and locally refining loops are null-homotopic by Theorem~\ref{thm:refining}.
\end{proof}

\begin{question}
\label{q:simply-connected}
Is the fundamental group of $\Dt$ trivial?
\end{question}

\subsection{The {\rm CW} topology on $\Dt$.}
\label{sec:CW-contractible}

Since $\PP(n)$ is the union of the subsets $\PP(\NN)$ for the different complete fans $\NN$ and since each $\PP(\NN)$ is isometric to a finite dimensional polyhedral cone, the stratification by normal equivalence is a cell decomposition of $\PP(n)$ into the uncountably many strata $\PP(\NN)$ for the possible  fans.
The closure of each cell is isometric to a closed polyhedral cone in some $\R^m$ with its face structure being the poset structure of the finitely many fans refined by $\NN$. In order to make these closures compact, we only need to take the quotient of $\PP(n)$ by translations and positive scaling.%
Indeed, the quotient by translations removes the common lineality space of all closed cells, turning them into pointed cones, and the quotient by positive scaling makes these pointed cones compact; indeed, homeomorphic to polytopes, and preserving their face structure.
That is:

\begin{proposition}
\label{prop:CW}
The quotient of $\PP(n)$ by translations and positive scaling has a natural structure of regular {\rm CW} complex in which closed cells of dimension $m$ correspond to secondary cones of dimension $m+n+1$ and the face poset structure of the {\rm CW} complex is that of refinement of normal fans.

In this structure $\PP_\Q(n)$ is a subcomplex with countably many cells, and $\Dt$ is a union of open cells, but not a subcomplex.
\end{proposition}

We call the topology induced on any of $\PP(n)$, $\PP_\Q(n)$ and $\Dt$ spaces by the {\rm CW} structure their \emph{{\rm CW} topology}. Observe that a priori this is defined on a quotient space, but it can be lifted to the original space via the following considerations:

\begin{itemize}

\item One canonical way to choose a representative from each class in the quotient is to translate and dilate every polytope other than a single point (a $0$-polytope) so that its circumball is the unit ball. The \emph{circumball} or smallest enclosing ball exists and is unique for every bounded subset of $\R^n$~\cite[p.~49]{Gruber}. This shows that the quotient space is also a subspace.

\item Let $ \widetilde{\PP(n)}$ denote the quotient of $\PP(n) \setminus \{ \text{$0$-polytopes}\}$ by translations and dilations. 
The following map is a bijection:
\begin{align*}
\PP(n) \setminus \Big\{ \text{$0$-polytopes}\Big\}& \longrightarrow \widetilde{\PP(n)} \times \R^n \times (0,\infty) \\
P \qquad &\longmapsto \ (\widetilde P\ , \ {\rm c}(P)\ ,\ {\rm r}(P))
\end{align*}
where $\widetilde P$, ${\rm c}(P)$ and ${\rm r}(P)$ denote, respectively, the class of $P$, the center, and the radius of the circumball of $P$.

Via this bijection, we can lift the {\rm CW} topology of $\widetilde{\PP(n)}$ to ${\PP(n)}$, and to its subspaces $\PP_\Q(n)$ and $\Dt$. Since $\R^n \times (0,\infty)$ is contractible, the homotopy type of $\Dt$ and its quotient is the same.
\end{itemize}

Regarded from this perspective, the fans constructed in Theorem~\ref{thm:isolated} correspond to open cells that are in $\Dt$ but with empty boundary in $\Dt$. Also, this provides another way of expressing  why connectivity by blow-ups/blow-downs as we have for $n=2$ is nicer than the one we have for $n\geq 3$: for $n=2$ there are paths that move only through cells of consecutive dimensions.

However, it must be noted that the {\rm CW} topology obtained in this way is not the metric topology of $\PP(n)$ that we are considering in the paper, coming from $\dist^H$ or $\dist^V$. In fact, we have the following:

\begin{lemma}
The {\rm CW} topology on $\PP(n)$ (or on $\PP_\Q(n)$, or on $\DD(n)$) is not metrizable. Hence, it does not coincide with the topology coming from $\dist^V$ o $\dist^H$.
\end{lemma}

\begin{proof}
A metrizable {\rm CW} complex must be locally finite, i.e., every point must lie only in finitely many closed cells. In our case every polytope $P$ lies in the infinitely many cells corresponding to fans that refine $\NN(P)$.
\end{proof}

\begin{proposition}
\label{prop:finer}
The {\rm CW} topology on $\PP(n)$ (or on $\PP_\Q(n)$, or on $\DD(n)$) is strictly finer than the metric topology coming from $\dist^V$ o $\dist^H$.
\end{proposition}

\begin{proof}
In the {\rm CW} complex a set is, by definition, open (respectively,  closed) if and only if its intersection with each closed cell is open (respectively, closed).
Since the closed cells of the {\rm CW} complex are closed also in the metric topology, the {\rm CW} topology is finer. It is strictly finer since, for example, $\PP_\Q(n)$ is closed in $\PP(n)$ with respect to the {\rm CW} topology (as is any union of closed cells in a {\rm CW} complex) while it is dense in the metric topology (cf. Theorem~\ref{thm:connected}). 
\end{proof}

However, for every finite, or even locally finite, subcomplex the {\rm CW} topology and the metric topology in $\PP(n)$ coincide.

As another proof that the topologies are different, Example~\ref{exm:nonlocal} shows a path $\gamma$ in $\DD(3)$ with respect to the metric topology in which every neighborhood of $P_0=\gamma(0)$ contains polytopes whose fan does not refine that of $\gamma_0$. In the {\rm CW} topology, the union of cells corresponding to polytopes that refine $\NN(P)$ are, by definition, a neighborhood of $P$. More strongly, in the CW topology all paths have finite complexity:

\begin{lemma}
\label{lemma:CW-continuous}
Let $n\in \N$ and let $X$ be a compact topological space. A map $f: X \to \Dt$ is continuous with respect to the {\rm CW} topology
in $\Dt$ if and only if:
\begin{itemize}
\item[(1)] It is continuous with respect to the metric topology, and
\item [(2)] $f(X)$ intersects only finitely many cells.
\end{itemize}
\end{lemma}

\begin{proof}
Necessity of (1) follows from Proposition \ref{prop:finer} and necessity of (2) from the fact that $f(X)$ is compact and every compact subspace in a {\rm CW} complex is contained in a finite subcomplex \cite[Proposition A.1]{Hatcher}.

In order to prove sufficiency, suppose that $f$ intersects only $k$ cells, corresponding to the unimodular fans $\Sigma_1,\dots, \Sigma_k$. Let $\Sigma$ be a common unimodular refinement of them and let $\VV$ the finite set of rays used in $\Sigma$. Then $f$ can be considered as a map $X \to f(X) \subset \PP(\VV)$ and in $ \PP(\VV)$ the {\rm CW} topology and the metric topology coincide.
\end{proof}

Recall that a space with all homotopy groups trivial is called \emph{weakly contractible}.

\begin{corollary} 
\label{coro:homotopy}
$\Dt$ with its {\rm CW} topology is weakly contractible.
\end{corollary}

\begin{proof}
Let $f: \S^k \to \Dt$ be a continuous map from the $k$-sphere $\S$ to $\Dt$. We need to show that it is homotopic to a constant map.
By \cite[p. 346]{Hatcher} to show that a homotopy group is trivial we can forget basepoints.
By Lemma~\ref{lemma:CW-continuous} there is a Delzant polytope $Q$ whose normal fan refines $f(x)$ for every $x \in \S^k$.
We consider the following homotopy, similar to the one in Corollary~\ref{coro:finite} and Theorem~\ref{thm:refining}:
\begin{align*}
\Gamma: \S^k\times[0,1] &\longrightarrow \Dt\\
(x,s) \quad &\longmapsto (1-s) f(x) + s Q.
\end{align*}
This map is continuous with respect to the metric topology and its image intersects only the cells which $f(\S^k)$ intersects plus the cell corresponding to $Q$.
\end{proof}

\begin{remark}
A \emph{weak homotopy equivalence} between two topological spaces is a continuous map inducing isomorphisms in all homotopy groups. 
Whitehead's Theorem~\cite[Theorem~4.5]{Hatcher} says that a weak homotopy equivalence between CW complexes is in fact a homotopy equivalence, so weakly contractible  {\rm CW} complexes are contractible. We cannot apply this directly to $\Dt$ since it is a union of open cells, but not a subcomplex, of $\PP(n)$. 
\end{remark}

\section{Translation to symplectic toric manifolds} \label{sec:symplectic}

In this section we translate the main result from Sections~\ref{sec:dp} and~\ref{sec:dp2} into the language of symplectic toric manifolds.

\subsection{Symplectic toric manifolds: basic theory}
\label{sec:symplectic-intro}

For an introduction to symplectic geometry and Hamiltonian torus actions we refer to 
Cannas da Silva~\cite{AC},  Guillemin\--Sjamaar~\cite{GSj05} and
Pelayo~\cite{PES}. The origins of symplectic geometry can be traced back to classical mechanics, see 
Abraham\--Marsden~\cite{AbMa}, De Le\'on\--Rodrigues~\cite{LeRo} 
and Marsden\--Ratiu~\cite{MaRa} for texts with an emphasis on this point of view.

\subsubsection{Definition of symplectic toric manifold}\label{stm}

We start reviewing the concept of symplectic manifolds, which is the mathematical model of the phase space of physics.

\begin{definition}
A \emph{symplectic manifold} $(M,\omega)$ is a pair 
consisting of a smooth manifold $M$ and a \emph{symplectic 
form} $\omega$, i.e., a non-degenerate closed 2-form on $M$. 
We denote by 
$\operatorname{Sympl}(M)$ the group of symplectomorphisms (i.e. symplectic diffeomorphisms) of $M$.
\end{definition}

Throughout this section we wtrite 
\[
\mathbb{T}^n:=(\T^1)^n=(\R/\Z)^n
\] 
for the $n$-dimensional standard torus. Denote by $\mathfrak{t}$ the Lie algebra $\mbox{Lie}(\T^n)$ 
of $\T^n$ and by $\mathfrak{t}^*$ the dual of $\mathfrak{t}$.   Let ${\rm exp} \colon \mathfrak{t} \to \T^n$ be the 
exponential map of Lie theory.

Let  $\T^n \times M \to M, (t,m) \mapsto t \cdot m$ be a symplectic action (i.e. an action by symplectomorphisms) of the $n$\--dimensional torus $\T^n$ on $(M,\omega)$. Let $X \in \mathfrak{t}$ . Then $X$ generates a 
smooth vector field $X_M$ on $M$, called the 
\emph{infinitesimal generator of the action}, defined
by 
$$
X_M(m):= \left.\frac{{\rm d}}{{\rm d}t}\right|_{t=0}
{\rm exp}(tX)\cdot m.
$$ 
We write
$\iota_{X_M} \omega : = \omega(X_M, \cdot) \in \Omega^1(M)$
for the contraction $1$-form.

\begin{definition}
Let $\left\langle \cdot , \cdot
\right\rangle : \mathfrak{t}^\ast \times \mathfrak{t}
\rightarrow \mathbb{R}$ be the duality pairing.%
\footnote{In previous sections we used the same notation for the pairing in general real vector spaces or in $\R^n$.} 
The  $\T^n$-action on $(M,\omega)$  is said to be 
\emph{Hamiltonian} if there exists a smooth
$\mathbb{T}^n$-invariant map $\mu\colon M \to \mathfrak{t}^*$, 
called the \emph{momentum map} or \emph{moment map}, such that for
all $X \in \mathfrak{t}$ we have 
$
\iota_{X_M}
\omega ={\rm d} \langle \mu,
X \rangle
$.
\end{definition}

In order to simplify the presentation we are going to assume that the momentum
map takes values in $\R^n$, where $n$ is the dimension of $\mathfrak{t}^*$. This will allow us to
work and reason with Delzant polytopes etc in $\R^n$, as we did in the previous sections, instead of with the abstract space $\mathfrak{t}^*$.
This is a standard simplification (which on the other hand represents no loss of generality) 
done via the following non-canonical procedure. Let us choose an epimorphism 
$E \colon \mathbb{R} \to \mathbb{T}^1$, for instance, 
$x \mapsto e^{2\pi\, \sqrt{-1}x}$. This Lie group epimorphism 
has discrete center $\mathbb{Z}$ and the inverse of the  
corresponding Lie algebra isomorphism is given by
$\mbox{Lie}(\mathbb{T}^1) \ni \frac{\partial}{\partial x} 
\mapsto \frac{1}{2\pi}\in \mathbb{R}$. Therefore, for 
$\mathbb{T}^n = \left(\mathbb{T}^1\right)^n$, we obtain the following 
 isomorphism of Lie algebras
\begin{align*}
\mbox{Lie}(\mathbb{T}^n) =
\mathfrak{t} &\longrightarrow \R^n\\
\frac{\partial}{\partial x_k} &\longmapsto 
\frac{1}{2\pi} \, {\rm e}_k,
\end{align*}
where  ${\rm e}_k$ is the $k$th standard basis vector in $\mathbb{R}^n$. Choosing an inner product  on $\mathfrak{t}$, we obtain an 
isomorphism $\mathfrak{t} \to \mathfrak{t}^*$, and hence 
taking its inverse and composing it with the isomorphism
$\mathfrak{t} \to \mathbb{R}^n$ described above, we get an 
isomorphism $\mathcal{I}: \mathfrak{t}^* \to \mathbb{R}^n$. 
In this way, we obtain a momentum map $$\mu=\mu_{\mathcal{I}} \colon M \to \mathbb{R}^n.$$

\begin{definition}
\label{def:symplectic}
A \emph{symplectic 
toric manifold} is quadruple of the form $$(M, \om, \T^n, \mu),$$ where $(M, \om)$ 
is a compact connected symplectic manifold of dimension $2n$ endowed with 
an effective Hamiltonian action of an 
$n$\--dimensional torus $\T^n$ admitting a momentum map $\mu: M \to 
\mathfrak{t}^*$.  Via the (non-canonical) isomorphism $\mathfrak{t}^*\cong \R^n$
the map $\mu$  gives rise to a map
$M \to \mathbb{R}^n$  which, to simplify notation, 
we also denote by $\mu \colon M \to \mathbb{R}^n$.
 \end{definition}
 
 Often symplectic toric manifolds are referred to as (compact connected) \emph{toric integrable systems}. Essentially a toric integrable system
 on a $2n$\--dimensional symplectic manifold $(M,\omega)$  is a smooth map $\mu \colon (M,\omega) \to \R^n$ which is
 an integrable system in the usual sense (see for instance~\cite{PeVN-BAMS11} for the basic definitions and properties) in
 which all components $\mu_i \colon M \to \R$  generate periodic flows of the same period. Usually one does not
 require the manifold to be compact when speaking of toric integrable systems, unlike in the case of symplectic toric manifolds,
 but other than that they are the same object.

Symplectic toric manifolds also have the structure of a toric variety; see Delzant~\cite{De1988} for this connection and Duistermaat\--Pelayo~\cite{DuPe2009} for explicit 
coordinate formulas explaining it.  For surveys concerning symplectic geometry
from the point of view of Hamiltonian torus actions and integrable systems, including symplectic toric manifolds, we refer to \cite{PeVN-BAMS11, PES}. 

\subsubsection{Isomorphisms and weak isomorphisms of symplectic toric manifolds}\label{isomorphism}

\begin{definition} \label{def:iso}
Let $(M,\omega,\T^n,\mu)$ and $(M',\omega',\T^n,\mu')$ be  $2n$\--dimensional
symplectic toric manifolds endowed with effective symplectic 
actions $\rho\colon \T^n\to \symp (M,\omega)$ and $\rho'\colon 
\T^n\to \symp (M',\omega')$. We say that  $(M,\omega,\T^n,\mu)$ 
and $(M',\omega',\T^n,\mu')$ are:
\begin{itemize}
\item
\emph{isomorphic} if there exists an equivariant symplectomorphism 
$\varphi\colon M\to M'$ such that $\mu'\circ \varphi=\mu$.
\item
 \emph{weakly isomorphic} if there 
exists an automorphism  $h\colon \T^n\to \T^n$ and 
an $h$\--equivariant symplectomorphism $\varphi\colon M\to M'$, 
i.e., the following diagram commutes:
\begin{eqnarray}\label{comm action2} \nonumber
\xymatrix{
\T^n\times M \;\; \ar[r]^{\;\;\rho^*} \ar[d]_{(h,\varphi)} & M 
\ar[d]_{\varphi} \\
\T^n \times M' \;\;\ar[r]^{\;\;\rho'^*} &  M', } 
\end{eqnarray}
where $\rho^\ast(x,m): = \rho(x)(m)$ for every $x\in\mathbb{T}^n$, 
$m \in M$ and $\rho'^*$ is defined analogously.
\end{itemize}
\end{definition}

Since the automorphism group of  $\T^n$ 
is  the general linear group $\GL(n,\Z)$ we have that $h$ is represented 
by a matrix $A\in \GL(n,\Z)$.

\subsubsection{Atiyah\--Guillemin\--Sternberg Convexity Theorem} \label{sec:ags}

Let $(M,\omega)$ be a symplectic manifold of dimension $2n$ and let $(\T^1)^k$ be a torus of
any dimension $k\leq n$ acting  on $M$ Hamiltonianly with momentum map $\mu \colon M \to \R^k$.
A seminal result proven independently by Atiyah~\cite{At82} and Guillemin\--Sternberg~\cite{GS82} says that the
momentum map image
\[
\mu(M) \subset \R^k
\]
is equal to the polytope $P \subset \R^k$ obtained by taking the convex hull of
the images of the fixed point set of the $(\T^1)^k$\--action on $M$. In our case, since
we are interested in symplectic toric manifolds, the  relevant case is $k=n$.  So
if $(M,\omega, \T^n,\mu)$ is a $2n$\--dimensional symplectic toric manifold then
$\mu(M)$ is a  polytope in $\R^n$.

This general result by Atiyah and Guillemin\--Sternberg has some important precedents  in the work of Horn~\cite{HORN}, 
Kostant~\cite{Kostant} and Schur~\cite{SCHUR}.

\subsubsection{Definition of the spaces of symplectic toric manifolds}

\begin{definition}[Sections 1.2 and 1.3 in~\cite{PPRS14}]
Assuming the previous conventions for
$\T^n$ and that the identification $\mathcal{I} \colon
\mathfrak{t}^* \to \mathbb{R}^n$ is \emph{fixed}, we 
define:
\begin{itemize}
\item
$
\Mo:=\Mo^{\mathcal{I}}
$
as the set of equivalence classes of  $2n$\--dimensional symplectic toric manifolds under the equivalence relation given by
 isomorphisms\footnote{If we choose a different identification 
$\mathcal{I}' \colon \mathfrak{t}^* \to \mathbb{R}^n$,
then $\Mo^{\mathcal{I}'}$ and $\mathcal{M}_{\T^n}$ are in general different as sets. 
However they are in bijective correspondence 
by a bijection  which preserves the structures with which we are concerned in the present paper.}.
 \item
$\Mt$
as the set of equivalence classes of  $2n$\--dimensional symplectic toric manifolds under the equivalence relation given by
weak  isomorphisms\footnote{Two weakly isomorphic toric manifolds 
are isomorphic if $h$ is the identity and 
$\mu' \circ \varphi =\mu$.}. 
\end{itemize}
\end{definition}

To simplify the notations we often write
$(M,\omega,\T^n,\mu)$ identifying the representative with
its equivalence class $\left[(M,\omega,\T^n,\mu) \right]$
in $\mathcal{M}(2n)$.

\subsubsection{Delzant correspondence: from symplectic toric manifolds to Delzant polytopes}

We saw in Section~\ref{sec:ags} that if $(M,\omega, \T^n,\mu)$ is a $2n$\--dimensional symplectic toric manifold then
$\mu(M)$ is a  polytope in $\R^n$. 

A famous theorem by Delzant showed that the polytopes that arise this way are exactly the Delzant polytopes 
(Definition~\ref{delpol})
and that the Delzant polytope of a symplectic toric manifold is a complete invariant of its isomorphism type. This was complemented by Karshon\--Kessler\--Pinsounnault~\cite[Proposition 2.3 (2)]{KKP} who gave the analogous result for weak isomorphisms:

\begin{theorem}[Delzant~\cite{De1988}, Karshon\--Kessler\--Pinsounnault~\cite{KKP}]
\label{thm:delzant}
Let $\Dt$ and $\Dtt$ be the space of Delzant $n$-polytopes and its quotient by $\AGL(n,\Z)$-equivalence, as defined in Section~\ref{sec:metric}. Then
\begin{itemize}
\item The map
\begin{align}
\label{eq:map}
 \Mo &\longrightarrow \Dt.\\
\left[(M,\omega,\T^n,\mu) \right]   &\longmapsto \mu(M).
\nonumber
\end{align}
is a bijection between $\Mo$ and $\Dt$.

\item This bijection descends to a bijection
\begin{equation}\label{iso moduli}
\Mt \longrightarrow \Dtt.
\end{equation}

\end{itemize}
\end{theorem}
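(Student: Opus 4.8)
The plan is to treat the two bullet points essentially as a citation-and-assembly exercise, since both the injectivity/surjectivity of the moment-polytope map and the descent to weak-isomorphism classes are already in the literature; the only genuine content here is to verify that the spaces $\Dt$ and $\Dtt$ as we have \emph{defined} them (via the metric $\dist^H$, or equivalently $\dist^V$, and the $\AGL(n,\Z)$-quotient) are the same underlying sets as the ones appearing in Delzant's and in Karshon--Kessler--Pinsonnault's statements. First I would recall Delzant's original theorem~\cite{De1988}: every compact connected symplectic toric manifold $(M,\omega,\T,\mu)$ has $\mu(M)$ a Delzant polytope (this is the Atiyah--Guillemin--Sternberg convexity theorem of Section~\ref{sec:ags} together with the local normal form for toric actions, which forces simplicity, rationality and the smoothness/unimodularity condition at each vertex), and conversely that from any Delzant polytope $P$ one can build, by symplectic reduction of $\C^m$ (where $m$ is the number of facets of $P$) by a suitable subtorus of $\T^m$, a symplectic toric manifold $M_P$ with $\mu(M_P)=P$; moreover two symplectic toric manifolds with the same moment polytope are equivariantly symplectomorphic by a map intertwining the moment maps. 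This is exactly the statement that \eqref{eq:map} is a well-defined bijection.

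Next I would address the descent. By Definition~\ref{def:iso}, $(M,\omega,\T,\mu)$ and $(M',\omega',\T,\mu')$ are weakly isomorphic precisely when there is $A\in\GL(n,\Z)$ (representing the torus automorphism $h$) and an $h$-equivariant symplectomorphism $\varphi$; chasing the commuting diagram shows this forces $\mu'(M') = A^{-\mathsf T}\mu(M) + c$ for some translation $c$, i.e. $\mu(M)$ and $\mu'(M')$ are $\AGL(n,\Z)$-congruent. Conversely, if $P' = A^{-\mathsf T}P+c$ then precomposing the model construction $P\mapsto M_P$ with this affine map yields a weak isomorphism $M_P\to M_{P'}$. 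Hence the equivalence relation ``weak isomorphism'' on $\Mo$ corresponds under \eqref{eq:map} exactly to ``$\AGL(n,\Z)$-congruence'' on $\Dt$, which is the relation defining $\Dtt$ (Definition~\ref{d:spaces}); so the bijection \eqref{eq:map} passes to the quotients and gives \eqref{iso moduli}. The precise correspondence between weak isomorphism and $\AGL(n,\Z)$-equivalence of moment polytopes is \cite[Proposition 2.3 (2)]{KKP}, so one may simply invoke it; I would include the short diagram chase above only for completeness.

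The step I expect to be the only mildly delicate one is bookkeeping the dual: a torus automorphism acts on $\mathfrak t$, hence on moment maps (which land in $\mathfrak t^*$) by the inverse transpose, so one must be careful that ``$A\in\GL(n,\Z)$'' acting on the torus corresponds to $A^{-\mathsf T}\in\GL(n,\Z)$ acting on polytopes; since $\GL(n,\Z)$ is closed under inverse-transpose this is harmless for the statement about $\AGL(n,\Z)$-classes, but it is the one place a sign or transpose could slip. Everything else — well-definedness, the claim that the model construction is inverse to $\mu(M)$, and compatibility with the translation part $c$ — is routine given Delzant's theorem, and I would present it as a brief verification rather than a proof from scratch. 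It is worth noting explicitly, as the excerpt does, that this theorem is purely a restatement in our terminology: no new argument is needed beyond identifying our $\Dt,\Dtt$ with the classical objects.
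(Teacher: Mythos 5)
Your proposal is correct and matches the paper's treatment: the paper gives no proof at all, simply attributing the first bullet to Delzant~\cite{De1988} and the second to \cite[Proposition 2.3 (2)]{KKP}, exactly as you propose to do. Your additional remarks — the diagram chase showing weak isomorphism corresponds to $\AGL(n,\Z)$-congruence, and the caveat about the inverse-transpose action on $\mathfrak{t}^*$ — are accurate and harmless elaboration, but not something the paper itself spells out.
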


\begin{example}[See Corollary~\ref{coro:2dim} and Figure~\ref{fig:Hirzebruch}] 
\[ 
P_\lambda = \left\{ \left.(x_1,x_2) \in \mathbb{R}^2 
\ \right| \
 x_1 \geq 0 ,\; x_2 \geq 0 ,\,\,\, x_1 + x_2 \leq \lambda 
 \right\}, 
\]
is the image under this map of the standard $\T^2$ action on $
\mathbb{CP}^2$ endowed
with the Fubini-Study symplectic form multiplied by $\lambda$. The \emph{Hirzebruch trapezoid}, 
\[ 
{\rm H}_{a,b,k} = \left\{ (x_1,x_2)\in\mathbb{R}^2 \ \left| \
   -\frac{b}{2} \leq x_2 \leq \frac{b}{2}, \right.\,\,\, 0 \leq x_1 
   \leq a - kx_2 \right\}, 
\]
is the image under this map of the standard toric action on a 
Hirzebruch surface.  
\end{example}

\subsection{Geometry and topology of the space of symplectic toric manifolds}
\label{sec:space-symplectic}

\subsubsection{Symplectic interpretation of Theorems~\ref{thm:dim3} and \ref{thm:isolated}}
\label{sec:dim3-symplectic}
Theorem~\ref{thm:dim3} and Theorem~\ref{thm:isolated}  imply that, unlike in dimension $4$, there is no collection of ``toric minimal models" for  symplectic toric manifolds in dimension $6$ or higher.
We refer to Delzant~\cite{De1988} and Duistermaat-Pelayo~\cite{DuPe2009} for the relation between a symplectic toric manifold and the underlying toric variety.

\begin{theorem}
\label{thm:dim3-symplectic}
Let $(M,\omega)$ be a 6-dimensional symplectic toric manifold. Then there exists another $6$-dimensional symplectic toric manifold $(M',\omega')$ such that:
\begin{itemize}
\item there is a toric equivariant epimorphism   between the underlying toric varieties $M \twoheadrightarrow M'$,
\item $(M', \omega')$ is not equivariantly symplectomorphic to the toric equivariant blow up of any other $6$-dimensional symplectic toric manifold.
\end{itemize}
\end{theorem}

\begin{corollary}
\label{coro:dim3-symplectic}
Given any collection $\mathcal{F}$ of $6$-dimensional symplectic toric manifolds there exists another $6$-dimensional symplectic toric manifold which cannot be obtained by a finite sequence of toric equivariant symplectic blow ups from an element of $\mathcal{F}$. 
\end{corollary}

\begin{theorem}
\label{thm:isolated-symplectic}
Let $n\geq 3$. There is an infinite family $(M_k,\omega_k)_{k\in \N}$ of pairwise non-homeomorphic $2n$-dimensional symplectic toric manifolds with the property that any  toric equivariant epimorphism  to another $2n$-dimensional symplectic toric manifold $M_k \twoheadrightarrow M$ is  a homeomorphism.
\end{theorem}

\subsubsection{Metric and topology on the moduli spaces $\Mo$ and $\Mt$}

The bijection \eqref{eq:map} of Theorem~\ref{thm:delzant} allows us, following~\cite{PPRS14} to define a metric  on 
$\Mo$ as the pullback of the metric $\dist^V$ defined on $\Dt$  (Section~\ref{sec:metric}), thereby getting the metric space 
$(\Mo,d)$.  This metric  induces a topology $\nu$ on $\Mo$ and it makes the map of Theorem~\ref{thm:delzant}(1) a homeomorphism.

We also endow $\Mt$ with the topology $\widetilde{\nu}$ induced by~\eqref{iso moduli}, that is, with the quotient topology coming from $\Mo$, which makes Theorem~\ref{thm:delzant}(2) a homeomorphism too.

The metric $\dist^V$ is very closely related to the famous \emph{Duistermaat-Heckman measure} $m_{\rm DH}$ on $\R^n$ from \cite{DH}: 
 if $\mu \colon (M,\omega) \to \mathfrak{t}^* \simeq \R^n$ is the momentum map of a symplectic toric manifold then the 
Duistermaat\--Heckman measure on $\mathfrak{t}^* \simeq \R^n$ is given as follows:
if $U \subset \mathfrak{t}^* \simeq \R^n$ is a Borel set then $m_{\rm DH}(U)$  is (up to a constant, depending on conventions)  the  volume of $\mu^{-1}(U)$ with respect to $\omega^n$, that is, the \emph{symplectic volume} 
$\int_{\mu^{-1}(U)} \frac{\omega^n}{n!}$. Because the toric case is so especial, in terms of the Euclidean volume (Lebesgue measure on $\R^n$)
we have that $m_{\rm DH}(U)=\Vol(U \cap \mu(M))$ (in general for Hamiltonian actions of tori of lower dimension $m<n$, the expression of $m_{\rm DH}$ is
given by the integral of a function which is a polynomial of degree at most $n-m$ on each component of the regular values of $\mu$, this being a
famous result by Duistermaat\--Heckman).

\subsubsection{Main Theorem describing the geometry and topology of $\Mt$ and $\Mo$}

It follows from the work we have described in this section  that Theorems~\ref{thm:connected} and \ref{thm:quotientD} are equivalent to the following.

\begin{theorem}
\label{key}
Let $\Mo$ and $\Mt$ be the moduli spaces of toric  $2n$\--dimensional manifolds, under isomorphisms and 
equivariant isomorphisms, respectively.  Then:
\begin{itemize}
\item[(i)] $ \Mo$ and $\Mt$ are path connected.
\item[(ii)] $\Mo$ is not locally compact, but it is dense in $\CC_p(n)$ in the sense of identifying $\Mo$ with $\Dt$ via 
\eqref{eq:map}. Hence, its completion is the same as that of $\CC_p(n)$ (namely $\CC_p(n)\cup\{0\}$ for $\dist^V$ and $\CC(n)$ for $\dist^H$).
\item[(iii)] $(\widetilde{\Mo}, \widetilde{\dist^V})$ is not locally compact, but it is dense in $\widetilde{\CC_p(n)}$ in the sense of identifying $\widetilde{\Mo}$ with $\widetilde{\Dt}$ via 
\eqref{iso moduli}. Hence, its completion is the same as that of $\widetilde{\CC_p(n)}$, namely $\widetilde{\CC_p(n)}\cup\{0\}$.
\end{itemize}
\end{theorem}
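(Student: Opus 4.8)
The plan is to obtain Theorem~\ref{key} by transporting Theorems~\ref{thm:connected} and~\ref{thm:quotientD} along the Delzant correspondence of Theorem~\ref{thm:delzant}. First I would recall the two structural facts set up earlier in this section. By construction the metric $d$ on $\Mo$ is the pullback under the bijection~\eqref{eq:map} of $\dist^V$ on $\Dt$, so~\eqref{eq:map} is an \emph{isometry}, in particular a homeomorphism for the topology $\nu$; and $\Mt$ carries the quotient topology $\widetilde\nu$, for which the induced bijection~\eqref{iso moduli} onto $\Dtt$ is a homeomorphism. Composing~\eqref{iso moduli} with Theorem~\ref{thm:quotientD}(1) also identifies $\widetilde\nu$ with the metric topology of $\widetilde{\dist^V}$ on $\Dtt$. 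Since path-connectedness and failure of local compactness are preserved under homeomorphisms, while density of a subspace in a prescribed ambient space and the isometry type of the metric completion are preserved under isometries, each clause of Theorem~\ref{key} becomes a direct restatement of the corresponding clause for Delzant polytopes.

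Concretely, part (i) is Theorem~\ref{thm:connected}(1) for $\Mo$ together with Theorem~\ref{thm:quotientD}(2) for $\Mt$. For part (ii): under~\eqref{eq:map} the image of $\Mo$ is $\Dt$, so ``$\Mo$ is dense in $\CC_p(n)$'' means precisely that $\Dt$ is dense in $\CC_p(n)$, which is Theorem~\ref{thm:connected}(2); the statement about completions follows because an isometric embedding of a dense subspace extends to an isometry of completions, whence the completion of $\Mo$ is that of $\CC_p(n)$, namely $\CC_p(n)\cup\{0\}$ for $\dist^V$ and $\CC(n)$ for $\dist^H$. Part (iii) is Theorem~\ref{thm:quotientD}(1) and~(3): part~(3) gives density of $\Dtt$ in $\widetilde{\CC_p(n)}$ together with the identification of completions with $\widetilde{\CC_p(n)}\cup\{0\}$, and these transfer verbatim to $(\Mt,\widetilde{\dist^V})$ via~\eqref{iso moduli}, using part~(1) to match the quotient and metric topologies.

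Since the argument is purely a change of language, there is no real obstacle; the only point that needs care is the compatibility of structures, namely that the metric on $\Mo$ is genuinely the pullback of $\dist^V$ rather than merely a topologically equivalent metric, and that the topology on $\Mt$ is genuinely the quotient topology, so that Theorem~\ref{thm:quotientD}(1) may be invoked. Both hold by the definitions fixed above in Section~\ref{sec:symplectic}, after which parts (i)--(iii) follow immediately.
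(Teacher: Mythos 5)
Your proposal is correct and follows exactly the same route as the paper: the paper itself states that Theorem~\ref{key} is equivalent to Theorems~\ref{thm:connected} and \ref{thm:quotientD} via the Delzant correspondence of Theorem~\ref{thm:delzant}, and provides no further argument. You have simply made explicit the bookkeeping (that~\eqref{eq:map} is by construction an isometry for $\dist^V$, that~\eqref{iso moduli} is a homeomorphism for the quotient/metric topologies via Theorem~\ref{thm:quotientD}(1), and that path-connectedness, non-local-compactness, density, and completions all transport along these identifications), which is precisely what the paper intends.
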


Theorem~\ref{key} essentially solves a problem proposed
by the first author  and V\~u Ng\d oc~\cite[Problem 2.42]{PeVN2012} in 2012.

Similarly, Corollary~\ref{coro:simply2} implies:

\begin{proposition} \label{pol}
$\mathcal{M}(4)$ is simply connected.
\end{proposition}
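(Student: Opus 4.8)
The statement to prove is Proposition~\ref{pol}: that $\mathcal{M}(4)$ is simply connected. The plan is to transport the topological conclusion of Corollary~\ref{coro:simply2} through the Delzant correspondence. By Theorem~\ref{thm:delzant}(1), the map $[(M,\omega,\T,\mu)]\mapsto\mu(M)$ is a bijection $\mathcal{M}(4)\to\mathcal{D}(2)$, and by the discussion in the subsection on metric and topology on the moduli spaces, $\mathcal{M}(4)$ is given precisely the topology that makes this bijection a homeomorphism (it is the pullback of the $\dist^V$-topology on $\mathcal{D}(2)$). Since simple connectivity is a topological invariant, it suffices to invoke Corollary~\ref{coro:simply2}, which states that $\mathcal{D}(2)$ is simply connected.

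First I would recall explicitly that for $n=2$ we have $2n=4$, so that the relevant instance of Theorem~\ref{thm:delzant} is the homeomorphism $\mathcal{M}(4)\cong\mathcal{D}(2)$. Then I would note that $\mathcal{D}(2)$ is path connected (a special case of Theorem~\ref{thm:connected}(1), or directly part of Corollary~\ref{coro:simply2}'s hypotheses), so that the fundamental group is well-defined up to isomorphism independent of basepoint. Finally I would apply Corollary~\ref{coro:simply2}: every loop in $\mathcal{D}(2)$ is locally refining (by the corollary preceding it, which follows from the convergence theorem for Delzant polygons), and locally refining loops are null-homotopic by Theorem~\ref{thm:refining}. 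Pulling back along the homeomorphism shows every loop in $\mathcal{M}(4)$ is null-homotopic.

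There is essentially no obstacle here: the proposition is a one-line corollary, and the only thing worth spelling out is the chain of identifications. If one wanted to be slightly more careful, the single point deserving a remark is that the topology on $\mathcal{M}(4)$ used in the statement is the one induced by $\dist^V$ via \eqref{eq:map}, and that this is the topology under which Theorem~\ref{thm:delzant}(1) is asserted to be a homeomorphism; once that is acknowledged, the result is immediate. I do not anticipate needing any genuinely new argument, since all the real work—establishing that loops in $\mathcal{D}(2)$ are locally refining and hence null-homotopic—has already been carried out in Section~\ref{sec:simply}.

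\begin{proof}
By Theorem~\ref{thm:delzant}(1) the map $[(M,\omega,\T,\mu)]\mapsto\mu(M)$ is a bijection between $\mathcal{M}(4)$ and $\mathcal{D}(2)$, and by construction the topology on $\mathcal{M}(4)$ is the one that turns this bijection into a homeomorphism. Since $\mathcal{D}(2)$ is simply connected by Corollary~\ref{coro:simply2}, so is $\mathcal{M}(4)$.
\end{proof}
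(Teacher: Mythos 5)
Your proof is correct and matches the paper's approach exactly: the paper deduces Proposition~\ref{pol} immediately from Corollary~\ref{coro:simply2} via the homeomorphism $\mathcal{M}(4)\cong\mathcal{D}(2)$ given by Theorem~\ref{thm:delzant} and the construction of the topology on $\mathcal{M}(4)$ as the pullback. Nothing further is needed.
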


\bibliographystyle{amsplain}
\bibliography{mybib}

\end{document}